\newtheorem{theorem}{Theorem}[section]
\newtheorem{corollary}[theorem]{Corollary} 
\newtheorem{lemma}[theorem]{Lemma}
\newtheorem{proposition}[theorem]{Proposition}
\newtheorem{problem}[theorem]{Problem}
\theoremstyle{definition}
\newtheorem{definition}[theorem]{Definition}
\newtheorem{example}[theorem]{Example}
\newcommand{\sparse}{\operatorname{sparse}}
\newcommand{\R}{{\mathbb R}}
\newcommand{\C}{{\mathbb C}}
\newcommand{\F}{{\mathbb F}}
\newcommand{\Q}{{\mathbb Q}}
\newcommand{\Z}{{\mathbb Z}}
\newcommand{\OO}{\operatorname{O}}
\newcommand{\gsparse}{\operatorname{gsparse}}
\newcommand{\trace}{\operatorname{trace}}
\newcommand{\sgn}{\operatorname{sgn}}
\newcommand{\GL}{\operatorname{GL}}
\newcommand{\proj}{\operatorname{proj}}
\newcommand{\shrink}{\operatorname{shrink}}
\newcommand{\rank}{\operatorname{rank}}
\newcommand{\perm}{\operatorname{perm}}
\title{A  general theory of singular values\\with applications to Signal Denoising}
\author{Harm Derksen}
\thanks{The author was partially supported by NSF grant DMS 1601229.}
\begin{document}
\maketitle
\begin{abstract}
We study the Pareto frontier for two competing norms $\|\cdot\|_X$ and $\|\cdot\|_Y$ on a vector space. For a given vector $c$,
the Pareto frontier describes the possible values of $(\|a\|_X,\|b\|_Y)$ for a decomposition $c=a+b$.
The singular  value decomposition of a matrix is closely related to the Pareto frontier for the spectral and nuclear norm. We will develop a general theory that extends the notion of  singular values  of a matrix to arbitrary finite dimensional euclidean vector spaces
equipped with dual norms. This also generalizes the diagonal singular value decompositions for tensors introduced by the author in previous work. We can apply the results to denoising, where $c$ is a noisy signal, $a$ is a sparse signal and $b$ is noise. Applications  include 1D total variation denoising,
2D  total variation Rudin-Osher-Fatemi image denoising, LASSO, basis pursuit denoising and tensor decompositions.

\end{abstract}
\section{Introduction}
Sound, images, and videos can be corrupted by noise. Noise removal is a fundamental problem in signal and image processing   In the additive noise model, we have an original signal $a$, additive noise $b$ and a corrupted signal $c=a+b$.  We will work with discrete signals and view $a$, $b$ and $c$  as vectors or arrays.  The problem of noise removal can be framed in terms of competing norms on a vector space. The Pareto frontier defines the optimal trade-off between the two norms. The Pareto frontier was used in the L-curve method
in Tikhonov regularization (see \cite{Phillips,Tik,Hansen,HO} and \cite[Chapter 26]{LH}), and in basis pursuit denoising (see~\cite{vdBF,TMJS,GLW}) to find optimal regularization parameters.
The Pareto frontier is a continuous convex curve, and has a continuous derivative if one of the norms is the euclidean norm (see~\cite{vdBF}, Lemma~\ref{lem:convexdecreasing} and Proposition~\ref{prop:derivativefsquared}). 


We will assume that the original signal $a$ is {\em sparse}. A  vector is sparse when it has  few nonzero values. We will also consider other notions of sparseness. For example, a piecewise constant function on an interval can be considered sparse because its derivative has few nonzero values (or values where it is not defined), and a piecewise linear function can be considered sparse because the second derivative has  few nonzero values. A sound signal from music is sparse because it contains only  a few frequencies.
A typical image is sparse, because it has large connected areas of the same color, i.e., the image is piecewise constant. This is exploited in the total variation image denoising method of Rudin, Osher and Fatemi (\cite{ROF}). A matrix of low rank can also be viewed as sparse because it is the sum of a few rank 1 matrices. In this context, principal component analysis can be viewed as a method for  recovering a sparse signal. 
The noise signal $b$, on the other hand, is not sparse. For example, white noise  contains all frequences in the sound spectrum. gaussian additive noise in an image will be completely discontinuous and not locally constant at all.

There are many ways to measure sparseness. Examples of sparseness measures are the $\ell_0$ ``norm'' (which actually is not a norm), the rank of a matrix or
the number of different frequencies in a sound signal. 
It is difficult to use these measure because they are not convex. We deal with this using {\em convex relaxation}, i.e., we replace the non-convex sparseness measure by a convex one. In this paper, we will measure sparseness using a norm $\|\cdot\|_X$ on the vector space of all signals. These norms coming from convex relaxation are typically $\ell_1$-type norms. For example, we may replace the $\ell_0$ ``norm'' by the $\ell_1$ norm, or replace the rank of a matrix by the nuclear norm. 
Noise will be measured using a different norm $\|\cdot\|_Y$.
This typically will be a euclidean $\ell_2$ norm or perhaps an $\ell_\infty$ type norm. The quotient $\|a\|_Y/\|a\|_X$ is large for the sparse signal $a$,
and $\|b\|_Y/\|b\|_X$ is small for the noise signal $b$. To denoise a signal $c$ we search for a decomposition $c=a+b$ where
$\|a\|_X$ and $\|b\|_Y$ are small. Minimizing $\|a\|_X$ and $\|b\|_Y$ are two competing objectives. The trade-off between these two objectives  is governed by the Pareto frontier. The concept of Pareto efficiency was used by Vilfredo Pareto (1848--1923) to decribe economic efficiency. 
A point $(x,y)\in \R^2$ is called Pareto efficient if there exists a decomposition $c=a+b$ with $\|a\|_X=x$ and $\|b\|_Y=y$
such that for every decomposition $c=a'+b'$ we have $\|a'\|_X>x$, $\|b'\|_Y>y$ or $(\|a'\|_X,\|b'\|_Y)=(x,y)$. If $(x,y)$ is Pareto efficient then we will call the decomposition $c=a+b$ an $XY$-decomposition. Many methods such as  LASSO, basis pursuit denoising, the Dantzig selector, total variation denoising and principal component analysis can be formulated as finding an $XY$-decomposition for certain norms $\|\cdot\|_X$ and $\|\cdot\|_Y$.

In most examples that we consider, the space of signals has a positive definite inner product and the norms $\|\cdot\|_X$ and $\|\cdot\|_Y$ are dual to each other.  The inner product gives an euclidean norm defined by $\|c\|_2=\sqrt{\langle c,c\rangle}$. We now have $3$ different norms. We will show that $X2$-decompositions and $2Y$-decompositions are the same. We define the {\em Pareto sub-frontier} as the set of all points $(\|a\|_X,\|b\|_Y)$
where $c=a+b$ is an $X2$-decomposition (or equivalently, a $2Y$-decomposition). The Pareto sub-frontier lies on or above the 
Pareto frontier (by the definition of the Pareto frontier). A vector $c$ is called {\em tight} if its Pareto frontier and Pareto sub-frontier coincide. 
If every vector is tight then the norms $\|\cdot\|_X$ and $\|\cdot\|_Y$ are called {\em tight}. We show that for tight vectors, the Pareto (sub-)frontier is piecewise linear.
For a tight vector $c$, we will define the slope decomposition of $c$ which can be thought of as a generalization of the singular value decomposition.

 The nuclear norm $\|\cdot\|_\star$ and spectral norm $\|\cdot\|_\sigma$ of a matrix are dual to each others (see for example \cite{CR,RFP} and Lemma~\ref{lem:sigmadual}) and  we will show that these norms are tight in Section~\ref{sec:matrixSVD}.   If the singular values of a matrix $A$ are $\lambda_1,\lambda_2,\dots,\lambda_r$ with multiplicities $m_1,m_2,\dots,m_r$ respectively, we have the following well-known formulas for the 
  spectral, nuclear and euclidean (Frobenius) norm:
 \begin{eqnarray}\label{eq:norms1}
 \|A\|_\sigma &=& \lambda_1\\
 \|A\|_* & = & m_1\lambda_1+m_2\lambda_2+\cdots+m_r\lambda_r\label{eq:norms2}\\
 \|A\|_2 & = & \sqrt{m_1\lambda_1^2+m_2\lambda_2^2+\cdots+m_r\lambda_r^2} \label{eq:norms3}
 \end{eqnarray}  
The singular value region of $A$ is a bar of height $\lambda_1$ and width $m_1$, followed by a bar of height $\lambda_2$ and width $m_2$, etc.
The singular values and their multiplicity can now easily be read off from the singular value region.
For exampe if a matrix  $A$ has eigenvalues $2.5$ with multiplicity 2, 1 with multiplicity 1 and 0.5 with multiplicity 2, then the singular value region is plotted below:

\centerline{\includegraphics[width=4in]{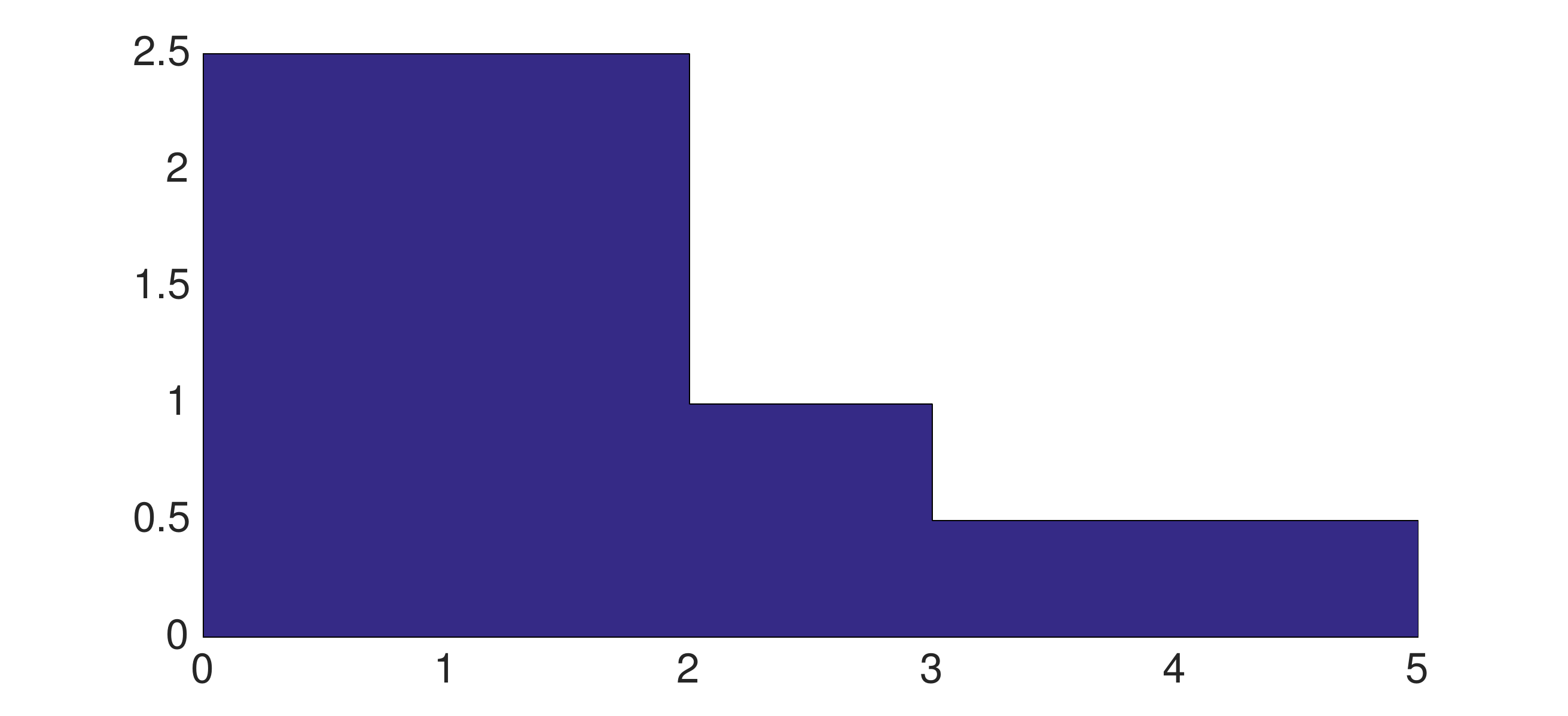}}

The height of the singular value region is the spectral norm, the width is the rank, the area is the nuclear norm, and if we integrate $2y$ over the region we obtain the square of the Frobenius norm $\|A\|_2^2$.
 The Pareto frontier of a matrix (which is also its Pareto sub-frontier) encodes the singular values of the matrix, and
 the slope decomposition is closely related to the singular value decomposition. 
  The slope decomposition is unique, but the singular value decomposition may not be unique if some of the singular values coindice. 
    
A tensor is a higher dimensional array. A $d$-way tensor is a vector for $d=1$ and a matrix for $d=2$. For $d\geq 3$, one can generalize the rank of a matrix to tensor rank (\cite{Hitchcock}). The tensor rank is closely related to the canonical polyadic decomposition of a tensor (CP-decomposition). This decomposition is also known as the PARAFAC (\cite{Harshman}) or the CANDECOMP model (\cite{CC}).
The nuclear norm of a matrix can be generalized to the nuclear norm of a tensor (\cite{Grothendieck,Schatten}), and this can be viewed as a convex relaxation of the tensor rank. The spectral norm of a matrix can be generalized to a spectral norm of a tensor, and this norm is dual to the nuclear norm of a tensor (\cite{Derksen}). Not every tensor is tight. A tensor that is tight will have a slope decomposition which generalizes the diagonal singular value decomposition introduced by the author in \cite{Derksen}. 
Every tensor that has a diagonal singular value has a slope decomposition but the converse is not true.  We will define singular values and multiplicities for tight tensors such that the formulas (\ref{eq:norms1}), (\ref{eq:norms2}), (\ref{eq:norms3}) are satisfied.  The multiplicities of the singular values of tensors are nonnegative, but are not always integers. For example, in Section~\ref{sec:tensor} we will show that the tensor
$$
e_1\otimes e_2\otimes e_3+e_1\otimes e_3\otimes e_2+e_2\otimes e_1\otimes e_3+e_2\otimes e_3\otimes e_1+e_3\otimes e_1\otimes e_2+e_3\otimes e_2\otimes e_1\in \R^{3\times 3 \times 3}
$$
is tight and has the singular value $\frac{2}{\sqrt{3}}$ with multiplicitiy $\frac{9}{2}$ (and not singular value $1$ with multiplicity 6 as one might expect). 
For tensors that are not tight we can still define the singular value region, but the singular value interpretation may me more esoteric. For example, we will show that the tensor
$$
e_2\otimes e_1\otimes e_1+e_1\otimes e_2\otimes e_1+e_1\otimes e_1\otimes e_2\in \R^{2\times 2\times 2}
$$
has the following singular value region:

\centerline{\includegraphics[width=4in]{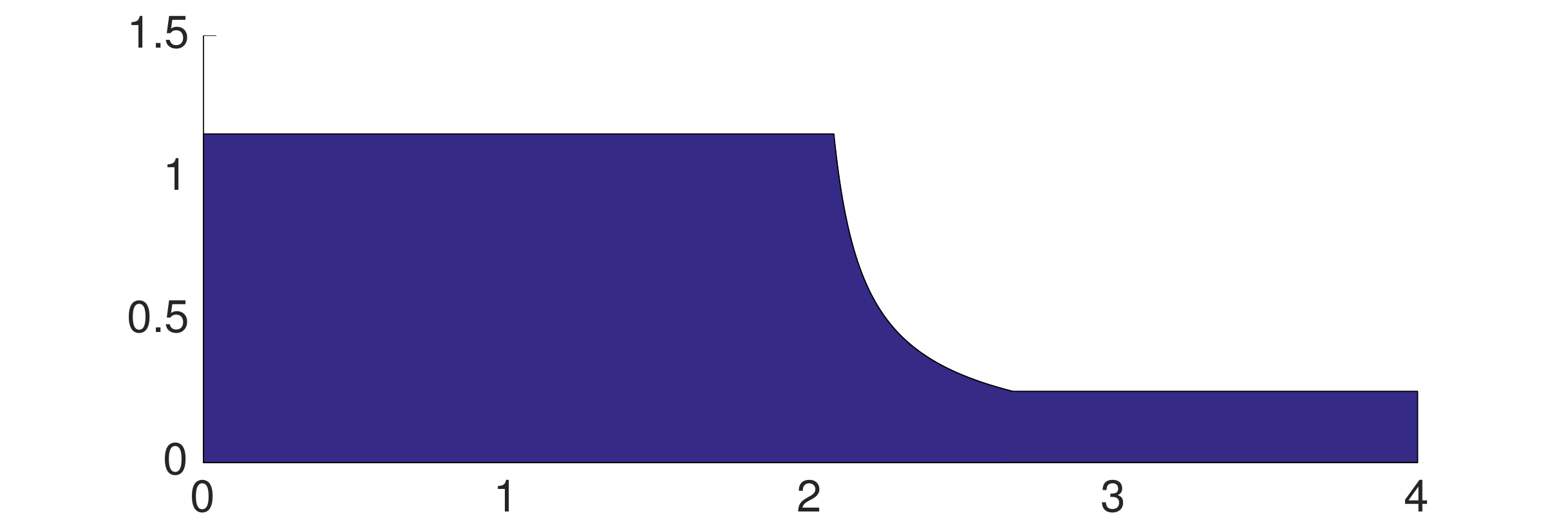}}

We will define the singular value region in a very general context. Whenever $V$ is a finite dimensional euclidean vector space, and $\|\cdot\|_X$ and $\|\cdot\|_Y$ are norms that are dual to each other, then we can define the singular value region for any $c\in V$. 

\tableofcontents

\section{Main Results}
\subsection{The Pareto frontier}
Let us consider a finite dimensional $\R$-vector space $V$ equipped with two norms, $\|\cdot\|_X$ and $\|\cdot\|_Y$.
Suppose that $c\in V$. We are looking for decompositions $c=a+b$ that are optimal in the sense that we cannot reduce $\|a\|_X$ without
increasing $\|b\|_Y$ and we cannot reduct $\|b\|_Y$ without increasing $\|a\|_X$.  We recall the definition from the introduction:
\begin{definition}
 A pair $(x,y)\in \R^2_{\geq 0}$ is called {\em Pareto efficient} if there exists a decomposition $c=a+b$ with $\|a\|_X=x$, $\|b\|_Y=y$ such that for every decomposition $c=a'+b'$  we have $\|a'\|_X> x$, $\|b'\|_Y> y$ or $(\|a'\|_X,\|b'\|_Y)=(x,y)$. If $(x,y)$ is a Pareto efficient pair then we call $c=a+b$ an {\em  
$XY$-decomposition}. 
\end{definition}
By symmetry,  $c=a+b$ is an $XY$-decomposition if and only if $c=b+a$ is a $YX$-decomposition.
The {\em Pareto frontier} consists of all Pareto efficient pairs (see~\cite{vdBF}).  The Pareto frontier is the graph of a strictly decreasing, continuous convex function 
$$f_{YX}^c:[0,\|c\|_X]\to [0,\|c\|_Y]$$ 
(see \cite{vdBF} and~Lemmas~\ref{lem:convexdecreasing} and~\ref{lem:ParetoEfficient}). If we change the role of $X$ and $Y$ we get the graph of $f_{XY}^c$, so
$f_{XY}^c$ and $f_{YX}^c$ are inverse functions of each other.


\begin{example}\label{ex:l1linf}
Consider the vector space  $V=\R^n$. Sparseness of vectors in $\R^n$ can be measured by the number of nonzero 
entries. For $c\in V$ we define
$$\|c\|_0=|\{i\mid 1\leq i\leq n,\ c_i\neq 0\}|=\lim_{p\to 0} \|c\|_p^p.$$
Note that $\|\cdot\|_0$ is not a norm on $V$ because it does not satisfy $\|\lambda c\|_0=|\lambda|\|c\|_0$ for $\lambda\in \R$. Convex relaxation  of $\|\cdot\|_0$ gives us the $\ell_1$ norm $\|\cdot\|_1$.  This means that the unit ball for the norm $\|\cdot\|_1$ is the convex hull of all vectors $c$ with $\|c\|_0=\|c\|_2=1$. Let us take  $\|\cdot\|_X=\|\cdot\|_1$ and $\|\cdot\|_Y=\|\cdot\|_\infty$ and describe the Pareto frontier.
Suppose that  $c=(c_1 \cdots c_n)^t\in \R^n$ and $0\leq y\leq \|c\|_\infty$. If $\|b\|_\infty=y$ then we have
$$
\textstyle \|c-b\|_1=\sum_{i=1}^n |c_i-b_i|\geq \sum_{i=1}^n\min\{|c_i|-y,0\}.
$$
If we take $b_i=\sgn(c_i)\min\{|c_i|,y \}$ for all $i$, then we have equality. So  $c=a+b$ is an $XY$-decomposition where $a:=c-b$.
This shows that 
$$
\textstyle f_{XY}^c(y )=\|a\|_1=\sum_{i=1}^n\max\{0,|c_i|-y\}.
$$


For the vector $c=(-1,2,4,1,-2,1,-1)^t$
we plotted $f_{XY}^c=f_{1\infty}^c$ and $f_{YX}^c=f_{\infty 1}^c$ (see also Example~\ref{ex:1.10})

\centerline{\includegraphics[width=3in]{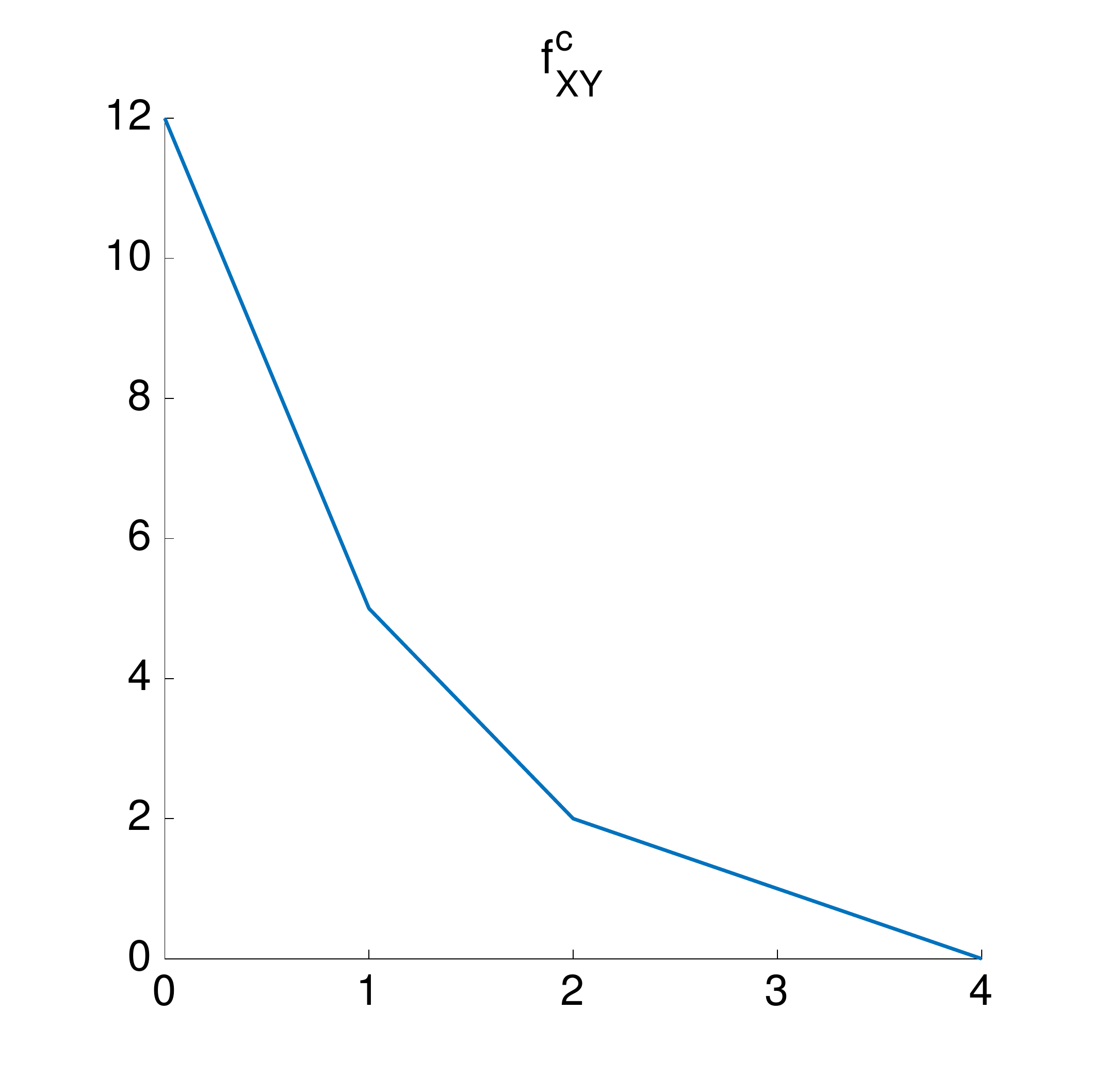}\includegraphics[width=3in]{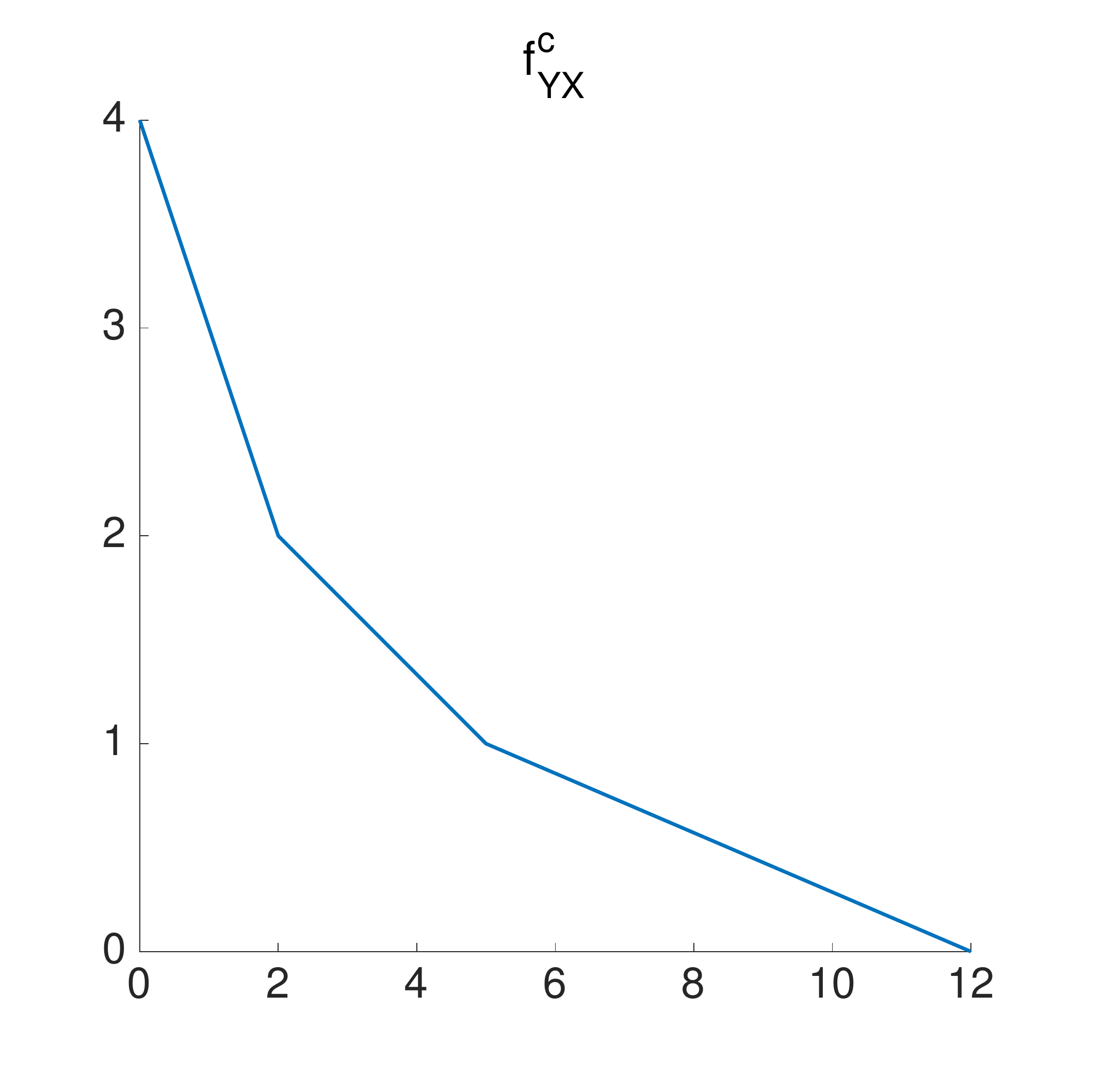}}

For example if we take $y=\frac{3}{2}$ then we get the decomposition with $a=(0,\frac{1}{2},\frac{5}{2},0,-\frac{3}{2},0,0)^t$ and $b=(-1,\frac{3}{2},\frac{3}{2},1,-\frac{3}{2},1,-1)^t$ and we have $\|a\|_1=\frac{7}{2}$ and $\|a\|_0=3$. The vector $a$ is sparser than  $c$. This procedure of noise reduction is {\em soft tresholding} in its simplest form.
\end{example}
In Section~\ref{sec:Pareto}, we will study the Pareto frontier and $XY$-decompositions in more detail.
\subsection{Dual norms and the Pareto sub-frontier}
We now assume that we have a positive definite bilinear form $\langle\cdot,\cdot\rangle$ on the finite dimensional vector space $V$. 
The euclidean $\ell_2$ norm on $V$  defined by $\|v\|_2:=\sqrt{\langle v,v\rangle}$. Suppose that $\|\cdot\|_X$ is another norm on $V$. We may think of $\|\cdot\|_X$ as a norm that measures sparseness. For denoising, we compare the norms $\|\cdot\|_X$ and $\|\cdot\|_2$. We also consider the
 {\em dual norm} on $V$  defined by
$$
\|v\|_Y=\max\{\langle v,w\rangle\mid w\in V,\ \|w\|_X=1\}.
$$
The dual norm of $\|\cdot\|_Y$ is $\|\cdot\|_X$ again.
There is an interesting interplay between the 3 norms, and the $X2$-decompositions, $2Y$-decompositions and $XY$-decompositions are closely connected. 
 The following proposition and other results in this section will be proved in Section~\ref{sec:3}.
\begin{proposition}\label{prop:equivalent}
For a vector $c\in V$, the following three statements are equivalent:
\begin{enumerate}
\item $c=a+b$ is an $X2$-decomposition;
\item $c=a+b$ is a  $2Y$-decomposition;
\item $c=a+b$ and $\langle a,b\rangle=\|a\|_X\|b\|_Y$.
\end{enumerate}
\end{proposition}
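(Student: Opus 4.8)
The plan is to establish the two equivalences $(1)\Leftrightarrow(3)$ and $(2)\Leftrightarrow(3)$; since interchanging the roles of $X2$ and $2Y$ amounts to swapping $a$ with $b$ and $\|\cdot\|_X$ with $\|\cdot\|_Y$ (the dual of $\|\cdot\|_Y$ being $\|\cdot\|_X$), the second equivalence will be the mirror image of the first. Two elementary facts are used throughout: the generalized H\"older inequality $\langle u,v\rangle\le\|u\|_X\|v\|_Y$, immediate from the definition of the dual norm, and the fact that by finite-dimensionality every $u\in V$ admits a $w$ with $\|w\|_X=1$, $\langle u,w\rangle=\|u\|_Y$, and dually a $w$ with $\|w\|_Y=1$, $\langle u,w\rangle=\|u\|_X$. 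The cases $a=0$ or $b=0$ are trivial (all three statements hold), so I assume $a,b\neq0$.

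For $(3)\Rightarrow(1)$: given $c=a+b$ with $\langle a,b\rangle=\|a\|_X\|b\|_Y$, let $c=a'+b'$ be any decomposition with $\|a'\|_X\le\|a\|_X$ and $\|b'\|_2\le\|b\|_2$. Writing $b'=b+(a-a')$,
$$
\|b'\|_2^2=\|b\|_2^2+2\langle b,a-a'\rangle+\|a-a'\|_2^2 ,
$$
and $\langle b,a-a'\rangle=\|a\|_X\|b\|_Y-\langle b,a'\rangle\ge(\|a\|_X-\|a'\|_X)\|b\|_Y\ge0$ by H\"older, so $\|b'\|_2^2\ge\|b\|_2^2+\|a-a'\|_2^2$. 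Combined with $\|b'\|_2\le\|b\|_2$ this forces $a'=a$, hence $(\|a'\|_X,\|b'\|_2)=(\|a\|_X,\|b\|_2)$, which is precisely Pareto efficiency of $c=a+b$ as an $X2$-decomposition. The argument for $(3)\Rightarrow(2)$ is the same computation with $a,b$ and $\|\cdot\|_X,\|\cdot\|_Y$ swapped: write $a'=a+(b-b')$, expand $\|a'\|_2^2$, and use $\langle a,b-b'\rangle\ge\|a\|_X(\|b\|_Y-\|b'\|_Y)\ge0$.

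For $(1)\Rightarrow(3)$ I argue by contrapositive. If $\langle a,b\rangle<\|a\|_X\|b\|_Y$, choose $w$ with $\|w\|_X=1$, $\langle b,w\rangle=\|b\|_Y$, and set $a_t=(1-t)a+t\|a\|_X w$ and $b_t=c-a_t=b+t(a-\|a\|_X w)$ for $t\in(0,1]$. Convexity of $\|\cdot\|_X$ gives $\|a_t\|_X\le\|a\|_X$, while
$$
\|b_t\|_2^2=\|b\|_2^2+2t\bigl(\langle a,b\rangle-\|a\|_X\|b\|_Y\bigr)+t^2\|a-\|a\|_X w\|_2^2 ,
$$
whose coefficient of $t$ is negative, so $\|b_t\|_2<\|b\|_2$ for $t$ small. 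Thus $c=a_t+b_t$ strictly improves both coordinates of $(\|a\|_X,\|b\|_2)$, contradicting that $c=a+b$ is an $X2$-decomposition. Symmetrically, $(2)\Rightarrow(3)$: if $\langle a,b\rangle<\|a\|_X\|b\|_Y$ choose $w$ with $\|w\|_Y=1$, $\langle a,w\rangle=\|a\|_X$, perturb $b_t=(1-t)b+t\|b\|_Y w$, and get $\|b_t\|_Y\le\|b\|_Y$ with $\|a_t\|_2<\|a\|_2$.

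I expect the converse directions to be the only real obstacle: one must produce an explicit improving perturbation, and the key realization is that moving $a$ (resp.\ $b$) toward the scaled norming vector $\|a\|_X w$ (resp.\ $\|b\|_Y w$) simultaneously keeps the $\|\cdot\|_X$- (resp.\ $\|\cdot\|_Y$-) coordinate from increasing, by convexity, and lowers the $\ell_2$-coordinate to first order, by the sign of $\langle a,b\rangle-\|a\|_X\|b\|_Y$. A more structural alternative is to combine Lemmas~\ref{lem:convexdecreasing} and~\ref{lem:ParetoEfficient} to see that $c=a+b$ is an $X2$-decomposition iff $a$ is the $\ell_2$-projection of $c$ onto $\{v:\|v\|_X\le\|a\|_X\}$, and that it is a $2Y$-decomposition iff $a$ is the $\ell_2$-projection of $0$ onto $c+\{v:\|v\|_Y\le\|b\|_Y\}$; in both cases the variational inequality for the projection combined with H\"older yields exactly $\langle a,b\rangle=\|a\|_X\|b\|_Y$. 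Either route shows $(1)$ and $(2)$ are each equivalent to $(3)$, hence to one another.
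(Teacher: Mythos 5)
Your proof is correct. The direction $(1)\Rightarrow(3)$ is essentially the same perturbation argument as in the paper (Proposition~\ref{prop:XYchar}): move $a$ toward $\|a\|_X w$ where $w$ norms $b$, expand the square, and use the sign of the linear term. The direction $(3)\Rightarrow(1)$ is where you diverge from the paper, and your route is genuinely cleaner. The paper introduces the optimizer $a'$ of ${\bf M}_{2X}^c(\|a\|_X)$, applies the already-proved forward implication to the pair $(a',b')$ to get $\langle a',b'\rangle=\|a'\|_X\|b'\|_Y$, and then shows $\|a'-a\|_2^2\le 0$; this relies on the existence of the optimizer and bootstraps from the forward direction. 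Your argument dispenses with both: it takes an arbitrary competing decomposition $c=a'+b'$ with $\|a'\|_X\le\|a\|_X$ and $\|b'\|_2\le\|b\|_2$, expands $\|b'\|_2^2=\|b\|_2^2+2\langle b,a-a'\rangle+\|a-a'\|_2^2$, and uses H\"older once to force $a'=a$. This is more self-contained and, as a free byproduct, proves uniqueness of the $X2$-decomposition at each level $x=\|a\|_X$ — which the paper obtains separately from strict convexity of $\|\cdot\|_2$. Both of your equivalences then dualize correctly, exactly as the paper does, to give $(2)\Leftrightarrow(3)$.
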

\begin{definition}
If the statements (1)--(3) in Proposition~\ref{prop:equivalent} imply
\begin{itemize}
\item[(4)] $c=a+b$ is an $XY$-decomposition.
\end{itemize}
 then $c$ is called {\em tight}. 
If all vectors in $V$ are tight then the norm $\|\cdot\|_X$ is called {\em tight}. 
\end{definition}
\begin{definition}
The {\em Pareto sub-frontier} is the set of all pairs $(x,y)\in \R_{\geq 0}^2$
such that there exists an $X2$-decomposition $c=a+b$ with $\|a\|_X=x$ and $\|b\|_Y=y$.
\end{definition}
We will show in Section~\ref{sec:3} that the Pareto sub-frontier is the graph of a decreasing  Lipschitz continuous function 
$$\textstyle h_{YX}^c:[0,\|c\|_X]\to [0,\|c\|_Y].$$ 
By symmetry,  $h_{XY}^c$ is the inverse function of $h_{YX}^c$.
From the definitions it is clear that $f_{YX}^c(x)\leq h_{YX}^c(x)$. If $f_{XY}^c=h_{XY}^c$, then every $X2$-decomposition is
automatically a $XY$-decomposition, and $c$ is tight
\begin{corollary}
A vector $c\in V$ is tight if and only if $f_{XY}^c=h_{XY}^c$.
\end{corollary}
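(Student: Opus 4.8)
The plan is to turn the corollary into pure bookkeeping with the functions $f_{YX}^c$ and $h_{YX}^c$. First I would record two preliminary observations. (a) Since $f_{XY}^c$ and $f_{YX}^c$ are mutually inverse, and likewise $h_{XY}^c$ and $h_{YX}^c$, the equality $f_{XY}^c=h_{XY}^c$ holds if and only if $f_{YX}^c=h_{YX}^c$; so it is enough to prove the statement with $YX$ in place of $XY$. (b) Whether $c=a+b$ is an $XY$-decomposition depends only on the pair $(\|a\|_X,\|b\|_Y)$: the condition in the definition of Pareto efficiency — that every decomposition $c=a'+b'$ satisfies $\|a'\|_X>x$, $\|b'\|_Y>y$, or $(\|a'\|_X,\|b'\|_Y)=(x,y)$ — refers only to the numbers $x=\|a\|_X$, $y=\|b\|_Y$. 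Hence $c=a+b$ is an $XY$-decomposition precisely when $(\|a\|_X,\|b\|_Y)$ is a Pareto efficient pair, i.e. lies on the graph of $f_{YX}^c$. I would also invoke the structural facts from Section~\ref{sec:3}: $h_{YX}^c$ is defined on all of $[0,\|c\|_X]$ and its graph is exactly the Pareto sub-frontier, the graph of $f_{YX}^c$ is exactly the set of Pareto efficient pairs, and $f_{YX}^c(x)\le h_{YX}^c(x)$ for every $x$.

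For the direction $f_{YX}^c=h_{YX}^c \Rightarrow c$ tight, by Proposition~\ref{prop:equivalent} it suffices to show that an arbitrary $X2$-decomposition $c=a+b$ is an $XY$-decomposition. The pair $(\|a\|_X,\|b\|_Y)$ lies on the Pareto sub-frontier, hence by the assumed equality it lies on the Pareto frontier, hence it is Pareto efficient; by observation (b), $c=a+b$ is then an $XY$-decomposition. This is essentially the remark preceding the corollary.

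For the converse, suppose $c$ is tight and fix $x\in[0,\|c\|_X]$; set $y:=h_{YX}^c(x)$. Because $(x,y)$ lies on the Pareto sub-frontier there is an $X2$-decomposition $c=a+b$ with $\|a\|_X=x$ and $\|b\|_Y=y$. Tightness together with Proposition~\ref{prop:equivalent} makes $c=a+b$ an $XY$-decomposition, so $(x,y)$ is Pareto efficient and therefore $f_{YX}^c(x)=y=h_{YX}^c(x)$. Since $x$ was arbitrary and we already have $f_{YX}^c\le h_{YX}^c$ pointwise, this yields $f_{YX}^c=h_{YX}^c$, equivalently $f_{XY}^c=h_{XY}^c$.

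The argument is short, and I do not expect a genuine obstacle. The only points that need care are imported rather than proved here: that $h_{YX}^c$ is really a function on the whole interval $[0,\|c\|_X]$ with graph equal to the sub-frontier, and that it is the inverse of $h_{XY}^c$ so that the two forms of the desired equality are interchangeable — both belonging to the Section~\ref{sec:3} development — together with the elementary observation (b) that ``being an $XY$-decomposition'' is a property of the norm pair $(\|a\|_X,\|b\|_Y)$ alone. Granting those, the corollary falls out of Proposition~\ref{prop:equivalent}.
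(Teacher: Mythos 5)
Your proposal is correct and takes essentially the same route the paper sketches in the paragraph preceding the corollary: the forward implication ($f=h$ gives tightness) is exactly the paper's remark, and your converse direction is the symmetric bookkeeping using the facts that the sub-frontier is the graph of $h_{YX}^c$, the Pareto frontier is the graph of $f_{YX}^c$, and tightness promotes each $X2$-decomposition to an $XY$-decomposition. The auxiliary facts you import (that $h_{YX}^c$ is defined on all of $[0,\|c\|_X]$ with graph the sub-frontier, that $h_{XY}^c$ and $h_{YX}^c$ are inverse, and that being an $XY$-decomposition is detected by the pair $(\|a\|_X,\|b\|_Y)$) are indeed all established in Section~\ref{sec:3} and Lemma~\ref{lem:ParetoEfficient}, so there is no gap.
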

If $f_{XY}^c=h_{XY}^c$ then there is no space between the two graphs and they fit together {\em tightly}, which explains the name of this property.
Let us work out an example where the norms are not tight.
\begin{example}
Define a norm $\|\cdot\|_X$ on $\R^2$ by
$$
\left\|\begin{pmatrix}Ä
z_1\\z_2\end{pmatrix}\right\|_X=\sqrt{\textstyle\frac{1}{2}z_1^2+2z_2^2}.
$$
Its dual norm is given by
$$
\left\| \begin{pmatrix}
z_1\\z_2\end{pmatrix}\right\|_Y=\sqrt{\textstyle 2z_1^2+\frac{1}{2}z_2^2}.
$$
For $t\in [\frac{1}{2},2]$, the decomposition
$$
c=
\begin{pmatrix}
3\\
3
\end{pmatrix}
=a(t)+b(t)
$$
is an $X2$-decomposition, where 
$$
a(t)=\begin{pmatrix}
4-2t\\
2t^{-1}-1\end{pmatrix}\mbox{ and }b(t)=\begin{pmatrix} 2t-1\\4-2t^{-1}\end{pmatrix}.
$$
To verify this, we compute
$$
\|a(t)\|_X=\sqrt{2}(2t^{-1}-1)\sqrt{t^2+1},\quad \|b(t)\|_Y=\sqrt{2}(2-t^{-1})\sqrt{t^2+1}
$$
and
$$
\langle a(t),b(t)\rangle=(4-2t)(2t-1)+(2t^{-1}-1)(4-2t^{-1})=2(2t^{-1}-1)(2-t^{-1})(t^2+1)=\|a(t)\|_X\|b(t)\|_Y.
$$
The Pareto sub-frontier is parameterized by 
$$
(\sqrt{2}(2t^{-1}-1)\sqrt{t^2+1},\sqrt{2}(2-t^{-1})\sqrt{t^2+1})),\quad t\in [\textstyle\frac{1}{2},2].
$$
The $XY$-decompositions of $c$ are $c=\widetilde{a}(t)+\widetilde{b}(t)$ for $t\in [\frac{1}{4},4]$, 
where
$$
\widetilde{a}(t)=\frac{1}{5}\begin{pmatrix}
16-4t\\
4t^{-1}-1\end{pmatrix}\mbox{ and }\widetilde{b}(t)=\frac{1}{5}\begin{pmatrix} 4t-1\\16-4t^{-1}\end{pmatrix}.
$$
The Pareto frontier is parameterized by
$$\textstyle(\frac{\sqrt{2}}{5}(4t^{-1}-1)\sqrt{4t^2+1},\frac{\sqrt{2}}{5}(4-t^{-1})\sqrt{t^2+4})).
$$
We plotted the Pareto frontier in green and the Pareto sub-frontier in blue:\\
\centerline{\includegraphics[width=3in]{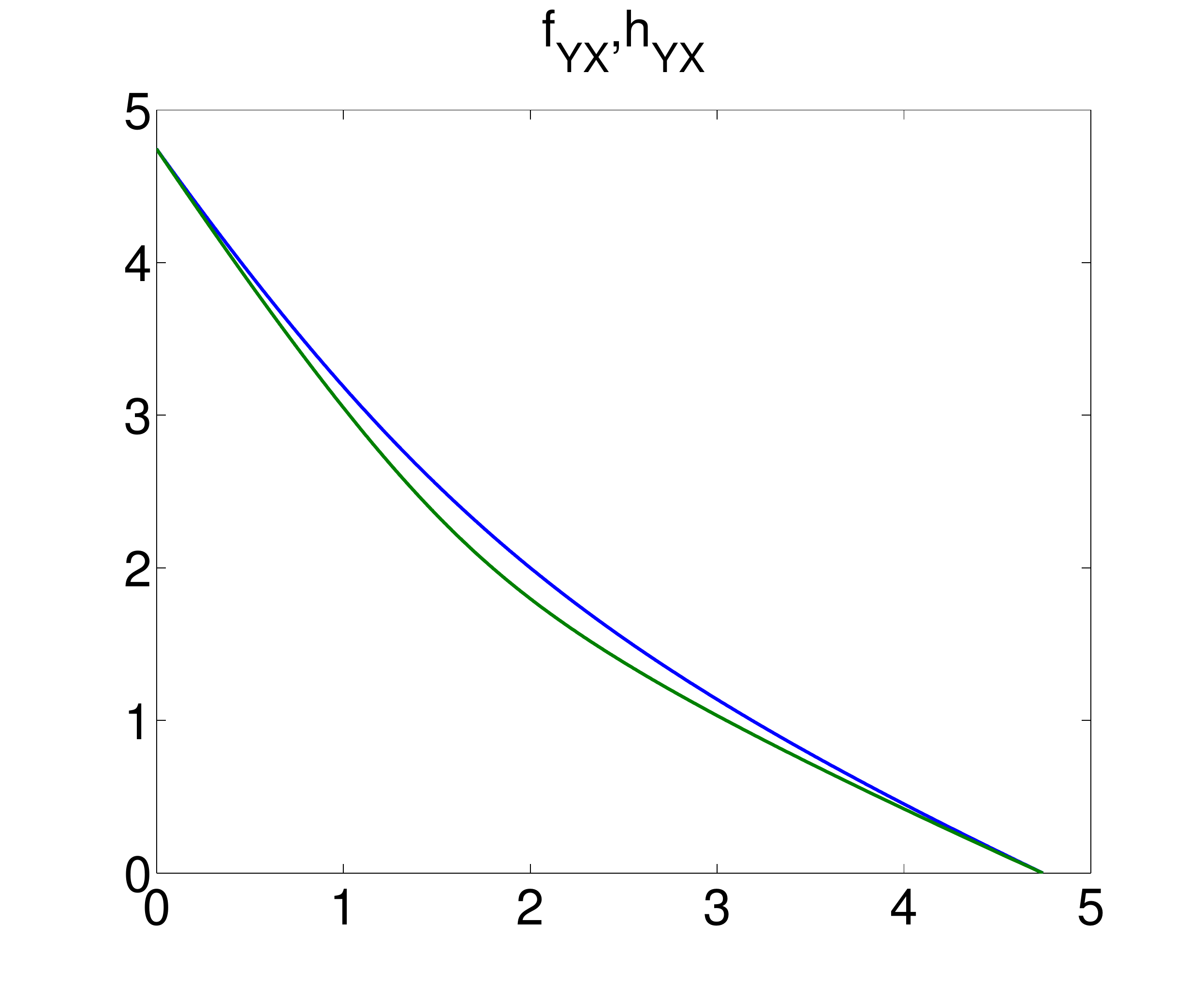}}
The Pareto frontier and sub-frontier are not the same, so the vector $c$ is not tight.
\end{example}

The Pareto sub-frontier  encodes crucial information about the $X2$-decompositions. If $(x_0,y_0)$ is a point on the Pareto subfrontier and
$c=a+b$ is an $X2$-decomposition with $\|a\|_X=x_0$ and $\|b\|_Y=y_0$, then we can read off $\|c\|_X$, $\|c\|_Y$, $\|c\|_2$, $\|a\|_X$, $\|a\|_2$, $\|b\|_Y$, $\|b\|_2$ and $\langle a,b\rangle$ from the Pareto sub-frontier using the following proposition.
\begin{proposition}\label{prop:areas}
Suppose that $(x_0,y_0)$ is a point on the  $XY$-Pareto sub-frontier of $c\in V$, and let $c=a+b$ be an $X2$ decomposition with $\|a\|_X=x_0$ and $\|b\|_Y=y_0$.
\begin{enumerate}
\item The area below the sub-frontier is equal to $\frac{1}{2}\|c\|_2^2$.
\item The area below the sub-frontier and to  the right of $x=x_0$ is equal to $\frac{1}{2}\|b\|_2^2$.
\item The area below the sub-frontier and above $y=y_0$ is equal to $\frac{1}{2}\|a\|_2^2$.
\item The area of the rectangle $[0,x_0]\times [0,y_0]$ is $\langle a,b\rangle$.
\end{enumerate}
\end{proposition}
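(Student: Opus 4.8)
The plan is to reduce the statement to part (4), which is immediate, together with part (2); part (3) then follows from part (2) by symmetry, and part (1) by adding the three regions. For part (4): by Proposition~\ref{prop:equivalent}, an $X2$-decomposition $c=a+b$ satisfies $\langle a,b\rangle=\|a\|_X\|b\|_Y=x_0y_0$, which is the area of the rectangle $[0,x_0]\times[0,y_0]$.

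For part (2) I would use the proximal parametrization of the Pareto sub-frontier. For $\mu\ge 0$ let $a_\mu$ be the minimizer (unique by $1$-strong convexity) of $a\mapsto \mu\|a\|_X+\tfrac12\|c-a\|_2^2$ and set $b_\mu:=c-a_\mu$. The optimality condition $b_\mu\in\mu\,\partial\|\cdot\|_X(a_\mu)$ gives $\|b_\mu\|_Y\le\mu$ and, when $a_\mu\neq 0$, both $\langle a_\mu,b_\mu\rangle=\mu\|a_\mu\|_X$ and (by the equality case of Hölder) $\|b_\mu\|_Y=\mu$, so $c=a_\mu+b_\mu$ is an $X2$-decomposition by Proposition~\ref{prop:equivalent}; conversely, any $X2$-decomposition $c=a+b$ with $\|b\|_Y=\mu$ satisfies $\langle a,b\rangle=\mu\|a\|_X$, which is exactly that optimality condition, so $(a,b)=(a_\mu,b_\mu)$ by uniqueness. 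Since $a_\mu$ depends continuously on $\mu$, the map $\mu\mapsto(\|a_\mu\|_X,\mu)$, $\mu\in[0,\|c\|_Y]$, is a continuous, strictly monotone parametrization of the whole sub-frontier, so that, writing $\phi:=h_{XY}^c$, we have $\phi(\mu)=\|a_\mu\|_X$ for all $\mu$; this is precisely the structure of $h_{YX}^c$ announced for Section~\ref{sec:3}, which I invoke.

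Now set $\psi(\mu):=\min_a\bigl(\mu\|a\|_X+\tfrac12\|c-a\|_2^2\bigr)=\mu\|a_\mu\|_X+\tfrac12\|b_\mu\|_2^2$. As an infimum of functions affine in $\mu$ it is concave, $\psi(0)=0$ (minimizer $a=c$), and since the affine piece indexed by $a$ has slope $\|a\|_X$ we get $\|a_\mu\|_X\in\partial\psi(\mu)$, hence by continuity $\psi\in C^1$ with $\psi'(\mu)=\|a_\mu\|_X=\phi(\mu)$; therefore $\psi(\mu)=\int_0^\mu\phi(t)\,dt$. Fix $\mu=y_0$, put $x_0=\phi(y_0)$, and note $b=b_{y_0}$ by the uniqueness above. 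On one hand $\psi(y_0)=x_0y_0+\tfrac12\|b\|_2^2$. On the other hand, since $\phi$ is decreasing with $\phi(y_0)=x_0$, the horizontal-slab decomposition $\int_0^{y_0}\phi(t)\,dt=x_0y_0+\int_0^{y_0}\bigl(\phi(t)-x_0\bigr)\,dt$ exhibits $\psi(y_0)$ as $x_0y_0$ plus the area of $\{(x,y):0\le y\le y_0,\ x_0\le x\le\phi(y)\}$, and this last region equals $\{(x,y):x_0\le x\le\|c\|_X,\ 0\le y\le h_{YX}^c(x)\}$ because $h_{YX}^c$ and $\phi=h_{XY}^c$ are mutually inverse; that is, it is the area below the sub-frontier and to the right of $x=x_0$. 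Comparing the two expressions for $\psi(y_0)$ gives part (2).

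Part (3) is part (2) applied with $\|\cdot\|_X$ and $\|\cdot\|_Y$ interchanged: the $X2$-decompositions of $c$ are, after swapping $a\leftrightarrow b$, exactly the $X2$-decompositions for the pair $(\|\cdot\|_Y,\|\cdot\|_X)$, and this reflects the sub-frontier across the diagonal, turning ``right of $x=x_0$'' into ``above $y=y_0$'' and $\|b\|_2$ into $\|a\|_2$. Finally, the region below the sub-frontier is the disjoint union of $[0,x_0]\times[0,y_0]$, the part to the right of $x=x_0$, and the part above $y=y_0$, so by (4), (2), (3) its area is $\langle a,b\rangle+\tfrac12\|b\|_2^2+\tfrac12\|a\|_2^2=\tfrac12\|a+b\|_2^2=\tfrac12\|c\|_2^2$, which is part (1). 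The only genuinely substantive step is establishing that the proximal family $(a_\mu,b_\mu)$ sweeps the sub-frontier monotonically and that $\psi'(\mu)=\|a_\mu\|_X$ (an envelope-theorem argument together with continuity of the proximal map); these are exactly the structural facts about $h_{YX}^c$ that Section~\ref{sec:3} provides, so once they are cited the remainder is elementary area bookkeeping.
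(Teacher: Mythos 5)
Your proof is correct, but it takes a genuinely different route from the paper's. The paper's proof cites Proposition~\ref{prop:derivativefsquared} (already established by a direct two-sided difference-quotient estimate), which says $\frac{d}{dx}\bigl(-\tfrac12 f_{2X}(x)^2\bigr)=h_{YX}(x)$; part~(2) then falls out immediately by integrating $h_{YX}$ in the $x$-variable from $x_0$ to $\|c\|_X$, part~(3) by symmetry, part~(4) is immediate, and part~(1) is the sum. You instead re-derive the needed calculus fact from scratch in the $y$-parametrization, via the proximal/Moreau value function $\psi(\mu)=\min_a\bigl(\mu\|a\|_X+\tfrac12\|c-a\|_2^2\bigr)$: concavity plus the envelope argument give $\psi'(\mu)=\|a_\mu\|_X=h_{XY}^c(\mu)$, and you then compare $\psi(y_0)=x_0y_0+\tfrac12\|b\|_2^2$ with a horizontal-slab decomposition of $\int_0^{y_0}h_{XY}^c$ and swap the region to the right of $x=x_0$. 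Both arguments rest on the same underlying identity (the signed area under the sub-frontier as a potential for $\tfrac12\|\cdot\|_2^2$), but yours reconstructs it through subdifferential and convex-duality machinery where the paper simply invokes its own elementary derivative lemma. What you gain is a clean conceptual link between the sub-frontier and the proximal map/LASSO value function (so the identity $\psi'=h_{XY}$ reads as a concrete envelope theorem, and continuity of $a_\mu$ is a standard proximal-operator fact); what you pay is re-proving differentiability that the paper already has in hand via Proposition~\ref{prop:derivativefsquared} — in fact, combining that proposition with Proposition~\ref{prop:BPDNformulations} would have given you $\psi(\mu)=\int_0^\mu h_{XY}^c$ with less work. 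The overall logical skeleton — (4) immediate from Proposition~\ref{prop:equivalent}, (2) by an integral identity, (3) by $X\leftrightarrow Y$ symmetry, (1) by adding — matches the paper exactly.
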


\centerline{\includegraphics[width=4in]{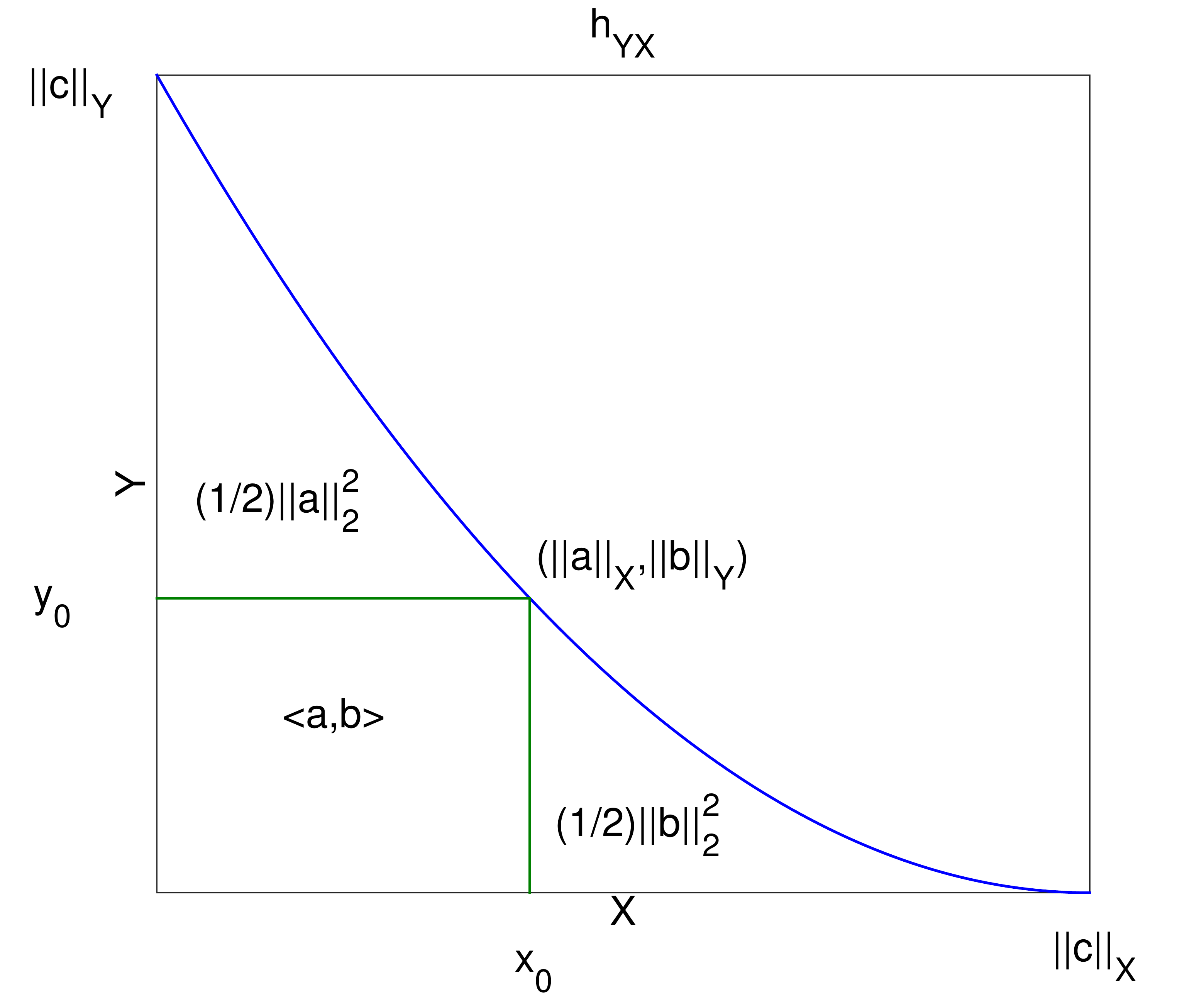}}
\subsection{The slope decomposition}
Proofs of results in this subsection will be given in Section~\ref{sec:slope}.
We define the slope of a nonzero vector $c\in V$ as the ratio 
$$\mu_{XY}(c):=\frac{\|c\|_Y}{\|c\|_X}.$$
If $\|\cdot\|_X$ is a norm that is small for sparse signals, then $\mu_{XY}$ is large for sparse signals, and small for noise.
Note that $\mu_{YX}(c)=(\mu_{XY}(c))^{-1}$.
Using the slope function, we define the slope decomposition. If $\|\cdot\|_X$ is the nuclear norm for matrices, then the slope decomposition is closely related to the singular value decomposition. So we can think of the slope decomposition as a generalization of the singular value decomposition. 
\begin{definition}
An expression $c=c_1+c_2+\cdots+c_r$ is called an  {\em $XY$-slope decomposition} if $c_1,\dots,c_r$ are nonzero, $\langle c_i,c_j\rangle=\|c_i\|_X\|c_j\|_Y$
for all $i\leq j$ and $\mu_{XY}(c_1)>\mu_{XY}(c_2)>\cdots>\mu_{XY}(c_r)$.
\end{definition}
Note that because of symmetry, $c=c_1+c_2+\cdots+c_r$ is an $XY$ slope decomposition if and only if $c=c_r+c_{r-1}+\cdots+c_1$ is a $YX$ slope decomposition. There may be vectors that do not have a slope decomposition.
We will prove the following result.
\begin{theorem}\label{theo:tightslope}\ 
\begin{enumerate}
\item  A vector $c\in V$ is tight if and only if $c$ has a slope decomposition.
\item  Suppose that $c=c_1+c_2+\cdots+c_r$ is a slope decomposition, and let $x_i=\|c_i\|_X$ and $y_i=\|c_i\|_Y$ for all $i$. 
Then $\|c\|_X=\sum_{i=1}^r x_i$, $\|c\|_Y=\sum_{i=1}^r y_i$ and the Pareto frontier  (which is the same as the sub-Pareto frontier) is the piecewise linear curve through the points
$$
(x_1+\cdots+x_i,y_{i+1}+\cdots+y_r), \quad i=0,1,2,\dots,r.
$$
\end{enumerate}
\end{theorem}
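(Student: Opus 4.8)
The plan is to prove part (2) first — which already yields one direction of part (1) — and then prove the converse in part (1) by induction.

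\emph{From a slope decomposition to the Pareto frontier.} Given an $XY$-slope decomposition $c=c_1+\cdots+c_r$, I would write $x_i=\|c_i\|_X$, $y_i=\|c_i\|_Y$, and for $0\le i\le r$ set $a^{(i)}=c_1+\cdots+c_i$, $b^{(i)}=c_{i+1}+\cdots+c_r$. The key computation is
\[
\langle a^{(i)},b^{(i)}\rangle=\sum_{k\le i}\sum_{l>i}\langle c_k,c_l\rangle=\sum_{k\le i}\sum_{l>i}x_k y_l=\Big(\sum_{k\le i}x_k\Big)\Big(\sum_{l>i}y_l\Big),
\]
valid because every pair has $k<l$, so the hypothesis $\langle c_k,c_l\rangle=\|c_k\|_X\|c_l\|_Y$ applies. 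Since $\langle a^{(i)},b^{(i)}\rangle\le\|a^{(i)}\|_X\|b^{(i)}\|_Y\le(\sum_{k\le i}x_k)(\sum_{l>i}y_l)$ by the dual-norm and triangle inequalities, all of these are equalities; hence $\|a^{(i)}\|_X=\sum_{k\le i}x_k$, $\|b^{(i)}\|_Y=\sum_{l>i}y_l$, and $\langle a^{(i)},b^{(i)}\rangle=\|a^{(i)}\|_X\|b^{(i)}\|_Y$, so $c=a^{(i)}+b^{(i)}$ is an $X2$-decomposition by Proposition~\ref{prop:equivalent}. Pairing $c$ with the unit vectors $c_r/\|c_r\|_Y$ and $c_1/\|c_1\|_X$ and using the cross relations gives $\|c\|_X=\sum_i x_i$ and $\|c\|_Y=\sum_i y_i$. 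To obtain the whole sub-frontier I would run the same squeeze on $a(t)=a^{(i-1)}+tc_i$, $b(t)=(1-t)c_i+b^{(i)}$ for $t\in[0,1]$: one finds $\langle a(t),b(t)\rangle=(\sum_{k<i}x_k+tx_i)(\sum_{l>i}y_l+(1-t)y_i)$, again forcing equalities, so these are $X2$-decompositions with $\|a(t)\|_X=\sum_{k<i}x_k+tx_i$ and $\|b(t)\|_Y=\sum_{l>i}y_l+(1-t)y_i$. As $i$ and $t$ vary these points trace the polygonal path through the vertices $(\sum_{k\le i}x_k,\sum_{l>i}y_l)$, whose $i$-th edge has slope $-y_i/x_i=-\mu_{XY}(c_i)$; since the sub-frontier is the graph of a function on $[0,\|c\|_X]$ and we have exhibited a point of it over every abscissa, the sub-frontier \emph{is} this path, and $\mu_{XY}(c_1)>\cdots>\mu_{XY}(c_r)$ makes it convex.

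\emph{Tightness via dual certificates.} Next I would show the sub-frontier coincides with the Pareto frontier. Put $w_i=c_i/\|c_i\|_X$, so $\|w_i\|_X=1$ and $\|w_i\|_Y=\mu_{XY}(c_i)$. The cross relations $\langle c_k,c_i\rangle=\|c_k\|_X\|c_i\|_Y$ for $k\le i$ and $\langle c_i,c_l\rangle=\|c_i\|_X\|c_l\|_Y$ for $l\ge i$ become $\langle w_i,c_k\rangle=\mu_{XY}(c_i)\|c_k\|_X$ for $k\le i$ and $\langle w_i,c_k\rangle=\|c_k\|_Y$ for $k>i$, so that $\langle w_i,c\rangle=\mu_{XY}(c_i)\|a^{(i)}\|_X+\|b^{(i)}\|_Y$. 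For an arbitrary decomposition $c=a'+b'$,
\[
\mu_{XY}(c_i)\|a'\|_X+\|b'\|_Y\ \ge\ \langle w_i,a'\rangle+\langle w_i,b'\rangle\ =\ \langle w_i,c\rangle\ =\ \mu_{XY}(c_i)\|a^{(i)}\|_X+\|b^{(i)}\|_Y,
\]
using $\langle w_i,a'\rangle\le\|w_i\|_Y\|a'\|_X$ and $\langle w_i,b'\rangle\le\|w_i\|_X\|b'\|_Y$. Hence the line of slope $-\mu_{XY}(c_i)$ through the $i$-th vertex lies weakly below every attainable pair $(\|a'\|_X,\|b'\|_Y)$, so the Pareto frontier lies on or above each of these $r$ lines, hence on or above their pointwise maximum, which is the polygonal path (by convexity of the latter); as it also lies on or below the sub-frontier, which equals that path, the three coincide. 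This proves (2) and shows that a vector with a slope decomposition is tight, giving one direction of (1).

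\emph{From tightness to a slope decomposition.} For the converse I would induct on $\dim V$, the case $c=0$ being vacuous. Let $c\ne0$ be tight, so $f_{YX}^c=h_{YX}^c$ is convex, decreasing and Lipschitz. The heart of the matter is to show this function is piecewise linear; granting that, let $\mu_1$ be minus the slope of its first edge and $c_1$ the ``$a$-part'' of the $X2$-decomposition $c=c_1+b$ at the right endpoint $x_1>0$ of that edge. One then checks that $c_1$ is uniform ($\|c_1\|_2^2=\|c_1\|_X\|c_1\|_Y$, equivalently $\mu_{XY}(c_1)=\mu_1$), that $\langle c_1,b\rangle=\|c_1\|_X\|b\|_Y$, and that $b$ is again tight with Pareto frontier obtained by deleting the first edge, so that every slope occurring for $b$ is $<\mu_1$; the induction hypothesis gives $b=c_2+\cdots+c_r$, and since $\|b\|_Y=\sum_{j\ge2}\|c_j\|_Y$ while $\langle c_1,c_j\rangle\le\|c_1\|_X\|c_j\|_Y$ term by term, the equality $\langle c_1,b\rangle=\|c_1\|_X\|b\|_Y$ forces $\langle c_1,c_j\rangle=\|c_1\|_X\|c_j\|_Y$ for all $j\ge2$; together with uniformity of $c_1$ and $\mu_{XY}(c_1)>\mu_{XY}(c_2)$ this exhibits $c=c_1+\cdots+c_r$ as a slope decomposition. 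The main obstacle is precisely the piecewise-linearity claim for tight vectors (equivalently, that the first edge has positive length and produces a uniform piece). I expect this to rest on a structural ``nesting'' lemma from Section~\ref{sec:slope}: given $X2$-decompositions $c=a+b$ and $c=a'+b'$ with $\|a\|_X\le\|a'\|_X$, the refinement $c=a+(a'-a)+b'$ satisfies $\langle a,a'-a\rangle=\|a\|_X\|a'-a\|_Y$, $\langle a'-a,b'\rangle=\|a'-a\|_X\|b'\|_Y$, $\langle a,b'\rangle=\|a\|_X\|b'\|_Y$ and $\mu_{XY}(a)\ge\mu_{XY}(a'-a)\ge\mu_{XY}(b')$; combined with compactness of the $X$- and $Y$-unit balls this should both force the frontier to be polygonal and pin down the first piece $c_1$.
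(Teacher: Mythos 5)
Your handling of part~(2) and the implication ``slope decomposition $\Rightarrow$ tight'' is correct and takes a genuinely different route from the paper. The paper proves this direction by induction on $r$, invoking Lemma~\ref{lem:XYstar} to concatenate sub-frontiers and then a separate lower-bound estimate to force $f^c_{YX}=h^c_{YX}$. You instead certify each cut $c=a^{(i)}+b^{(i)}$ (and its interpolants $a(t),b(t)$) directly as an $X2$-decomposition by squeezing $\langle a,b\rangle\le\|a\|_X\|b\|_Y$ against the triangle-inequality upper bounds, which simultaneously yields $\|c\|_X=\sum x_i$, $\|c\|_Y=\sum y_i$, and the exact abscissae/ordinates; you then pin the Pareto frontier to the sub-frontier with the dual certificates $w_i=c_i/\|c_i\|_X$. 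The dual-certificate step is a clean alternative: it exhibits $r$ supporting lines for the Pareto set and identifies the polygonal sub-frontier as their pointwise maximum, making tightness transparent in one stroke.

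The converse direction, ``tight $\Rightarrow$ slope decomposition,'' has a genuine gap that you yourself flag but do not close: the piecewise linearity of $h^c_{YX}=f^c_{YX}$ for a tight $c$. The ``nesting lemma'' you sketch is indeed true for tight $c$ (it falls out of Theorem~\ref{thm:cabtight}: since $\alpha^c_{YX}=\alpha^a_{YX}\star\alpha^b_{YX}$ one has $a'-a=\alpha^b_{YX}(\|a'\|_X-\|a\|_X)$, whence $b=(a'-a)+b'$ is an $X2$-decomposition and the four equalities and slope inequalities follow), but it only gives a \emph{refinement} property and does not by itself force the set of slopes to be finite. That finiteness is exactly what Proposition~\ref{prop:fpiecewise} supplies, and it rests on a different mechanism: one introduces $\mathcal{F}_X(x)$, the smallest face of $B_X$ containing $\alpha_{2X}(x)/x$, shows that $x\mapsto\mathcal{F}_X(x)$ is nondecreasing when $c$ is tight, and then uses the fact that a chain of faces of $B_X$ has length at most $n=\dim V$ to conclude that $\mathcal{F}_X$ is constant on finitely many intervals; on each such interval $\alpha_{YX}(x)$ is forced to move along an affine line $a+xd$ with $d\perp W$, from which linearity of $h_{YX}$ follows. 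Without some substitute for this face-chain argument, ``polygonal'' does not follow from nesting plus compactness alone. Finally, the stated ``induction on $\dim V$'' does not match the structure you use: peeling off $c_1$ leaves a tail $b=c-c_1$ that lives in the same ambient space $V$; once piecewise linearity is available the natural induction is on the number $r$ of linear pieces of the frontier, as in the paper.
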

\begin{example}\label{ex:1.10}
Let us go back to Example~\ref{ex:l1linf}. The norms $\|\cdot\|_X=\|\cdot\|_1$ and $\|\cdot\|_Y=\|\cdot\|_\infty$ are dual to each other. 
We will show that these norms are tight.
If we integrate the function
$$
f_{XY}^c(y)=\sum_{i=1}^n\max\{0,|c_i|-y\}
$$
from $0$ to $\|c\|_Y=\max\{|c_1|,|c_2|,\dots,|c_n|\}$, we get $\frac{1}{2}\sum_{i=1}^n c_i^2=\frac{1}{2}\|c\|_2^2$ which is the area under the graph of $h_{XY}^c$. So the areas under the graphs of $f_{XY}^c$ and $h_{XY}^c$ are the same, and we deduce that $h_{XY}^c=f_{XY}^c$. This shows that $c$ is tight. 
Since $c$ is arbitrary, the norms $\|\cdot\|_X$ and $\|\cdot\|_Y$ are tight
By Theorem~\ref{theo:tightslope} above, every vector has a slope decomposition. For example
$$
c=\begin{pmatrix}
-1\\
2\\
4\\
1\\
-2\\
1\\
-1\end{pmatrix}=
\begin{pmatrix}
0\\
0\\
2\\
0\\
0\\
0\\
0\end{pmatrix}
+
\begin{pmatrix}
0\\
1\\
1\\
0\\
-1\\
0\\
0
\end{pmatrix}
+
\begin{pmatrix}
-1\\
1\\
1\\
1\\
-1\\
1\\
-1
\end{pmatrix}
=c_1+c_2+c_3
$$
is a slope decomposition. Let $x_i=\|c_i\|_1$ and $y_i=\|c_i\|_Y$. Then we have
$x_1=2$, $x_2=3$, $x_3=7$, $y_1=2$, $y_2=1$, $y_3=1$. The Pareto curve $f_{YX}^c$ is the piecewise linear function going through
$$
(0,2+1+1)=(0,4),(2,1+1)=(2,2),(2+3,1)=(5,1),(2+3+7,0)=(12,0).
$$
\end{example}

\subsection{Geometry of the unit ball}
For $x\geq 0$ we define the $X$-ball of radius $x$ by
$$
B_X(x)=\{v\in V\mid \|v\|_X\leq x\}.
$$
We explain denoising in terms of the geometry of the $X$-balls. Suppose we want to denoise a signal $c$, such that the denoised signal $a$
is sparse. We impose the constraint $\|a\|_X\leq x$. Under this constraint, we minimize the amount of noise, by minimizing the $\ell_2$ norm of $b:=c-a$.
This means that the $a$ is the vector inside the ball $B_X(x)$ that is closest to the vector $c$. We call $a$ the projection of $c$ onto the ball $B_X(x)$
and write $a=\proj_X(c,x)$.
 The function $\proj_X(\cdot,x)$ is a retraction of $\R^n$ onto the ball $B_X(x)$. 
If $x_1\leq x_2$, then it is clear that 
$$
\proj_X(\proj_X(c,x_1),x_2)=\proj_X(c,x_1)
$$
For $x_1>x_2$ one might expect that $\proj_X(\proj_X(c,x_1),x_2)=\proj_X(c,x_2)$. This is not always true, but it is true in the case where $c$ is tight by Proposition~\ref{prop:denoisetransitive} below.

We also define a shrinkage operator by
$$
\shrink_X(c,x)=c-\proj_X(c,x).
$$
If $c=a+b$ is an $X2$-decomposition, $\|a\|_X=x$ and $\|b\|_Y=y$ then we have
$$
a=\proj_X(c,x)=\shrink_Y(c,y)
$$
and
$$
b=\proj_Y(c,y)=\shrink_X(c,x).
$$
The function $\shrink_Y(\cdot,y)$
can be seen as a denoising function where $y$ is the noise level.

A nice property of tight vectors is the transitivity of denoising.
\begin{proposition}\label{prop:denoisetransitive}
 If $c$ is tight, then we have
 \begin{enumerate}
 \item $\proj_X(\proj_X(c,x_1),x_2)=\proj_X(c,\min\{x_1,x_2\})$ and
 \item
$\shrink_X(\shrink_X(c,x_1),x_2)=\shrink_X(c,x_1+x_2)$.
\end{enumerate}
\end{proposition}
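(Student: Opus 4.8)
The plan is to reduce both statements to the slope decomposition guaranteed by Theorem~\ref{theo:tightslope}(1) and to the structural description of the Pareto frontier in Theorem~\ref{theo:tightslope}(2), together with the identifications $a=\proj_X(c,x)=\shrink_Y(c,y)$ and $b=\proj_Y(c,y)=\shrink_X(c,x)$ recorded just above the proposition. Write $c=c_1+\cdots+c_r$ for an $XY$-slope decomposition, set $x_i=\|c_i\|_X$, $y_i=\|c_i\|_Y$, and put $s_k=x_1+\cdots+x_k$ (with $s_0=0$). The first observation I would establish is that partial sums of the slope decomposition realize the projections: for $s_{k-1}\le x\le s_k$, the vector $\proj_X(c,x)$ equals $c_1+\cdots+c_{k-1}+t\,c_k$ for the appropriate $t\in[0,1]$ (namely $t=(x-s_{k-1})/x_k$), and correspondingly $\shrink_X(c,x)=(1-t)c_k+c_{k+1}+\cdots+c_r$. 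This is exactly the statement that the Pareto frontier is the piecewise-linear curve through the points $(s_i,\ y_{i+1}+\cdots+y_r)$, combined with Proposition~\ref{prop:areas}: the point on the sub-frontier at abscissa $x$ corresponds to an $X2$-decomposition $c=a+b$ with $a$ the indicated partial sum, and $a=\proj_X(c,x)$. I would also note that $c_1+\cdots+c_{k-1}+tc_k$ is itself a tight vector whose slope decomposition is the obvious truncation, since all the inner-product identities $\langle c_i,c_j\rangle=\|c_i\|_X\|c_j\|_Y$ for $i\le j$ are inherited and the slopes remain strictly decreasing.

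For part~(1), assume without loss of generality $x_1\le x_2$ (the case $x_1\ge x_2$ being the easy containment $\proj_X(\proj_X(c,x_1),x_2)=\proj_X(c,x_1)$ already noted in the text, since $\proj_X(c,x_1)$ already lies in $B_X(x_2)$). Let $a_1=\proj_X(c,x_1)$; by the first observation $a_1$ is a partial sum $c_1+\cdots+c_{k-1}+t c_k$ with $\|a_1\|_X=x_1$, and $a_1$ is tight with slope decomposition the truncation. Now apply the same description to $a_1$ in place of $c$: since $\|a_1\|_X=x_1\le x_2$, and $\|a_1\|_X=x_1$, we get $\proj_X(a_1,x_2)=a_1=\proj_X(c,x_1)=\proj_X(c,\min\{x_1,x_2\})$. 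The only point needing care is that $\proj_X(\cdot,x)$ genuinely equals the partial sum, i.e. that this partial sum is the \emph{unique} closest point in $B_X(x)$; uniqueness of the $\ell_2$-projection onto the convex set $B_X(x)$ is standard, and Proposition~\ref{prop:equivalent} identifies it with the $X2$-decomposition.

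For part~(2), argue dually. Let $b_1=\shrink_X(c,x_1)$. By the first observation $b_1=(1-t)c_k+c_{k+1}+\cdots+c_r$ where $x_1=s_{k-1}+tx_k$, and $b_1$ is again tight: its slope decomposition is $b_1=c_k'+c_{k+1}+\cdots+c_r$ with $c_k'=(1-t)c_k$ (check the inner-product relations and that the slopes $\mu_{XY}(c_k')=\mu_{XY}(c_k)>\mu_{XY}(c_{k+1})>\cdots$ are still strictly decreasing). Then $\|b_1\|_X=x_1'$ where the total $X$-norm of $c$ is $\|c\|_X=s_r$ and $\|b_1\|_X=\|c\|_X-x_1=s_r-x_1$; more to the point, shrinking $b_1$ by $x_2$ removes the next ``$x_2$ worth'' of $X$-mass from the top of the slope decomposition of $b_1$, which is precisely the ``$(x_1+x_2)$ worth'' from the top of the slope decomposition of $c$, i.e. $\shrink_X(b_1,x_2)=\shrink_X(c,x_1+x_2)$. (When $x_1+x_2\ge\|c\|_X$ both sides are $0$.) The main obstacle in the whole argument is the bookkeeping in this first observation — proving cleanly that $\proj_X(c,x)$ and $\shrink_X(c,x)$ are the prefix/suffix of the slope decomposition cut at $X$-mass $x$ — since once that ``cutting lemma'' is in hand, both transitivity statements are immediate from re-applying it to the truncated (still tight) vector. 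I would isolate that observation as a short lemma, proving it from Theorem~\ref{theo:tightslope}(2) and the uniqueness of the nearest point in the convex body $B_X(x)$.
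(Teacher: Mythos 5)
Your overall strategy — reduce the proposition to a ``cutting lemma'' that identifies $\proj_X(c,x)$ and $\shrink_X(c,x)$ with the prefix/suffix of the slope decomposition cut at $X$-mass $x$ — is a genuinely viable route and in spirit close to what the paper does, but the paper routes through Theorem~\ref{thm:cabtight} instead: it takes the single $XY$-decomposition $c=a+b$ at level $x_1$, uses Theorem~\ref{thm:cabtight}(1) to get that $a$ and $b$ are tight and Theorem~\ref{thm:cabtight}(4) to get $\alpha_{YX}^c=\alpha_{YX}^a\star\alpha_{YX}^b$, and reads off the result without ever decomposing all the way down to unitangent pieces. Your cutting lemma is essentially the iterated form of Theorem~\ref{thm:cabtight}(4) and is true, but the justification you sketch for it is incomplete: Theorem~\ref{theo:tightslope}(2) and Proposition~\ref{prop:areas} describe the \emph{graph} of the Pareto (sub-)frontier and the scalar quantities $\|a\|_2$, $\|b\|_2$, $\langle a,b\rangle$, but do not by themselves identify the \emph{vector} $\alpha_{2X}^c(x)$. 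To conclude that the partial sum $a=c_1+\cdots+c_{k-1}+tc_k$ is actually $\proj_X(c,x)$, you still have to verify that $c=a+(c-a)$ is an $X2$-decomposition, i.e.\ check $\langle a,\,c-a\rangle=\|a\|_X\|c-a\|_Y$ by expanding in the $c_i$'s and using $\langle c_i,c_j\rangle=\|c_i\|_X\|c_j\|_Y$ for $i\le j$; only then does uniqueness of the closest point in $B_X(x)$ finish the lemma.

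The more serious problem is in part~(1): you have reversed the easy and hard cases. The case $x_1\le x_2$ is the trivial one (already noted in the text just before the proposition): then $\proj_X(c,x_1)\in B_X(x_1)\subseteq B_X(x_2)$, so re-projecting onto $B_X(x_2)$ does nothing, and no tightness is needed. The nontrivial case is $x_1>x_2$, where $\proj_X(c,x_1)\notin B_X(x_2)$ in general and one must actually show that projecting the projection equals projecting directly. Your argument ``assume WLOG $x_1\le x_2$'' followed by ``since $\|a_1\|_X=x_1\le x_2$, we get $\proj_X(a_1,x_2)=a_1$'' re-derives only the trivial case; the content of the proposition — the case $x_1>x_2$, which the paper's proof treats by showing $a_1=\alpha_{YX}^c(x_2)$ via the concatenation formula — is left unaddressed. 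Your framework \emph{can} handle $x_1>x_2$ (apply the cutting lemma to $a_1=\proj_X(c,x_1)$ at level $x_2<x_1$ and observe that the prefix of $a_1$'s slope decomposition at mass $x_2$ coincides with that of $c$), so the fix is local, but as written part~(1) proves nothing beyond what was already observed without tightness. Part~(2) is fine once the cutting lemma is in place.
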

The unit ball $B_X=B_X(1)$ is a closed convex set, but not always a polytope. We recall the definition of a face of a closed convex set:
\begin{definition}
A face of a closed convex set $B\subseteq V$ is a convex closed subset $F\subseteq B$ with the following property: if $a,b\in B$, $0<t<1$ and $t a+(1-t)b\in F$ then  we have $a,b\in F$.
\end{definition}
We will study faces of the unit ball $B_X=B_X(1)$, and the cones associated to it.
\begin{definition}
A {\em facial $X$-cone} is a cone of the form $\R_{\geq 0} F$ where $F\subset B_X$ is a (proper) face of the unit ball $B_X$. The set $\{0\}$ is considered a facial $X$-cone as well.
\end{definition}
If $C$ is a nonzero  facial $X$-cone, then $C=\R_{\geq 0} F$ for some proper face $F$ of the unit ball $B_X$. In that case, we have $F=C\cap \partial B_X$, where $\partial B_X=\{c\in V\mid \|c\|_X=1\}$ is the unit sphere.
We now will discuss two notions of sparseness related to a norm $\|\cdot\|_X$. 
\begin{definition}
For a nonzero vector $c$ we define its {\em $X$-sparseness} $\sparse_X(c)$ as
the smallest nonnegative integer  $r$ such that we can write
$$
c=c_1+c_2+\cdots+c_r
$$
where $c_i/\|c_i\|_X$ is a extremal point of the unit ball $B_X$ for $i=1,2,\dots,r$. The {\em geometric $X$-sparseness} $\gsparse_X(c)$ is $\dim C$ where $C$ is the smallest facial $X$-cone  containing $c$. 
\end{definition}
Each notion of sparseness has its merits. 
We have
$$
\sparse_X(a+b)\leq\sparse_X(a)+\sparse_X(b)
$$
but a similar inequality does not always hold for geometric sparseness. On the other hand, the set
$$
\{c\mid \gsparse_X(c)\leq k\}
$$
of $k$-geometric $X$-sparse vectors
is closed, but the set 
$$
\{c\mid \sparse_X(c)\leq k\}
$$
of $k$-$X$-sparse vectors is not always closed. 

We have
$$
\sparse_X(c)\leq \gsparse_X(c)
$$
by the Carath\'eodory Theorem (see~\cite[Theorem 2.3]{Barvinok}).
\begin{example}
Consider $\R^n$, and let $\|\cdot\|_X=\|\cdot\|_1$ and $\|\cdot\|_Y=\|\cdot\|_\infty$. We have
$$
\sparse_X(c)=\gsparse_X(c)=\|c\|_0
$$
where
$$
\|c\|_0=\#\{i\mid c_i\neq 0\}=\lim_{p\to 0}\|c\|_p^p.
$$
The function $\|\cdot\|_0$ is sometimes referred to as the $\ell_0$ norm, but is strictly speaking not a norm.
$$
\gsparse_Y(c)=1+\#\big\{i\,\big| \,|c_i|\neq \max\{|c_1|,\dots,|c_n|\}\big\}.
$$
We have
$$
\begin{pmatrix}
2\\
0\\
0
\end{pmatrix}=\begin{pmatrix}
1\\
1\\
1
\end{pmatrix}+
\begin{pmatrix}
1\\
-1\\
-1
\end{pmatrix}
$$
but
$$
\gsparse_Y\begin{pmatrix}
2\\
0\\
0
\end{pmatrix}=3>1+1=\gsparse_Y\begin{pmatrix}
1\\
1\\
1
\end{pmatrix}+\gsparse_Y
\begin{pmatrix}
1\\
-1\\
-1
\end{pmatrix}.
$$
\end{example}

\begin{example}
If $\|\cdot\|_X=\|\cdot\|_\star$ is the nuclear norm on a space $V=\R^{m\times n}$ of $m\times n$ matrices. and $c\in V$ is a matrix,
then $\sparse_X(c)=\gsparse_X(c)=\rank(c)$  is the rank of $c$.
\end{example}
\begin{example}
Let $V=\R^{2\times 2\times 2}=\R^2\otimes \R^2\otimes \R^2$ be the tensor product space. The value $\sparse_X(c)$ is the tensor rank of $c\in V$. It is known that the set of all tensors of rank $\leq 2$ is not closed. If $\{e_1,e_2\}$ is the standard
orthogonal basis of $\R^2$, then the tensor
$$
c=e_2\otimes e_1\otimes e_1+e_1\otimes e_2\otimes e_1+e_1\otimes e_1\otimes e_2
$$
has rank $3$, but is a limit of tensors of rank $2$. In this case, the geometric sparseness and sparseness are not the same. For example, if 
$$
d=(e_1+(0.1) e_2)\otimes (e_1+(0.1)e_2)\otimes (e_1+(0.1)e_2)-e_1\otimes e_1\otimes e_1
$$
then $\sparse_X(d)=2$ is the tensor rank, but $\gsparse_X(d)>2$.
\end{example}

\begin{theorem}\label{thm:sparser}
Suppose that $c=a+b$ is an $X2$-decomposition. Then we have
$$
 \gsparse_X(a)+\gsparse_Y(b)\leq n+1
$$
where $n=\dim V$.
Moreover, if $c$ is tight then we have
$$
\gsparse_X(a)\leq \gsparse_X(c)\mbox{ and } \gsparse_Y(b)\leq \gsparse_Y(c).
$$
\end{theorem}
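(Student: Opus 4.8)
The plan is to exploit the characterization of $X2$-decompositions from Proposition~\ref{prop:equivalent}(3), namely that $c=a+b$ with $\langle a,b\rangle=\|a\|_X\|b\|_Y$. This equality case of the Cauchy--Schwarz-type inequality for dual norms means that $b/\|b\|_Y$ is a subgradient of $\|\cdot\|_X$ at $a$ (equivalently $b$ is a supporting functional for the face of $B_X$ containing $a/\|a\|_X$), and symmetrically $a/\|a\|_X$ is a supporting functional for the face of $B_Y$ containing $b/\|b\|_Y$. Concretely, let $F_a$ be the smallest face of $B_X$ whose cone contains $a$, so $\gsparse_X(a)=\dim\R_{\geq 0}F_a$, and let $F_b$ be the smallest face of $B_Y$ whose cone contains $b$, so $\gsparse_Y(b)=\dim\R_{\geq 0}F_b$. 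The duality condition forces $a/\|a\|_X\in F_a$ to pair with value $1$ against $b/\|b\|_Y$ and against every point of $F_b$; likewise $b/\|b\|_Y$ pairs with value $1$ against every point of $F_a$. So the affine span of $F_a$ and the affine span of $F_b$ are ``orthogonal'' with respect to $\langle\cdot,\cdot\rangle$ in the sense that for $u\in \Aff(F_a)$ and $v\in \Aff(F_b)$ we get $\langle u,v\rangle=1$, hence the linear spans of $F_a-a/\|a\|_X$ and $F_b-b/\|b\|_Y$ pair to zero.

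From here the first inequality should follow by a dimension count. The facial $X$-cone $C_a=\R_{\geq 0}F_a$ has dimension $\gsparse_X(a)$, and its linear span is $\R(a/\|a\|_X)\oplus \mathrm{span}(F_a - a/\|a\|_X)$, of dimension $\gsparse_X(a)$; similarly for $C_b$. The orthogonality relations above show that $\mathrm{span}(C_a)$ and $\mathrm{span}(C_b)$ intersect only in a controlled way: every vector in $\mathrm{span}(F_a - a/\|a\|_X)$ is orthogonal to $b$ and to all of $\mathrm{span}(F_b-b/\|b\|_Y)$, and $a$ is orthogonal to $\mathrm{span}(F_b - b/\|b\|_Y)$ (since $\langle a, v\rangle = \langle a, b/\|b\|_Y\rangle$ is constant in $v\in F_b$). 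One then checks that the subspace $\mathrm{span}(C_a)+\mathrm{span}(C_b)$ has dimension at least $\gsparse_X(a)+\gsparse_Y(b)-1$, the $-1$ accounting for the single possible overlap along the $\langle a,b\rangle$ pairing direction; since this lives inside $V$ of dimension $n$, we conclude $\gsparse_X(a)+\gsparse_Y(b)-1\leq n$. I expect the bookkeeping of exactly which one-dimensional overlap is unavoidable (and ruling out any larger intersection using strict convexity/extremality of the faces) to be the main obstacle, since faces of $B_X$ need not be simplices and one must argue purely from the face axiom plus the duality pairing.

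For the refinement when $c$ is tight: by Theorem~\ref{theo:tightslope}, tightness gives a slope decomposition $c=c_1+\cdots+c_r$, and the $X2$-decomposition $c=a+b$ corresponds to cutting this sum at some index, $a=c_1+\cdots+c_k$ and $b=c_{k+1}+\cdots+c_r$ (up to splitting one block $c_j$ along the Pareto frontier's linear segment). The orthogonality relations $\langle c_i,c_j\rangle=\|c_i\|_X\|c_j\|_Y$ for $i\le j$ mean each $c_i/\|c_i\|_X$ lies in a common face of $B_X$ together with $a/\|a\|_X$ for $i\le k$, so the smallest facial $X$-cone containing $a$ is contained in the smallest facial $X$-cone containing $c$ — one must verify that adding the later blocks $c_{k+1},\dots,c_r$ to $a$ keeps us inside the same ambient face of $B_X$, which follows because $\langle c, \cdot\rangle$ restricted to $F_a$ still attains its max $\|c\|_X$ on all of $F_a$ (using that $b$ pairs maximally with $a/\|a\|_X$ and with the whole face). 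Hence $\gsparse_X(a)\leq\gsparse_X(c)$, and by the $X\leftrightarrow Y$ symmetry $\gsparse_Y(b)\leq\gsparse_Y(c)$. The delicate point here is handling the possibly-split middle block and confirming the face containments survive it, but the slope decomposition machinery from Theorem~\ref{theo:tightslope} should make this routine.
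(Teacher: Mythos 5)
Your proposal follows the paper's proof essentially step for step: identify the smallest faces ${\mathcal F}_X(x)\ni a/\|a\|_X$ and ${\mathcal F}_Y(y)\ni b/\|b\|_Y$, use the equality $\langle a,b\rangle=\|a\|_X\|b\|_Y$ plus the fact that $a/\|a\|_X$ and $b/\|b\|_Y$ lie in the relative interiors of their faces to get $\langle a',b'\rangle=1$ identically on ${\mathcal F}_X(x)\times{\mathcal F}_Y(y)$, then count dimensions; for the tight case, invoke the nesting ${\mathcal F}_X(x)\subseteq{\mathcal F}_X(\|c\|_X)$, which is exactly the content of the paper's preliminary lemma on tight vectors (and which the slope decomposition of Theorem~\ref{theo:tightslope} indeed provides, as you say). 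The ``bookkeeping'' you worry about in the dimension count is cleaner than you fear: with $W_a,W_b$ the direction spaces of $\Aff({\mathcal F}_X(x))$, $\Aff({\mathcal F}_Y(y))$, the constancy of the pairing gives $W_a\perp W_b$ and $W_a\perp b$, while $b\notin W_b$ because the affine hull of a proper face of $B_Y$ cannot pass through the origin; hence $W_a\oplus W_b\oplus\R b\subseteq V$ is a genuine direct sum of dimension $(\gsparse_X(a)-1)+(\gsparse_Y(b)-1)+1\leq n$, with no need to rule out larger overlaps by extremality.
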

since $\gsparse_X\leq \sparse_X$, the theorem also implies that
$$
\sparse_X(a)+\sparse_Y(b)\leq n+1.
$$
\subsection{The Singular Value Region}
We can generalize the notion of the singular values region to an arbitrary finite dimensional vector space $V$ with dual norms $\|\cdot\|_X$ and $\|\cdot\|_Y$. Let $y=h_{YX}^c(x)$ be the Pareto sub-frontier of $c\in V$. For the definition of the singular value region, we view $x$ as a function of $y$ which gives $x=h_{XY}^c(y)$. The function $h_{XY}^c(y)$ is Lipschitz and decreasing and is differentiable almost everywhere.
\begin{definition}\label{def:SVregion}
The {\em singular value region} is the region in the first quadrant to the left of the graph of $x=-\frac{d}{d\,y}h_{XY}^c(y)$.
\end{definition}
If the graph of $-\frac{d}{d\,y}h_{XY}^c(y)$ is a step function, then we can interpret the region as singular values with multiplicities similarly as in the case of matrices.
If a vector $c\in V$ has a slope decomposition, then we can easily find the singular values and multiplicities from Theorem~\ref{theo:tightslope}.
Recall that $\mu_{XY}(c)=\|c\|_Y/\|c\|_X$ and $\mu_{YX}(c)=\|c\|_X/\|c\|_Y$.
\begin{corollary}
If
$$c=c_1+c_2+\dots+c_r
$$
is an $XY$ slope decomposition, then the singular values are
$$\|c_1\|_Y,\|c_2\|_Y,\dots,\|c_r\|_Y$$ with multiplicities
$$
\mu_{YX}(c_1),\mu_{YX}(c_2)-\mu_{YX}(c_1),\dots,\mu_{YX}(c_r)-\mu_{YX}(c_{r-1}).
$$
respectively. 
\end{corollary}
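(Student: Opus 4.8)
The plan is to feed the slope decomposition into Theorem~\ref{theo:tightslope}, get the Pareto (sub-)frontier explicitly, differentiate it, and then read the singular value region off the resulting step function. Write $x_i=\|c_i\|_X$ and $y_i=\|c_i\|_Y$, so $\mu_{XY}(c_i)=y_i/x_i$ and $\mu_{YX}(c_i)=x_i/y_i$, and set $Y_i=y_{i+1}+\cdots+y_r$, so that $Y_0=\|c\|_Y$ and $Y_r=0$. By Theorem~\ref{theo:tightslope}, the existence of the slope decomposition makes $c$ tight, the Pareto frontier and sub-frontier coincide, and the graph of $x=h_{XY}^c(y)$ is the polygonal path through the points $P_i=(x_1+\cdots+x_i,\,Y_i)$ for $i=0,1,\dots,r$.

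The first real step is to differentiate $h_{XY}^c$. The segment $P_{i-1}P_i$ has horizontal displacement $x_i$ and vertical displacement $-y_i$, so, viewed as the graph of $x$ as a function of $y$ on the interval $Y_i\le y\le Y_{i-1}$ (of length $y_i$), it has slope $-x_i/y_i$; hence
$$
-\frac{d}{dy}\,h_{XY}^c(y)=\frac{x_i}{y_i}=\mu_{YX}(c_i)\qquad\text{for } Y_i<y<Y_{i-1},\ i=1,\dots,r.
$$
Since $\mu_{XY}(c_1)>\cdots>\mu_{XY}(c_r)$ is equivalent to $0<\mu_{YX}(c_1)<\cdots<\mu_{YX}(c_r)$, this is a genuine step function of $y$: scanning $y$ downward from $\|c\|_Y$ to $0$ it takes the strictly increasing values $\mu_{YX}(c_1),\dots,\mu_{YX}(c_r)$ on consecutive intervals of lengths $y_1,\dots,y_r$. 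The strictness of these inequalities is exactly what makes the step structure — hence the singular-value reading indicated after Definition~\ref{def:SVregion} — unambiguous.

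It then remains to assemble the region. The set in the first quadrant lying to the left of the graph of $x=-\frac{d}{dy}h_{XY}^c(y)$ is the union of the non-overlapping rectangles $R_i=[0,\mu_{YX}(c_i)]\times[Y_i,Y_{i-1}]$, $i=1,\dots,r$, where $R_i$ has height $Y_{i-1}-Y_i=\|c_i\|_Y$ and its right edge sits at $\mu_{YX}(c_i)=\mu_{YX}(c_{i-1})+\bigl(\mu_{YX}(c_i)-\mu_{YX}(c_{i-1})\bigr)$, with the convention $\mu_{YX}(c_0)=0$. Reading this staircase as singular values with multiplicities, as in the matrix case, then yields singular values $\|c_1\|_Y,\dots,\|c_r\|_Y$ with multiplicities $\mu_{YX}(c_1),\ \mu_{YX}(c_2)-\mu_{YX}(c_1),\ \dots,\ \mu_{YX}(c_r)-\mu_{YX}(c_{r-1})$, which is the claim. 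As a sanity check one should verify that the region has height $\sum_i y_i=\|c\|_Y$, width $\mu_{YX}(c_r)$, and area $\sum_i\mu_{YX}(c_i)\,y_i=\sum_i x_i=\|c\|_X$, while $\int\!\int 2y\,dx\,dy=\sum_i\mu_{YX}(c_i)(Y_{i-1}^2-Y_i^2)=\|c\|_2^2$ (the last using $\langle c_i,c_j\rangle=x_iy_j$ for $i\le j$; compare Proposition~\ref{prop:areas}), recovering the analogues of (\ref{eq:norms1})--(\ref{eq:norms3}). There is no real obstacle here, since all the substance is already in Theorem~\ref{theo:tightslope}; the only thing to be careful about is the bookkeeping — getting $\mu_{YX}(c_i)$ rather than $\mu_{XY}(c_i)$ as the step height and placed on the correct $y$-interval of length $\|c_i\|_Y$, and lining up "singular value" versus "multiplicity" with the matrix convention of the introduction.
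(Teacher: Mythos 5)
Your derivation of the Pareto (sub-)frontier from Theorem~\ref{theo:tightslope} and your computation $-\frac{d}{dy}h_{XY}^c(y)=\mu_{YX}(c_i)$ on the interval $Y_i<y<Y_{i-1}$ are both correct, but the final ``reading off'' step has a genuine error: the heights of your horizontal rectangles $R_i$ are \emph{not} the singular values. In the matrix convention recalled in the introduction, the singular values are the heights of the \emph{vertical} bars of the staircase. The $i$-th vertical bar occupies $x\in[\mu_{YX}(c_{i-1}),\mu_{YX}(c_i)]$ and has height $Y_{i-1}=\|c_i\|_Y+\|c_{i+1}\|_Y+\cdots+\|c_r\|_Y$, not $\|c_i\|_Y$. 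So the $i$-th singular value should be the cumulative tail sum $\sum_{j\geq i}\|c_j\|_Y$, while $\|c_i\|_Y=Y_{i-1}-Y_i$ is merely the telescoping difference. This is visible in the matrix slope decomposition of Section~\ref{sec:matrixSVD}: there $\|C^{(j)}\|_\sigma=\lambda_j-\lambda_{j+1}$, but the actual $j$-th distinct singular value is $\lambda_j=\sum_{k\geq j}\|C^{(k)}\|_\sigma$. Likewise, in Example~\ref{ex:1.10} the singular values should be $4,2,1$ (the distinct $|c_i|$), whereas $\|c_1\|_\infty,\|c_2\|_\infty,\|c_3\|_\infty=2,1,1$.

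Your sanity checks do not catch this because you computed height, area, and the $\int 2y$ integral directly from the horizontal strips, never as $\sum_i(\text{mult}_i)(\text{sv}_i)$. Had you checked formula~(\ref{eq:norms2}), you would have found $\sum_i\bigl(\mu_{YX}(c_i)-\mu_{YX}(c_{i-1})\bigr)\|c_i\|_Y\neq\|c\|_X$ in general (e.g.\ $1\cdot 2+2\cdot 1+4\cdot 1=8\neq 12$ in Example~\ref{ex:1.10}). Note that you have in fact faithfully reproduced what appears to be a slip in the corollary's statement; the correct singular values are $\|c_i+c_{i+1}+\cdots+c_r\|_Y$ for $i=1,\dots,r$, which by Theorem~\ref{theo:tightslope}(2) equals $\|c_i\|_Y+\cdots+\|c_r\|_Y$, and with these in place all of (\ref{eq:norms1})--(\ref{eq:norms3}) hold. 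The multiplicities $\mu_{YX}(c_i)-\mu_{YX}(c_{i-1})$ you give are correct.
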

As we will see in Section~\ref{sec:matrixSVD}, this notion of the singular value region is indeed a generalization of the singular value region defined for matrices in the introduction.

\section{The Pareto Curve}\label{sec:Pareto}
\subsection{Optimization problems}
We will formulate signal denoising as an optimization problem. Suppose that $V$ is an $n$-dimensional $\R$-vector space equipped with two norms, $\|\cdot\|_X$ and $\|\cdot\|_Y$. Let $c\in V$ be a fixed vector.
Given a noisy signal $c$ we would like to find a decomposition
$c=a+b$ where both $\|a\|_X$ and $\|b\|_Y$ is small. We can do this by minimizing $\|a\|_X$ under the constraint $\|b\|_Y\leq y$ for some fixed $y\geq 0$,
or by minimizing $\|b\|_Y$ under the constraint $\|a\|_X\leq x$ for some $x$ with  $ 0\leq x\leq \|c\|_X$, we formally define the following optimization problem:\\

\noindent{\bf Problem ${\bf M}_{YX}^c(x)$:} {\it
Minimize $\|c-a\|_Y$ for $a\in V$ under the constraint $\|a\|_X\leq x$. }\\

This has a solution because the function $a\mapsto \|c-a\|_Y$ is continuous, and the domain $\{a\in \R^n\mid \|a\|_X\leq x\}$ is compact.
Let $f_{YX}^c(x)$ be the smallest value of $\|c-a\|_Y$. As we will see later in Lemma~\ref{lem:ParetoEfficient},  the graph of $f_{YX}^c(x)$ consists
of all Pareto efficient pairs $(x,y)$, and we call $f_{YX}^c$ the {\em Pareto curve} or {\em Pareto frontier}. 
Since $c$ will be fixed most of the time, we may just write ${\bf M}_{YX}(x)$ and $f_{YX}(x)$ instead of ${\bf M}_{YX}^c(x)$ and $f_{YX}^c(x)$.
We will prove some properties of the Pareto curve.

%
%

\begin{lemma}\label{lem:aXx}
If $0\leq x\leq \|c\|_X$ and $a$ is a solution to ${\bf M}_{YX}^c(x)$, then $\|a\|_X=x$.
\end{lemma}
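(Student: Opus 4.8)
The plan is to argue by contradiction. Suppose $a$ is a solution to ${\bf M}_{YX}^c(x)$ with $\|a\|_X < x$; I will produce a feasible point that is strictly better than $a$, contradicting optimality. The natural perturbation is to slide $a$ a little toward $c$ along the segment joining them, i.e. to consider $a_t := (1-t)a + tc$ for small $t>0$: this scales the objective $\|c-a\|_Y$ down by the factor $1-t$, and since $\|a\|_X$ is strictly inside the budget there is room to stay feasible.

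In carrying this out, I would first dispose of the degenerate case $a=c$: if $a=c$ then $\|a\|_X=\|c\|_X\ge x$, contradicting $\|a\|_X<x$; hence $a\ne c$ and, as $\|\cdot\|_Y$ is a norm, $\|c-a\|_Y>0$. Also $\|a\|_X<x\le\|c\|_X$ forces $\|c\|_X-\|a\|_X>0$, so $t_0:=(x-\|a\|_X)/(\|c\|_X-\|a\|_X)$ is well defined, positive (because $\|a\|_X<x$) and at most $1$ (because $x\le\|c\|_X$). Then for any $t\in(0,t_0]$ I would check $\|a_t\|_X\le(1-t)\|a\|_X+t\|c\|_X=\|a\|_X+t(\|c\|_X-\|a\|_X)\le x$, so $a_t$ is feasible, while $c-a_t=(1-t)(c-a)$ gives $\|c-a_t\|_Y=(1-t)\|c-a\|_Y<\|c-a\|_Y$. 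This contradicts optimality of $a$, so $\|a\|_X\ge x$; together with feasibility $\|a\|_X\le x$ this yields $\|a\|_X=x$.

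I do not expect a real obstacle here: the argument is a one-line convexity/perturbation estimate. The only points requiring care are the explicit choice of step size $t_0$ (needed so the perturbed point stays in $B_X(x)$) and the strict positivity of $\|c-a\|_Y$, which is precisely where the hypothesis $x\le\|c\|_X$ enters — it is what makes $\|a\|_X<x$ incompatible with $a=c$. The boundary case $x=0$ is trivial, since then $\|a\|_X<x$ is impossible outright.
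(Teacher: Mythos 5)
Your argument is correct and is essentially the paper's own proof: assume $\|a\|_X<x$, note $a\ne c$ so $\|c-a\|_Y>0$, and move $a$ along the segment toward $c$ to stay feasible while scaling the objective by a factor less than one. The only cosmetic difference is your explicit sharp step size $t_0$ versus the paper's simpler choice of $\varepsilon$ with $\|a\|_X+\varepsilon\|c\|_X\le x$; both serve the same purpose.
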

\begin{proof}
Suppose that $a$ is a solution to ${\bf M}_{YX}(x)$ and $\|a\|_X<x$. Note that $\|a\|_X<x\leq \|c\|_X$, so $c\neq a$ and $\|c-a\|_Y>0$. 
Choose $\varepsilon>0$ such that $\|a\|_X+\varepsilon\|c\|_X\leq x$ and define $a'=(1-\varepsilon)a+\varepsilon c$.
Then we have $\|a'\|_X\leq (1-\varepsilon)\|a\|_X+\varepsilon \|c\|_X\leq x$ and 
$$\|c-a'\|_Y=\|(1-\varepsilon)(c-a)\|_Y=(1-\varepsilon)\|c-a\|_Y<\|c-a\|_Y.
$$
Contradiction.
\end{proof}

\subsection{Properties of the Pareto curve}
The following lemma gives some basic properties of the Pareto curve. In various contexts, these properties are already known. We formulate the properties for the problem of two arbitrary competing norms on a finite dimensional vector space.

\begin{lemma}\label{lem:convexdecreasing}
The function $f_{YX}$ is a convex, strictly decreasing, continuous function on the interval $[0,\|c\|_X]$.
\end{lemma}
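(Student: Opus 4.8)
The plan is to establish the three properties—convexity, monotonicity, continuity—in that logical order, since continuity on the open interval will follow from convexity, and the endpoints need separate attention. Throughout, for $0\le x\le\|c\|_X$ let $a_x$ denote a solution to ${\bf M}_{YX}^c(x)$, so that $f_{YX}(x)=\|c-a_x\|_Y$ and, by Lemma \ref{lem:aXx}, $\|a_x\|_X=x$ (for $x<\|c\|_X$; for $x=\|c\|_X$ we may simply take $a_x=c$ and $f_{YX}(\|c\|_X)=0$).

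First I would prove convexity. Given $x_1,x_2\in[0,\|c\|_X]$ and $t\in[0,1]$, set $x_t=tx_1+(1-t)x_2$ and $a=ta_{x_1}+(1-t)a_{x_2}$. Then $\|a\|_X\le t\|a_{x_1}\|_X+(1-t)\|a_{x_2}\|_X\le x_t$, so $a$ is feasible for ${\bf M}_{YX}^c(x_t)$, and hence
$$
f_{YX}(x_t)\le\|c-a\|_Y=\|t(c-a_{x_1})+(1-t)(c-a_{x_2})\|_Y\le t f_{YX}(x_1)+(1-t)f_{YX}(x_2).
$$
This is the core computation and it is routine; the only subtlety is making sure the convex combination of feasible points is feasible, which is exactly where the triangle inequality and homogeneity of $\|\cdot\|_X$ enter.

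Next, monotonicity: $f_{YX}$ is clearly nonincreasing, since enlarging $x$ enlarges the feasible set $B_X(x)$. To upgrade "nonincreasing" to "strictly decreasing" on $[0,\|c\|_X]$, suppose $x_1<x_2$ with $f_{YX}(x_1)=f_{YX}(x_2)=:y$. If $y=0$ then $c=a_{x_1}$ has $\|c\|_X=\|a_{x_1}\|_X\le x_1<\|c\|_X$, a contradiction; so $y>0$, and in particular $x_2<\|c\|_X$. Now I would reuse the perturbation idea from Lemma \ref{lem:aXx}: since $\|a_{x_2}\|_X=x_2>x_1\ge 0$, consider $a'=(1-\varepsilon)a_{x_2}$ for small $\varepsilon>0$; then $\|a'\|_X=(1-\varepsilon)x_2$, which is $\le x_1$ is too crude, so instead move toward $c$: the argument of Lemma \ref{lem:aXx} shows that if a minimizer had $X$-norm strictly below its budget one could strictly decrease the objective, which forces $\|a_{x_1}\|_X=x_1$; then the point $a'=(1-\varepsilon)a_{x_1}+\varepsilon c$ is feasible for ${\bf M}_{YX}^c(x_1)$ for suitable $\varepsilon$ once $x_1>\|a_{x_1}\|_X$—but that contradicts $\|a_{x_1}\|_X=x_1$ unless... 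The cleanest route, which I would actually take, is: pick any $x_3$ with $x_1<x_3<x_2$; by convexity $f_{YX}(x_3)\le$ the chord value $=y$, and by monotonicity $f_{YX}(x_3)\ge y$, so $f_{YX}$ is constant $=y>0$ on $[x_1,x_2]$; then apply Lemma \ref{lem:aXx} at an interior point together with the strict-decrease perturbation $a'=(1-\varepsilon)a+\varepsilon c$ exactly as in that lemma's proof to get $\|c-a'\|_Y<y$ while $\|a'\|_X\le x_2$, contradicting minimality. I expect this strictness argument to be the main obstacle—not because it is deep, but because one must be careful that the perturbation stays feasible and genuinely lowers the $Y$-norm, using $y>0$ so that $c-a\ne 0$.

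Finally, continuity. On the open interval $(0,\|c\|_X)$ it is automatic: a finite convex function on an open interval of $\R$ is continuous. At the right endpoint, $f_{YX}$ is nonincreasing and bounded below by $0=f_{YX}(\|c\|_X)$, so $\lim_{x\uparrow\|c\|_X}f_{YX}(x)$ exists; if it were a positive number $\delta$, convexity would force the graph to lie above every chord from an interior point to $(\|c\|_X,0)$, which near $\|c\|_X$ pushes values below $\delta$—a contradiction, giving left-continuity at $\|c\|_X$. At the left endpoint I would argue directly: as $x\downarrow 0$ the feasible sets shrink to $\{0\}$, and for any minimizer $a_x$ with $\|a_x\|_X=x\to 0$ we also get $a_x\to 0$ (all norms on the finite-dimensional $V$ are equivalent), hence $\|c-a_x\|_Y\to\|c\|_Y=f_{YX}(0)$, so $f_{YX}$ is right-continuous at $0$. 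Combining the three parts completes the proof.
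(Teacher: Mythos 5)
Your proof is correct and follows essentially the same route as the paper's: the identical convex-combination computation for convexity, continuity on the open interval from convexity, continuity at the right endpoint from convexity together with monotonicity, and the same sequence argument (via equivalence of norms on the finite-dimensional $V$) for right-continuity at $0$. The one place you diverge is strict monotonicity: the paper's argument is a one-liner---write $x_2=(1-t)x_1+t\|c\|_X$ with $t\in(0,1]$ and apply convexity together with $f_{YX}(\|c\|_X)=0$ to get $f_{YX}(x_2)\le(1-t)f_{YX}(x_1)<f_{YX}(x_1)$---whereas you first deduce that $f_{YX}$ would have to be constant on $[x_1,x_2]$ and then rerun the perturbation $a'=(1-\varepsilon)a+\varepsilon c$ from Lemma~\ref{lem:aXx} to reach a contradiction. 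Both are correct; yours is a little longer, and the exploratory false starts in that paragraph (``is too crude,'' ``unless\dots'') should be cut from a final write-up.
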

\begin{proof}
Suppose that $0\leq x_1<x_2\leq \|c\|_X$. There exist $a_1,a_2\in V$
with $\|a_i\|_X=x_i$ and $\|c-a_i\|_Y=f_{YX}(x_i)$ for $i=1,2$. 

Let $a=(1-t)a_1+ta_2$ for some $t\in [0,1]$.
Then we have $\|a\|_X=\|(1-t)a_1+ta_2\|_X\leq (1-t)x_1+tx_2$, so by definition we have
\begin{multline*}
(1-t)f_{YX}(x_1)+tf_{YX}(x_2)=(1-t)\|c-a_1\|_Y+t\|c-a_2\|_Y\geq\\
\geq \|(1-t)(c-a_1)+t(c-a_2)\|_Y  =\|c-a\|_Y\geq f_{YX}((1-t)x_1+tx_2).
\end{multline*}
This proves that $f$ is convex.

For some $t\in (0,1]$ we can write $x_2=(1-t)x_1+t\|c\|_X$. We have 
$$f_{YX}(x_2)\leq (1-t) f_{YX}(x_1)+tf_{YX}(\|c\|_X)=(1-t)f_{YX}(x_1)<f_{YX}(x_1).$$ 
This shows that $f$ is strictly decreasing.

Since $f_{YX}$ is convex,  it is continuous on $(0,\|c\|_X)$. Because $f$ is decreasing, it is continuous at $x=\|c\|_X$. We will show that $f$ is also continuous at $x=0$.
Suppose that $x_1,x_2,\dots$ is a sequence in $[0,\|c\|_X]$ with $\lim_{n\to\infty}x_n=0$. Choose $a_n\in V$ such that
$\|a_n\|_X=x_n$ and $\|c-a_n\|_Y=f_{YX}(x_n)$. Since $\lim_{n\to\infty}\|a_n\|_X=\lim_{n\to\infty}x_n=0$, we have that $\lim_{n\to\infty}a_n=0$. 
It follows that $\lim_{n\to\infty} f_{YX}(x_n)=\lim_{n\to\infty}\|c-a_n\|_Y=\|c\|_Y=f_{YX}(0)$ because $\|\cdot\|_Y$ is a continuous function.
\end{proof}
We show now that the graph of $f_{YX}$ consists of all Pareto efficient pairs.
\begin{lemma}\label{lem:ParetoEfficient} Suppose that $c\in V$.
\begin{enumerate}
\item $c=a+b$ is an $XY$-decomposition if and only if $a$ is a solution to ${\bf M}_{YX}^c(\|a\|_X)$.
\item The pair $(x,y)$ is Pareto efficient if and only if $0\leq x\leq \|c\|_X$ and $y=f^{c}_{YX}(x)$.
\end{enumerate}
\end{lemma}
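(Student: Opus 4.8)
The plan is to prove the two parts in order, deriving (2) from (1) together with the properties of $f_{YX}$ already established in Lemmas~\ref{lem:aXx} and~\ref{lem:convexdecreasing}. For part (1), I would argue both implications directly from the definitions. Suppose first that $c = a+b$ is an $XY$-decomposition, and set $x = \|a\|_X$. If $a$ were not a solution to ${\bf M}_{YX}^c(x)$, there would be some $a'$ with $\|a'\|_X \le x$ and $\|c-a'\|_Y < \|c-a\|_Y = \|b\|_Y$. Writing $b' = c-a'$, this decomposition has $\|a'\|_X \le x$ and $\|b'\|_Y < \|b\|_Y$, contradicting Pareto efficiency of $(x,\|b\|_Y)$ (we get neither $\|a'\|_X > x$, nor $\|b'\|_Y > \|b\|_Y$, nor equality of the pair). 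Conversely, suppose $a$ solves ${\bf M}_{YX}^c(\|a\|_X)$ and put $x = \|a\|_X$, $b = c-a$, $y = \|b\|_Y$. If $x = \|c\|_X$, then $y=0$ and every other decomposition has $\|a'\|_X \le \|c\|_X = x$, hence by minimality $\|b'\|_Y \ge y$; one checks Pareto efficiency holds (strict inequality or equality of pairs). If $x < \|c\|_X$, take any decomposition $c = a'+b'$ with $\|a'\|_X \le x$. By minimality $\|b'\|_Y \ge y$, with equality only if $a'$ is also an optimal solution to ${\bf M}_{YX}^c(\|a'\|_X)$; and if $\|a'\|_X < x$ strictly then $f_{YX}(\|a'\|_X) > f_{YX}(x) = y$ by strict monotonicity (Lemma~\ref{lem:convexdecreasing}), so $\|b'\|_Y \ge f_{YX}(\|a'\|_X) > y$. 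Thus any $a'$ with $\|a'\|_X \le x$ and $\|b'\|_Y \le y$ must have $\|a'\|_X = x$ and $\|b'\|_Y = y$; for $a'$ with $\|a'\|_X > x$ there is nothing to check. This is exactly Pareto efficiency of $(x,y)$.

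For part (2), first suppose $(x,y)$ is Pareto efficient, witnessed by $c = a+b$. By Lemma~\ref{lem:aXx} combined with part (1) we may assume $\|a\|_X = x$, so in particular $0 \le x \le \|c\|_X$; and $y = \|b\|_Y = \|c-a\|_Y = f_{YX}^c(\|a\|_X) = f_{YX}^c(x)$ since $a$ solves ${\bf M}_{YX}^c(x)$ by part (1). Conversely, if $0 \le x \le \|c\|_X$ and $y = f_{YX}^c(x)$, choose a solution $a$ to ${\bf M}_{YX}^c(x)$; by Lemma~\ref{lem:aXx} we have $\|a\|_X = x$, so $c = a + (c-a)$ is an $XY$-decomposition by part (1), and it realizes the pair $(\|a\|_X, \|c-a\|_Y) = (x, f_{YX}^c(x)) = (x,y)$, so $(x,y)$ is Pareto efficient.

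I do not expect a serious obstacle here; the argument is essentially unwinding the definitions. The one point requiring a little care is the "or equality of the pair" clause in the definition of Pareto efficiency — one must make sure that when a competing decomposition $c = a'+b'$ satisfies $\|a'\|_X \le x$ and $\|b'\|_Y \le y$, it is forced to have $(\|a'\|_X,\|b'\|_Y) = (x,y)$ rather than being strictly better in one coordinate. This is where strict monotonicity of $f_{YX}$ from Lemma~\ref{lem:convexdecreasing} does the real work: it rules out $\|a'\|_X < x$, and then minimality in ${\bf M}_{YX}^c(x)$ rules out $\|b'\|_Y < y$. A minor edge case is $x = 0$ (forcing $a = 0$, $y = \|c\|_Y$) and $x = \|c\|_X$ (forcing $y = 0$), which are handled by the same reasoning and are anyway covered by continuity of $f_{YX}$ at the endpoints.
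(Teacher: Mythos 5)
Your proposal is essentially correct and follows broadly the same line as the paper's proof: both directions of (1) are definition-unwinding, and (2) is derived from (1) together with Lemma~\ref{lem:aXx}. The one genuine stylistic difference is that in the reverse direction of (1) you invoke strict monotonicity of $f_{YX}$ from Lemma~\ref{lem:convexdecreasing}, whereas the paper argues via Lemma~\ref{lem:aXx}: if $\|a'\|_X\le x$ and $\|b'\|_Y=y$, then $a'$ is also a solution of ${\bf M}_{YX}^c(x)$, hence $\|a'\|_X=x$. Both routes work and cost about the same.

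Two small points need repair. First, in the forward direction of (2) you write that $\|a\|_X=x$ gives ``in particular $0\le x\le\|c\|_X$,'' but this does not follow: nothing in the definition of Pareto efficiency forces the witnessing $a$ to satisfy $\|a\|_X\le\|c\|_X$ a priori. The paper supplies the missing one-line argument: if $x>\|c\|_X$, the decomposition $c=c+0$ violates Pareto efficiency, since $\|c\|_X>x$ is false, $(\|c\|_X,0)=(x,y)$ is false, and $\|0\|_Y>y$ is impossible because $y\ge 0$. Second, in your treatment of the edge case $x=\|c\|_X$ in (1), the claim that ``every other decomposition has $\|a'\|_X\le\|c\|_X$'' is simply false (take $a'=2c$, $b'=-c$). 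The conclusion survives only because decompositions with $\|a'\|_X>x$ satisfy the first Pareto clause vacuously --- which is exactly what your general argument already handles, so the separate edge-case discussion is both erroneous and unnecessary and should be dropped.
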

\begin{proof}
(1) Suppose that  $c=a+b$. If $c=a+b$ is an $XY$-decomposition, then it is clear from the definitions that $a$ is a solution to ${\bf M}_{YX}^c(\|a\|_X)$.

On the other hand, suppose that $a$ is a solution to ${\bf M}_{YX}^c(\|a\|_X)$ and $c=a'+b'$.  Let $x=\|a\|_X$ and $y=\|b\|_Y$. Assume that $\|a'\|_X\leq \|a\|_X=x$. Then we have $\|b'\|_Y= \|c-a'\|_Y\geq \|c-a\|_Y=\|b\|_Y=y$.
If $\|b'\|_Y=y$, then $a'$ is also a solution to ${\bf M}_{YX}^c(x)$ and $\|a'\|_X=x$ by Lemma~\ref{lem:aXx}. This shows that $c=a+b$ is an $XY$-decomposition.

(2) Suppose that $(x,y)$ is Pareto efficient. Assume that $\|c\|_X<x$. From the decomposition $c=c+0$ follows that $0=\|0\|_Y>y$. Contradiction.
This shows that $0\leq x\leq \|c\|_X$. There exists a decomposition $c=a+b$ with $\|a\|_X=x$ and $\|c-a\|_Y=\|b\|_Y=y$. Because $\|a\|_X\leq x$,
we have $y=\|c-a\|_Y\geq f^c_{YX}(x)$. Suppose that $a'$ is a solution of ${\bf M}^c_{XY}(x)$. Then $\|a'\|_X=x$ by Lemma~\ref{lem:aXx}. 
Because $(x,y)$ is Pareto efficient, we have $f^c_{YX}(x)=\|c-a'\|_Y\geq y$. We conclude that $f_{YX}^c(x)=y$.

Conversely, suppose that $0\leq x\leq \|c\|_X$ and $y=f^{c}_{YX}(x)$. Let $a$ be a solution of ${\bf M}_{YX}^c(x)$ and $b:=c-a$.
Suppose that $c=a'+b'$ is another decomposition. 
If $\|a'\|_X<x$, then we have $\|b'\|_Y=\|c-a'\|_Y\geq f^c_{YX}(\|a'\|_X)>f^c_{YX}(x)=y$.
If $\|a'\|_X=x$ then we have $\|b'\|_Y=\|c-a'\|_Y\geq  f^c_{YX}(\|a'\|_X)=f^c_{YX}(x)=y$.
 We conclude that $(x,y)$ is Pareto efficient.
\end{proof}

\begin{corollary}\ 
\begin{enumerate}
\item  The function $f_{YX}:[0,\|c\|_X]\to [0,\|c\|_Y]$ is a homeomorphism and its inverse is $f_{XY}$. 
\item A vector $a$ is a solution to ${\bf M}_{YX}(x)$ if and only if $c-a$ is a solution to ${\bf M}_{XY}(f_{YX}(x))$.
\end{enumerate}

\end{corollary}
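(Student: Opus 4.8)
The plan is to deduce everything from Lemmas~\ref{lem:convexdecreasing} and~\ref{lem:ParetoEfficient} together with the $X\leftrightarrow Y$ symmetry already recorded after the definition of an $XY$-decomposition.

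For part~(1) I would first observe that by Lemma~\ref{lem:convexdecreasing} the map $f_{YX}\colon[0,\|c\|_X]\to\R$ is continuous and strictly decreasing, and that $f_{YX}(0)=\|c\|_Y$ (from the decomposition $c=0+c$) while $f_{YX}(\|c\|_X)=0$ (from $c=c+0$). A continuous strictly monotone function on a compact interval is a homeomorphism onto its image, so $f_{YX}$ is a homeomorphism $[0,\|c\|_X]\to[0,\|c\|_Y]$. To see that the inverse is $f_{XY}$, I would chase the characterization of Pareto efficiency: by Lemma~\ref{lem:ParetoEfficient}(2), $y=f_{YX}^c(x)$ holds precisely when $(x,y)$ is Pareto efficient for the ordered pair $(X,Y)$; by the symmetry remark this is the same as $(y,x)$ being Pareto efficient for $(Y,X)$, which by Lemma~\ref{lem:ParetoEfficient}(2) with the roles of the two norms exchanged means $x=f_{XY}^c(y)$. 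Hence $f_{XY}\circ f_{YX}=\mathrm{id}$, so $f_{XY}=f_{YX}^{-1}$.

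For the forward implication in part~(2), suppose $a$ solves ${\bf M}_{YX}(x)$. Lemma~\ref{lem:aXx} gives $\|a\|_X=x$, so with $b:=c-a$ we have $\|b\|_Y=\|c-a\|_Y=f_{YX}(x)$. By Lemma~\ref{lem:ParetoEfficient}(1), $c=a+b$ is an $XY$-decomposition, hence $c=b+a$ is a $YX$-decomposition, hence (Lemma~\ref{lem:ParetoEfficient}(1) with $X,Y$ interchanged) $b$ solves ${\bf M}_{XY}(\|b\|_Y)={\bf M}_{XY}(f_{YX}(x))$, which is the claim. For the converse, assume $c-a$ solves ${\bf M}_{XY}(f_{YX}(x))$ and run the chain backwards: Lemma~\ref{lem:aXx} (interchanged) gives $\|c-a\|_Y=f_{YX}(x)$; Lemma~\ref{lem:ParetoEfficient}(1) (interchanged) gives that $c=(c-a)+a$ is a $YX$-decomposition, so $c=a+(c-a)$ is an $XY$-decomposition; then Lemma~\ref{lem:ParetoEfficient}(1) shows $a$ solves ${\bf M}_{YX}(\|a\|_X)$ and $(\|a\|_X,\|c-a\|_Y)$ is Pareto efficient. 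By Lemma~\ref{lem:ParetoEfficient}(2), $\|c-a\|_Y=f_{YX}(\|a\|_X)$; since also $\|c-a\|_Y=f_{YX}(x)$, injectivity of $f_{YX}$ from part~(1) forces $\|a\|_X=x$, so $a$ solves ${\bf M}_{YX}(x)$.

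I do not expect a genuine obstacle here; the only points to watch are that the endpoint values of $f_{YX}$ are as claimed (so that its image is all of $[0,\|c\|_Y]$) and that every application of Lemmas~\ref{lem:aXx} and~\ref{lem:ParetoEfficient} with the roles of $X$ and $Y$ swapped is legitimate — which it is, since those lemmas are stated for an arbitrary ordered pair of norms and the relevant hypotheses ($0\le x\le\|c\|_X$, respectively $0\le f_{YX}(x)\le\|c\|_Y$) hold throughout.
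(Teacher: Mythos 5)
Your proof is correct and takes essentially the same route as the paper: part (1) via the equivalence $f_{YX}(x)=y\Leftrightarrow(x,y)$ is Pareto efficient $\Leftrightarrow f_{XY}(y)=x$ drawn from Lemma~\ref{lem:ParetoEfficient}(2) together with continuity from Lemma~\ref{lem:convexdecreasing}, and part (2) via the chain of equivalences through Lemma~\ref{lem:ParetoEfficient}(1) and the $X\leftrightarrow Y$ symmetry. Your write-up just makes a few steps more explicit (e.g.\ invoking Lemma~\ref{lem:aXx} and injectivity of $f_{YX}$ to pin down $\|a\|_X=x$ in the converse), but nothing differs in substance.
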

\begin{proof}
(1) If $0\leq x\leq \|c\|_X$ and $0\leq y\leq \|c\|_Y$ then we have
$$
f_{YX}(x)=y\Leftrightarrow \mbox{$(x,y)$ is Pareto efficient}\Leftrightarrow f_{XY}(y)=x.
$$
So$ f_{XY}$ and $f_{YX}$ are inverse of each other. Since both functions are continuous, the functions are homeomorphisms.

If $y=f_{XY}(x)$ and $b=c-a$ then we have
\begin{multline*}
\mbox{$a$ solution to ${\bf M}_{YX}(x)$}\Leftrightarrow c=a+b\mbox{ is a $XY$ decomposition}\Leftrightarrow\\ \Leftrightarrow \mbox{$c-a=b$ is a solution to ${\bf M}_{XY}(y)={\bf M}_{XY}(f_{YX}(x))$}
\end{multline*}
%
\end{proof}



\subsection{Rigid norms}
The problem ${\bf M}_{YX}^c(x)$ does not always have a unique solution. 
\begin{definition}
We say that $c\in V$ is {\em rigid} if ${\bf M}_{YX}^c(x)$ has a {\em unique} solution for all $x\in [0,\|c\|_X]$.
\end{definition}
Let us give an example of a vector that is not rigid.
\begin{example}
Suppose $V=\R^2$ and $\|c\|_X=\|c\|_Y=\|c\|_{\infty}$ for all $c\in V$.
Then $(1,1)^t$
is rigid because for $x\in [0,1]$ the vector $(x,x)^t$
is the unique solution to ${\bf M}^c_{YX}(x)$.
The vector $c=(1,0)^t$
is not rigid: If $0<x<1$ then ${\bf M}^c_{YX}(x)$ has infinitely many solutions, namely
$(x,s)^t$, $|s|\leq \min\{x,1-x\}$.
\end{example}
If $c\in V$ is rigid, then we can study how the unique solution of ${\bf M}_{YX}^c(x)$ varies as we change the value of $x$. The lemma below shows that the solution varies continuously. In various contexts, this property is well-known and used in homotopy continuation methods (see for example~\cite{OPT2,OPT,EJHT,vdBF}) for some optimization problem.

\begin{lemma}
Suppose that $c\in V$ is rigid and let $\alpha_{YX}(x)=\alpha_{YX}^c(x)$ be the unique solution to ${\bf M}_{YX}^c(x)$ for $x\in [0,\|c\|_X]$.
Then $\alpha_{YX}:[0,\|c\|_X]\to V$  is continuous.
\end{lemma}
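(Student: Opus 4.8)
The plan is to show that $\alpha_{YX}$ is continuous by a standard compactness-plus-uniqueness argument. Fix $x_0 \in [0,\|c\|_X]$ and let $x_1, x_2, \dots$ be any sequence in $[0,\|c\|_X]$ with $x_n \to x_0$. Write $a_n = \alpha_{YX}(x_n)$. Since each $a_n$ lies in the compact ball $B_X(\|c\|_X)$, the sequence $(a_n)$ is bounded, so every subsequence has a convergent sub-subsequence. It suffices to show that every convergent subsequence of $(a_n)$ has limit $\alpha_{YX}(x_0)$; then the whole sequence converges to $\alpha_{YX}(x_0)$, and continuity at $x_0$ follows.

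So suppose a subsequence $a_{n_k} \to a_\star$ for some $a_\star \in V$. First, $\|a_\star\|_X \le x_0$: indeed $\|a_{n_k}\|_X = x_{n_k}$ by Lemma~\ref{lem:aXx} (or just $\|a_{n_k}\|_X \le x_{n_k}$), and passing to the limit using continuity of $\|\cdot\|_X$ gives $\|a_\star\|_X \le x_0$, so $a_\star$ is feasible for ${\bf M}_{YX}^c(x_0)$. Second, $a_\star$ is optimal: by continuity of the map $a \mapsto \|c-a\|_Y$ we have $\|c - a_\star\|_Y = \lim_k \|c - a_{n_k}\|_Y = \lim_k f_{YX}^c(x_{n_k}) = f_{YX}^c(x_0)$, where the last equality is the continuity of $f_{YX}^c$ from Lemma~\ref{lem:convexdecreasing}. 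Hence $a_\star$ attains the minimum in ${\bf M}_{YX}^c(x_0)$. Since $c$ is rigid, ${\bf M}_{YX}^c(x_0)$ has a unique solution, namely $\alpha_{YX}(x_0)$, so $a_\star = \alpha_{YX}(x_0)$.

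This completes the argument: every convergent subsequence of $(a_n)$ has limit $\alpha_{YX}(x_0)$, and since $(a_n)$ is bounded in finite-dimensional $V$, this forces $a_n \to \alpha_{YX}(x_0)$. Therefore $\alpha_{YX}$ is continuous on $[0,\|c\|_X]$.

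The only mildly delicate point — which I would flag rather than belabor — is the reduction "every convergent subsequence has limit $p$ $\Rightarrow$ the whole bounded sequence converges to $p$"; this is a routine fact in a finite-dimensional normed space (if $a_n \not\to p$, extract a subsequence bounded away from $p$, then a convergent sub-subsequence, whose limit is both $\ne p$ and, by hypothesis, $=p$, a contradiction). Everything else is just invoking continuity of the norms and of $f_{YX}^c$ together with the uniqueness packaged into the definition of rigidity, so there is no real obstacle here.
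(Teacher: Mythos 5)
Your proof is correct and follows essentially the same compactness-plus-uniqueness argument as the paper: both extract a convergent subsequence, use continuity of the norms and of $f_{YX}^c$ to show the limit is feasible and optimal for ${\bf M}_{YX}^c(x_0)$, and then invoke rigidity to conclude the limit must be $\alpha_{YX}(x_0)$. The only cosmetic difference is that the paper phrases it as a proof by contradiction, while you spell out the ``every convergent subsequence has the same limit'' reduction a bit more explicitly.
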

\begin{proof}
Suppose that $x_1,x_2,\dots\in [0,\|c\|_X]$ is a sequence for which $x=\lim_{n\to\infty}x_n$ exists.
We assume that  $\lim_{n\to\infty}\alpha_{YX}(x_n)$ does not exist, or that it is not equal to $\alpha_{YX}(x)$.
By replacing $x_1,x_2,\dots$ by a subsequence, we may assume that $a=\lim_{n\to\infty}\alpha_{YX}(x_n)$ exists,
but that it is not equal to $\alpha_{YX}(x)$. We have $\|a\|_X=\lim_{n\to\infty} \|\alpha_{YX}(x_n)\|_X=\lim_{n\to\infty} x_n=x$. Also, we get $\|c-a\|_Y=\lim_{n\to\infty}\|c-\alpha_{YX}(x_n)\|_Y=\lim_{n\to\infty}f_{YX}(x_n)=f_{YX}(x)$
because $f$ is continuous. Because ${\bf M}_{YX}(x)$ has a  unique solution, we conclude
that $a=\alpha_{YX}(x)$. Contradiction. We conclude that $\lim_{n\to\infty} \alpha_{YX}(x_n)=\alpha_{YX}(x)$. This proves that $\alpha_{YX}$ is continuous. 
\end{proof}

\begin{definition}
The norm $\|\cdot\|_Y$ is called {\em strictly convex} if  $\|a+b\|_Y=\|a\|_Y+\|b\|_Y$ implies that $a$ and $b$ are linearly dependent.
\end{definition}
The $\ell_2$-norm on $\R^n$ is strictly convex, for example.
\begin{lemma}
If $\|\cdot\|_Y$ is strictly convex, then every vector is rigid.
\end{lemma}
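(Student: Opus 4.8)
The plan is to fix a vector $c\in V$, take an arbitrary $x\in[0,\|c\|_X]$, and show that ${\bf M}_{YX}^c(x)$ cannot have two distinct solutions. So suppose $a_1$ and $a_2$ are both solutions, i.e. $\|a_1\|_X\le x$, $\|a_2\|_X\le x$, and $\|c-a_1\|_Y=\|c-a_2\|_Y=f_{YX}^c(x)=:y$. By Lemma~\ref{lem:aXx} we actually have $\|a_1\|_X=\|a_2\|_X=x$ (assuming $x>0$; the case $x=0$ is trivial since then $a_1=a_2=0$). Set $a=\tfrac12(a_1+a_2)$. Then $\|a\|_X\le x$, so $a$ is feasible, and by the triangle inequality in $\|\cdot\|_Y$,
$$
\|c-a\|_Y=\Bigl\|\tfrac12(c-a_1)+\tfrac12(c-a_2)\Bigr\|_Y\le \tfrac12\|c-a_1\|_Y+\tfrac12\|c-a_2\|_Y=y.
$$
Since $y=f_{YX}^c(x)$ is the minimum value over all feasible points, equality must hold throughout, so $\|(c-a_1)+(c-a_2)\|_Y=\|c-a_1\|_Y+\|c-a_2\|_Y$.

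Now I invoke strict convexity of $\|\cdot\|_Y$: the equality above forces $c-a_1$ and $c-a_2$ to be linearly dependent. Write $c-a_2=\lambda(c-a_1)$ for some scalar $\lambda$ (the degenerate case $c-a_1=0$ is handled separately, but then $\|c-a_1\|_Y=0=y$ and by the triangle-inequality equality $\|c-a_2\|_Y=0$ too, forcing $a_1=a_2=c$). Taking $\|\cdot\|_Y$ of both sides and using $\|c-a_1\|_Y=\|c-a_2\|_Y=y>0$ gives $|\lambda|=1$. If $\lambda=1$ then $a_1=a_2$ and we are done. If $\lambda=-1$, then $c-a_2=-(c-a_1)$, i.e. $a_1+a_2=2c$, so $a=\tfrac12(a_1+a_2)=c$; but then $\|a\|_X=\|c\|_X\le x\le\|c\|_X$ forces $x=\|c\|_X$, and $\|c-a\|_Y=0=y$, which by the equality in the triangle inequality again forces $\|c-a_1\|_Y=0$, contradicting $y>0$ (or directly: then the minimum is $0$, attained only at $a=c$, contradicting $a_1\neq c$). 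In every case $a_1=a_2$.

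I expect the main subtlety to be the bookkeeping around the degenerate sign case $\lambda=-1$ and the boundary value $x=\|c\|_X$; the core of the argument is the standard "midpoint + triangle inequality + strict convexity" trick and should go through cleanly. One could streamline by noting that whenever $y=f_{YX}^c(x)>0$, the argument above shows $c-a_1$ and $c-a_2$ are positive scalar multiples of each other of equal $Y$-norm, hence equal, so $a_1=a_2$; the only case left is $y=0$, which by Lemma~\ref{lem:aXx}-type reasoning (or directly, minimality) happens only when $c-a$ can be made zero, and then strict monotonicity / the equality case pins down $a=c$ uniquely. Either packaging yields rigidity of $c$, and since $c$ was arbitrary, every vector is rigid.
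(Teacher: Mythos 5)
Your proof is correct and takes essentially the same approach as the paper: form the midpoint $\overline a=\tfrac12(a_1+a_2)$, note it is feasible, use optimality plus the triangle inequality to force equality in $\|(c-a_1)+(c-a_2)\|_Y\le\|c-a_1\|_Y+\|c-a_2\|_Y$, invoke strict convexity to get linear dependence, and then rule out the sign $-1$. The bookkeeping in your $\lambda=-1$ branch is a bit more roundabout than the paper's (which simply notes $\|b\|_Y+\|b'\|_Y\le\|b+b'\|_Y=0$ forces $b=b'=0$), but the argument is the same.
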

\begin{proof}
Suppose that $\|\cdot\|_Y$ is strictly convex and that $c\in V$. We will prove that $c$ is rigid. Suppose
 $\|a\|_X=\|a'\|_X=x$ and $\|b\|_{Y}=\|b'\|_Y=f_{YX}(x)$, where $b=c-a$ and $b'=c-a'$.
Let $\overline{a}=(a+a')/2$. Then we have $\|\overline{a}\|_X\leq \frac{1}{2}(\|a\|_X+\|a'\|_X)=x$.
By definition, we have $\|c-\overline{a}\|_Y\geq f_{YX}(x)$. It follows that 
$$\|b+b'\|_Y=\|2(c-\overline{a})\|_Y=2\|c-\overline{a}\|_Y\geq 2f_{YX}(x)=\|b\|_Y+\|b'\|_Y.$$
Since $\|\cdot\|_Y$ is strictly convex, $b$ and $b'$ are linearly dependent. Because $\|b\|_Y=\|b'\|_Y$, it follows that $b=\pm b'$.
If $b=-b'$ then we have $b+b'=0$. From $\|b\|_Y+\|b'\|_Y\leq \|b+b'\|_Y=0$ follows that $b=b'=0$ and 
  $a=a'=c$ and the uniqueness is established.
If $b=b'$, then $a=a'$ and we have again uniqueness. 
\end{proof}
\subsection{The Pareto curve of sums}
Next we will compare the Pareto curve of $f_{YX}^{a+b}$ with the Pareto curves $ f_{YX}^a$ and $f_{YX}^b$. For this purpose, we introduce the concatenation of two functions.
Suppose that $u:[0,t]\to \R$ and $v:[0,s]\to \R$ are two  functions with $u(t)=v(s)=0$. The {\em concatenation} $u\star v:[0,s+t]\to \R$ is defined by
$$
u\star v(x)=\begin{cases}
u(x)+v(0) & \mbox{if $0\leq x\leq t$}\\
v(x-t) & \mbox{if $t\leq x\leq s+t$}.
\end{cases}
$$
Note that $(u\star v)(0)=u(0)+v(0)$ and $(u\star v)(s+t)=v(s)=0$.
If $u$ and $v$ are decreasing, then so is $u\star v$. If $u$ and $v$ are continuous, then so  is $u\star v$. Note that concatenation is associative: $(u\star v)\star w=u\star (v\star w)$.
However,  it is not commutative.
\begin{example}
Suppose that $u,v:[0,1]\to \R$ are defined by $u(x)=1-x$ and $v=(1-x)^2$:

\centerline{\includegraphics[width=2.5in]{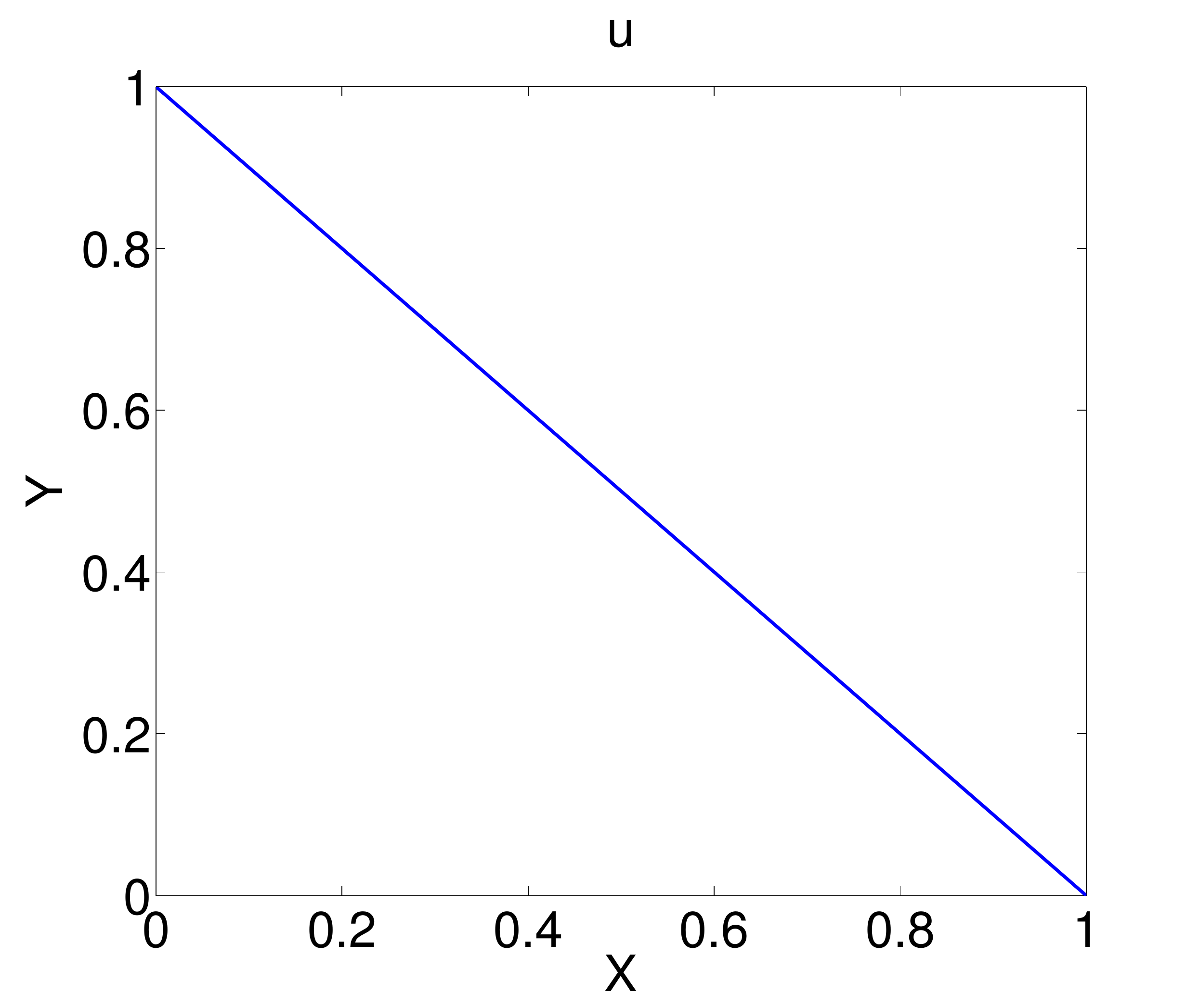}\includegraphics[width=2.5in]{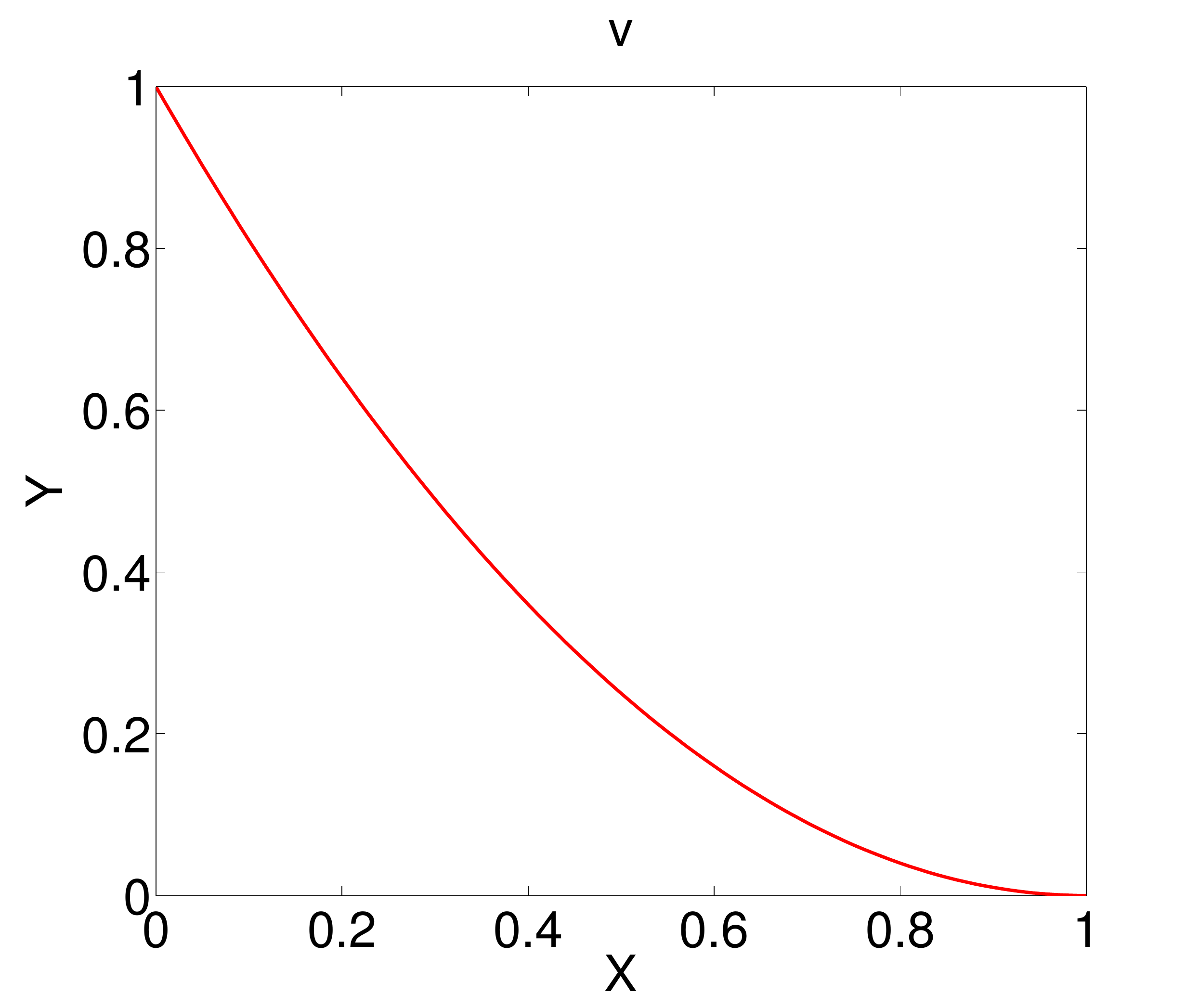}}

The graphs of $u\star v$ and $v\star u$ are:

\centerline{\includegraphics[width=2.5in]{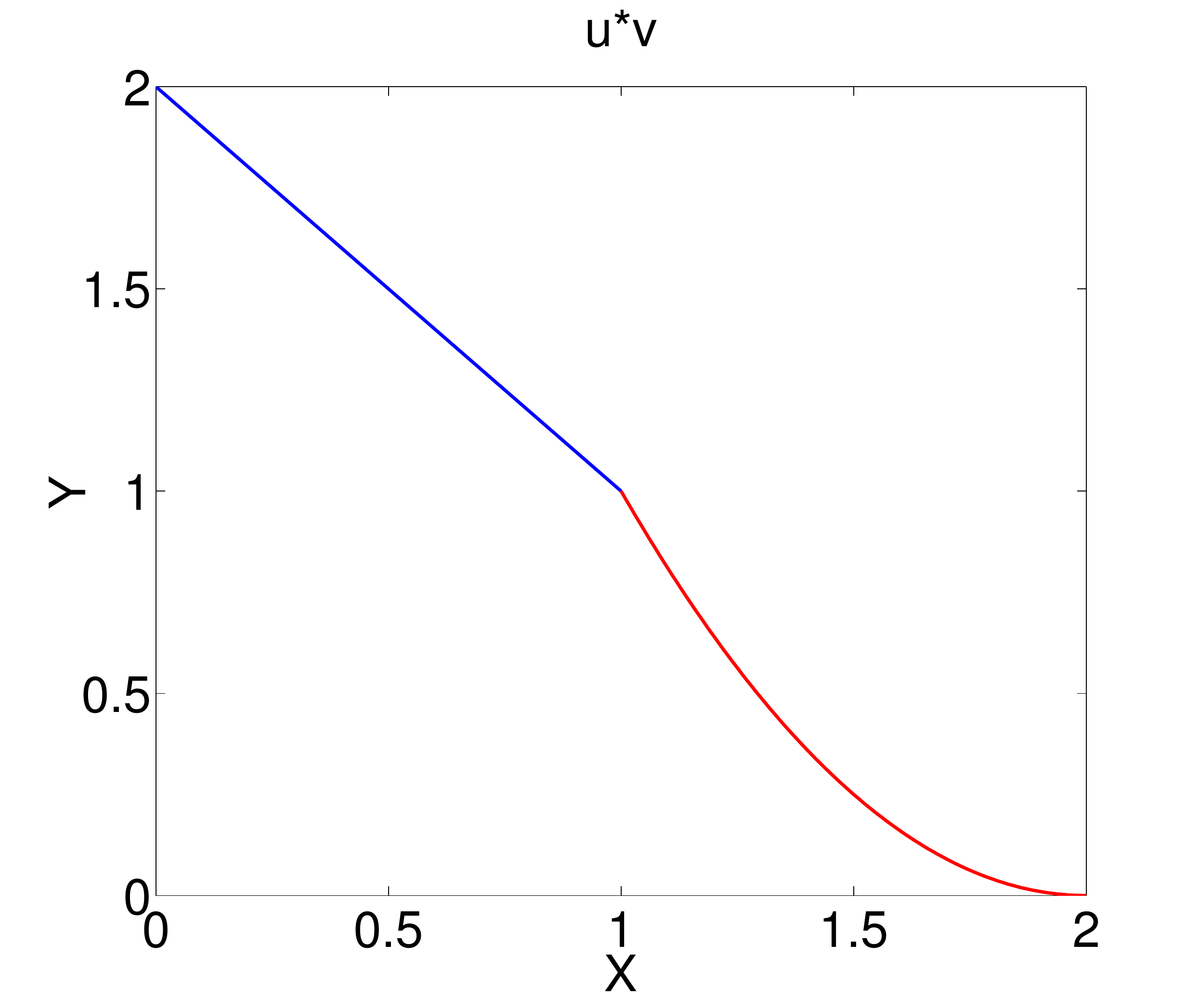}\includegraphics[width=2.5in]{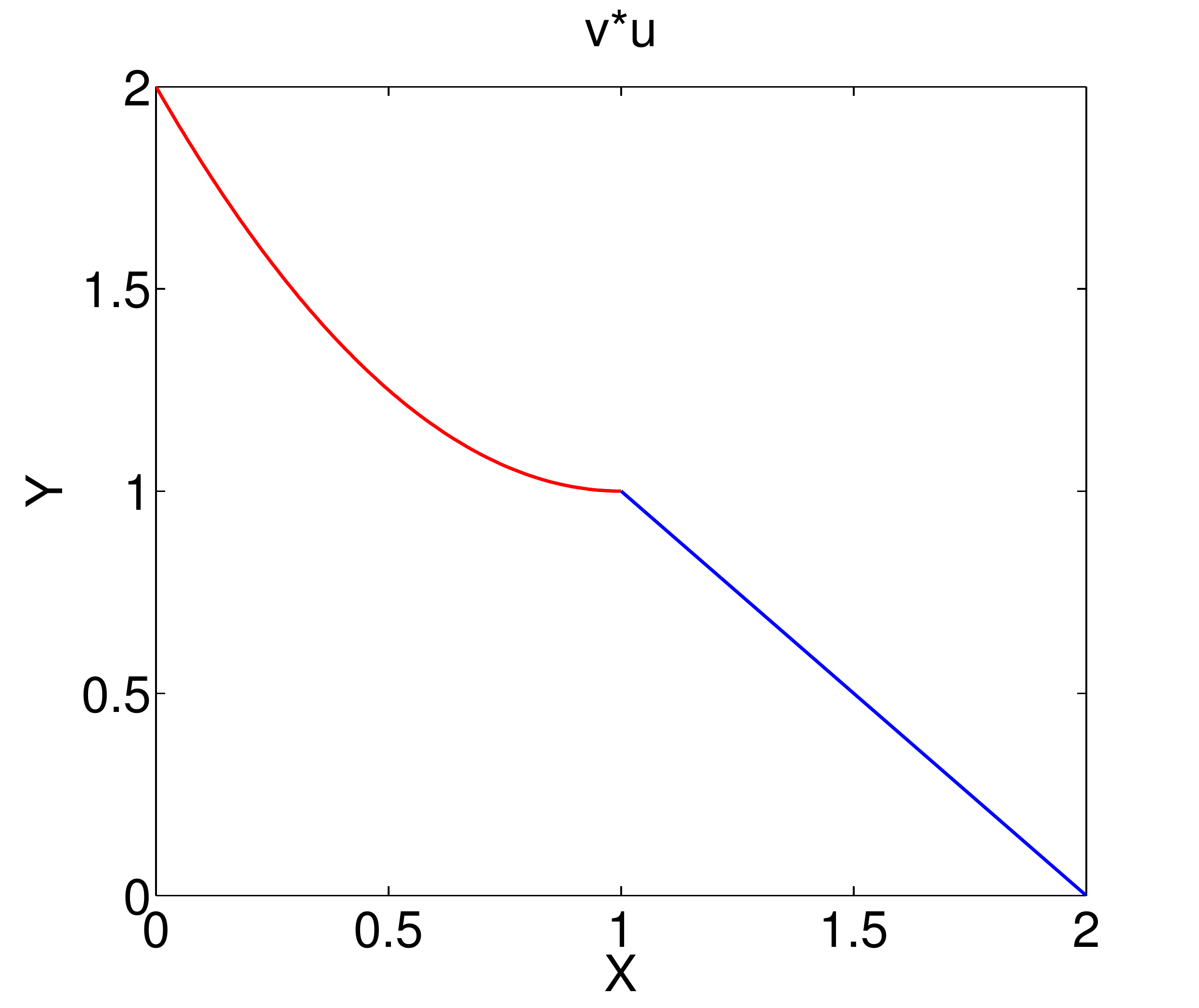}}
\end{example}

\begin{lemma}
Suppose that $a,b\in V$.
If $0\leq x\leq \|a+b\|_X$ then we have
$$
f_{YX}^{a+b}(x)\leq (f_{YX}^a\star f_{YX}^b)(x).
$$
\end{lemma}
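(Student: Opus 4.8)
The plan is to prove the inequality piecewise, according to the two branches in the definition of the concatenation $u\star v$ with $u=f_{YX}^a$ (defined on $[0,\|a\|_X]$, with $u(\|a\|_X)=0$) and $v=f_{YX}^b$ (defined on $[0,\|b\|_X]$, with $v(\|b\|_X)=0$), so that $u\star v$ is defined on $[0,\|a\|_X+\|b\|_X]$. First note that this is the correct ambient interval: since $\|a+b\|_X\le\|a\|_X+\|b\|_X$, any $x\in[0,\|a+b\|_X]$ lies in the domain of $u\star v$. The single recurring idea is that from an optimal decomposition of $a$ (resp.\ of $b$) at a suitable parameter one can manufacture a \emph{competing} decomposition of $a+b$ at parameter $x$, and then bound $f_{YX}^{a+b}(x)$ from above using its definition as a minimum together with the triangle inequality for $\|\cdot\|_Y$.

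For the first branch, assume $0\le x\le\|a\|_X$; here $(u\star v)(x)=f_{YX}^a(x)+\|b\|_Y$. Let $a'$ be a solution of ${\bf M}_{YX}^a(x)$, so $\|a'\|_X\le x$ and $\|a-a'\|_Y=f_{YX}^a(x)$. Using the decomposition $a+b=a'+\bigl((a-a')+b\bigr)$, which has first part of $X$-norm at most $x$, the definition of $f_{YX}^{a+b}$ and the triangle inequality give
$$
f_{YX}^{a+b}(x)\le\|(a-a')+b\|_Y\le\|a-a'\|_Y+\|b\|_Y=f_{YX}^a(x)+\|b\|_Y=(u\star v)(x).
$$

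For the second branch, assume $\|a\|_X\le x\le\|a+b\|_X$; here $(u\star v)(x)=f_{YX}^b(x-\|a\|_X)$. One checks $x-\|a\|_X\in[0,\|b\|_X]$: it is $\ge 0$ since $x\ge\|a\|_X$, and $\le\|b\|_X$ since $x\le\|a+b\|_X\le\|a\|_X+\|b\|_X$, so the right-hand side is well defined. Let $b'$ be a solution of ${\bf M}_{YX}^b(x-\|a\|_X)$, so $\|b'\|_X\le x-\|a\|_X$ and $\|b-b'\|_Y=f_{YX}^b(x-\|a\|_X)$. Now use the decomposition $a+b=(a+b')+(b-b')$, whose first part has $X$-norm at most $\|a\|_X+\|b'\|_X\le x$; hence $f_{YX}^{a+b}(x)\le\|b-b'\|_Y=f_{YX}^b(x-\|a\|_X)=(u\star v)(x)$. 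Combining the two branches gives $f_{YX}^{a+b}(x)\le(f_{YX}^a\star f_{YX}^b)(x)$ for all $x\in[0,\|a+b\|_X]$.

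There is no serious obstacle here; the only thing to be careful about is the bookkeeping of the parameter ranges (that $x-\|a\|_X$ stays in $[0,\|b\|_X]$), which rests entirely on the subadditivity $\|a+b\|_X\le\|a\|_X+\|b\|_X$, and the fact that a solution $a$ of ${\bf M}_{YX}^c(x)$ exists and satisfies $\|a\|_X\le x$ (indeed $=x$ for $x\le\|c\|_X$ by Lemma~\ref{lem:aXx}, though only the inequality is needed).
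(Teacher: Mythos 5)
Your proof is correct. The first branch coincides with the paper's argument: both manufacture a competing decomposition of $a+b$ from a solution of ${\bf M}_{YX}^a(x)$ and then apply the triangle inequality. For the second branch you diverge from the paper in a genuinely different, and arguably more elementary, way. The paper derives the second branch from the first by symmetry (swap $X\leftrightarrow Y$ and $a\leftrightarrow b$ to get $f_{XY}^{a+b}(y)\le f_{XY}^b(y)+\|a\|_X$), then substitutes $y=f_{YX}^{a+b}(x)$ and applies the decreasing function $f_{YX}^b$ to unwind the inverse-function relation; this requires invoking that $f_{XY}^c$ and $f_{YX}^c$ are inverse homeomorphisms and that $f_{YX}^{a+b}(x)\le\|b\|_Y$ on the relevant range. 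You instead construct the competing decomposition $a+b=(a+b')+(b-b')$ directly from a solution $b'$ of ${\bf M}_{YX}^b(x-\|a\|_X)$, using only subadditivity of $\|\cdot\|_X$ and the definition of $f_{YX}^{a+b}$ as a minimum. Your route avoids the inverse-function bookkeeping entirely and makes the two branches transparently parallel; the paper's route illustrates the $X\leftrightarrow Y$ duality and reuses the first estimate rather than redoing a construction. Both are sound; your version is a bit more self-contained.
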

\begin{proof}
Suppose that $0\leq x\leq \|a\|_X$.
Choose $d\in V$ such that $\|d\|_X=x$ and $\|a-d\|_Y=f_{YX}^a(x)$. Then we have
$$
f_{YX}^{a+b}(x)\leq \|a+b-d\|_Y\leq \|a-d\|_Y+\|b\|_Y=f_{YX}^a(x)+f_{YX}^b(0)=(f_{YX}^a\star f_{YX}^b)(x).
$$

Reversing the roles of $X$ and $Y$, and $a$ and $b$ gives
$$
f_{XY}^{a+b}(y)\leq f_{XY}^b(y)+f_{XY}^a(0)=f_{XY}^b(y)+\|a\|_X
$$
if $0\leq y\leq \|b\|_Y$.
Substituting $y=f_{YX}^{a+b}(x)$ where $\|a\|_X\leq x\leq \|a+b\|_X$ yields
$$
x-\|a\|_X\leq f_{XY}^b(f_{YX}^{a+b}(x)).
$$
Applying the decreasing function $f_{YX}^b$ gives
$$
(f_{YX}^a\star f_{YX}^b)(x)=f_{YX}^b(x-\|a\|_X)\geq f_{YX}^{a+b}(x).
$$
\end{proof}

\section{Duality}\label{sec:3}
\subsection{$X2$- and $2Y$-decompositions}
Suppose that the vector space $V$ is equipped with a positive definite bilinear form $\langle\cdot,\cdot\rangle$ and a norm $\|\cdot\|_X$. The bilinear form gives an $\ell_2$-norm $\|\cdot\|_2$ and let $\|\cdot\|_Y$ be the dual norm of $\|\cdot\|_X$. In this section we will study $X2$-decompositions, which turn out to be the same as $2Y$-decompositions. We start with the following characterization of an $X2$-decomposition:
\begin{proposition}\label{prop:XYchar}
The expression $c=a+b$ is an $X2$-decomposition if and only if $\langle a, b\rangle=\|a\|_X\|b\|_Y$.
\end{proposition}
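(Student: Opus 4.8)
The plan is to prove the equivalence in Proposition~\ref{prop:XYchar} by relating the optimization problem $\mathbf{M}_{2X}^c(x)$ (minimize $\|c-a\|_2$ subject to $\|a\|_X\le x$) to the condition $\langle a,b\rangle = \|a\|_X\|b\|_Y$, exploiting the fact that the $\ell_2$ norm comes from an inner product and hence is differentiable. The core tool is a first-order optimality (KKT-type) condition: $a$ solves $\mathbf{M}_{2X}^c(x)$ with $\|a\|_X = x$ precisely when the negative gradient of $\tfrac12\|c-a\|_2^2$ at $a$, namely $b = c-a$, lies in the normal cone to the ball $B_X(x)$ at $a$, i.e. $b$ is a nonnegative multiple of a subgradient of $\|\cdot\|_X$ at $a$. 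Translating ``$b$ is a subgradient direction of $\|\cdot\|_X$ at $a$'' into norm language is exactly the statement that $b$ is a dual vector to $a$, which by the definition of the dual norm $\|\cdot\|_Y$ means $\langle a,b\rangle = \|a\|_X\|b\|_Y$ (with equality in the Cauchy–Schwarz-type bound $\langle a,b\rangle \le \|a\|_X\|b\|_Y$ that always holds).

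Concretely, I would proceed as follows. First establish the universal inequality $\langle a,b\rangle \le \|a\|_X\|b\|_Y$ for all $a,b$, directly from the definition of $\|\cdot\|_Y$ as the dual norm. Next, for the forward direction, suppose $c=a+b$ is an $X2$-decomposition, so $a$ solves $\mathbf{M}_{2X}^c(\|a\|_X)$ and by Lemma~\ref{lem:aXx} (with the roles of the norms adjusted) $\|a\|_X = x := \|a\|_X$. If $a=0$ or $b=0$ the identity is trivial, so assume both nonzero. For any $a'$ with $\|a'\|_X \le x$ and any $t\in[0,1]$, the point $(1-t)a + t a'$ is feasible, and optimality of $a$ gives $\|c - a\|_2^2 \le \|c - (1-t)a - ta'\|_2^2 = \|b + t(a-a')\|_2^2$; expanding and letting $t\to 0^+$ yields $\langle b, a-a'\rangle \ge 0$, i.e. $\langle b,a\rangle \ge \langle b,a'\rangle$ for all $a'\in B_X(x)$. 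Taking the supremum over such $a'$ gives $\langle b,a\rangle \ge x\,\|b\|_Y = \|a\|_X\|b\|_Y$, which combined with the universal inequality forces equality. For the reverse direction, suppose $c = a+b$ with $\langle a,b\rangle = \|a\|_X\|b\|_Y$; I want to show $a$ solves $\mathbf{M}_{2X}^c(\|a\|_X)$. For any feasible $a'$ (so $\|a'\|_X \le \|a\|_X$), write $\|c-a'\|_2^2 = \|b + (a-a')\|_2^2 = \|b\|_2^2 + 2\langle b, a-a'\rangle + \|a-a'\|_2^2 \ge \|b\|_2^2 + 2(\langle b,a\rangle - \langle b,a'\rangle)$, and then use $\langle b,a\rangle = \|a\|_X\|b\|_Y \ge \|a'\|_X\|b\|_Y \ge \langle b,a'\rangle$ to conclude $\|c-a'\|_2 \ge \|b\|_2 = \|c-a\|_2$. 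Hence $a$ is optimal, so $c=a+b$ is an $X2$-decomposition.

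The main obstacle is the forward direction's passage from optimality to the first-order inequality $\langle b, a-a'\rangle \ge 0$: one must be careful that the perturbation $(1-t)a + ta'$ stays feasible (which it does by convexity of $B_X(x)$) and that differentiating the squared $\ell_2$ objective at $t=0$ is legitimate — this is where using $\|\cdot\|_2^2$ rather than $\|\cdot\|_2$ matters, since the square is smooth even when $b$ is small. A secondary subtlety is handling the degenerate cases $a=0$ (then $x=0$, forcing $a'=0$, trivial) and $b=0$ (then $c=a$ and the identity $0=0$ holds), and confirming via Lemma~\ref{lem:aXx} that an $X2$-decomposition indeed has $\|a\|_X$ equal to the constraint radius so that the supremum $\sup_{\|a'\|_X\le x}\langle b,a'\rangle = x\|b\|_Y$ is attained with the right constant. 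Everything else is routine expansion of inner products and invoking the dual-norm definition.
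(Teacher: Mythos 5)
Your proof is correct, and the forward direction is essentially the same as the paper's: perturb $a$ toward a feasible point, use the smoothness of $\|\cdot\|_2^2$ to extract the first-order inequality $\langle b, a-a'\rangle \geq 0$, then take the supremum over $a'\in B_X(\|a\|_X)$ to obtain $\langle a,b\rangle \geq \|a\|_X\|b\|_Y$ (the paper does the same thing but plugs in the single maximizer $d$ directly rather than varying $a'$). Your reverse direction, however, is genuinely cleaner than the paper's. The paper takes the (a priori different) optimizer $a'$ of $\mathbf{M}_{2X}^c(\|a\|_X)$, applies the already-established forward implication to the pair $(a',b')$ to get $\langle a',b'\rangle = \|a'\|_X\|b'\|_Y$, and then shows $\|a-a'\|_2^2 \leq 0$ by a telescoping inner-product computation; your argument instead verifies optimality of $a$ directly, by expanding $\|c-a'\|_2^2 = \|b\|_2^2 + 2\langle b, a-a'\rangle + \|a-a'\|_2^2$ and bounding $\langle b,a'\rangle \leq \|a'\|_X\|b\|_Y \leq \|a\|_X\|b\|_Y = \langle b,a\rangle$. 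This avoids any appeal to the forward direction and to Lemma~\ref{lem:aXx}, making the two implications logically independent, which is a small but real simplification.
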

\begin{proof}
Suppose that $c=a+b$ is an $X2$-decomposition.
Choose a vector $d$ such that $\langle b,d\rangle=\|b\|_Y$ and $\|d\|_X=1$. Let $\varepsilon>0$.
Define $a'=(1-\varepsilon)a+\varepsilon \|a\|_Xd$ and $b'=c-a'$. We have $\|a'\|_X\leq (1-\varepsilon)\|a\|_X+\varepsilon \|a\|_X=\|a\|_X$. Therefore, $\|b'\|_2\geq \|b\|_2$. 
It follows that
\begin{multline*}
0\leq \|b'\|_2^2-\|b\|^2_2=\|b+(a-a')\|_2^2-\|b\|_2^2=2\langle a-a',b\rangle+\|a-a'\|_2^2=\\
=2\varepsilon\langle a-\|a\|_Xd,b\rangle+\varepsilon^2\|a-\|a\|_Xd\|_2^2.
\end{multline*}
Taking the limit $\varepsilon\downarrow 0$ yields the inequality
$$
0\leq\langle a-\|a\|_Xd,b\rangle=\langle a,b\rangle-\|a\|_X\|b\|_Y,
$$
so $\langle a,b\rangle\geq \|a\|_X\|b\|_Y$.
The opposite inequality $\langle a,b\rangle\leq \|a\|_X\|b\|_Y$ holds because the norms are dual to each other. We conclude that $\langle a,b\rangle=\|a\|_X\|b\|_Y$.

Conversely, suppose that $\langle a,b\rangle=\|a\|_X\|b\|_Y$. Let $x=\|a\|_X$, let $a'$ be the solution to ${\bf M}_{2X}^c(x)$ and define $b'=c-a'$.
Then $c=a'+b'$ is an $X2$-decomposition. 
\begin{multline*}
\|a'-a\|_2^2=\langle a'-a,b-b'\rangle=
\langle a',b\rangle+\langle a,b'\rangle-\langle a',b'\rangle-\langle a,b\rangle=\\=
\langle a',b\rangle+\langle a,b'\rangle-x\|b\|_Y-x\|b'\|_Y\leq
x\|b'\|_Y+x\|b\|_Y-x\|b'\|_Y-x\|b\|_Y=0.
\end{multline*}
So we conclude that $a=a'$, and $c=a+b$ is an $X2$-decomposition.
\end{proof}
The equivalence between the $X2$-decomposition and $2Y$-decomposition (Proposition~\ref{prop:equivalent}) now easily follows.
\begin{proof}[Proof of Proposition~\ref{prop:equivalent}]
In Proposition~\ref{prop:equivalent}, (1) and (3) are equivalent because of Proposition~\ref{prop:XYchar}. 
Dually, (2) and (3) are equivalent. 
\end{proof}

\subsection{The Pareto sub-frontier} We define $h_{YX}=h^c_{YX}:[0,\|c\|_X]\to [0,\|c\|_Y]$ by $h_{YX}(x)=\|c-\alpha_{2X}(x)\|_Y$. The graph of $h_{YX}^c$ is the Pareto subfrontier. 
Indeed, if $c=a+b$ is an $X2$-decomposition with $\|a\|_X=x$ and $\|b\|_Y=y$, then we have 
$\alpha_{2X}(x)=a$ and $h_{YX}(x)=\|c-\alpha_{2X}(x)\|_Y=\|c-a\|_Y=\|b\|_Y=y$. We now prove some properties of the Pareto sub-frontier.

\begin{lemma}
We have $\alpha_{2Y}(h_{YX}(x))=c-\alpha_{2X}(x)$.
\end{lemma}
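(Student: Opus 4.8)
The plan is to unwind the definitions on both sides and use the symmetric characterization of $X2$-decompositions from Proposition~\ref{prop:XYchar} (equivalently Proposition~\ref{prop:equivalent}). Fix $x\in[0,\|c\|_X]$ and set $a=\alpha_{2X}(x)$, $b=c-a$. By definition of $\alpha_{2X}$, the decomposition $c=a+b$ solves ${\bf M}_{2X}^c(x)$, hence is an $X2$-decomposition; by Proposition~\ref{prop:XYchar} this is equivalent to $\langle a,b\rangle=\|a\|_X\|b\|_Y$. The same identity $\langle b,a\rangle=\|b\|_Y\|a\|_X$ says (again by the dual form of Proposition~\ref{prop:XYchar}, i.e. the equivalence of (2) and (3) in Proposition~\ref{prop:equivalent}) that $c=b+a$ is a $2Y$-decomposition with the roles of the two summands swapped, i.e. $b$ is the solution to ${\bf M}_{2Y}^c(\|b\|_Y)$. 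Now $\|b\|_Y=\|c-\alpha_{2X}(x)\|_Y=h_{YX}(x)$ by the definition of $h_{YX}$, so $b=\alpha_{2Y}(h_{YX}(x))$, which is exactly the claim $\alpha_{2Y}(h_{YX}(x))=c-\alpha_{2X}(x)$.

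First I would make explicit that $\alpha_{2X}$ and $\alpha_{2Y}$ are well-defined single-valued functions: the $\ell_2$ norm is strictly convex, so by the earlier lemma every vector is rigid for the pair $(2,X)$ and for the pair $(2,Y)$, meaning ${\bf M}_{2X}^c(x)$ and ${\bf M}_{2Y}^c(y)$ each have a unique solution. This justifies writing $\alpha_{2Y}(h_{YX}(x))$ as a genuine vector and lets me conclude $b=\alpha_{2Y}(h_{YX}(x))$ from "$b$ solves ${\bf M}_{2Y}^c(\|b\|_Y)$" without worrying about a set-valued solution. I would also invoke Lemma~\ref{lem:aXx} (or its $2Y$-analogue) to note $\|b\|_Y$ is indeed the correct radius, i.e. that the constraint is active, which is automatic here since the solution lies on the Pareto sub-frontier.

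The only genuine content is the symmetry step: translating "$c=a+b$ is an $X2$-decomposition" into "$c=b+a$ is a $2Y$-decomposition," and I would handle this by citing Proposition~\ref{prop:equivalent}, whose statements (1) and (2) are equivalent precisely through the symmetric condition (3) $\langle a,b\rangle=\|a\|_X\|b\|_Y$. Everything else is bookkeeping with the definition $h_{YX}(x)=\|c-\alpha_{2X}(x)\|_Y$. I do not anticipate a real obstacle; the main thing to get right is not to conflate the two arguments of a decomposition — one must be careful that in $c=a+b$ the vector $a$ is the "$X$-small" piece solving ${\bf M}_{2X}$, whereas $b$ is the "$Y$-small" piece solving ${\bf M}_{2Y}$, so the identity $\alpha_{2Y}(h_{YX}(x))=c-\alpha_{2X}(x)=b$ is asserting that the $Y$-side solution at noise level $h_{YX}(x)$ recovers the same $b$. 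A clean one-paragraph proof stating "set $a=\alpha_{2X}(x)$, $b=c-a$; by Proposition~\ref{prop:equivalent} $c=a+b$ is also a $2Y$-decomposition, so $b=\alpha_{2Y}(\|b\|_Y)=\alpha_{2Y}(h_{YX}(x))$" suffices.
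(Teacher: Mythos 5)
Your proof is correct and is essentially the paper's own argument: set $a=\alpha_{2X}(x)$, $b=c-a$, observe $c=a+b$ is an $X2$-decomposition and hence a $2Y$-decomposition (Proposition~\ref{prop:equivalent}), so $c=b+a$ is a $Y2$-decomposition and $b=\alpha_{2Y}(\|b\|_Y)=\alpha_{2Y}(h_{YX}(x))$. One minor terminological slip: what you call ``$c=b+a$ is a $2Y$-decomposition with the roles swapped'' is, in the paper's convention, a $Y2$-decomposition of $c$; you then draw exactly the right conclusion ($b$ solves ${\bf M}_{2Y}^c(\|b\|_Y)$), so the substance is fine.
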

\begin{proof}
Let $a=\alpha_{2X}(x)$ and $b=c-a$. Then $c=a+b$ is an $X2$-decomposition, therefore also a $2Y$-decomposition.
So $c=b+a$ is a $Y2$-decomposition. Let $y=\|b\|_Y=h_{YX}(x)$. Then we have $b=\alpha_{2Y}(y)=\alpha_{2Y}(h_{YX}(x))$.
%
%
%
\end{proof}
\begin{lemma}
The function $h_{YX}(x)$ is a strictly decreasing homeomorphism and its inverse is $h_{XY}(x)$.
\end{lemma}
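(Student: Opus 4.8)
The plan is to show that $h_{YX}^c$ is strictly decreasing and continuous, then invoke the symmetry between $X$ and $Y$ to identify its inverse with $h_{XY}^c$. First I would recall that $h_{YX}(x)=\|c-\alpha_{2X}(x)\|_Y$, where $\alpha_{2X}(x)$ is the solution to $\mathbf{M}_{2X}^c(x)$ (which exists and is unique, since the $\ell_2$-norm is strictly convex, so every vector is rigid). Continuity of $h_{YX}$ follows from continuity of $\alpha_{2X}$ (already established for rigid vectors) composed with the continuous norm $\|\cdot\|_Y$. For the endpoint behavior: $h_{YX}(0)=\|c\|_Y$ since $\alpha_{2X}(0)=0$, and $h_{YX}(\|c\|_X)=0$ since $\alpha_{2X}(\|c\|_X)=c$ (the constraint $\|a\|_X\le\|c\|_X$ permits $a=c$, which drives $\|c-a\|_2$ to $0$).

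The key step is strict monotonicity. Suppose $0\le x_1<x_2\le\|c\|_X$, and write $a_i=\alpha_{2X}(x_i)$, $b_i=c-a_i$. By Lemma~\ref{lem:aXx} we have $\|a_i\|_X=x_i$, so $a_1\ne a_2$ and hence $\|b_1\|_2\ne\|b_2\|_2$; since $a_2$ is the $\ell_2$-closest point to $c$ in the larger ball $B_X(x_2)\supseteq B_X(x_1)$, we get $\|b_2\|_2<\|b_1\|_2$, i.e. the map $x\mapsto\|c-\alpha_{2X}(x)\|_2$ is strictly decreasing. To transfer this to the $Y$-norm, I would use Proposition~\ref{prop:XYchar}: each $c=a_i+b_i$ is an $X2$-decomposition, so $\langle a_i,b_i\rangle=\|a_i\|_X\|b_i\|_Y=x_i\|b_i\|_Y$. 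Combining this with $\|c\|_2^2=\|a_i\|_2^2+2\langle a_i,b_i\rangle+\|b_i\|_2^2$ (or, more cleanly, using Proposition~\ref{prop:areas}: the area under the sub-frontier to the right of $x=x_i$ equals $\frac12\|b_i\|_2^2$) shows that as $x$ increases the area $\int_x^{\|c\|_X}h_{YX}$ strictly decreases, which forces $h_{YX}$ itself to be strictly decreasing — a nonincreasing function whose tail integral is strictly decreasing in the lower limit cannot be locally constant near any point.

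Alternatively, and perhaps more directly, I would argue strict monotonicity by an exchange/convexity argument: if $h_{YX}(x_1)=h_{YX}(x_2)$ for $x_1<x_2$ then $b_1$ and $b_2$ both realize the value $f$ of $\|c-a\|_Y$ under $\|a\|_X\le x_2$... but actually the cleanest route is to note $h_{YX}$ is decreasing (this is immediate: enlarging the ball can only decrease the minimal $\|b\|_Y$ — wait, $\|b\|_Y$ is not what's minimized, $\|b\|_2$ is), so I'd instead combine the strict decrease of $\|b\|_2$ with the constraint structure. Since $a_1\in B_X(x_1)\subseteq B_X(x_2)$ and $a_1\ne a_2=\alpha_{2X}(x_2)$, and $\alpha_{2X}$ is single-valued, strict convexity of $\|\cdot\|_2^2$ gives $\|c-a_2\|_2<\|c-a_1\|_2$; then I apply Proposition~\ref{prop:XYchar} to write $\|b_i\|_Y=\langle a_i,b_i\rangle/x_i$ and bound $\langle a_1,b_1\rangle=\langle a_1, c-a_1\rangle$ versus $\langle a_2,c-a_2\rangle$, using that $a_2$ minimizes $\|c-a\|_2^2 = \|c\|_2^2 - 2\langle a,c\rangle + \|a\|_2^2$. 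The main obstacle is precisely this transfer from the $\ell_2$-behavior of the decomposition to strict decrease in the $Y$-norm; the area interpretation from Proposition~\ref{prop:areas} is the safest tool, since it packages the inner-product bookkeeping cleanly. Once strict monotonicity and continuity are in hand, $h_{YX}:[0,\|c\|_X]\to[0,\|c\|_Y]$ is a continuous strictly decreasing surjection, hence a homeomorphism; and since (by the symmetric version of the preceding lemma) $\alpha_{2Y}(h_{YX}(x))=c-\alpha_{2X}(x)$, applying $h_{XY}$ — defined by $h_{XY}(y)=\|c-\alpha_{2Y}(y)\|_X$ — to $y=h_{YX}(x)$ gives $h_{XY}(h_{YX}(x))=\|c-\alpha_{2Y}(h_{YX}(x))\|_X=\|\alpha_{2X}(x)\|_X=x$, so $h_{XY}$ and $h_{YX}$ are mutually inverse.
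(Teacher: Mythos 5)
Your proposed direct argument for strict monotonicity has a gap, and it is precisely the step you flag as the ``main obstacle.'' From $\|b_2\|_2<\|b_1\|_2$ and Proposition~\ref{prop:areas} you conclude that the tail integral $\int_x^{\|c\|_X}h_{YX}$ is strictly decreasing in $x$; but for a continuous nonnegative function that only says $\int_{x_1}^{x_2}h_{YX}>0$ for all $x_1<x_2$, i.e.\ $h_{YX}$ does not vanish identically on any subinterval. That is consistent with $h_{YX}$ being constant (or even increasing) on a stretch. The assertion that ``a nonincreasing function whose tail integral is strictly decreasing cannot be locally constant'' is simply false (take $h\equiv 1$ on $[0,1/2]$ then decreasing to $0$), and in any case you have not yet shown $h_{YX}$ is nonincreasing. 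The alternative inner-product route you sketch is also not carried through. So the direct monotonicity attempt does not close.

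The good news is that you already wrote down the fact that fixes everything, you just put it in the wrong place. At the very end you derive $h_{XY}(h_{YX}(x))=x$ from the preceding lemma $\alpha_{2Y}(h_{YX}(x))=c-\alpha_{2X}(x)$ and injectivity of $\alpha_{2X}$ (which holds because $f_{2X}$ is strictly decreasing). Together with the symmetric identity $h_{YX}(h_{XY}(y))=y$, this shows $h_{YX}$ is a bijection from $[0,\|c\|_X]$ onto $[0,\|c\|_Y]$ with inverse $h_{XY}$. Now combine this with the continuity you established: a continuous bijection of a compact interval onto an interval is a homeomorphism, and by the Intermediate Value Theorem it must be strictly monotone; the endpoint values $h_{YX}(0)=\|c\|_Y>0=h_{YX}(\|c\|_X)$ force it to be strictly decreasing. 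This is exactly the order in which the paper argues, and it completely bypasses the need for a direct monotonicity proof. So reorder your steps: (i) inverse identity via the preceding lemma and injectivity of $\alpha_{2X}$, (ii) continuity, (iii) monotonicity as a consequence, not a prerequisite.
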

\begin{proof}
We have have
$$
\alpha_{2X}(h_{XY}(h_{YX}(x)))=c-\alpha_{2Y}(h_{YX}(x))=\alpha_{2X}(x).
$$
The function $\alpha_{2X}$ is injective, because the function $f_{2X}(x)=\|c-\alpha_{2 X}(x)\|_2$ is injective.
It follows that $h_{XY}(h_{YX}(x))=x$. By symmetry, we also have $h_{YX}(h_{XY}(y))=y$, so $h_{XY}$ is the inverse
of $h_{YX}$.

The function $h_{YX}(x)=\|c-\alpha_{2X}(x)\|_Y$ is continuous, because $\alpha_{2X}$ and $\|\cdot \|_Y$ are continuous.
This proves that $h_{YX}$ is a homeomorphism. By the Intermediate Value Theorem, it has to be
strictly increasing or strictly decreasing. Since $h_{YX}(\|c\|_X)=0\leq h_{YX}(0)=\|c\|_Y$, the function $h_{YX}$ must be
strictly decreasing.
\end{proof}

\begin{proposition}
The function $h_{YX}$ is Lipschitz continuous.
\end{proposition}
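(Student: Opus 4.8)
The plan is to estimate the variation $h_{YX}(x_1)-h_{YX}(x_2)$ for $0\le x_1\le x_2\le\|c\|_X$ and show it is bounded by a constant multiple of $x_2-x_1$. Write $a_i=\alpha_{2X}(x_i)$ and $b_i=c-a_i$, so that $c=a_i+b_i$ is an $X2$-decomposition with $\|a_i\|_X=x_i$ and $h_{YX}(x_i)=\|b_i\|_Y$. Since $h_{YX}$ is decreasing, $h_{YX}(x_1)-h_{YX}(x_2)=\|b_1\|_Y-\|b_2\|_Y\ge 0$. The first step is to compare $b_2$ with a competitor built from $a_1$: consider $\widetilde a=\tfrac{x_1}{x_2}a_2$, which satisfies $\|\widetilde a\|_X=x_1$, so by optimality of $a_1$ for ${\bf M}_{2X}^c(x_1)$ we get $\|c-a_1\|_2\le\|c-\widetilde a\|_2$, i.e. $\|b_1\|_2\le\|c-\tfrac{x_1}{x_2}a_2\|_2$. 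Expanding the right-hand side around $b_2=c-a_2$ via $c-\tfrac{x_1}{x_2}a_2=b_2+(1-\tfrac{x_1}{x_2})a_2$ gives a bound of the form $\|b_1\|_2^2\le\|b_2\|_2^2+2(1-\tfrac{x_1}{x_2})\langle b_2,a_2\rangle+(1-\tfrac{x_1}{x_2})^2\|a_2\|_2^2$, and since $\langle a_2,b_2\rangle=x_2\|b_2\|_Y\ge 0$ by Proposition~\ref{prop:XYchar}, everything on the right is controlled. This shows $\|b_1\|_2-\|b_2\|_2$ is $O(x_2-x_1)$ with constants depending only on $\|c\|_2$ and $\|c\|_X$.

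The second step is to convert the $\ell_2$-estimate into a $\|\cdot\|_Y$-estimate. All norms on the finite-dimensional space $V$ are equivalent, so there is a constant $K$ with $\|v\|_Y\le K\|v\|_2$ for all $v$; but a direct application to $b_1-b_2$ is not quite what we want because $\|b_1\|_Y-\|b_2\|_Y\le\|b_1-b_2\|_Y\le K\|b_1-b_2\|_2$, and we still need to bound $\|b_1-b_2\|_2=\|a_1-a_2\|_2$. So the cleaner route is to bound $\|a_1-a_2\|_2$ directly, mimicking the computation in the proof of Proposition~\ref{prop:XYchar}: from $\|a_1-a_2\|_2^2=\langle a_1-a_2,b_2-b_1\rangle$ and the identities $\langle a_i,b_i\rangle=x_i\|b_i\|_Y$ together with the duality inequalities $\langle a_1,b_2\rangle\le x_1\|b_2\|_Y$ and $\langle a_2,b_1\rangle\le x_2\|b_1\|_Y$, one gets $\|a_1-a_2\|_2^2\le x_1\|b_2\|_Y+x_2\|b_1\|_Y-x_1\|b_1\|_Y-x_2\|b_2\|_Y=(x_2-x_1)(\|b_1\|_Y-\|b_2\|_Y)$. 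Combining this with the $\ell_2$ bound from step one, $\|b_1\|_Y-\|b_2\|_Y\le K\|a_1-a_2\|_2\le K\sqrt{(x_2-x_1)(\|b_1\|_Y-\|b_2\|_Y)}$, and dividing through yields $\|b_1\|_Y-\|b_2\|_Y\le K^2(x_2-x_1)$, which is exactly Lipschitz continuity with constant $K^2$.

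Actually, the inequality $\|a_1-a_2\|_2^2\le(x_2-x_1)(\|b_1\|_Y-\|b_2\|_Y)$ is already self-contained and does not even require step one: it directly gives $\|b_1\|_Y-\|b_2\|_Y\le\|b_1-b_2\|_Y\le K\|a_1-a_2\|_2\le K\sqrt{(x_2-x_1)}\sqrt{\|b_1\|_Y-\|b_2\|_Y}$, hence $\sqrt{\|b_1\|_Y-\|b_2\|_Y}\le K\sqrt{x_2-x_1}$ and so $\|b_1\|_Y-\|b_2\|_Y\le K^2(x_2-x_1)$. So the cleanest proof is: (i) derive the key algebraic identity $\|a_1-a_2\|_2^2=\langle a_1-a_2,b_2-b_1\rangle$ (just bilinearity plus $a_i+b_i=c$); (ii) bound the right side using $\langle a_i,b_i\rangle=x_i\|b_i\|_Y$ (Proposition~\ref{prop:XYchar}) and the duality inequality $\langle v,w\rangle\le\|v\|_X\|w\|_Y$; (iii) use norm equivalence on $V$ to pass from $\|\cdot\|_2$ to $\|\cdot\|_Y$; (iv) solve the resulting inequality for $\|b_1\|_Y-\|b_2\|_Y$. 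The main obstacle is really just organizing step (ii) so that the cross terms have the right sign — one must be careful to apply the duality inequality in the direction that produces an upper bound, and to use $x_1\le x_2$ at the right moment; none of this is deep, but it is where a sign error would creep in.
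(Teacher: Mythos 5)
Your final "cleanest route" paragraph is exactly the paper's argument: the key identity $\|a_1-a_2\|_2^2=\langle a_1-a_2,b_2-b_1\rangle$, the bound $\|a_1-a_2\|_2^2\leq (x_2-x_1)(\|b_1\|_Y-\|b_2\|_Y)$ via Proposition~\ref{prop:XYchar} plus duality, the reverse triangle inequality together with $\|\cdot\|_Y\leq K\|\cdot\|_2$, and dividing out. The initial "step one" detour via the competitor $\widetilde a=\tfrac{x_1}{x_2}a_2$ is superfluous, as you yourself note; discarding it gives essentially the paper's proof verbatim.
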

\begin{proof}
Since $\|\cdot\|_2$ and $\|\cdot\|_X$ are norms on a finite dimensional vector space, there exists positive constant $K>0$ such that
$\|c\|_Y\leq K\|c\|_2$. 
Suppose that $c=a_1+b_1$ and $c=a_2+b_2$ are $XY$-decompositions, with $x_2:=\|a_2\|_X>x_1:=\|a_1\|_X$. It follows that $y_2:=\|b_2\|_Y<y_1:=\|b_1\|_Y$.
We have
\begin{multline*}
K^{-2}(y_1-y_2)^2\leq  K^{-2}\|b_1-b_2\|_X^2   \leq \|b_1-b_2\|_2^2=\langle a_2-a_1,b_1-b_2\rangle\leq\\ \leq  \langle a_2,b_1\rangle+\langle a_1,b_2\rangle-\langle a_1,b_1\rangle-\langle a_2,b_2\rangle=
\langle a_2,b_1\rangle+\langle a_1,b_2\rangle-x_1y_1-x_2y_2\leq \\ \leq  x_2y_1+x_1y_2-x_1y_1-x_2y_2=(x_2-x_1)(y_1-y_2)
\end{multline*}

We conclude that
$$
\frac{|y_2-y_1|}{|x_2-x_1|}=\frac{y_1-y_2}{x_2-x_1}\leq K^2.
$$
\end{proof}

\subsection{Differentiating the Pareto curve}
The function $f_{2X}(x)$ is differentiable. A special case (but with a similar prove) was treated in \cite[\S2]{vdBF}.

\begin{proposition}\label{prop:derivativefsquared}
The function $f_{2X}(x)$ is differentiable on $[0,\|c\|_X]$, and
$$
f_{2X}'(x)f_{2X}(x)=-h_{YX}(x).
$$
\end{proposition}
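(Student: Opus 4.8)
The plan is to work with the function $g(x) := \tfrac12 f_{2X}(x)^2 = \tfrac12\|c-\alpha_{2X}(x)\|_2^2$, which is the squared distance from $c$ to the ball $B_X(x)$, and to show it is differentiable with $g'(x) = -h_{YX}(x)$; the stated identity then follows from the chain rule since $g'(x) = f_{2X}(x) f_{2X}'(x)$ (and $f_{2X}(x)>0$ on $[0,\|c\|_X)$, with the endpoint handled separately). The key geometric input is the characterization from Proposition~\ref{prop:XYchar}: writing $a = \alpha_{2X}(x)$ and $b = c-a$, we have $\langle a,b\rangle = \|a\|_X\|b\|_Y = x\, h_{YX}(x)$, and moreover $b$ is (a positive multiple of) a subgradient of $\|\cdot\|_X$ at $a$ — equivalently $\langle d, b\rangle \le \|d\|_X \|b\|_Y$ for all $d$ with equality at $d=a$. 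This says exactly that $-b = \alpha_{2X}(x) - c$ is an outward normal to $B_X(x)$ at the boundary point $a$, which is the standard setup for differentiating a squared distance to a convex set.

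First I would establish the two one-sided difference quotient estimates. For the upper bound on $g(x_2)-g(x_1)$ with $x_1 < x_2$: use the competitor $a_1 = \alpha_{2X}(x_1)$ rescaled, or better, use $a_2' := \tfrac{x_1}{x_2} a_2$ where $a_2 = \alpha_{2X}(x_2)$, which lies in $B_X(x_1)$; then $f_{2X}(x_1)^2 \le \|c - a_2'\|_2^2$ and expanding $c - a_2' = b_2 + (1-\tfrac{x_1}{x_2})a_2$ gives, after using $\langle a_2,b_2\rangle = x_2 y_2$ where $y_2 = h_{YX}(x_2)$,
\begin{equation*}
g(x_1) - g(x_2) \le -\Big(1-\tfrac{x_1}{x_2}\Big) x_2 y_2 + \tfrac12\Big(1-\tfrac{x_1}{x_2}\Big)^2\|a_2\|_2^2 = -(x_2-x_1)y_2 + O((x_2-x_1)^2).
\end{equation*}
Dually (reversing the roles via the $2Y$-decomposition $c = b + a$ and the ball $B_Y(y)$), using the competitor obtained by shrinking $b_1 = \alpha_{2Y}(y_1)$ into $B_Y(y_2)$, one gets the matching lower bound $g(x_1) - g(x_2) \ge -(x_2-x_1)y_1 - O((x_2-x_1)^2)$, equivalently $\tfrac12\|c\|_2^2 - g(x) $ differentiated from the $y$-side. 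Combining: $-y_1 \le \tfrac{g(x_2)-g(x_1)}{x_2-x_1} \le -y_2 + O(x_2-x_1)$ (with symmetric statements from the other side), and since $h_{YX}$ is continuous (even Lipschitz, by the Proposition just proved), letting $x_2 \to x_1$ squeezes the difference quotient to $-h_{YX}(x_1)$.

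I expect the main obstacle to be making the dual lower-bound estimate clean and handling the endpoints $x=0$ and $x=\|c\|_X$. At $x = \|c\|_X$ one has $a = c$, $b = 0$, $h_{YX} = 0$, and one only needs the left derivative of $g$ to be $0$, which follows from the upper bound above; at $x=0$ one has $a=0$, $f_{2X}(0) = \|c\|_2 > 0$, and $h_{YX}(0) = \|c\|_Y$, and again only a one-sided derivative is needed — the upper estimate with $x_1 = 0$ gives $g(x_2) - g(0) \le -x_2\|c\|_Y + O(x_2^2)$ provided one checks $\alpha_{2X}(x_2)/x_2$ converges to a maximizing direction; the reverse inequality comes from the dual side. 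An alternative that sidesteps the endpoint fuss: prove $g$ is convex (it is, as the squared distance to the nested convex sets $B_X(x)$ is convex in $x$ — this also follows from Lemma~\ref{lem:convexdecreasing} applied to $f_{2X}$ together with monotonicity), note a convex function has one-sided derivatives everywhere, identify the right derivative with $-h_{YX}(x^+)$ and the left with $-h_{YX}(x^-)$ from the estimates above, and conclude equality from continuity of $h_{YX}$. Finally, divide by $f_{2X}(x)$ to recover the stated form $f_{2X}'(x)f_{2X}(x) = -h_{YX}(x)$.
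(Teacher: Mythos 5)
Your overall strategy is the same as the paper's: differentiate $g(x)=\tfrac12 f_{2X}(x)^2$ by squeezing the difference quotient between values of $-h_{YX}$ at the two endpoints, using the optimality identity $\langle a,b\rangle=\|a\|_X\|b\|_Y$ from Proposition~\ref{prop:XYchar}, and then invoke continuity of $h_{YX}$. That architecture is sound, and your convexity remark about $g$ is also correct. However, there are two concrete errors in the execution that prevent the argument from closing.

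First, a sign error in the competitor estimate. Writing $a_i=\alpha_{2X}(x_i)$, $b_i=c-a_i$, $y_i=h_{YX}(x_i)$, the competitor $a_2'=(x_1/x_2)a_2\in B_X(x_1)$ gives
$$
c-a_2'=b_2+\bigl(1-\tfrac{x_1}{x_2}\bigr)a_2,\qquad
\|c-a_2'\|_2^2=\|b_2\|_2^2+2(x_2-x_1)y_2+\bigl(1-\tfrac{x_1}{x_2}\bigr)^2\|a_2\|_2^2,
$$
so the cross term has a \emph{plus} sign and the inequality reads $g(x_1)-g(x_2)\le +(x_2-x_1)y_2+O((x_2-x_1)^2)$, not $-(x_2-x_1)y_2+\cdots$. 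Equivalently, this competitor bounds the difference quotient \emph{from below}: $\tfrac{g(x_2)-g(x_1)}{x_2-x_1}\ge -y_2-O(x_2-x_1)$.

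Second, the complementary bound does not come from the $B_Y$ side as you propose. If you carry out the dual competitor (shrink $b_1$ by $(y_2/y_1)$ into $B_Y(y_2)$ and expand), you again land on a bound of the form $\tfrac{g(x_2)-g(x_1)}{x_2-x_1}\ge -y_2 - O(\cdot)$ — the same lower estimate as the first competitor — so nothing new is gained. What you actually need is the \emph{other primal} competitor $a_1'=(x_2/x_1)a_1\in B_X(x_2)$, which yields
$$
c-a_1'=b_1-\tfrac{x_2-x_1}{x_1}a_1,\qquad \tfrac{g(x_2)-g(x_1)}{x_2-x_1}\le -y_1+O(x_2-x_1).
$$
Together these give the squeeze $-y_2-O\le \tfrac{g(x_2)-g(x_1)}{x_2-x_1}\le -y_1+O$, and continuity (indeed Lipschitzness) of $h_{YX}$ finishes the argument. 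The paper avoids this bookkeeping altogether: it uses the single algebraic identity
$$
f_{2X}(x_2)^2-f_{2X}(x_1)^2\ge 2\langle c-\alpha_{2X}(x_1),\ \alpha_{2X}(x_1)-\alpha_{2X}(x_2)\rangle\ge -2(x_2-x_1)h_{YX}(x_1)
$$
(the first inequality just drops $\|\alpha_{2X}(x_2)-\alpha_{2X}(x_1)\|_2^2\ge 0$; the second uses the duality bound $\langle c-\alpha_{2X}(x_1),\alpha_{2X}(x_2)\rangle\le x_2h_{YX}(x_1)$), and then swaps $x_1\leftrightarrow x_2$ to get both sides with no error terms. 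You should either switch to the paper's one-identity-plus-symmetry argument, or fix your signs and replace the dual competitor with the rescaled $a_1'$.
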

\begin{proof}
If $0\leq x_1,x_2\leq \|c\|_X$ then we have
\begin{multline*}
(f_{2X}(x_2))^2-(f_{2X}(x_1))^2\geq \|c-\alpha_{2X}(x_2)\|_2^2-\|c-\alpha_{2X}(x_1)\|_2^2-\|\alpha_{2X}(x_2)-\alpha_{2X}(x_1)\|_2^2=\\
2\langle c-\alpha_{2X}(x_1),\alpha_{2X}(x_1)-\alpha_{2X}(x_2)\rangle=2x_1h_{YX}(x_1)-2\langle c-\alpha_{2X}(x_1),\alpha_{2X}(x_2)\rangle\geq\\
\geq 2(x_1-x_2)h_{YX}(x_1)=-2(x_2-x_1)h_{YX}(x_1).
\end{multline*}
Reversing the roles of $x_1,x_2$ gives us
$$
(f_{2X}(x_2))^2-(f_{2X}(x_1))^2\leq -2(x_2-x_1)h_{YX}(x_2).
$$
if $x_2>x_1$ then we obtain
$$
-2h_{YX}(x_1)\leq \frac{(f_{2X}(x_2))^2-(f_{2X}(x_1))^2}{x_2-x_1}\leq -2h_{YX}(x_2)
$$
and if $x_1>x_2$ then we have
$$
-2h_{YX}(x_2)\leq \frac{(f_{2X}(x_2))^2-(f_{2X}(x_1))^2}{x_2-x_1}\leq -2h_{YX}(x_1).
$$
Since $h_{YX}$ is continuous, it follows that $(f_{2X})^2$ is differentiable on $[0,\|c\|_X]$ with derivative $-2h_{YX}$. Since $f_{2X}(x)$ is positive on $[0,\|c\|_X)$,
it is differentiable on $[0,\|c\|_X)$. We have
$$
2f_{2X}'(x)f_{2X}(x)=\frac{d}{dx} \Big(f_{2X}(x)\Big)^2=-2h_{YX}(x).
$$
\end{proof}
\begin{proof}[Proof of Proposition~\ref{prop:areas}]
From Proposition~\ref{prop:derivativefsquared} follows that 
$$\textstyle \frac{d}{dx}\big(-\frac{1}{2}f_{2X}(x)^2\big)=-f_{2X}'(x)f_{2X}(x)=h_{YX}(x).
$$
So the area to the right of $x=\|a\|_X$ is
$$
\int_{\|a\|_X}^{\|c\|_X} h_{YX}(x)\,dx=\left[-{\textstyle \frac{1}{2}}f_{2X}(x)^2\right]^{\|c\|_X}_{\|a\|_X}=-{\textstyle \frac{1}{2}}(0^2-\|b\|_2^2)={\textstyle \frac{1}{2}}\|b\|_2^2.
$$
Similarly, the area below the graph and above the line $y=\|b\|_Y$ is equal to $\|a\|_2^2$. The area below the graph of $h_{YX}$ is equal to $\frac{1}{2}\|c\|_2^2=\frac{1}{2}\|a\|_2^2+\frac{1}{2}\|b\|_2^2+\langle a,b\rangle$.
The area of the $\|a\|_X\times \|b\|_Y$ rectangle is $\|a\|_X\|b\|_Y=\langle a,b\rangle$.
\end{proof}

The solution for ${\bf M}^c_{2X}(\lambda)$ can be obtained from a regularized quadratic minimization problem.
\begin{proposition}\label{prop:BPDNformulations}
The vector $a$ is a solution to ${\bf M}_{2X}^c(\lambda)$ if and only if
$$
\textstyle \|c-a\|_Y+\frac{1}{2\lambda}\|a\|_2^2
$$
is minimal.
\end{proposition}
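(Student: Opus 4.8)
The plan is to set $g(a):=\|c-a\|_Y+\frac{1}{2\lambda}\|a\|_2^2$ (with $\lambda>0$) and to show that the minimizers of $g$ over $V$ are exactly the solutions of ${\bf M}_{2X}^c(\lambda)$. First I would fix a solution $a_0$ of ${\bf M}_{2X}^c(\lambda)$, put $b_0:=c-a_0$, and record two facts about it. Since $a_0$ is feasible, $\|a_0\|_X\le\lambda$. Since $a_0$ solves ${\bf M}_{2X}^c(\lambda)$, the decomposition $c=a_0+b_0$ is an $X2$-decomposition, so Proposition~\ref{prop:XYchar} gives $\langle a_0,b_0\rangle=\|a_0\|_X\|b_0\|_Y$; and since either $\|a_0\|_X=\lambda$ (Lemma~\ref{lem:aXx}, the case $\lambda\le\|c\|_X$) or else $a_0=c$ and $b_0=0$ (the case $\lambda\ge\|c\|_X$), this sharpens in all cases to
$$\langle a_0,b_0\rangle=\lambda\|b_0\|_Y.$$

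For the direction ``a solution of ${\bf M}_{2X}^c(\lambda)$ minimizes $g$'' I would estimate $g(a)-g(a_0)$ from below for an arbitrary $a\in V$, writing $b:=c-a$. Expanding $\|a\|_2^2=\|a_0+(a-a_0)\|_2^2$ gives $\|a\|_2^2-\|a_0\|_2^2\ge 2\langle a_0,a-a_0\rangle$, and since $a-a_0=b_0-b$ this yields
$$g(a)-g(a_0)\ge \|b\|_Y-\|b_0\|_Y+\frac{1}{\lambda}\bigl(\langle a_0,b_0\rangle-\langle a_0,b\rangle\bigr).$$
Substituting $\langle a_0,b_0\rangle=\lambda\|b_0\|_Y$ and the duality estimate $\langle a_0,b\rangle\le\|a_0\|_X\|b\|_Y\le\lambda\|b\|_Y$, the right-hand side becomes $\|b\|_Y-\frac{1}{\lambda}\langle a_0,b\rangle\ge 0$, so $g(a)\ge g(a_0)$.

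For the converse I would observe that $\frac{1}{2\lambda}\|\cdot\|_2^2$ is strictly convex and $\|c-\cdot\|_Y$ is convex, so $g$ is strictly convex; being continuous and coercive, it attains its minimum at a unique point. Since a solution of ${\bf M}_{2X}^c(\lambda)$ exists (the feasible set $B_X(\lambda)$ is compact and $a\mapsto\|c-a\|_Y$ is continuous) and is a minimizer of $g$ by the previous step, it must be this unique minimizer; conversely the minimizer of $g$ is that solution. This yields the asserted equivalence (and, as a by-product, uniqueness of the solution of ${\bf M}_{2X}^c(\lambda)$).

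The one place that needs care is the identity $\langle a_0,b_0\rangle=\lambda\|b_0\|_Y$: when the constraint $\|a\|_X\le\lambda$ is slack one has $\|a_0\|_X<\lambda$, and one must note that slackness forces $\|c-a_0\|_2=0$ — otherwise, moving $a_0$ slightly toward $c$ exactly as in the proof of Lemma~\ref{lem:aXx} keeps feasibility and strictly decreases $\|c-a_0\|_2$ — so $b_0=0$ and the identity holds trivially. Everything else is the routine convexity computation above.
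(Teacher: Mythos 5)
Your proof is correct, and it runs in the opposite direction from the paper's. The paper begins with a minimizer $a$ of $g(a)=\|c-a\|_Y+\frac{1}{2\lambda}\|a\|_2^2$ and asserts, without further argument, that $c=a+b$ is already an $X2$-decomposition, from which it extracts $\langle a,b\rangle=\|a\|_X\|b\|_Y$; the one-variable variational computation $f'(1)=0$ along the direction $b$ then yields $\langle a,b\rangle=\lambda\|b\|_Y$, hence $\|a\|_X=\lambda$. The unjustified assertion is in fact the delicate point: the stationarity condition $0\in\partial g(a)$ gives $\frac{1}{\lambda}a\in\partial\|\cdot\|_Y(b)$, which says both $\|a\|_X\le\lambda$ and $\langle a,b\rangle=\lambda\|b\|_Y$, and only by combining these with the duality bound $\langle a,b\rangle\le\|a\|_X\|b\|_Y$ does one recover that $c=a+b$ is an $X2$-decomposition — the 1D derivative along $b$ alone does not produce the constraint $\|a\|_X\le\lambda$. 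You instead start from a solution $a_0$ of ${\bf M}_{2X}^c(\lambda)$, where Lemma~\ref{lem:ParetoEfficient} and Proposition~\ref{prop:XYchar} give $\langle a_0,b_0\rangle=\|a_0\|_X\|b_0\|_Y$ essentially for free, and then establish $g(a)\ge g(a_0)$ by a short convexity estimate, after which strict convexity and coercivity of $g$ close the loop. This direction is genuinely easier and, as a by-product, supplies the uniqueness facts that the paper merely invokes. Your handling of the boundary cases ($\|a_0\|_X=\lambda$ versus $b_0=0$ when the constraint is slack) is correct. One small slip: when arguing existence of a solution to ${\bf M}_{2X}^c(\lambda)$ by compactness, the continuous objective is $\|c-a\|_2$, not $\|c-a\|_Y$; this is a typo and does not affect the argument.
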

\begin{proof}
We can choose $a$ such that $\|c-a\|_Y+\frac{1}{2\lambda}\|a\|_2^2$ is minimal. Let $b=c-a$. Then $c=a+b$ is an $X2$-decomposition,
so $\langle a,b\rangle=\|a\|_X\|b\|_Y$. The function
$$
f(t)=\textstyle \|tb\|_Y+\frac{1}{2\lambda}\|c-tb\|_2^2=t\|b\|_Y+\frac{1}{\lambda}\langle c-bt,c-bt\rangle 
$$
has a minimum at $t=1$. So we have
$$
0=f'(1)=\textstyle \|b\|_Y-\frac{1}{\lambda}\langle a,b\rangle=\|b\|_Y-\frac{1}{\lambda}\|a\|_X\|b\|_Y
$$
and $\|a\|_X=\lambda$. This shows that $a$ is solution ${\bf M}_{2X}^c(\lambda)$. Since $M_{2X}^c(\lambda)$ has a unique solution, this unique solution $a$ must minimize $\|c-a\|_Y+\frac{1}{2\lambda}\|a\|_2^2$.
\end{proof}

A similar argument shows that $a$ is a solution to ${\bf M}_{2X}^c(\lambda)$ if and only if $\|c-a\|_Y+\frac{1}{\lambda}\|a\|_2$ is minimal.

\section{Tight vectors}
Suppose that $V$ is an $n$-dimensional vector space with a positive definite bilinear form, and that $\|\cdot\|_X$ and $\|\cdot\|_Y$ are norms which are dual to each other.
From the definitions it is clear that $f_{YX}^c(x)\leq h_{YX}^c(x)$. Recall that $c$ is {\em tight} if we have equality for all $x\in [0,\|c\|_X]$.
If $c\in V$ is tight and rigid, then  $\alpha_{2X}(x)=\alpha_{YX}(x)$ for $x\in [0,\|c\|_X]$. If every vector in $V$ is tight, then $\|\cdot\|_X$ and $\|\cdot\|_Y$ are called {\em tight norms}. In this section we study properties of tight vectors and tight norms.

\subsection{An example of a norm that is not tight}
Consider the norm on $\R^2$ defined by $\|c\|_X=\max\{|c_2|,|2c_2-c_1|\}$ for $c=(c_1,c_2)^t$. Its dual norm is given by $\|c\|_Y=\max\{|c_1+c_2|,|3c_1+c_2|\}$.
The unit balls are polar duals of each other:

\centerline{\includegraphics[width=3in]{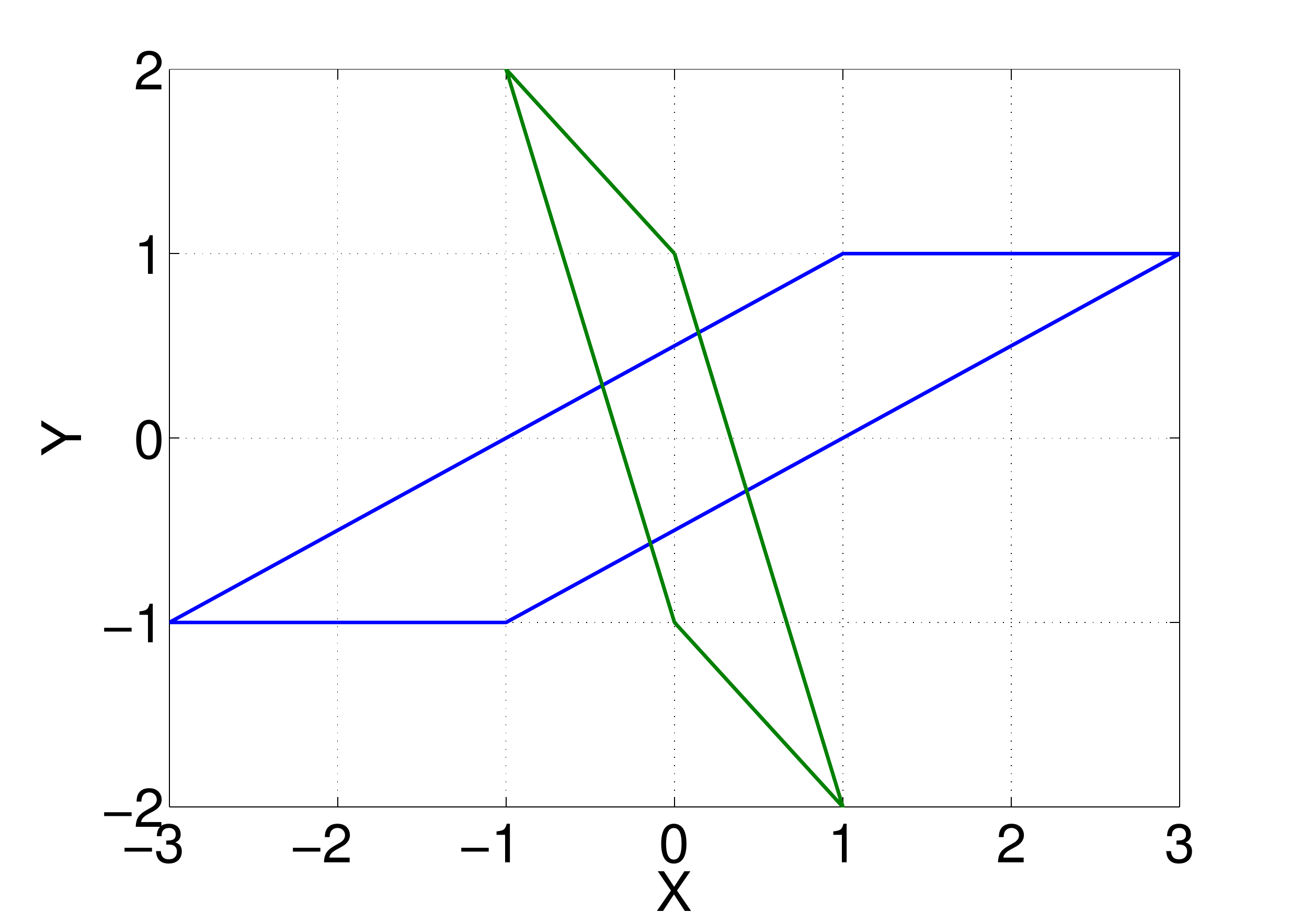}}

Consider the vector $c=(3,12)^t$. Below are the functions $f_{YX}^c$ and $h_{YX}^c$.
We see that $f^c_{YX}$ and $h^c_{YX}$ are not the same, so $c$ is not tight. The example shows that $h_{YX}^c$ is not always convex.

\centerline{\includegraphics[width=4in]{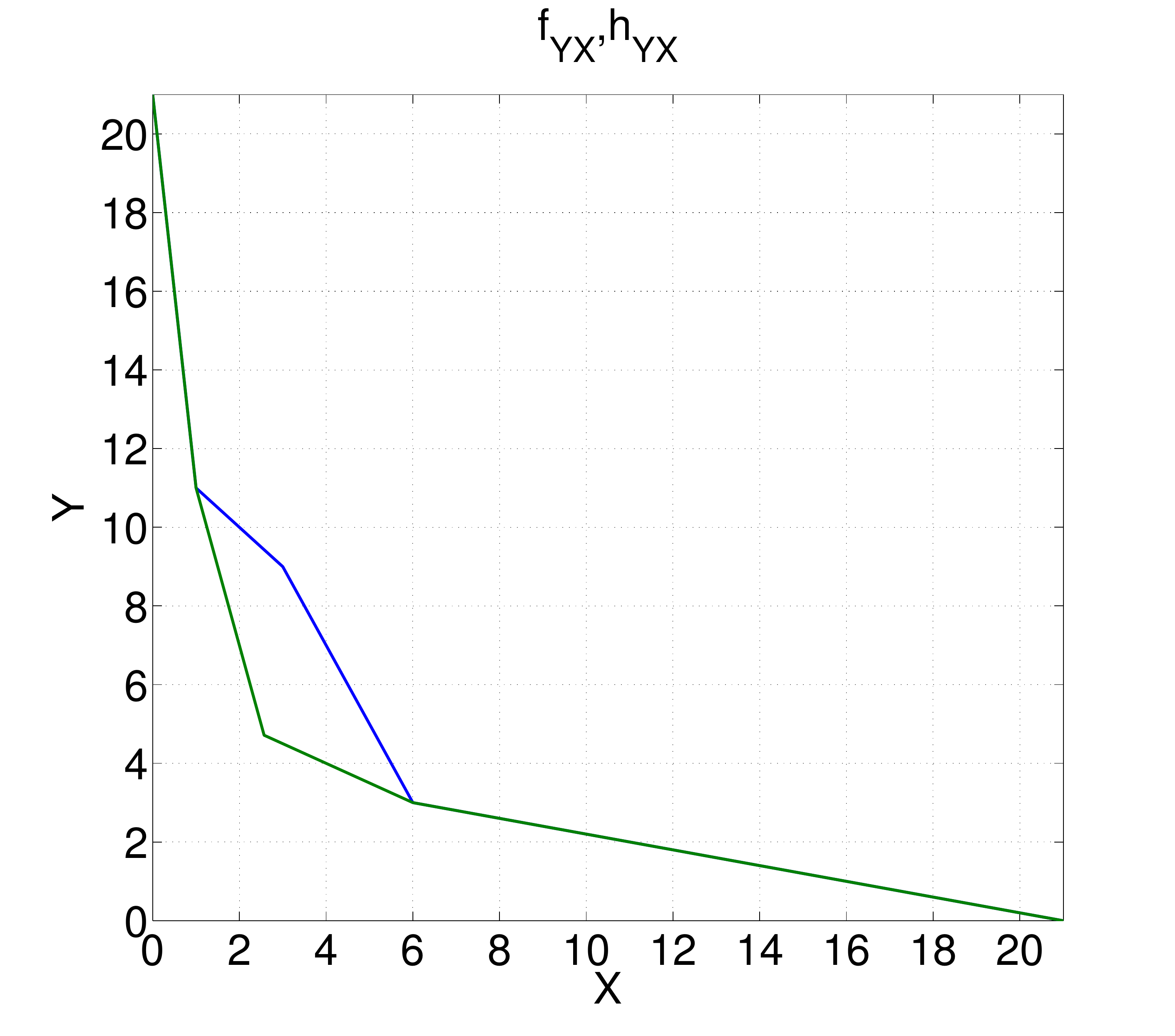}}

The trajectories of $a_{YX}^c$ (green) and $a_{2X}^c$ (blue) are sketched in the  graph below.

\centerline{\includegraphics[width=2.5in]{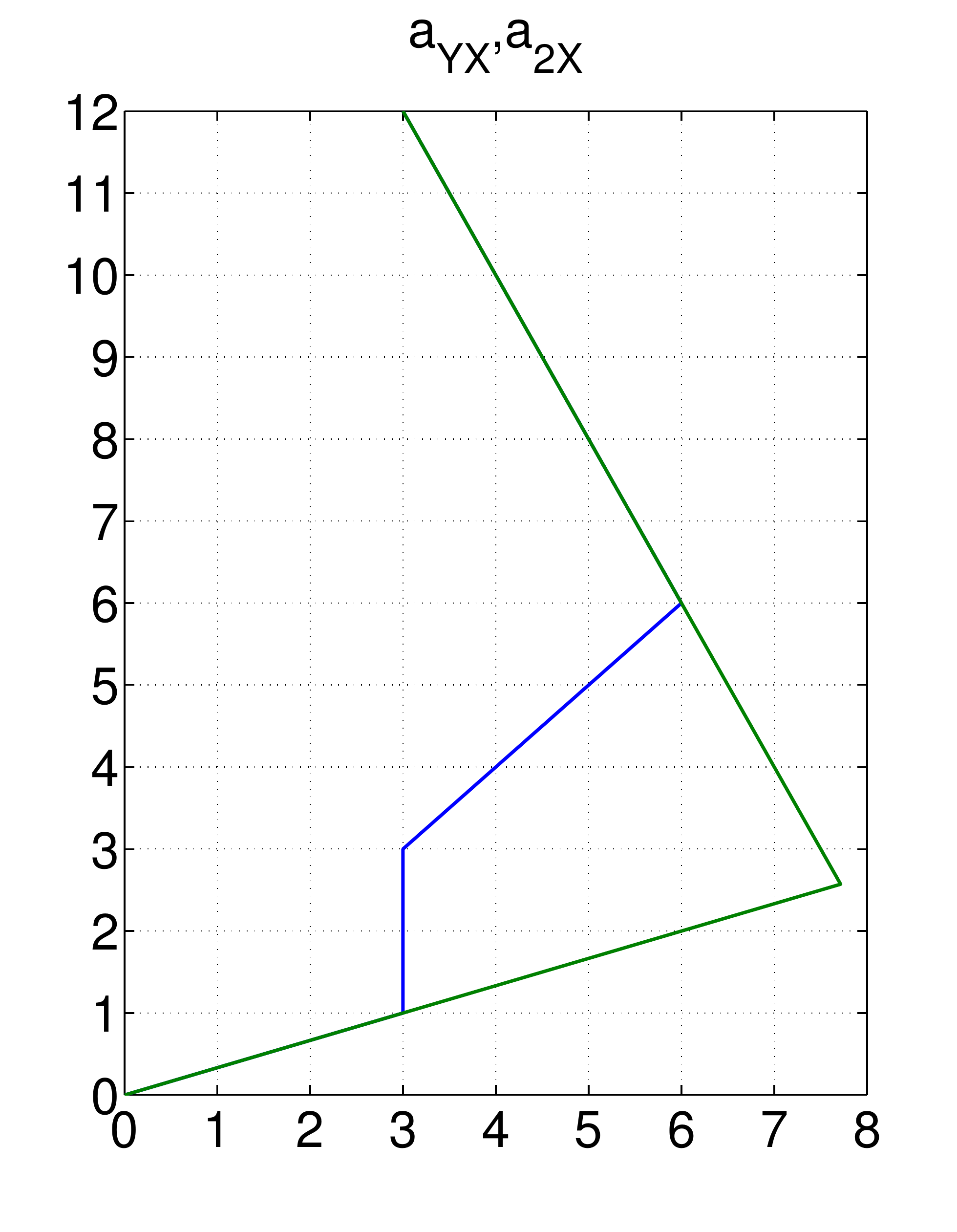}}

For every positive value of $x$, $a_{2X}^c(x)/x$ lies on the unit ball $B_X$. In fact, $a_{2X}^c(x)/x$ is the vector in $B_X$ that is closest to $c/x$ with respect to the euclidean distance. In the  graph below, $c/x$ and $a_{2X}^c(x)/x$ are plotted for  for various values of $x$. Note that $a_{2X}^c(x)/x$ is constant  on the intervals $(0,1]$ and $[3,6]$.
On these intervals $a_{2X}^c(x)$ moves on a line through the origin. On the other intervals $[1,3]$ and $[6,21]$, $a_{2X}^c(x)$ moves on a line through $c$.

\centerline{\includegraphics[width=2in]{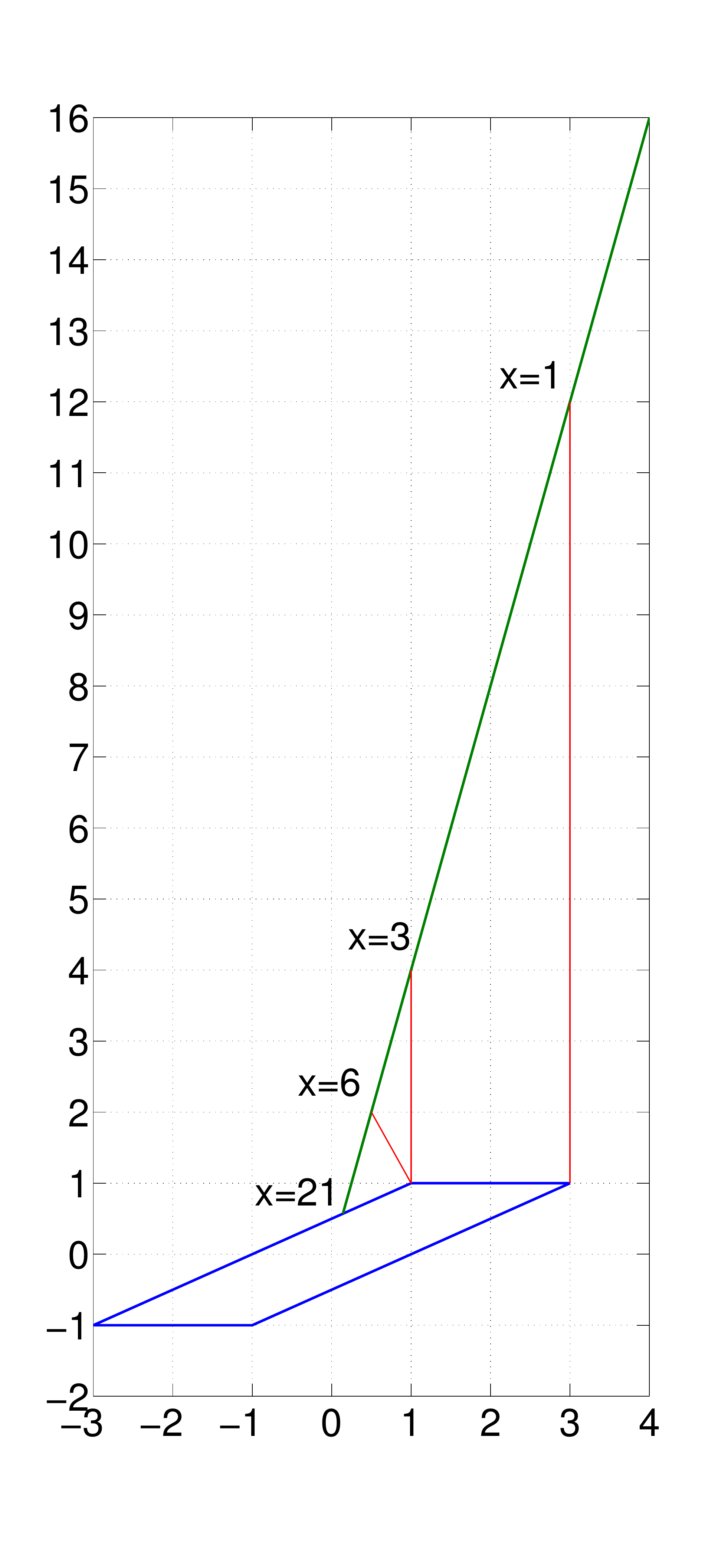}}

The singular value region for the vector $c$ is as follows:

\centerline{\includegraphics[width=2.5in]{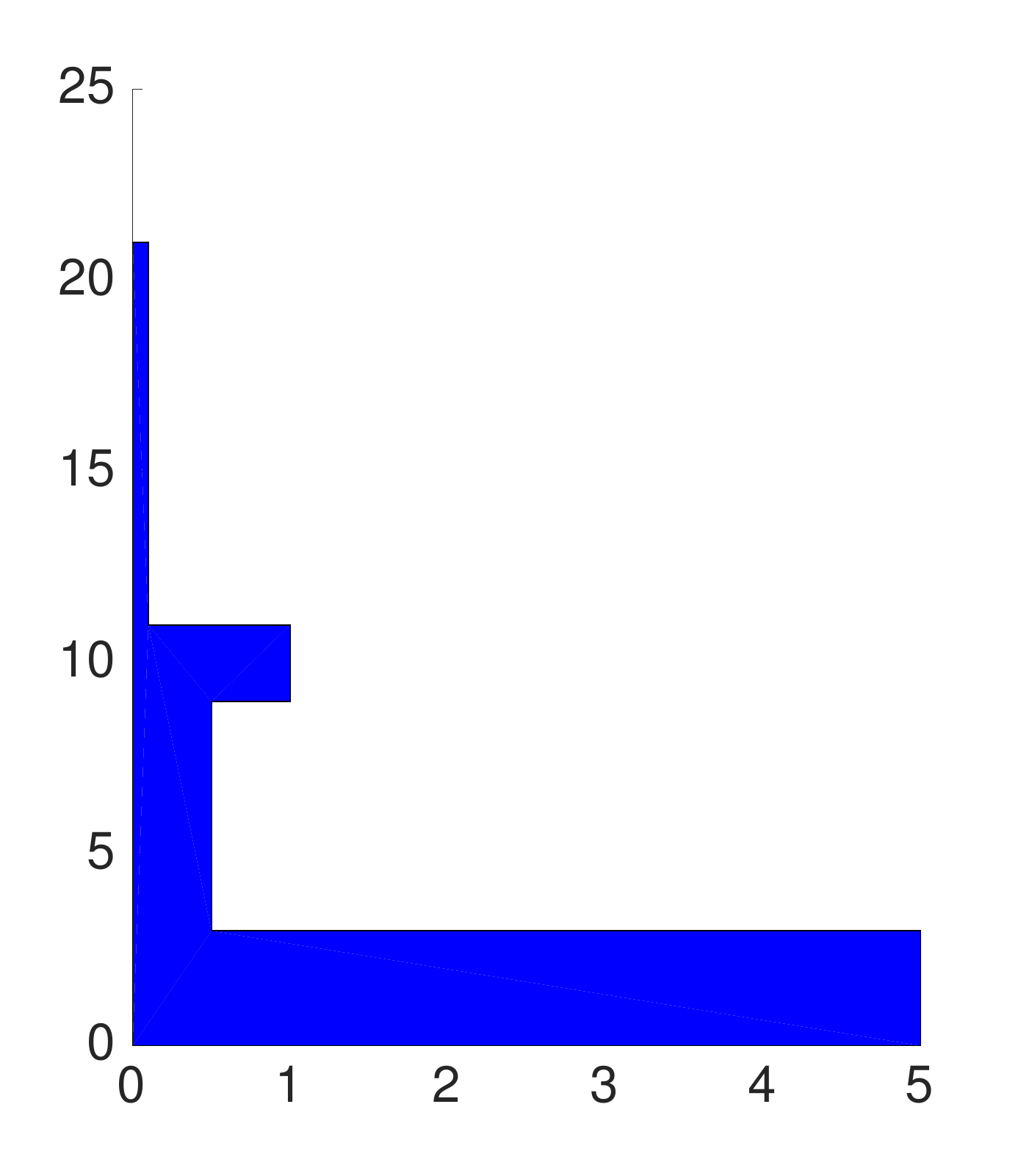}}

If we want to iterpret the singular value region in terms of singular values, we must allow negative multiplicities.
The singular values of $c$ are  21 with multiplicity $0.1$, $11$ with multiplicity $0.9$, $9$ with multiplicity $-0.5$ and $3$ with multiplicity $4.5$.
\subsection{The Pareto sub-frontier of sums}

We have defined the concatenation of two graphs,  and now we define the concatenation of two paths in $V$ in a similar manner.
If $\beta:[0,t]\to V$ and $\gamma:[0,s]\to V$ are curves with $\beta(0)=\gamma(0)=0$, then we define the concatenation $\beta\star\gamma:[0,s+t]\to V$ by
$$
(\beta\star \gamma)(x)=\begin{cases}
\beta(x) & \mbox{if $0\leq x\leq t$,}\\
\beta(t)+\gamma(s-t) & \mbox{if $t\leq x\leq s+t$.}
\end{cases}
$$
\begin{theorem}\label{thm:cabtight}
Suppose that $c\in V$ is tight and that $c=a+b$ is an $XY$-decomposition.
\begin{enumerate}
\item  $a$ and $b$ are tight as well;
\item  $\|c\|_X=\|a\|_X+\|b\|_X$ and $\|c\|_Y=\|a\|_Y+\|b\|_Y$;
\item $f_{YX}^c=f_{YX}^a\star f_{YX}^b$;
\item $\alpha_{YX}^c=\alpha_{YX}^a\star \alpha_{YX}^b$.
\end{enumerate}
\end{theorem}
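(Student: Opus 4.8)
The plan is to build everything on the characterization of $X2$-decompositions via the Cauchy--Schwarz equality $\langle a,b\rangle=\|a\|_X\|b\|_Y$ (Proposition~\ref{prop:XYchar}), together with the fact that for tight $c$ the functions $f_{YX}^c$ and $h_{YX}^c$ coincide and hence the Pareto frontier is traced by the $X2$-trajectory $\alpha_{2X}^c$. First I would establish (2). Since $c=a+b$ is an $XY$-decomposition and $c$ is tight, it is in particular an $X2$-decomposition, so $\langle a,b\rangle=\|a\|_X\|b\|_Y$. From duality $\|c\|_Y\le\|a\|_Y+\|b\|_Y$ always; for the reverse, I would use $\|c\|_X^2$-type bookkeeping or, more cleanly, pick a dual vector $d$ with $\|d\|_X=1$, $\langle b,d\rangle=\|b\|_Y$ and note that along the segment from $a$ towards $\|a\|_X d$ the $\ell_2$-norm of the complementary part cannot decrease (this is exactly the first-variation computation in the proof of Proposition~\ref{prop:XYchar}); combining the first-order conditions at both endpoints $a$ and $b$ forces $\|c\|_X=\|a\|_X+\|b\|_X$, and dually $\|c\|_Y=\|a\|_Y+\|b\|_Y$. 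Alternatively one can read (2) off from Proposition~\ref{prop:areas}: the rectangle $[0,\|a\|_X]\times[0,\|b\|_Y]$ has area $\langle a,b\rangle=\|a\|_X\|b\|_Y$ and sits under the frontier, and using that for tight $c$ the frontier passes through the corner $(\|a\|_X,\|b\|_Y)$, the area identities pin down the endpoints $\|c\|_X=f_{XY}^c(0)$ and $\|c\|_Y=f_{YX}^c(0)$.

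Next I would prove (3) and (4) together, since the concatenation of trajectories and the concatenation of Pareto curves are the same statement viewed through $\alpha$ and $f$. The idea: for $0\le x\le\|a\|_X$, I claim $\alpha_{2X}^c(x)=\alpha_{2X}^a(x)$, i.e. the optimal denoising of $c$ at budget $x\le\|a\|_X$ never touches the $b$-part. To see this, write $\alpha_{2X}^a(x)=a'$ with $a=a'+b'$ an $X2$-decomposition of $a$; then $c=a'+(b'+b)$, and I want $\langle a',b'+b\rangle=\|a'\|_X\|b'+b\|_Y$. Here I would use that $a$ and $b$ "align" in the sense coming from tightness of $c$: because $c=a+b$ is an $XY$-decomposition of a tight vector, the $2Y$-trajectory of $c$ from $b$ outward agrees with that of $b$, so $b'$, $b$, and their sum all lie on a common facet structure making $\|b'+b\|_Y=\|b'\|_Y+\|b\|_Y$ and $\langle a',b\rangle=\|a'\|_X\|b\|_Y$; then $\langle a',b'+b\rangle=\langle a',b'\rangle+\langle a',b\rangle=\|a'\|_X(\|b'\|_Y+\|b\|_Y)=\|a'\|_X\|b'+b\|_Y$. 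Uniqueness of $\alpha_{2X}^c(x)$ (rigidity from the $\ell_2$-norm) then gives $\alpha_{2X}^c(x)=a'=\alpha_{2X}^a(x)$. For $\|a\|_X\le x\le\|c\|_X$ the symmetric argument on the $Y$-side, parametrizing by the noise budget $y$ and using $\alpha_{2Y}^c(y)=\alpha_{2Y}^b(y)$ for $y\le\|b\|_Y$, shows $\alpha_{2X}^c(x)=a+\alpha_{2X}^b(x-\|a\|_X)$. These two cases are precisely $\alpha_{YX}^c=\alpha_{YX}^a\star\alpha_{YX}^b$ (using that for tight vectors $\alpha_{2X}=\alpha_{YX}$), and taking $Y$-norms of $c$ minus these gives $f_{YX}^c=f_{YX}^a\star f_{YX}^b$; the breakpoint of the concatenation sits at $x=\|a\|_X$ with value $f_{YX}^c(\|a\|_X)=\|b\|_Y$, consistent with (2).

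Finally, (1): once (3) holds, $f_{YX}^a$ is the restriction/reparametrization of $f_{YX}^c$ to $[0,\|a\|_X]$, which equals $h_{YX}^c$ on that range by tightness of $c$; I must identify this with $h_{YX}^a$. Since $\alpha_{2X}^c(x)=\alpha_{2X}^a(x)$ for $x\le\|a\|_X$ as shown above, we get $h_{YX}^a(x)=\|a-\alpha_{2X}^a(x)\|_Y=\|a-\alpha_{2X}^c(x)\|_Y$; comparing with $h_{YX}^c(x)=\|c-\alpha_{2X}^c(x)\|_Y=\|b+(a-\alpha_{2X}^c(x))\|_Y$ and using the additivity $\|b+(a-\alpha_{2X}^c(x))\|_Y=\|b\|_Y+\|a-\alpha_{2X}^c(x)\|_Y$ (same alignment fact as in the (3)-step), we conclude $h_{YX}^a=h_{YX}^c(\cdot)-\|b\|_Y=f_{YX}^c(\cdot)-\|b\|_Y=f_{YX}^a$ on $[0,\|a\|_X]$, so $a$ is tight; $b$ is tight by symmetry.

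The main obstacle is the "alignment" step underlying both (3) and (1): the assertion that if $c=a+b$ is an $XY$-decomposition of a tight vector, then refining $a$ further as $a=a'+b'$ (an $X2$-decomposition of $a$) keeps $a'$ and $b'+b$ Cauchy--Schwarz-tight, equivalently $\|b'+b\|_Y=\|b'\|_Y+\|b\|_Y$ and $\langle a',b\rangle=\|a'\|_X\|b\|_Y$. I expect this to require a careful use of the equality case in the dual-norm inequality (a common supporting functional / common face of the unit ball $B_Y$ containing $b$, $b'$, and $b'+b$), extracted from tightness of $c$; the cleanest route is probably to first prove the weaker statement that $\langle a,b\rangle=\|a\|_X\|b\|_Y$ together with tightness forces a single linear functional $\varphi$ with $\|\varphi\|_X=1$ attaining $\|b\|_Y$ on $b$ and attaining the full slope on the entire $\alpha_{2X}^c$ trajectory up to $x=\|a\|_X$, and then everything collapses to linearity of $\varphi$.
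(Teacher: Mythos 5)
Your plan correctly identifies the crux of the difficulty and isolates it as the ``alignment'' claim: that refining the $XY$-decomposition $c=a+b$ by further decomposing $a=a'+b'$ stays Cauchy--Schwarz-tight, i.e.\ $\langle a',b\rangle=\|a'\|_X\|b\|_Y$ and $\|b'+b\|_Y=\|b'\|_Y+\|b\|_Y$. You flag this honestly as the main obstacle, but you do not prove it, and the sketch you offer (a single supporting functional $\varphi$ attaining the full slope along the whole $\alpha_{2X}^c$ trajectory) is not obviously extractable from the hypotheses without essentially redoing the whole theorem. In fact, the cleanest way I can see to establish the alignment claim is to first have parts (1), (2) and (4) in hand and deduce it, which would make your argument circular: you want to prove (3) and (4) \emph{from} alignment, then prove (1) \emph{from} (3) via the same alignment. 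Your treatment of (2) has a similar soft spot: the triangle inequality gives $\|c\|_Y\le\|a\|_Y+\|b\|_Y$ for free, but neither the first-variation sketch nor the Proposition~\ref{prop:areas} hint as stated actually yields the reverse inequality without the additional input you are trying to produce.

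The paper avoids all of this with a different and more economical mechanism. For $0\le x\le\|a\|_X$ it sets up the chain
$$
h_{YX}^c(x)-\|b\|_Y=f_{YX}^c(x)-\|b\|_Y\le\|c-d\|_Y-\|c-a\|_Y\le\|a-d\|_Y=f_{YX}^a(x)\le h_{YX}^a(x),
$$
where $d$ realizes $f_{YX}^a(x)$, and then integrates: by Proposition~\ref{prop:areas}, both extreme ends of the chain integrate over $[0,\|a\|_X]$ to the same number $\tfrac12\|a\|_2^2$. A strict inequality anywhere would violate this, so equality holds identically. That single integration argument simultaneously gives tightness of $a$ (hence (1)), the norm additivity at $x=0$ (hence (2)), and the identity $f_{YX}^c=f_{YX}^a+\|b\|_Y$ on $[0,\|a\|_X]$ from which (3) and then (4) follow painlessly — (4) uses only the triangle inequality together with the already-known formula for $f_{YX}^c$, so no alignment is ever needed. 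The ordering matters: the paper proves (1) and (2) \emph{first} and then reads (3) and (4) off of them, whereas your plan tries to get (3)--(4) before (1), which is what forces you into the unproved alignment lemma. I would suggest reversing the order and replacing the alignment lemma by the integration-of-inequalities trick.
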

\begin{proof}
Suppose that $0\leq x\leq \|a\|_X$. Choose a vector $d$ such that $\|d\|_X=x$ and $\|a-d\|_Y=f_{YX}^a(x)$.
We have
$$
h_{YX}^c(x)-\|b\|_Y= f_{YX}^c(x)-\|b\|_Y   \leq \|c-d\|_Y-\|c-a\|_Y\leq \|a-d\|_Y=f_{YX}^a(x)\leq h_{YX}^a(x).
$$
Suppose that one of the inequalities above is strict for some $x\in [0,\|a\|_X]$. 
Integrating $t$ from $0$ to $x$ yields
\begin{multline*}
{\textstyle \frac{1}{2}}\|a\|^2_2=\int_0^{\|a\|_X} h_{YX}^a(t)\,dt>\int_0^{\|a\|_X} (h_{YX}^c(x)-\|b\|_Y)\,dt=\\={\textstyle \frac{1}{2}}(\|c\|_2^2-\|a\|_2^2)-\|a\|_X\|b\|_Y=
{\textstyle \frac{1}{2}}(\|a+b\|_2^2-\|b\|_2^2-2\langle a,b\rangle)={\textstyle \frac{1}{2}}\|a\|_2^2.
\end{multline*}
Contradiction. We conclude that $h_{YX}^a(x)=f_{YX}^a(x)=h_{YX}^c(x)-\|b\|_Y$ for all $x\in [0,\|a\|_X]$. In particular, $a$ is tight. By symmetry, $b$ is tight as well. This proves (1).
For $x=0$ we get $\|a\|_Y=h_{YX}^a(0)=h_{YX}^c(0)-\|b\|_Y=\|c\|_Y-\|b\|_Y$. So we have $\|c\|_Y=\|a\|_Y+\|b\|_Y$ and by symmetry we also have
$\|c\|_X=\|a\|_X+\|b\|_X$. This proves (2).  

If $0\leq x\leq \|a\|_X$, then we have 
$$
f_{YX}^c(x)=h_{YX}^c(x)=h_{YX}^a(x)+\|b\|_Y=f_{YX}^a(x)+f_{YX}^b(0)=(f_{YX}^a\star f_{YX}^b)(x).
$$
By symmetry, if $0\leq y\leq \|b\|_Y$, then we have
$$
f_{XY}^c(y)=f_{XY}^b(y)+\|a\|_X.
$$
It follows that
$$
f_{YX}^b(f_{XY}^c(y)-\|a\|_X)=f_{YX}^b(f_{XY}^b(y))=y.
$$
Substituting $y=f_{YX}^c(x)$ yields
$$
(f_{YX}^a\star f_{YX}^b)(x)=f_{YX}^b(x-\|a\|_X)=f_{YX}^c(x).
$$
This proves (3). 

If $x\in [0,\|a\|_X]$, then we have 
$$
\|(\alpha_{YX}^a\star \alpha_{YX}^b)(x)\|_X=\|\alpha_{YX}^a(x)\|_X=x
$$
So we get
$$
\|c-(\alpha_{YX}^a\star \alpha_{YX}^b)(x)\|_Y=\|c-\alpha_{YX}^a(x)\|_Y\leq \|b\|_Y+\|a-\alpha_{YX}^a(x)\|_Y=\|b\|_Y+h_{YX}^a(x)=f_{YX}^c(x).
$$
If $x\in [\|a\|_X,\|c\|_X]$ then we have
\begin{multline*}
\|(\alpha_{YX}^a\star \alpha_{YX}^b)(x)\|_X=\|a+\alpha_{YX}^b(x-\|a\|_X)\|_X\leq \\ \leq \|a\|_X+\|\alpha_{YX}^b(x-\|a\|_X)\|_X=
\|a\|_X+(x-\|a\|_X)=x
\end{multline*}
and
\begin{multline*}
\|c-(\alpha_{YX}^a\star \alpha_{YX}^b)(x)\|_Y=\|c-a-\alpha_{YX}^b(x-\|a\|_X)\|_Y=\\=\|b-\alpha_{YX}^b(x-\|a\|_X)\|_Y=f_{YX}^b(x-\|a\|_X)=f_{YX}^c(x).
\end{multline*}
So for all $x\in [0,\|c\|_X]$ we have
$$
\|(\alpha_{YX}^a\star \alpha_{YX}^b)(x)\|_X\leq x
$$
and
$$
\|c-(\alpha_{YX}^a\star \alpha_{YX}^b)(x)\|_{Y}\leq f_{YX}^c(x).
$$
So $(\alpha_{YX}^a\star \alpha_{YX}^b)(x)$ is the unique solution to ${\bf M}_{YX}^c(x)$ and therefore equal to $\alpha_{YX}^c(x)$. This proves (4).
\end{proof}
\begin{proof}[Proof of Proposition~\ref{prop:denoisetransitive}]
We already now part (1) in the case $x_1\leq x_2$. Assume that $x_1>x_2$.
Let  $c=a+b$ be the $XY$-decomposition with $\|a\|_X=x_1$. This is also a $X2$-decomposition and $a=\proj_X(c,x_1)$.
Now $a$ and $b$ are tight by Theorem~\ref{thm:cabtight}(1).
Let $a=a_1+a_2$ be an $XY$-decomposition with $\|a_1\|_X=x_2$. Now $a=a_1+a_2$ is also an $X2$-decomposition
and $a_1=\proj_X(a,x_2)=\proj_X(\proj_X(c,x_1),x_2)$. 
We have $a_1=\alpha_{YX}^a(x_1)=(\alpha_{YX}^a\star \alpha_{YX}^b)(x_1)=\alpha_{YX}^c(x_1)$.
So $c=a_1+(c-a_1)$ is an $XY$-decomposition (and $X2$-decomposition) and $a_1=\proj_X(c,x_2)$. 
This proves that $\proj_X(c,x_2)=\proj_X(\proj_X(c,x_1),x_2)$ and part (1)  has been proved.

Suppose that $c=a+b$ is an $X2$-decomposition with $\|a\|_X=x_1$. Then $b=\shrink_X(c,x_1)$.
Let $b=b_1+b_2$ be an $X2$-decomposition with $\|b_1\|_X=x_2$.  Then $b_2=\shrink_X(b,x_2)=\shrink_X(\shrink_X(c,x_1),x_2)$.
Similar reasoning as before shows that $c=(a+b_1)+b_2$ is an $X2$-decomposition.
Also $(a+b_1)$ is tight and $(a+b_1)=a+b_1$ is an $XY$-decomposition, so $\|a+b_1\|_X=\|a\|_X+\|b_1\|_X=x_1+x_2$.
Therefore, $b_2=\shrink_X(c,x_1+x_2)$ and we are done.
\end{proof}

\begin{lemma}\label{lem:XYstar}
If $c=a+b$ is an $X2$-decomposition and $a$ and $b$ are tight, then we have $h^{c}_{YX}= h^{a}_{YX}\star h^{b}_{YX}$.
\end{lemma}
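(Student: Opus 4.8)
The plan is to exploit the characterization of the Pareto sub-frontier as the graph of $h_{YX}^c$, together with Theorem~\ref{thm:cabtight} and the area formulas in Proposition~\ref{prop:areas}. Since $c=a+b$ is an $X2$-decomposition, it is also a $2Y$-decomposition and an $XY$-decomposition \emph{provided $c$ is tight}; but here we are only told $a$ and $b$ are tight, not $c$. So the first order of business is to show $c$ is tight. I would argue: since $a$ and $b$ are tight, each has a slope decomposition by Theorem~\ref{theo:tightslope}(1), say $a=a_1+\cdots+a_p$ and $b=b_1+\cdots+b_q$. Because $c=a+b$ is an $X2$-decomposition we have $\langle a,b\rangle=\|a\|_X\|b\|_Y$, and I would combine this with the orthogonality-type relations $\langle a_i,a_j\rangle=\|a_i\|_X\|a_j\|_Y$ and $\langle b_i,b_j\rangle=\|b_i\|_X\|b_j\|_Y$ to deduce $\langle a_i,b_j\rangle=\|a_i\|_X\|b_j\|_Y$ for all $i,j$ (using that the Cauchy--Schwarz-type bound $\langle u,w\rangle\le\|u\|_X\|w\|_Y$ forces each cross term to be maximal when the total is). This shows the concatenation of the two slope decompositions — after reindexing so the slopes are strictly decreasing, merging any pieces of equal slope — is a slope decomposition of $c$, hence $c$ is tight.

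Once $c$ is tight, Theorem~\ref{thm:cabtight}(3) gives $f_{YX}^c=f_{YX}^a\star f_{YX}^b$, and tightness of $a$, $b$, and $c$ means $h_{YX}^a=f_{YX}^a$, $h_{YX}^b=f_{YX}^b$, and $h_{YX}^c=f_{YX}^c$. Therefore
$$
h_{YX}^c=f_{YX}^c=f_{YX}^a\star f_{YX}^b=h_{YX}^a\star h_{YX}^b,
$$
which is exactly the claim. The $\star$ operation is well-defined here because Theorem~\ref{thm:cabtight}(2) gives $\|c\|_X=\|a\|_X+\|b\|_X$ and $\|c\|_Y=\|a\|_Y+\|b\|_Y$, so the domains and endpoint values match up ($f_{YX}^a:[0,\|a\|_X]\to[0,\|a\|_Y]$ with $f_{YX}^a(\|a\|_X)=0$, similarly for $b$).

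The main obstacle is establishing that $c$ is tight without circularity, since Theorem~\ref{thm:cabtight} \emph{assumes} $c$ tight. An alternative, possibly cleaner route that avoids slope decompositions: prove the identity $h_{YX}^c=h_{YX}^a\star h_{YX}^b$ directly by a two-sided estimate. For the inequality $h_{YX}^c\le h_{YX}^a\star h_{YX}^b$, given $x\in[0,\|a\|_X]$ take $d=\alpha_{2X}^a(x)$ and check $\|d\|_X=x$ while $\|c-d\|_Y\le\|b\|_Y+\|a-d\|_Y=\|b\|_Y+h_{YX}^a(x)$, using the dual-norm triangle inequality; for $x\in[\|a\|_X,\|c\|_X]$ use $d=a+\alpha_{2X}^b(x-\|a\|_X)$. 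For the reverse inequality, integrate: by Proposition~\ref{prop:areas}(1) the area under $h_{YX}^c$ is $\tfrac12\|c\|_2^2=\tfrac12\|a\|_2^2+\tfrac12\|b\|_2^2+\langle a,b\rangle$, while the area under $h_{YX}^a\star h_{YX}^b$ is $\tfrac12\|a\|_2^2+\tfrac12\|b\|_2^2+\|a\|_X\|b\|_Y$; since $\langle a,b\rangle=\|a\|_X\|b\|_Y$ for an $X2$-decomposition, the two areas coincide, and a pointwise-$\le$ between continuous functions with equal integrals forces equality. This second approach sidesteps the tightness-of-$c$ subtlety entirely and is the version I would write up.
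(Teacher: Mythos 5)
Your first route is circular in the context of this paper.  You want to show $c$ is tight by concatenating the slope decompositions of $a$ and $b$ and then invoking ``$c$ has a slope decomposition $\Rightarrow$ $c$ is tight.''  That implication is exactly part (1) of Theorem~\ref{theo:tightslope}, and the converse direction of its proof (slope decomposition $\Rightarrow$ tight) is proved by induction \emph{using Lemma~\ref{lem:XYstar} itself}: one writes $c=(c_1+\cdots+c_{r-1})+c_r$ as an $X2$-decomposition and applies the lemma to get $h^c_{YX}=h^{c_1+\cdots+c_{r-1}}_{YX}\star h^{c_r}_{YX}$.  Likewise Proposition~\ref{prop:cistight}, which is the exact statement ``$c=a+b$ an $X2$-decomposition with $a,b$ tight $\Rightarrow$ $c$ tight,'' is proved in the paper \emph{after} this lemma and cites Theorem~\ref{theo:tightslope} in its final step; the author explicitly remarks right after the proof of Lemma~\ref{lem:XYstar} that this tightness of $c$ will only be established later.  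So you cannot get the tightness of $c$ first and then invoke Theorem~\ref{thm:cabtight}(3).  (You also glossed over the case $\mu_{XY}(a_p)<\mu_{XY}(b_1)$ when concatenating the two slope decompositions, which needs to be ruled out.)

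Your ``cleaner'' alternative has a more fundamental gap: the pointwise upper bound $h^c_{YX}(x)\le (h^a_{YX}\star h^b_{YX})(x)$ does not follow from exhibiting a vector $d$ with $\|d\|_X=x$ and $\|c-d\|_Y\le h^a_{YX}(x)+\|b\|_Y$.  That construction bounds $f^c_{YX}(x)$, because $f^c_{YX}(x)$ is defined as a \emph{minimum} of $\|c-\cdot\|_Y$ over the $X$-ball of radius $x$.  But $h^c_{YX}(x)=\|c-\alpha^c_{2X}(x)\|_Y$ is the $Y$-norm of the residual of the $\ell_2$-projection, not a minimum of $Y$-norms, and there is no reason for it to be $\le \|c-d\|_Y$ for your chosen $d$.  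Indeed the paper gives an example where $h^c_{YX}$ fails to be convex and lies strictly above $f^c_{YX}$, so $h$ and $f$ genuinely behave differently.  The paper's actual proof of this lemma is close to your construction but does something stronger: it takes $d=\alpha^a_{2X}(x)$ and, via a chain of inequalities
$$
\|d\|_X\|c-d\|_Y\ \ge\ \langle d,c-d\rangle\ \ge\ \|d\|_X\bigl(\|a-d\|_Y+\|b\|_Y\bigr)\ \ge\ \|d\|_X\|c-d\|_Y
$$
(using $\langle d,a-d\rangle=\|d\|_X\|a-d\|_Y$, $\langle a,b\rangle=\|a\|_X\|b\|_Y$, the dual-norm bound on $\langle a-d,b\rangle$, and $\|a\|_X-\|a-d\|_X=\|d\|_X$ from the tightness of $a$), forces all of them to be equalities.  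The equality $\langle d,c-d\rangle=\|d\|_X\|c-d\|_Y$ says, by Proposition~\ref{prop:XYchar}, that $c=d+(c-d)$ is itself an $X2$-decomposition; hence $d=\alpha^c_{2X}(x)$ and $h^c_{YX}(x)=\|c-d\|_Y=h^a_{YX}(x)+\|b\|_Y$ \emph{exactly}.  The remaining range $x\in[\|a\|_X,\|c\|_X]$ is handled by the $X\leftrightarrow Y$ symmetry.  In other words, what makes the argument work is not a pointwise bound plus an area count, but the exact identification of the $X2$-minimizer for $c$ at each $x$, and the tightness hypotheses on $a$ and $b$ are precisely what turn the intermediate inequalities into equalities.
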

\begin{proof}
Suppose that $0\leq x\leq \|a\|_X$ and let $d=\alpha_{YX}^a(x)=\alpha_{2X}^a(x)$.
We get 
\begin{multline*}
\|d\|_X\|c-d\|_Y\geq \langle d,c-d\rangle=\langle d,a-d\rangle+\langle d,b\rangle=\|d\|_X\|a-d\|_Y+\langle a,b\rangle-\langle a-d,b\rangle\geq\\ \geq
\|d\|_X\|a-d\|_Y+\|a\|_X\|b\|_Y-\|a-d\|_X\|b\|_Y=\|d\|_X\|a-d\|_Y+\|d\|_X\|b\|_Y\geq \|d\|_X\|c-d\|_Y.
\end{multline*}
It follows that 
$$\|d\|_X\|c-d\|_Y=\langle d,c-d\rangle=\|d\|_X(\|a-d\|_Y+\|b\|_Y)$$ 
So  $c=d+(c-d)$ is an $X2$-decomposition and $\|c-d\|_Y=\|a-d\|_Y+\|b\|_Y$.  We get
$$
h^c_{YX}(x)=\|c-d\|_Y= \|b\|_Y+\|a-d\|_Y=h^{a}_{YX}(x)+\|b\|_Y=(h_{YX}^a\star h_{YX}^b)(x).
$$
Suppose that $\|a\|_X\leq x\leq \|c\|_X$ and define $y=h^{c}_{YX}(x)$. Then we have $0\leq y\leq \|b\|_Y$ and
by symmetry we get
$$
x=h^c_{XY}(y)=(h_{XY}^b\star h_{XY}^a)(y)=h^{b}_{XY}(y)+\|b\|_X=h^{b}_{XY}(h^c_{YX}(x))+\|b\|_X
$$
and
$$
(h^a_{YX}\star h^b_{YX})(x)=h^b_{YX}(x-\|b\|_X)=h^b_{YX}(h^b_{XY}(h^c_{YX}(x)))=h^c_{YX}(x).
$$

We conclude that $h^c_{YX}=h^a_{YX}\star h_{YX}^b$. 
\end{proof}

We will show later in Proposition~\ref{prop:cistight} that under the assumptions of Lemma~\ref{lem:XYstar} the vector $c$ is tight. 

\section{The slope decomposition}\label{sec:slope}
\subsection{unitangent vectors}
In this section we study the slope decomposition. We show that a vector $c$ is tight if and only if it has a slope decomposition. We also will show that
the Pareto frontier is always piecewise linear for a tight vector. In that case, the different slopes in the Pareto frontier correspond to different summands in the slope decomposition of $c$.

For a vector $c\in V$ we have the inequality $\|c\|_2^2=\langle c,c\rangle\leq \|c\|_X\|c\|_Y$.
\begin{definition}
We call a vector $c\in V$ {\em unitangent} if $\|c\|_2^2=\|c\|_X\|c\|_Y$.
\end{definition}
Unitangent vectors are the simplest kind of tight vectors. As we will see, their Pareto frontier is linear, i.e., has only one slope.
Recall that $\|c\|_Y$ is the maximal value of the functional $\langle c,\cdot\rangle $ on the unit ball $B_X$. Now $c$ is unitangent if and only if 
the maximum of $\langle c,\cdot\rangle$ is attained at $c/\|c\|_X\in B_X$. 
\begin{proposition}
If $c\in V$ is unitangent, then it is tight and we have $\alpha_{YX}^c(x)=(x/\|c\|_X)c$ and $f_{YX}^c(x)=\|c\|_Y(1-x/\|c\|_X)$ for $x\in [0,\|c\|_X]$.
\end{proposition}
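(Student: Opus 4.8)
The plan is to verify directly that for a unitangent vector $c$, the curve $a(x) := (x/\|c\|_X)c$ gives an $X2$-decomposition for every $x \in [0,\|c\|_X]$, and that these are in fact $XY$-decompositions, whence $c$ is tight and the formulas for $\alpha_{YX}^c$ and $f_{YX}^c$ follow. First I would set $a := (x/\|c\|_X)c$ and $b := c - a = (1 - x/\|c\|_X)c$; both are nonnegative scalar multiples of $c$, so $\|a\|_X = x$, $\|b\|_Y = \|c\|_Y(1 - x/\|c\|_X)$, and $\langle a,b\rangle = (x/\|c\|_X)(1 - x/\|c\|_X)\|c\|_2^2$. Using the unitangency hypothesis $\|c\|_2^2 = \|c\|_X\|c\|_Y$, this last quantity equals $(x/\|c\|_X)(1-x/\|c\|_X)\|c\|_X\|c\|_Y = x\cdot \|c\|_Y(1-x/\|c\|_X) = \|a\|_X\|b\|_Y$. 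By Proposition~\ref{prop:XYchar}, $c = a+b$ is therefore an $X2$-decomposition for each such $x$.

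Next I would upgrade this to an $XY$-decomposition. The key observation is that the points $(\|a\|_X,\|b\|_Y) = (x, \|c\|_Y(1-x/\|c\|_X))$ traced out as $x$ ranges over $[0,\|c\|_X]$ all lie on the single line segment from $(0,\|c\|_Y)$ to $(\|c\|_X,0)$. Since the Pareto sub-frontier $h_{YX}^c$ lies on or above the Pareto frontier $f_{YX}^c$, and $f_{YX}^c$ is a convex decreasing function with $f_{YX}^c(0)=\|c\|_Y$ and $f_{YX}^c(\|c\|_X)=0$ (Lemma~\ref{lem:convexdecreasing}), the straight segment connecting its two endpoints lies on or below $f_{YX}^c$. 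Combining $f_{YX}^c \le h_{YX}^c \le (\text{the segment}) \le f_{YX}^c$ forces equality throughout: $f_{YX}^c(x) = h_{YX}^c(x) = \|c\|_Y(1-x/\|c\|_X)$ for all $x$. Thus $c$ is tight by the corollary in Section~2.2, and the formula $f_{YX}^c(x)=\|c\|_Y(1-x/\|c\|_X)$ is established.

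Finally, for the identity $\alpha_{YX}^c(x) = (x/\|c\|_X)c$: I would argue that $a = (x/\|c\|_X)c$ is a solution to ${\bf M}_{YX}^c(x)$. Indeed $\|a\|_X = x$ and $\|c - a\|_Y = \|c\|_Y(1 - x/\|c\|_X) = f_{YX}^c(x)$, which is exactly the optimal value, so $a$ solves the problem. (If one wants $\alpha_{YX}^c$ to be well-defined as a function one also needs rigidity; but the displayed formula is simply asserting that this particular $a$ is a solution, consistent with the notation used elsewhere for a chosen solution.) The main obstacle — really the only non-bookkeeping point — is the sandwiching argument showing the sub-frontier collapses onto the chord; everything else is a direct computation using $\|c\|_2^2 = \|c\|_X\|c\|_Y$ and the fact that scalar multiples $\lambda c$ with $\lambda \ge 0$ have $\|\lambda c\|_X = \lambda\|c\|_X$ and $\|\lambda c\|_Y = \lambda\|c\|_Y$.
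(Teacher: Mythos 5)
Your computational setup is fine — you correctly show that for $a=(x/\|c\|_X)c$ and $b=(1-x/\|c\|_X)c$ the unitangency hypothesis gives $\langle a,b\rangle = \|a\|_X\|b\|_Y$, so by Proposition~\ref{prop:XYchar} every such decomposition is an $X2$-decomposition and $h_{YX}^c(x) = \|c\|_Y(1-x/\|c\|_X)$ is exactly the chord from $(0,\|c\|_Y)$ to $(\|c\|_X,0)$. But the sandwich you then try to close has the convexity inequality pointing the wrong way. A convex function lies on or \emph{below} the chord joining its endpoints, not above it; so Lemma~\ref{lem:convexdecreasing} gives you $f_{YX}^c(x) \le \|c\|_Y(1-x/\|c\|_X)$, which is the same inequality you already have from $f_{YX}^c \le h_{YX}^c$. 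Your claimed chain $f \le h \le \text{chord} \le f$ fails at the last link, and you never obtain the reverse bound $f_{YX}^c(x) \ge \|c\|_Y(1-x/\|c\|_X)$, which is precisely what tightness requires.

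The missing step is a direct lower bound on $f_{YX}^c$ coming from the duality inequality, which is where the unitangency hypothesis is actually used in an essential way. For any $a$ with $\|a\|_X \le x$ one has
$$
\|c-a\|_Y\|c\|_X \;\ge\; \langle c-a,c\rangle \;=\; \|c\|_2^2 - \langle a,c\rangle \;\ge\; \|c\|_X\|c\|_Y - \|a\|_X\|c\|_Y \;\ge\; \|c\|_Y(\|c\|_X - x),
$$
using $\|c\|_2^2 = \|c\|_X\|c\|_Y$ and the dual-norm bound twice. Dividing by $\|c\|_X$ gives $\|c-a\|_Y \ge \|c\|_Y(1-x/\|c\|_X)$, hence $f_{YX}^c(x) \ge \|c\|_Y(1-x/\|c\|_X)$. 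Combined with the upper bound furnished by the explicit decomposition, this yields $f_{YX}^c = h_{YX}^c$ and the formulas for both $f_{YX}^c$ and $\alpha_{YX}^c$. This is essentially the paper's argument; your computation of $\langle a,b\rangle$ is a perfectly fine alternative way to exhibit the $X2$-decomposition, but the convexity appeal cannot replace the duality estimate.
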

\begin{proof}
Suppose that $\|a\|_X\leq x$. Then we have 
\begin{multline*}
\|c-a\|_Y\|c\|_X\geq \langle c-a,c\rangle=\|c\|_2^2-\langle a,c\rangle\geq \|c\|_2^2-\|a\|_X\|c\|_Y=\\=
\|c\|_Y(\|c\|_X-\|a\|_X)\geq \|c\|_Y(\|c\|_X-x).
\end{multline*}
It follows that
$$
\|c-a\|_Y\geq \|c\|_Y(1-x/\|c\|_X).
$$
Since $a$ was arbitrary, we conclude that $f_{YX}^c(x)\geq \|c\|_Y(1-x/\|c\|_X)$. 

If we take $a=(x/\|c\|_X)c$, then we have $\|a\|_X=x$ and $\|c-a\|_Y=\|c\|_Y(1-x/\|c\|_X)\leq f_{YX}^c(x)$.
We conclude that $f_{YX}^c(x)=\|c\|_Y(1-x/\|c\|_X)$ and $\alpha_{YX}^c(x)=a=(x/\|c\|_X)c$.
\end{proof}
\subsection{Faces of the unit ball}
Suppose that $B$ is a compact convex subset of a finite dimensional $\R$-vector space $V$. Recall that a convex closed subset $F$ of $B$ is a face
if $v,w\in B$, $0<t<1$ and $tv+(1-t)w\in F$ implies that $v,w\in F$. The following lemma is easily shown by induction and is left to the reader.
\begin{lemma}\label{lem:faceproperty}
If $F$ is a face of $B$, $v_1,v_2,\dots,v_r\in B$, $t_1,t_2,\dots,t_r>0$ such that $t_1+t_2+\cdot+t_r=1$ and $t_1v_1+\cdots+t_rv_r\in F$,
then $v_1,v_2,\dots,v_r\in F$.
\end{lemma}
\begin{proof}
We prove the statement by induction on $r$.
This is clear from the definition of a face for $r\leq 2$. Suppose $r>2$.
Let $t_i'=t_i/(1-t_r)$ and $w=t_1'v_1+\cdots+t_{r-1}'v_{r-1}$. We have $t_rv_r+(1-t_r)w=t_1v_1+\cdots+t_rv_r\in F$, so $v_r,w\in F$.
Since $t_1'+\cdots+t_{r-1}'=1$ and $w=t_1'v_1+\cdots+t_{r-1}'v_{r-1}\in F$ we get $v_1,\dots,v_{r-1}$ by induction.
\end{proof}
\begin{lemma}
If $B\subseteq V$ is a compact convex set, then the smallest face containing $v\in B$ is
$$
F=\{w\in B\mid \mbox{$(1+t)v-t w\in B$ for some $t>0$}\}.
$$
\end{lemma}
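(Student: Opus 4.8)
The plan is to prove the two inclusions that together characterize $F$ as the smallest face containing $v$: (i) $F$ is contained in every face of $B$ that contains $v$, and (ii) $F$ is itself a face of $B$, and $v\in F$. Statement (ii) splits into three checks: $F$ is convex, $F$ has the defining property of a face, and $F$ is closed. Once both inclusions are established, $F$ is a face containing $v$ that sits inside every such face, hence is the smallest one.

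Inclusion (i) is quick. Let $G$ be a face of $B$ with $v\in G$, and let $w\in F$, so $u:=(1+t)v-tw\in B$ for some $t>0$. Then
$$
v=\frac{t}{1+t}\,w+\frac{1}{1+t}\,u
$$
writes $v$ as a convex combination with both coefficients in $(0,1)$ of the points $w\in B$ and $u\in B$; since $v$ lies in the face $G$, the definition of a face gives $w\in G$. Hence $F\subseteq G$. Moreover $v\in F$, since $(1+t)v-tv=v\in B$.

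For (ii), first convexity. Suppose $w_1,w_2\in F$ with witnesses $u_i:=(1+t_i)v-t_iw_i\in B$, $t_i>0$, and put $w:=\lambda w_1+(1-\lambda)w_2$ with $0\le\lambda\le1$. With $D:=t_1(1-\lambda)+t_2\lambda>0$, $\rho:=t_2\lambda/D\in[0,1]$ and $u:=\rho u_1+(1-\rho)u_2\in B$, one computes $\rho t_1=t\lambda$ and $(1-\rho)t_2=t(1-\lambda)$, where $t:=\rho t_1+(1-\rho)t_2=t_1t_2/D>0$; hence $u=(1+t)v-tw$ and $w\in F$. Next, the face property: let $w_1,w_2\in B$ and $w:=\lambda w_1+(1-\lambda)w_2\in F$ with $0<\lambda<1$, and choose $t>0$ with $u:=(1+t)v-tw\in B$. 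Rearranging gives
$$
(1+t)v-t\lambda\,w_1=u+t(1-\lambda)\,w_2 ,
$$
and dividing by $\mu:=1+t(1-\lambda)$ exhibits the left side as the convex combination $\mu^{-1}u+\mu^{-1}t(1-\lambda)w_2$ of $u,w_2\in B$, hence in $B$; since $(1+t)/\mu=1+t\lambda/\mu$, this reads $(1+s)v-s\,w_1\in B$ with $s:=t\lambda/\mu>0$, so $w_1\in F$, and by the symmetric computation $w_2\in F$.

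It remains to show $F$ is closed. Affine subspaces of $V$ are closed, so $\overline F\subseteq\operatorname{aff}(F)$, and also $\overline F\subseteq\overline B=B$. Pick a point $r$ in the relative interior of the nonempty convex set $F$. For $x\in\overline F$ we have $x\in\operatorname{aff}(F)$, hence $(1+\varepsilon)r-\varepsilon x\in\operatorname{aff}(F)$ for all $\varepsilon$, and since this point tends to $r$ as $\varepsilon\downarrow 0$ it lies in $F$ for all small enough $\varepsilon>0$. Then
$$
r=\frac{1}{1+\varepsilon}\big((1+\varepsilon)r-\varepsilon x\big)+\frac{\varepsilon}{1+\varepsilon}\,x
$$
presents $r\in F$ as a convex combination with positive coefficients of the point $(1+\varepsilon)r-\varepsilon x\in F\subseteq B$ and the point $x\in B$, so the face property just proved forces $x\in F$. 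Thus $\overline F=F$, so $F$ is a face of $B$, and with (i) this shows $F$ is the smallest face containing $v$. I expect the main obstacles to be the two bookkeeping steps in (ii): the face property requires normalizing the rearranged identities carefully so that the coefficients form a genuine convex combination, and the closedness step is essentially the finite-dimensional fact that faces of closed convex sets are closed, which is most cleanly obtained via the relative interior as above — a direct limiting argument on the parameter $t$ fails because the witnesses $t$ for a convergent sequence in $F$ may tend to $0$.
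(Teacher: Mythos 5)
Your proof is correct, and it is actually more complete than the paper's own argument. The paper verifies only two things: that $F$ is convex (its computation with the common witness $t=\min(t_1,t_2)$, or your weighted version with $\rho=t_2\lambda/D$, is the same idea), and that $F$ is contained in every face of $B$ containing $v$. It then declares $F$ to be a face, but it never checks the defining face property (that $a,b\in B$ and $ta+(1-t)b\in F$ with $0<t<1$ force $a,b\in F$) nor that $F$ is closed — both of which are part of the paper's own definition of a face. Your rearrangement
$(1+t)v-t\lambda w_1 = u + t(1-\lambda)w_2$
followed by normalization by $\mu=1+t(1-\lambda)$ supplies the first missing step, and it is correct: the identity $\frac{1+t}{\mu}=1+\frac{t\lambda}{\mu}$ does hold, so the left side is $(1+s)v-sw_1$ with $s=t\lambda/\mu>0$. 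Your closedness argument via a relative-interior point is the right tool, and you correctly diagnose why a naive limit in $t$ fails (the witnesses for a convergent sequence in $F$ can degenerate to $t\to 0$). One small remark: in the paper's first paragraph the hypothesis is stated as ``$s>0$,'' but the key step (that the weighted average of the two witnesses lies in $B$) is only justified for $0<s<1$, i.e.\ for a genuine convex combination; so the paper's argument, read literally, establishes exactly the convexity your proof also establishes, and nothing more. In short, your proof is a faithful completion of the paper's sketch rather than a different route.
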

\begin{proof}
Suppose that $w_1,w_2\in F$ and $sw_1+(1-s)w_2\in B$ for some $s>0$. There exist $t>0$ such that $(1+t)v-tw_1,(1+t)v-tw_2\in B$,
so we have
$$
s\big((1+t)v-tw_1\big)+(1-s)\big((1+t)v-tw_2\big)=(1+t)v-t\big(sw_1+(1-s)w_2\big)\in B
$$
and therefore $sw_1+(1-s)w_2\in F$. This proves that $F$ is a face of $B$.

Suppose that $F'$ is any face of $B$ containing $v$. If $w\in F$ then there exists $t>0$ such that $(1+t)v-tw\in B$.
Since $v\in F'$ is a convex combination of $(1+t)v-tw,w\in B$, we have $w,(1+t)v-tw\in F'$. So $F'$ contains $F$.
\end{proof}

Let $B_X$ be the unit ball for the norm $\|\cdot\|_X$.

\begin{lemma}\label{lem:facialXcone}
A convex cone $C$ in $V$ is a facial $X$-cone if and only if it has the following norm-sum property: If $a,b\in V$, $a+b\in C$ and $\|a+b\|_X=\|a\|_X+\|b\|_X$
then we have $a,b\in C$.
\end{lemma}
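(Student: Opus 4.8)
The statement is a biconditional, so I would split it into two implications and handle each by unwinding the definitions. Recall that a facial $X$-cone is by definition $\R_{\geq 0}F$ for a (proper) face $F$ of $B_X$, together with $\{0\}$, and that the lemma before it identifies the smallest face of a compact convex set containing a point. The "norm-sum property" is really the translation of the face-stability condition from the sphere $\partial B_X$ to the cone, and the bridge between the two pictures is the elementary observation that if $a,b$ are nonzero then
$$
\frac{a+b}{\|a+b\|_X} \;=\; \frac{\|a\|_X}{\|a+b\|_X}\cdot\frac{a}{\|a\|_X} \;+\; \frac{\|b\|_X}{\|a+b\|_X}\cdot\frac{b}{\|b\|_X},
$$
which is a convex combination with positive weights precisely when $\|a+b\|_X=\|a\|_X+\|b\|_X$ (that triangle-inequality equality is exactly what makes the coefficients sum to one).

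\textbf{($\Rightarrow$)} Suppose $C$ is a facial $X$-cone, say $C=\R_{\geq 0}F$ with $F$ a proper face of $B_X$ (the case $C=\{0\}$ is trivial since $a+b\in\{0\}$ forces $a=b=0$, using $\|a\|_X+\|b\|_X=0$). Take $a,b\in V$ with $a+b\in C$ and $\|a+b\|_X=\|a\|_X+\|b\|_X$. If $a+b=0$ the norm equality forces $a=b=0\in C$; otherwise write $v=(a+b)/\|a+b\|_X\in F$ (since $F=C\cap\partial B_X$). If $a=0$ then $b=a+b\in C$ and $a=0\in C$; symmetrically if $b=0$. If both are nonzero, apply the displayed identity: $v$ is a positive convex combination of $a/\|a\|_X$ and $b/\|b\|_X$, both of which lie in $B_X$. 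Since $F$ is a face, Lemma~\ref{lem:faceproperty} (or just the definition of a face) gives $a/\|a\|_X, b/\|b\|_X\in F$, hence $a,b\in \R_{\geq 0}F=C$.

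\textbf{($\Leftarrow$)} Conversely, suppose $C$ is a convex cone with the norm-sum property. If $C=\{0\}$ we are done, so assume $C\neq\{0\}$ and set $F=C\cap \partial B_X=\{v\in C\mid \|v\|_X=1\}$; note $F\neq\emptyset$ since $C$ contains a nonzero vector and $C$ is a cone. I claim $F$ is a face of $B_X$ and $C=\R_{\geq 0}F$. The equality $C=\R_{\geq 0}F$ is immediate from $C$ being a cone. To see $F$ is a face: $F$ is convex (intersection of the convex set $C$ with the convex set $\partial B_X$? — careful, $\partial B_X$ is not convex, so instead argue: if $v,w\in F$ and $0\le s\le 1$ then $sv+(1-s)w\in C$ and $\|sv+(1-s)w\|_X\le 1$; and conversely any point of $B_X$ of norm $<1$ on the segment is not in $\partial B_X$ — actually what I need is just that $F$ is closed and convex, which follows because $F$ equals the intersection of $C$ with the closed ball restricted to the norm-one level, and convexity of $F$ follows from the equality case in the triangle inequality forcing $\|sv+(1-s)w\|_X=1$ when $v,w$ have norm one and lie in a common cone — I will spell this out). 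Then suppose $v,w\in B_X$, $0<t<1$, and $u:=tv+(1-t)w\in F$. Set $a=tv$, $b=(1-t)w$, so $a+b=u\in C$ and $\|a\|_X+\|b\|_X\le t+(1-t)=1=\|u\|_X$, forcing equality $\|a+b\|_X=\|a\|_X+\|b\|_X$; the norm-sum property gives $a,b\in C$, hence $v,w\in\R_{\geq 0}F=C$, and since $\|v\|_X,\|w\|_X\le 1$ with the displayed-identity argument forcing them to have norm one when both nonzero (and the zero case handled directly), we get $v,w\in F$. Thus $F$ is a proper face (proper since $0\notin F$, or since $C\neq V$ — if $C=V$ then $F=\partial B_X$ which is still a face, namely $B_X$ itself is the only non-proper face; I should check whether the convention allows $C=V$, and if so treat $F=B_X$ separately), and $C=\R_{\geq 0}F$ is a facial $X$-cone.

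\textbf{Main obstacle.} The routine-looking part that actually needs care is the bookkeeping around degenerate cases — vectors equal to $0$, the cone being all of $V$ or $\{0\}$, and the fact that $\partial B_X$ itself is not convex so "$F$ is convex" must be deduced from the equality case of the triangle inequality rather than asserted. The conceptual content is entirely in the displayed convex-combination identity and its relation to the triangle-inequality equality, so once that is isolated the proof is short; I expect the write-up challenge to be stating the edge cases cleanly rather than any real difficulty.
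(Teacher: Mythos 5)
Your ($\Rightarrow$) direction is exactly the paper's argument: write $(a+b)/\|a+b\|_X$ as the convex combination of $a/\|a\|_X$ and $b/\|b\|_X$ with weight $t=\|a\|_X/\|a+b\|_X$, dispose of $t\in\{0,1\}$ and the zero cases separately, and apply the face property. Your ($\Leftarrow$) direction also reproduces the paper's core step: with $a=tv$, $b=(1-t)w$ and $tv+(1-t)w\in F$, squeeze $1=\|a+b\|_X\leq\|a\|_X+\|b\|_X\leq t+(1-t)=1$, conclude $\|v\|_X=\|w\|_X=1$, and use the norm-sum property to conclude $v,w\in C\cap\partial B_X=F$. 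That establishes the stability condition in the definition of a face.

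The genuine gap is the convexity of $F=C\cap\partial B_X$, which your write-up gestures at ("convexity of $F$ follows from the equality case in the triangle inequality forcing $\|sv+(1-s)w\|_X=1$ when $v,w$ have norm one and lie in a common cone") but never actually proves --- and the claim is false. Take $\|\cdot\|_X=\|\cdot\|_2$ on $\R^2$ and let $C=\{(x_1,x_2):x_2\geq 0\}$, a closed convex cone. With $v=(1,0)$, $w=(-1,0)\in F$ and $s=\frac{1}{2}$ one gets $\|sv+(1-s)w\|_2=0$, so $F$ (the upper unit half-circle) is not convex and hence not a face. Yet this $C$ does satisfy the norm-sum property: for $\|\cdot\|_2$, equality $\|a+b\|_2=\|a\|_2+\|b\|_2$ forces $a$ and $b$ to be nonnegative scalar multiples of $a+b\in C$, which already places them in the cone. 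So for a strictly convex norm every convex cone has the norm-sum property, while the facial $X$-cones are only $\{0\}$ and rays, and the ($\Leftarrow$) implication fails in this example. Your instinct to worry about $\partial B_X$ not being convex and to demand a separate convexity argument for $F$ was exactly right; the sketched fix cannot work, so the argument as written is incomplete. It is worth noting that the paper's own proof of the ($\Leftarrow$) direction has the same omission: it verifies only the face-stability property of $F$ and never addresses convexity.
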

\begin{proof}
Suppose that $C$ is a cone in $V$. If $C=\{0\}$ then $C$ is a facial $X$-cone and has the norm-sum property. 
Assume now that $C\neq \{0\}$.
Let $\partial B_X=\{c\in V\mid \|c\|_X=1\}$ be the unit sphere.
Take $F=C\cap \partial B_X$ so that $C=\R_{\geq 0}F$. We have to show that $F$ is a face of $B_X$ if and only if $C$ has the norm-sum property. 

Suppose that $F$ is a face. If $a+b\in C$ and $\|a+b\|_X=\|a\|_X+\|b\|_X$ then we have
$$
\frac{a+b}{\|a+b\|_X}=t\frac{a}{\|a\|_X}+(1-t)\frac{b}{\|b\|_X}
$$
where $t=\|a\|_X/\|a+b\|_X$ and $a/\|a\|_X,b/\|b\|_X,(a+b)/\|a+b\|_X\in F$. If $t=0$ or $t=1$ then $a=0$ or $b=0$ and $a,b\in C$. Otherwise, $0<t<1$
and $a/\|a\|_X,b/\|b\|_X\in F$ because $F$ is a face. We conclude that $a,b\in C$. So $C$ has the norm-sum property.

Conversely, suppose that $C$ has the norm-sum property, $a,b\in B_X$ and $0<t<1$ such that $t a+(1-t)b\in F$
then we have
$$
\|t a+(1-t)b\|_X=1=t+(1-t)\geq \|t a\|_X+\|(1-t)b\|_X\geq \|t a+(1-t)b\|_X
$$
and the inequalities are equalities. It follows that $\|a\|_X=\|b\|_X=1$
The norm-sum property gives $t a, (1-t)b\in C$, so $a,b\in C$. We conclude that $a,b\in C\cap \partial B_X=F$.
\end{proof}

\begin{lemma}\label{lem:smallestXcone}
Suppose that $a\in V$ is nonzero, and $C$ is the set of all
 $b\in V$ for which there exists $\varepsilon>0$ such that
$\|a-\varepsilon b\|_X+\|\varepsilon b\|_X=\|a\|_X$. Then $C$ is the smallest facial $X$-cone containing $a$.
\end{lemma}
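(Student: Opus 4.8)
Suppose that $a\in V$ is nonzero, and $C$ is the set of all $b\in V$ for which there exists $\varepsilon>0$ such that $\|a-\varepsilon b\|_X+\|\varepsilon b\|_X=\|a\|_X$. Then $C$ is the smallest facial $X$-cone containing $a$.

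The plan is to identify $C$ with the cone $\mathbb{R}_{\geq 0}F$, where $F$ is the smallest face of the unit ball $B_X$ containing $a/\|a\|_X$, and then invoke Lemma 5.10 (which characterizes that smallest face as $\{w\in B_X \mid (1+t)v - tw \in B_X \text{ for some } t>0\}$ with $v = a/\|a\|_X$) together with Lemma 5.9 and the norm-sum characterization of facial $X$-cones in Lemma 5.8. Write $v = a/\|a\|_X$ and let $F$ be the smallest face of $B_X$ containing $v$, so that $C_0 := \mathbb{R}_{\geq 0}F$ is by definition the smallest facial $X$-cone containing $a$. It suffices to show $C = C_0$.

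First I would handle scaling: the defining condition on $b$ is invariant under replacing $b$ by $\lambda b$ for $\lambda > 0$ (just rescale $\varepsilon$), and $0 \in C$ trivially (take any $\varepsilon$, using $\|a\|_X + 0 = \|a\|_X$), so $C$ is a cone. Next, the containment $C \subseteq C_0$: given nonzero $b \in C$ with $\|a - \varepsilon b\|_X + \|\varepsilon b\|_X = \|a\|_X$, this is exactly a norm-sum equality $\|(a - \varepsilon b) + \varepsilon b\|_X = \|a-\varepsilon b\|_X + \|\varepsilon b\|_X$ with the sum lying in $C_0$ (it equals $a$); since $C_0$ has the norm-sum property by Lemma 5.8, we get $\varepsilon b \in C_0$, hence $b \in C_0$. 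For the reverse containment $C_0 \subseteq C$: take nonzero $b \in C_0$, so $w := b/\|b\|_X \in F$. By Lemma 5.10 there is $t > 0$ with $(1+t)v - tw \in B_X$. Set $u = (1+t)v - tw$; then $v = \frac{t}{1+t}w + \frac{1}{1+t}u$ is a convex combination, and taking norms, $1 = \|v\|_X \leq \frac{t}{1+t}\|w\|_X + \frac{1}{1+t}\|u\|_X \leq \frac{t}{1+t} + \frac{1}{1+t} = 1$, forcing $\|u\|_X = 1$ and the norm-sum equality $\|v\|_X = \frac{t}{1+t}\|w\|_X + \frac{1}{1+t}\|u\|_X$. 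Multiplying through by $\|a\|_X(1+t)$, this says $\|a\|_X = \| t\|a\|_X w \|_X + \| \|a\|_X u \|_X$ with $a = t\|a\|_X w + \|a\|_X u$; now choose $\varepsilon$ so that $\varepsilon b = t\|a\|_X w$ — concretely $\varepsilon = t\|a\|_X/\|b\|_X > 0$ — giving $\|a - \varepsilon b\|_X + \|\varepsilon b\|_X = \|a\|_X$, i.e.\ $b \in C$.

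The one point requiring a little care — and the most likely obstacle — is verifying that the geometry of Lemma 5.10 converts cleanly into the scalar bookkeeping of the defining condition of $C$: one must pick $\varepsilon$ correctly and confirm that $a - \varepsilon b$ is a positive multiple of $u \in B_X$ so that $\|a - \varepsilon b\|_X = \|a\|_X \|u\|_X = \|a\|_X$ contributes the right amount, and that the equality in the triangle inequality above is genuinely forced rather than merely plausible. Everything else is a direct translation between the "norm-sum" language of facial cones (Lemma 5.8), the explicit smallest-face formula (Lemma 5.10), and the convex-combination identity relating $v$, $w$, and $u$; no new ideas beyond those three lemmas should be needed.
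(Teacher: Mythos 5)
Your proof is correct in its structure but takes a genuinely different route from the paper's for the harder inclusion. Both you and the paper prove $C\subseteq C_0$ (where $C_0=\R_{\geq 0}F$ and $F$ is the smallest face of $B_X$ containing $a/\|a\|_X$) the same way: for $b\in C$ the defining equation is a norm-sum equality whose sum is $a\in C_0$, so the norm-sum property of the facial $X$-cone $C_0$ (Lemma~\ref{lem:facialXcone}) forces $\varepsilon b\in C_0$, hence $b\in C_0$. For the other inclusion the paper does not invoke the explicit smallest-face formula at all; it instead argues that $C$ itself is a facial $X$-cone, by averaging the norm-sum identities for $b_1,b_2\in C$ to obtain closure under addition and then appealing again to Lemma~\ref{lem:facialXcone}. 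You go the other way: you prove $C_0\subseteq C$ directly, taking $w=b/\|b\|_X\in F$, using the characterization of the smallest face as $\{w\in B_X\mid (1+t)v-tw\in B_X\mbox{ for some }t>0\}$ to produce $u=(1+t)v-tw\in B_X$, and unwinding the convex combination $v=\tfrac{t}{1+t}w+\tfrac{1}{1+t}u$ into the defining identity for $C$. Your version is arguably tidier, since it never has to establish that $C$ itself has the norm-sum property (a step the paper treats somewhat loosely).

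There is one arithmetic slip in your $C_0\subseteq C$ half. Since $a=\|a\|_X v$, the decomposition reads $a=\tfrac{t\|a\|_X}{1+t}w+\tfrac{\|a\|_X}{1+t}u$, not $a=t\|a\|_X w+\|a\|_X u$; the factor $1+t$ was dropped. The correct choice is therefore $\varepsilon=\tfrac{t\|a\|_X}{(1+t)\|b\|_X}$, which gives $\varepsilon b=\tfrac{t\|a\|_X}{1+t}w$ and $a-\varepsilon b=\tfrac{\|a\|_X}{1+t}u$, so
$$
\|a-\varepsilon b\|_X+\|\varepsilon b\|_X=\frac{\|a\|_X}{1+t}\|u\|_X+\frac{t\|a\|_X}{1+t}\|w\|_X=\frac{\|a\|_X}{1+t}+\frac{t\|a\|_X}{1+t}=\|a\|_X.
$$
You flagged this step as the one needing care; the slip does not affect the soundness of the idea once the constants are corrected.
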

\begin{proof}
If $a\in V$, then we have
 $\|a-\varepsilon b\|_X+\|\varepsilon b\|_X=\|a\|_X$ and every facial $X$ cone containing $a$ must also contain $\varepsilon b$ and $b$ by Lemma~\ref{lem:facialXcone}.
 Now $C$ itself is a facial $X$ cone: if $b_1,b_2\in C$ then there exists $\varepsilon_1,\varepsilon_2>0$ such that
 $\|a-\varepsilon_i b_i\|_X+\|\varepsilon_ib_i\|_X=\|a\|_X$ for $i=1,2$. We can replace $\varepsilon_1$ and $\varepsilon_2$ by the minimum of the two and assume that $\varepsilon_1=\varepsilon_2=\varepsilon$. 
 We have
 \begin{multline*}
\|a\|_X\leq\textstyle \frac{1}{2}( \|a-\varepsilon b_1\|_X+\|\varepsilon b_1\|_X+\|a-\varepsilon b_2\|_X+\|\varepsilon b_2\|_X)\leq\\
\textstyle \leq  \|a-\frac{1}{2}\varepsilon (b_1+b_2)\|_X+\|\frac{1}{2}\varepsilon (b_1+b_2)\|_X\leq \|a\|_X.
\end{multline*}
We must have equalities everywhere, so $\frac{1}{2}(b_1+b_2)$ and $b_1+b_2$ lie in $C$ by Lemma~\ref{lem:facialXcone}.
\end{proof}


\begin{definition}
For  $c\in V$ and $0<x<\|c\|_X$ we define ${\mathcal F}_{X}(x)$ as the smallest face of $B_X$ containing $\alpha_{2X}(x)/x$. 
\end{definition}
\begin{lemma}
Suppose that $c$ is a tight vector. If $0<x_1<x_2<\|c\|_X$ then we have
${\mathcal F}_{X}(x_1)\subseteq {\mathcal F}_{X}(x_2)$.
\end{lemma}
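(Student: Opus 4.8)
The plan is to prove the nesting $\mathcal F_X(x_1)\subseteq\mathcal F_X(x_2)$ by exhibiting, for each element $w$ of $\mathcal F_X(x_1)$, a witness $\varepsilon>0$ such that $(1+\varepsilon)\big(\alpha_{2X}(x_2)/x_2\big)-\varepsilon w\in B_X$, which by the explicit description of the smallest face (the second face lemma above) places $w$ in $\mathcal F_X(x_2)$. The key structural input is Theorem~\ref{thm:cabtight}(4): since $c$ is tight, the path $\alpha_{YX}^c=\alpha_{2X}^c$ decomposes as a concatenation along any $XY$-decomposition of $c$, and more importantly, for $x_1<x_2$ the vector $a:=\alpha_{2X}(x_2)$ is itself tight and $\alpha_{2X}^c(x_1)=\alpha_{2X}^a(x_1)$ with $x_1<x_2=\|a\|_X$. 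So it suffices to understand the relationship between $\mathcal F_X(x_1)$, the smallest face of $B_X$ containing $\alpha_{2X}^a(x_1)/x_1$, and the single point $a/\|a\|_X$, where $a$ is tight.

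First I would reduce to the following claim: if $a$ is tight, $0<x<\|a\|_X$, and $d=\alpha_{2X}^a(x)$, then the smallest face of $B_X$ containing $d/x$ is contained in the smallest face of $B_X$ containing $a/\|a\|_X$. To prove this, write $a=d+(a-d)$; by Proposition~\ref{prop:equivalent} (or Lemma~\ref{lem:XYstar}-type reasoning) this is an $X2$-decomposition, so $\|a\|_X=\|d\|_X+\|a-d\|_X=x+\|a-d\|_X$, i.e.\ $\|a\|_X=\|d\|_X+\|a-d\|_X$ with both summands nonzero. By Lemma~\ref{lem:facialXcone}, the smallest facial $X$-cone containing $a$ therefore contains both $d$ and $a-d$; hence the smallest face of $B_X$ containing $a/\|a\|_X$ contains $d/\|d\|_X=d/x$, and so contains the smallest face through $d/x$. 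Applying this with $a=\alpha_{2X}^c(x_2)$ and using $\alpha_{2X}^c(x_1)=\alpha_{2X}^a(x_1)$ (from Theorem~\ref{thm:cabtight}(4) applied to the $XY$-decomposition $c=a+(c-a)$ and the restriction of $\alpha_{YX}^a$ to $[0,x_1]$), we get $\mathcal F_X(x_1)\subseteq\mathcal F_X(x_2)$.

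The step I expect to be the main obstacle is verifying cleanly that $a:=\alpha_{2X}^c(x_2)$ is tight and that $\alpha_{2X}^c$ restricted to $[0,x_2]$ agrees with $\alpha_{2X}^a$ on that interval — i.e.\ that truncating the optimal path at parameter $x_2$ and then re-optimizing for the new target vector $a$ reproduces the same path below $x_2$. This is exactly the content of transitivity of projection, Proposition~\ref{prop:denoisetransitive}(1): $\proj_X(\proj_X(c,x_2),x_1)=\proj_X(c,x_1)$, combined with the identification $\alpha_{2X}(x)=\proj_X(c,x)$ and the fact that an $X2$-decomposition of $c$ restricts to an $X2$-decomposition of its summand (Theorem~\ref{thm:cabtight}(1), which gives tightness of $a$). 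So the proof is essentially an assembly: tightness of $a$ and path-transitivity from Section~\ref{sec:slope}'s predecessors, the norm-additivity $\|a\|_X=\|d\|_X+\|a-d\|_X$ for the induced $X2$-decomposition of $a$, and then the norm-sum characterization of facial cones (Lemma~\ref{lem:facialXcone}) to conclude the face containment. No delicate estimates are needed beyond these cited results.
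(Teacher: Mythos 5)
Your argument is correct and lands in essentially the same place as the paper's, but by a slightly different route, so a brief comparison is worthwhile. The paper's proof is a direct plug-in: taking $t=x_1/(x_2-x_1)$ (the paper misprints this as $t=x_2/(x_2-x_1)$), one checks that
\[
(1+t)\,\frac{\alpha_{2X}(x_2)}{x_2}\;-\;t\,\frac{\alpha_{2X}(x_1)}{x_1}\;=\;\frac{\alpha_{2X}(x_2)-\alpha_{2X}(x_1)}{x_2-x_1}
\]
lies in $B_X$, and the explicit smallest-face description (the lemma giving $F=\{w\in B\mid (1+t)v-tw\in B\ \text{for some}\ t>0\}$) immediately places $\alpha_{2X}(x_1)/x_1$ in $\mathcal F_X(x_2)$. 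You instead first establish tightness of $a=\alpha_{2X}(x_2)$ via Theorem~\ref{thm:cabtight}(1) and the restriction identity $\alpha_{2X}^c(x_1)=\alpha_{2X}^a(x_1)$ via Proposition~\ref{prop:denoisetransitive}(1), and then route the face containment through Lemma~\ref{lem:facialXcone}'s norm-sum characterization of facial cones. Both proofs turn on the same core fact, $\|\alpha_{2X}(x_2)-\alpha_{2X}(x_1)\|_X=x_2-x_1$; the paper leaves it tacit, while you make it explicit, which is arguably more careful. One small correction: the norm additivity $\|a\|_X=\|d\|_X+\|a-d\|_X$ is not a consequence of Proposition~\ref{prop:equivalent} (which only identifies $X2$-, $2Y$-, and inner-product decompositions), nor of the $X2$-decomposition property by itself; it needs $a$ to be tight and is exactly Theorem~\ref{thm:cabtight}(2).
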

\begin{proof}
Suppose that $c$ is tight and $0<x_1<x_2<\|c\|_X$.
For $t=x_2/(x_2-x_1)$ we have
$$(1+t)\alpha_{YX}(x_2)/x_2-t\alpha_{YX}(x_1)/x_1=(\alpha_{YX}(x_2)-\alpha_{YX}(x_1))/(x_2-x_1)$$
which lies in the unit ball $B_X$. This proves that $\alpha_{YX}(x_1)/x_1$ lies in ${\mathcal F}_{X}(x_2)$.
We conclude that ${\mathcal F}_{X}(x_1)\subseteq {\mathcal F}_{X}(x_2)$.
\end{proof}
\begin{proposition}\label{prop:fpiecewise}
If $c\in V$ is tight, then $\alpha_{YX}(x)$ and $f_{YX}(x)$ are piecewise linear.
\end{proposition}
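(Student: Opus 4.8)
The plan is to show that the faces $\mathcal F_X(x)$ can take only finitely many values as $x$ ranges over $(0,\|c\|_X)$, and that on each maximal interval where $\mathcal F_X(x)$ is constant the curve $\alpha_{YX}(x)$ is affine-linear (hence $f_{YX}(x)=\|c-\alpha_{YX}(x)\|_Y$ is linear there, once we observe that $\|\cdot\|_Y$ is linear along the relevant ray). The key structural input is the previous lemma: for a tight $c$ the assignment $x\mapsto\mathcal F_X(x)$ is monotone increasing in the sense $\mathcal F_X(x_1)\subseteq\mathcal F_X(x_2)$ for $x_1<x_2$. Since the faces of the compact convex body $B_X$ form a poset in which every chain $\mathcal F_X(x_1)\subsetneq\mathcal F_X(x_2)\subsetneq\cdots$ has strictly increasing dimension, and $\dim V=n<\infty$, a nested family of faces can have at most $n+1$ distinct members. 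Therefore $(0,\|c\|_X)$ decomposes into finitely many subintervals $I_1,I_2,\dots,I_k$ (in increasing order) on each of which $\mathcal F_X(x)$ equals a fixed face $F_j$.

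Next I would fix one such interval $I_j=(p,q)$ and show $\alpha_{YX}$ is affine on it. For $p<x_1<x_2<q$, the computation in the proof of the preceding lemma shows that $\bigl(\alpha_{YX}(x_2)-\alpha_{YX}(x_1)\bigr)/(x_2-x_1)$, suitably normalized, lies in $F_j$; more precisely, the vectors $\alpha_{YX}(x)/x$ all lie in the single face $F_j$, and for points in the \emph{relative interior} of a face the "direction" in which one may perturb while staying in $B_X$ is governed entirely by the affine hull of $F_j$. Concretely: writing $a_i=\alpha_{YX}(x_i)$, tightness gives $\langle a_i,c-a_i\rangle=\|a_i\|_X\|c-a_i\|_Y=x_i\,h_{YX}(x_i)$, and $h_{YX}=f_{YX}$ is convex (for tight $c$) — wait, more cleanly: by Theorem~\ref{thm:cabtight}(4) applied to the decomposition $c=\alpha_{YX}(x_1)+(c-\alpha_{YX}(x_1))$, we have $\alpha_{YX}^c=\alpha_{YX}^{a}\star\alpha_{YX}^{b}$ with $a=\alpha_{YX}(x_1)$. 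So it suffices to prove linearity near $x=0$, i.e. that $\alpha_{YX}^c(x)$ is linear on an initial interval $[0,\epsilon]$; the general statement then follows by the concatenation structure and induction on the (finitely many) faces. Near $0$, $\alpha_{YX}(x)/x$ lies in the fixed smallest face $F_0=\mathcal F_X(0^+)$ containing the limiting direction, and one checks that any $XY$-decomposition $c=a+b$ with $\|a\|_X$ small and $a/\|a\|_X\in\operatorname{relint}F_0$ forces $a$ to move along a fixed line: if $a/\|a\|_X$ and $a'/\|a'\|_X$ both lie in $\operatorname{relint}F_0$ and both are optimal, convexity of $f_{YX}$ together with the fact (Lemma~\ref{lem:facialXcone}) that $\|a+a'\|_X=\|a\|_X+\|a'\|_X$ on the cone $\R_{\ge0}F_0$ pins down $\|c-a\|_Y$ as an affine function of $\|a\|_X$, and then $\alpha_{YX}$ is determined linearly because $c$ is rigid along that ray (or, if not rigid, we select the canonical solution and it is still affine).

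Finally, having $\alpha_{YX}$ piecewise affine with breakpoints at the finitely many interval endpoints $0=x^{(0)}<x^{(1)}<\cdots<x^{(k)}=\|c\|_X$, linearity of $f_{YX}=h_{YX}$ on each $I_j$ follows: on $I_j$ we have $c-\alpha_{YX}(x)=b_j+\lambda(x)u_j$ for a fixed vector $b_j$, fixed direction $u_j$, and affine scalar $\lambda(x)$, where $b_j$ and $u_j$ point in "compatible" directions with respect to $\|\cdot\|_Y$ (again by the norm-sum/facial-cone structure, now for the dual norm applied to the $b$-side, using Theorem~\ref{thm:cabtight}(2)), so $\|c-\alpha_{YX}(x)\|_Y=\|b_j\|_Y+\lambda(x)\|u_j\|_Y$ is affine in $x$. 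Since $f_{YX}$ is continuous (Lemma~\ref{lem:convexdecreasing}) and affine on each of finitely many subintervals, it is piecewise linear, as is $\alpha_{YX}$.

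The main obstacle I anticipate is the second step — proving that $\alpha_{YX}$ is genuinely \emph{affine}, not merely that its difference quotients stay in a fixed face. The face $\mathcal F_X(x)$ being constant controls the direction of motion only up to the linear span of that face, which may be higher-dimensional; one must use the extra rigidity coming from $\langle a,b\rangle=\|a\|_X\|b\|_Y$ (the $X2$-decomposition equation) on \emph{both} the $a$- and $b$-sides simultaneously to collapse the possible motion to a single line. I expect the cleanest route is exactly the reduction via Theorem~\ref{thm:cabtight}(4) to the behavior near $x=0$, where "the smallest face containing the initial direction" is a single well-defined object and the argument localizes.
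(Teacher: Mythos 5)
Your skeleton matches the paper's: use the previous lemma to see that ${\mathcal F}_X(x)$ is a nested (hence, by dimension count, eventually stabilizing) chain of faces of $B_X$, so $(0,\|c\|_X)$ breaks into finitely many intervals on each of which ${\mathcal F}_X$ is a fixed face $F$; then prove $\alpha_{YX}$ is affine on each such interval. The finiteness half is correct and is exactly the paper's argument.

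The second half is a genuine gap, and you correctly diagnose it yourself: constancy of the face only constrains the difference quotients of $\alpha_{YX}$ to lie in a possibly high-dimensional subspace, so nothing so far forces the path to be a straight line. Your attempted fixes --- reducing to behavior near $x=0$ via the concatenation $\alpha_{YX}^c=\alpha_{YX}^a\star\alpha_{YX}^b$ and then appealing to ``rigidity along that ray'' --- restate the difficulty without resolving it, since you would still have to show linearity near $0$, and the norm-sum property on $\R_{\ge 0}F_0$ does not by itself pin $a$ to a line.

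The paper's missing ingredient is a clean exploitation of euclidean geometry that you did not find. For tight $c$ one has $\alpha_{YX}(x)=\alpha_{2X}(x)$, the euclidean-closest point to $c$ in $B_X(x)$. Write $\Aff(F)=d+W$ with $W$ a subspace and $d\in W^\perp$. Because $\alpha_{2X}(x)/x$ sits in the \emph{relative interior} of $F$, first-order optimality for the euclidean distance says $c-\alpha_{2X}(x)\perp W$; hence $\alpha_{2X}(x)$ is not merely the closest point in the body but the closest point in the whole affine subspace $xd+W$, i.e.\ the orthogonal projection of $c$ onto $xd+W$. Since $d\perp W$, that projection is $\proj_W(c)+xd$, which is visibly affine in $x$. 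From there, using $\langle c-\alpha(x),\alpha(x)\rangle=\|\alpha(x)\|_X\|c-\alpha(x)\|_Y=x\,h_{YX}(x)$ together with $\langle c-\proj_W(c),\proj_W(c)\rangle=0$ and $\langle d,\proj_W(c)\rangle=0$ gives $x\,h_{YX}(x)$ as a quadratic in $x$ with zero constant term, so $h_{YX}=f_{YX}$ is linear on the interval --- this is also simpler than your proposed route through a norm-sum decomposition of the $b$-side. The lesson: replacing the convex-body constraint by its local affine hull, and then using the orthogonality $d\in W^\perp$, is what collapses the motion to a line.
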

\begin{proof}
We can divide up the interval $[0,\|c\|_X]$ into finitely many intervals such that on each interval ${\mathcal F}_{X}$ is constant.
Suppose that $(x_1,x_2)$ is an open interval on which ${\mathcal F}_{X}$ equal to $F$. The affine hull of $F$ is of the form $d+W$
where  $W\subset V$ is a subspace and $d\in W^\perp$.  Now $\alpha_{YX}(x)$ is the vector in $xd+W$ closest to $c$ (in the euclidean norm). If we define $a(x)=\alpha_{YX}(x)-dx$, then $a(x)\in W$ is the vector closest to $c-dx$.
So $a(x)$ is the orthogonal projection of $c-dx$ onto $W$. Since $d\in W^{\perp}$, $a(x)$ is the orthogonal projection of $c$ onto $W$, so $a(x)=a$ is constant. This proves that  $\alpha_{YX}(x)=a+dx$ is linear.

Because $\langle c-a,a\rangle=0$, we have
$$
xh_{YX}(x)=\langle c-\alpha_{XY}(x),\alpha_{XY}(x)\rangle=\langle c-a-dx,a+dx\rangle=\langle c,d\rangle x+\langle d,d\rangle x^2.
$$

So $h_{YX}(x)$ is linear for $x\in (x_1,x_2)$.
\end{proof}
\begin{proof}[Proof of Theorem~\ref{thm:sparser}]
Suppose that $c=a+b$ is an $X2$-decomposition and let $x=\|a\|_X$ and $y=\|b\|_Y$. 
The smallest face of $B_X$ containing $x^{-1}a$ is ${\mathcal F}_X(x)$ and the smallest face containing $y^{-1}b$ is ${\mathcal F}_Y(y)$. For every $a'\in {\mathcal F}_X(x)$ and every $b'\in {\mathcal F}_Y(y)$ we have
$\langle a',b'\rangle \leq 1$. We have $\langle x^{-1}a,y^{-1}b\rangle=1$. Since $y^{-1}b$ lies in the relative interior of ${\mathcal F}_Y(y)$, we have $\langle x^{-1}a,b'\rangle=1$ for all $b'\in {\mathcal F}_Y(y)$. 
Since $x^{-1}a$ lies in the relative interior of ${\mathcal F}_X(x)$, we have $\langle a',b'\rangle=1$
for all $a'\in{ \mathcal F}_X(x)$ and all $b'\in {\mathcal F}_Y(y)$. 
It follows that 
$$
\gsparse_X(a)+\gsparse_Y(b)=\dim {\mathcal F}_X(x)+1+\dim {\mathcal F}_Y(y)+1\leq n+1.
$$

If $c$ is tight, them we have ${\mathcal F}_X(x)\subseteq {\mathcal F}_X(\|c\|_X)$ and 
$$
\gsparse_X(a)=\dim {\mathcal F}_X(x)+1\leq \dim {\mathcal F}_X(\|c\|_X)+1=\gsparse_X(c).
$$
\end{proof}

\subsection{Proof of Theorem~\ref{theo:tightslope}}
Suppose that $c$ is tight. Then $h^{c}_{YX}=f^c_{YX}$ is piecewise linear by Proposition~\ref{prop:fpiecewise}. Suppose that $z_0<z_1<\cdots<z_r=\|c\|_X$
such that $h^{c}_{YX}$ is linear on each interval $[z_{i-1},z_i]$, and that  $h^c_{YX}$ is not differentiable at $z_1,\dots,z_{r-1}$.
Let $a_i=\alpha_{2X}^c(z_i)$, and define $c_i=a_i-a_{i-1}$ for $i=1,2,\dots,r$.
We have $a_{i-1}=\alpha_{2 X}^c(z_{i-1})=\alpha_{2 X}^{a_i}(z_{i-1})$ so $a_i=a_{i-1}+c_i$ is an $X2$-decomposition for all $i$.
By induction we get
$$
f^c_{YX}=f^{c_1}_{YX}\star \cdots \star f^{c_r}_{YX}.
$$
The area under the graph of $f^{c_i}_{YX}=h^{c_i}_{YX}$  is ${\textstyle \frac{1}{2}}\|c_i\|_2^2$.
The area under the graph of $f^{c_1}_{YX}\star \cdots \star f^{c_r}_{YX}$ is
$$
\sum_{i<j}\|c_i\|_X\|c_j\|_Y+{\textstyle \frac{1}{2}}\sum_{i}\|c_i\|_X\|c_i\|_Y.
$$
So we have
$$
\sum_{i<j}\langle c_i,c_j\rangle+{\textstyle \frac{1}{2}}\sum_i \langle c_i,c_i\rangle={\textstyle \frac{1}{2}}\|c\|_2^2=
\sum_{i<j}\|c_i\|_X\|c_j\|_Y+{\textstyle \frac{1}{2}}\|c_i\|_X\|c_i\|_Y\geq
\sum_{i<j}\langle c_i,c_j\rangle+{\textstyle \frac{1}{2}}\sum_i\langle c_i,c_i\rangle.
$$
This proves that $\langle c_i,c_j\rangle =\|c_i\|_X\|c_j\|_Y$ for all $i\leq j$.
This shows that $c=c_1+c_2+\cdots+c_r$ is a slope decomposition.

Conversely, suppose that $c=c_1+c_2+\cdots+c_r$ is a slope decomposition. 
We will show that $c$ is tight. Since $c_1+\cdots+c_{r-1}$ is also a slope decomposition, so by induction we may assume that
$c_1+\cdots+c_{r-1}$ is tight, and 
$$
h^{c_1+\cdots+c_{r-1}}_{YX}=f^{c_1+\cdots+c_{r-1}}_{YX}=f^{c_1}_{YX}\star \cdots\star f^{c_{r-1}}_{YX}=h^{c_1}_{YX}\star \cdots \star h^{c_{r-1}}_{YX}.
$$
Since $c=(c_1+\cdots+c_{r-1})+c_{r}$ is an $X2$-decomposition, it follows from Lemma~\ref{lem:XYstar} that
$$
h^{c}_{YX}=h^{c_1+\cdots+c_{r-1}}_{YX}\star h^{c_r}_{YX}=h^{c_1}_{YX}\star \cdots\star h^{c_r}_{YX}.
$$
Suppose that
$$\|c_1\|_X+\cdots+\|c_{i-1}\|_X\leq x\leq \|c_1\|_X+\cdots+\|c_i\|_X.$$
We have
\begin{multline*}
\sum_{j<i} \|c_j\|_X\|c_i\|_Y+\sum_{j\geq i} \|c_j\|_Y\|c_i\|_X=\sum_{j<i} \langle c_j,c_i\rangle+\sum_{j\geq i}\langle c_j,c_i\rangle  =\\=\langle c-a,c_i\rangle+\langle a,c_i\rangle\leq \|c-a\|_Y\|c_i\|_X+\|a\|_X\|c_i\|_Y=f_{YX}(x)\|c_i\|_X+x\|c_i\|_Y.
\end{multline*}
So we have
$$
f_{YX}^c(x)\geq \sum_{j=i}^r \|c_j\|_Y+\frac{\|c_i\|_Y}{\|c_i\|_X}\Big(\sum_{j=1}^{i-1} \|c_j\|_X-x\Big)=(f_{YX}^{c_1}\star f_{YX}^{c_2}\star \cdots\star f_{YX}^{c_r})(x).
$$
It follows that 
$$
f^c_{YX}\leq h^c_{YX}=h^{c_1}_{YX}\star \cdots\star h^{c_r}_{YX}=f^{c_1}_{YX}\star\cdots\star f^{c_r}_{YX}\leq f^{c}_{YX}.
$$
We get  $f^c_{YX}=h^c_{YX}$, so $c$ is tight. We have proven part (1).

(2) Suppose that $c$ is tight and $c=c_1+c_2+\cdots+c_r$ is a slope decomposition.
Then the graph $f^{c_i}_{YX}$ is a straight line segment from $(0,y_i)$ to $(x_i,0)$ where $y_i=\|c_i\|_Y$ and
$x_i=\|c_i\|_X$. Since $f^c_{YX}=f^{c_1}_{YX}\star f^{c_2}_{YX}\star \cdots \star f^{c_r}_{YX}$, we have that
$f^c_{YX}$ is the graph through the points $(x_1+\cdots+x_i,y_{i+1}+\cdots+y_r)$ for $i=0,1,2,\dots,r$.
\subsection{Properties of the slope decomposition}
\begin{lemma}\label{lem:abctight}
If $c=c_1+\cdots+c_m$ is an $XY$-slope decomposition, then $c_1,\dots,c_m$ are linearly independent.
\end{lemma}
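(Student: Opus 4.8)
The plan is to deduce linear independence from invertibility of the Gram matrix $M=(\langle c_i,c_j\rangle)_{1\le i,j\le m}$: any relation $\sum_i\lambda_ic_i=0$ forces $M\lambda=0$, so if $M$ is invertible then all $\lambda_i=0$. Throughout I write $x_i=\|c_i\|_X>0$, $y_i=\|c_i\|_Y>0$ and $\mu_i=\mu_{XY}(c_i)=y_i/x_i$, so the slope-decomposition hypotheses say $\langle c_i,c_j\rangle=x_iy_j$ for $i\le j$ and $\mu_1>\mu_2>\cdots>\mu_m>0$.

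First I would put the Gram matrix into a structured form. For $i\le j$ we have $\langle c_i,c_j\rangle=x_iy_j=x_ix_j\mu_j=x_ix_j\min(\mu_i,\mu_j)$, the last equality because $\mu_i>\mu_j$; by symmetry of the inner product the identity $\langle c_i,c_j\rangle=x_ix_j\min(\mu_i,\mu_j)$ then holds for all $i,j$. Hence $M=DND$, where $D=\mathrm{diag}(x_1,\dots,x_m)$ is invertible and $N=(\min(\mu_i,\mu_j))_{i,j}$, so it remains to show that $N$ is invertible.

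Next I would evaluate $\det N$ by row reduction. Because $\mu_1>\cdots>\mu_m$, the $(k,j)$ entry of $N$ is $\mu_k$ when $j\le k$ and $\mu_j$ when $j>k$; performing the determinant-preserving operations $R_k\mapsto R_k-R_{k+1}$ successively for $k=1,\dots,m-1$ turns row $k$ into $(\mu_k-\mu_{k+1})$ in columns $1,\dots,k$ and $0$ elsewhere, while row $m$ stays $\mu_m(1,\dots,1)$. The result is lower triangular, so
\[
\det N=\mu_m\prod_{k=1}^{m-1}(\mu_k-\mu_{k+1})\neq 0
\]
since the $\mu_i$ are positive and strictly decreasing. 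Therefore $M=DND$ is invertible and $c_1,\dots,c_m$ are linearly independent.

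There is no serious obstacle here: the argument is a short computation, and the only place the hypotheses enter decisively is the rewriting $\langle c_i,c_j\rangle=x_ix_j\min(\mu_i,\mu_j)$, which uses both the norm-product equalities and the strict ordering of the slopes, together with the final nonvanishing of $\det N$, which again uses $\mu_1>\cdots>\mu_m>0$. If one prefers to avoid the determinant, one can instead substitute a putative relation $\sum_i\lambda_ic_i=0$ into $\langle\,\cdot\,,c_k\rangle=0$ to obtain $y_k\sum_{i\le k}\lambda_ix_i+x_k\sum_{i>k}\lambda_iy_i=0$ for each $k$, and then show by induction on $k$, invoking $\mu_{k-1}>\mu_k$ at the inductive step, that every partial sum $\sum_{i\le k}\lambda_ix_i$ vanishes, whence all $\lambda_i=0$.
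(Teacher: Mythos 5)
Your proof is correct, but it takes a genuinely different route from the paper's. The paper supposes a dependence $c_m=\sum_{i<m}\lambda_ic_i$ and argues by contradiction, chaining
\[
\|c_m\|_X\|c_m\|_Y=\langle c_m,c_m\rangle=\sum_{i<m}\lambda_i\langle c_i,c_m\rangle
\le\sum_{i<m}|\lambda_i|\,\|c_i\|_X\|c_m\|_Y
<\sum_{i<m}|\lambda_i|\,\|c_i\|_Y\|c_m\|_X
\le\|c_m\|_X\|c_m\|_Y,
\]
where the strict middle step uses $\mu_{XY}(c_i)>\mu_{XY}(c_m)$, arriving at a strict self-inequality. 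You instead identify $\langle c_i,c_j\rangle=x_ix_j\min(\mu_i,\mu_j)$, factor the Gram matrix as $M=DND$, and compute $\det N=\mu_m\prod_{k<m}(\mu_k-\mu_{k+1})>0$ by successive row differences. This buys something the paper's argument does not: an explicit, positive formula for the Gram determinant, $\det M=\bigl(\prod_i x_i^2\bigr)\,\mu_m\prod_{k<m}(\mu_k-\mu_{k+1})$, so the vectors are not merely linearly independent but their degeneracy is quantitatively controlled by how well-separated the slopes are; the computation is also completely self-contained, relying on nothing beyond the defining identities of a slope decomposition. Your alternative inductive route (pairing the relation against each $c_k$ to get $y_k\sum_{i\le k}\lambda_ix_i+x_k\sum_{i>k}\lambda_iy_i=0$, then killing the partial sums $\sum_{i\le k}\lambda_ix_i$ one by one using $\mu_{k-1}>\mu_k$) is likewise correct and closer in spirit to the paper's, since it too exploits the strict slope ordering to force vanishing coefficients, but it avoids the absolute-value/triangle-inequality manipulations and is airtight at every step.
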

\begin{proof}
Suppose that we have
$$
c_m=\sum_{i=1}^{m-1} \lambda_i c_i.
$$
Because $\mu_{XY}(c_i)=\|c_i\|_Y/\|c_i\|_X>\mu_{XY}(c_m)=\|c_m\|_Y/\|c_m\|_X$ we get
\begin{multline*}
\|c_m\|_X\|c_m\|_Y=\langle c_m,c_m\rangle=\sum_{i=1}^{m-1}\lambda_i\langle c_i,c_m\rangle\leq \\ \leq \sum_{i=1}^{m-1}
|\lambda_i|\|c_i\|_X\|c_m\|_Y<\sum_{i=1}^{m-1}|\lambda_i|\|c_i\|_Y\|c_m\|_X\leq \|c_m\|_X\|c_m\|_Y
%
\end{multline*}
Contradiction. This proves that $c_1,\dots,c_r$ are linearly independent.
\end{proof}
\begin{proposition}\label{prop:cistight}
Suppose that $c=a+b$ is an $X2$-decomposition and $a$ and $b$ are tight. Then $c$ is tight. 
\end{proposition}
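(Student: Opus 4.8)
The plan is to leverage the slope decompositions of $a$ and $b$ together with Lemma~\ref{lem:XYstar} and Theorem~\ref{theo:tightslope}. Since $a$ and $b$ are tight, Theorem~\ref{theo:tightslope}(1) gives slope decompositions $a=a_1+\cdots+a_p$ and $b=b_1+\cdots+b_q$, so that $\mu_{XY}(a_1)>\cdots>\mu_{XY}(a_p)$, $\mu_{XY}(b_1)>\cdots>\mu_{XY}(b_q)$, and the pairwise relations $\langle a_i,a_j\rangle=\|a_i\|_X\|a_j\|_Y$ for $i\le j$ and $\langle b_k,b_l\rangle=\|b_k\|_X\|b_l\|_Y$ for $k\le l$ hold. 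From Lemma~\ref{lem:XYstar} we know that $h^c_{YX}=h^a_{YX}\star h^b_{YX}$, which (using tightness of $a$ and $b$, hence $h^a_{YX}=f^a_{YX}$ and $h^b_{YX}=f^b_{YX}$) equals $f^{a_1}_{YX}\star\cdots\star f^{a_p}_{YX}\star f^{b_1}_{YX}\star\cdots\star f^{b_q}_{YX}$. So the sub-frontier of $c$ is piecewise linear with the slopes of the $a_i$'s followed by those of the $b_k$'s.

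The key point I would establish is that the concatenated list $a_1,\dots,a_p,b_1,\dots,b_q$ is itself an $XY$-slope decomposition of $c$; then Theorem~\ref{theo:tightslope}(1) immediately yields that $c$ is tight. Most of the required relations are already in hand: the $\langle a_i,a_j\rangle$ and $\langle b_k,b_l\rangle$ identities come from the slope decompositions of $a$ and $b$, and the monotonicity of slopes within each block is given. Two things remain to be checked. First, the cross relations $\langle a_i,b_k\rangle=\|a_i\|_X\|b_k\|_Y$ for all $i,k$; second, that the last slope of $a$ exceeds the first slope of $b$, i.e. $\mu_{XY}(a_p)>\mu_{XY}(b_1)$, so that the monotonicity condition holds across the junction. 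If either $a=0$ or $b=0$ the statement is trivial, so assume both are nonzero.

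For the cross relations, I would argue as in the proof of Lemma~\ref{lem:XYstar}/Theorem~\ref{thm:sparser}: since $c=a+b$ is an $X2$-decomposition, $\langle a,b\rangle=\|a\|_X\|b\|_Y$, and because $a/\|a\|_X$ lies in the relative interior of the smallest face ${\mathcal F}_X(\|a\|_X)$ of $B_X$ and $b/\|b\|_Y$ in the relative interior of ${\mathcal F}_Y(\|b\|_Y)$, the dual pairing is identically $1$ on these faces; in particular $\langle a',b'\rangle=\|a'\|_X\|b'\|_Y$ for any $a'$ in the cone over ${\mathcal F}_X(\|a\|_X)$ and $b'$ in the cone over ${\mathcal F}_Y(\|b\|_Y)$. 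Each $a_i$ lies in the facial $X$-cone generated by $a$ (this follows from the slope decomposition of $a$ via the norm-sum property, Lemma~\ref{lem:facialXcone}, since $\|a\|_X=\sum\|a_i\|_X$ by Theorem~\ref{thm:cabtight}(2) applied to the tight vector $a$), and likewise each $b_k$ lies in the facial $Y$-cone generated by $b$; hence $\langle a_i,b_k\rangle=\|a_i\|_X\|b_k\|_Y$. For the slope inequality at the junction, I would compare areas or simply invoke the shape of $h^c_{YX}=f^c_{YX}$: the concatenation $f^a_{YX}\star f^b_{YX}$ has the slopes of $a$ before those of $b$, and convexity of $f^c_{YX}$ (Lemma~\ref{lem:convexdecreasing}, applicable once we know $c$ is tight — but here we can instead use that $h^c_{YX}$, being a concatenation of the convex pieces $h^a_{YX}$ and $h^b_{YX}$ and equal to $f^c_{YX}$ on the relevant range, forces the slopes to be nonincreasing) gives $\mu_{XY}(a_p)\ge\mu_{XY}(b_1)$, and strictness follows because distinct summands in a slope decomposition have strictly distinct slopes. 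The main obstacle I anticipate is the bookkeeping around \emph{equal} slopes across the junction: if $\mu_{XY}(a_p)=\mu_{XY}(b_1)$ we cannot simply concatenate, and we must merge $a_p$ and $b_1$ into a single summand $a_p+b_1$, checking that $\langle a_p+b_1, a_p+b_1\rangle=\|a_p+b_1\|_X\|a_p+b_1\|_Y$ (using the cross relation just established and additivity of the two norms on summands with a common slope). Once this merging is handled, $c=c_1+\cdots+c_r$ with the merged list is a genuine $XY$-slope decomposition, and Theorem~\ref{theo:tightslope}(1) concludes that $c$ is tight.
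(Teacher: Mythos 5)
Your overall plan matches the paper's: take slope decompositions of $a$ and $b$, prove the cross relations $\langle a_i,b_k\rangle=\|a_i\|_X\|b_k\|_Y$, then either concatenate the two lists or merge $a_p$ and $b_1$ when their slopes coincide, and conclude via Theorem~\ref{theo:tightslope}(1). Your facial-cone argument for the cross relations is a legitimate (if heavier) alternative to the one-line argument in the paper, which simply expands
$\|a\|_X\|b\|_Y=\langle a,b\rangle=\sum_{i,k}\langle a_i,b_k\rangle\leq\sum_{i,k}\|a_i\|_X\|b_k\|_Y=\bigl(\sum_i\|a_i\|_X\bigr)\bigl(\sum_k\|b_k\|_Y\bigr)=\|a\|_X\|b\|_Y$
and forces equality term by term.

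There is, however, a genuine gap at the junction step. You argue $\mu_{XY}(a_p)\geq\mu_{XY}(b_1)$ by invoking convexity of the concatenation or the identity $h^c_{YX}=f^c_{YX}$. Both are circular: the concatenation $h^a_{YX}\star h^b_{YX}$ of two convex pieces is convex precisely when the slope at the junction is nonincreasing, i.e.\ precisely when $\mu_{XY}(a_p)\geq\mu_{XY}(b_1)$ holds, and ``$h^c_{YX}$ equal to $f^c_{YX}$'' \emph{is} the statement that $c$ is tight, which is the conclusion. Also, ``strictness follows because distinct summands in a slope decomposition have strictly distinct slopes'' presupposes that the concatenated list already is a slope decomposition, which is what is being proved. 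What is actually needed here — and what the paper does — is a direct estimate from the cross relation $\langle a_p,b_1\rangle=\|a_p\|_X\|b_1\|_Y$: using the triangle inequality for $\|\cdot\|_X$ and the duality bound $\langle u,v\rangle\leq\|u\|_X\|v\|_Y$, one gets
$\|a_p\|_Y\|a_p+b_1\|_X\geq\langle a_p,a_p+b_1\rangle=\|a_p\|_X\|a_p\|_Y+\|a_p\|_X\|b_1\|_Y\geq\|a_p\|_X\|a_p+b_1\|_Y$,
hence $\mu_{XY}(a_p)\geq\mu_{XY}(a_p+b_1)$, and symmetrically $\mu_{XY}(a_p+b_1)\geq\mu_{XY}(b_1)$. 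This gives the junction inequality unconditionally, and when equality holds it simultaneously shows $a_p+b_1$ has the common slope so that replacing $a_p,b_1$ by $a_p+b_1$ yields a slope decomposition (you still need to note that the new summand has the right inner products with the other $a_i$'s and $b_k$'s, which follows by linearity from the cross relations). If you replace the convexity appeal with this computation, the proof is complete.
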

\begin{proof}
Since $a$ and $b$ are tight, they have slope decompositions, say $a=a_1+\cdots+a_r$ and $b=b_1+\cdots+b_s$.
We have
\begin{multline*}
\|a\|_X\|b\|_Y=\langle a,b\rangle=\sum_{i=1}^r\sum_{j=1}^s \langle a_i,b_j\rangle\leq
\sum_{i=1}^r\sum_{j=1}^s \|a_i\|_X\|b_j\|_Y=\\=\left(\sum_{i=1}^r \|a_i\|_X\right)\left(\sum_{j=1}^s \|b_j\|_Y\right)\leq
\|a\|_X\|b\|_Y.
\end{multline*}
It follows that $\langle a_i,b_j\rangle=\|a_i\|_X\|b_j\|_Y$ for all $i<j$.
If $\mu_{XY}(a_r)>\mu_{XY}(b_1)$ then
$$
c=a_1+\cdots+a_r+b_1+\cdots+b_s
$$
is a slope decomposition.

Suppose that $\mu_{XY}(a_r)\leq \mu_{XY}(b_1)$. We have
\begin{multline*}
\|a_r\|_Y\|a_r+b_1\|_X\geq \langle a_r,a_r+b_1\rangle=\\=\langle a_r,a_r\rangle+\langle a_r,b_1\rangle=
\|a_r\|_X\|a_r\|_Y+\|a_r\|_X\|b_1\|_Y\geq \|a_r\|_X\|a_r+b_1\|_Y
\end{multline*}
so $\mu_{XY}(a_r)\geq \mu_{XY}(a_r+b_1)$. Similarly, we have
\begin{multline*}
\|a_r+b_1\|_Y\|b_1\|_X\geq \langle a_r+b_1,b_1\rangle=\\=\langle a_r,b_1\rangle+\langle b_1,b_1\rangle=
\|a_r\|_X\|b_1\|_Y+\|b_1\|_X\|b_1\|_Y\geq \|a_r+b_1\|_X\|b_1\|_Y,
\end{multline*}
so $\mu_{XY}(a_r+b_1)\geq \mu_{XY}(b_1)\geq \mu_{XY}(a_r)\geq \mu_{XY}(a_r+b_1)$.
We conclude that $\mu_{XY}(a_r)=\mu_{XY}(b_1)=\mu_{XY}(a_r+b_1)$ and
$$
c=a_1+\cdots+a_{r-1}+(a_r+b_1)+b_2+\cdots+b_s
$$
is a slope decomposition.

Since $c$ has a slope decomposition, it is tight.
\end{proof}

\subsection{The unit ball of a tight norm}
\begin{proposition}\label{prop:normtight}
A norm $\|\cdot\|_X$ is tight if and only if every face $F$ of the unit ball $B_X$ contains a unitangent vector that is perpendicular to $F$. 
\end{proposition}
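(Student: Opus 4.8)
The plan is to prove the two implications separately, in both cases leaning on the slope-decomposition machinery of Section~\ref{sec:slope}: a vector is tight iff it has a slope decomposition (Theorem~\ref{theo:tightslope}(1)), and every summand $c_i$ of a slope decomposition satisfies $\langle c_i,c_i\rangle=\|c_i\|_X\|c_i\|_Y$, i.e.\ is unitangent. Throughout, for a proper face $F$ of $B_X$ I write $\operatorname{aff}(F)=d_F+W_F$ with $W_F$ a subspace and $d_F\in W_F^{\perp}$; then ``$v$ perpendicular to $F$'' means $v\in W_F^{\perp}$, and if in addition $v\in F\subseteq\operatorname{aff}(F)$ this forces $v=d_F$, so the condition in the proposition amounts to: $d_F\in F$ and $d_F$ is unitangent. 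The case $F=B_X$ is trivial ($v=0$ works), so only proper faces matter.

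For the forward implication, assume $\|\cdot\|_X$ is tight and fix a proper face $F$. First I would pick $v_0$ in the relative interior of $F$, so $\|v_0\|_X=1$ and $F$ is the smallest face of $B_X$ containing $v_0$. Since $v_0$ is tight it has a slope decomposition $v_0=p_1+\dots+p_m$, and from $\|v_0\|_X=\sum_i\|p_i\|_X$ (Theorem~\ref{theo:tightslope}(2)) together with the norm-sum property of the facial cone $\mathbb{R}_{\geq0}F$ (Lemma~\ref{lem:facialXcone}, iterated) I get $p_i/\|p_i\|_X\in F$ for every $i$. Now I examine the last summand: $\langle v_0,p_m\rangle=\sum_i\langle p_i,p_m\rangle=\|p_m\|_Y\sum_i\|p_i\|_X=\|p_m\|_Y$, so $v_0$ lies in the face $F''$ of $B_X$ exposed by $p_m$ (where $\langle p_m,\cdot\rangle$ attains its maximum $\|p_m\|_Y$ on $B_X$). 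Since $v_0$ is a relative-interior point of $F$ and $F''$ is a face, $F\subseteq F''$, hence $\langle p_m,\cdot\rangle$ is constant on $F$, i.e.\ $p_m\perp W_F$. Then $p_m/\|p_m\|_X\in F$ is unitangent and perpendicular to $F$, as required.

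For the converse, assume every face of $B_X$ contains a unitangent vector perpendicular to it; I would show every $c\in V$ is tight by induction on $\gsparse_X(c)$. If $\gsparse_X(c)\leq1$ then $c=0$ (trivially tight) or $c/\|c\|_X$ is an extreme point of $B_X$, and the hypothesis applied to that $0$-dimensional face forces $c/\|c\|_X$ unitangent, hence $c$ unitangent and tight. For the inductive step let $F$ be the smallest face containing $c/\|c\|_X$ and let $w\in F$ be the guaranteed unitangent vector perpendicular to $F$; then $\|w\|_X=1$, $\|w\|_2^2=\|w\|_Y$, $w=d_F$, and writing $c/\|c\|_X=w+w'$ with $w'\in W_F$ gives $\langle c,w\rangle=\|c\|_X\|w\|_2^2=\|c\|_X\|w\|_Y$. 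I then consider $\phi(t)=\|c-tw\|_X+t$: it is convex, $\phi(0)=\|c\|_X$, and $\phi(t)\geq\|c\|_X$ for all $t\geq0$ (triangle inequality, $\|w\|_X=1$), so $\{t\geq0:\phi(t)=\|c\|_X\}=[0,t^{*}]$ with $0<t^{*}<\|c\|_X$ (the left inequality because $w$ lies in the smallest facial $X$-cone of $c$, Lemma~\ref{lem:smallestXcone}; the right because $\gsparse_X(c)\geq2$). Setting $b=t^{*}w$ and $a=c-b$, a short computation from $\langle c,w\rangle=\|c\|_X\|w\|_Y$ and $\|a\|_X=\|c\|_X-t^{*}$ gives $\langle a,b\rangle=\|a\|_X\|b\|_Y$, so $c=a+b$ is an $X2$-decomposition with $b$ a positive multiple of the unitangent $w$, hence tight. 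Finally $a/\|a\|_X\in F$ by the face property, and if $\gsparse_X(a)=\gsparse_X(c)$ then $a/\|a\|_X$ would be relative-interior to $F$, so Lemma~\ref{lem:smallestXcone} applied to $a$ would produce $\varepsilon>0$ with $\phi(t^{*}+\varepsilon)=\|c\|_X$, contradicting maximality of $t^{*}$; thus $\gsparse_X(a)<\gsparse_X(c)$, $a$ is tight by induction, and $c$ is tight by Proposition~\ref{prop:cistight}.

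I expect the main obstacle to be the converse direction, specifically the choice of the \emph{maximal} scalar $t^{*}$: peeling off an arbitrarily small multiple of $w$ would still give a valid $X2$-decomposition but would leave $a/\|a\|_X$ in the relative interior of $F$, so $\gsparse_X$ would not drop and the induction would stall. Establishing $0<t^{*}<\|c\|_X$ and the strict decrease $\gsparse_X(a)<\gsparse_X(c)$, both through Lemma~\ref{lem:smallestXcone}, is the technical heart, after which Proposition~\ref{prop:cistight} closes the loop. The forward direction is lighter: the only real idea is that the lowest-slope summand of the slope decomposition of a relative-interior point of $F$ is (a scalar multiple of) the sought vector, and the remainder is routine bookkeeping with the norm-sum and face properties.
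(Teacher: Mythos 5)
You take essentially the same route as the paper. For the forward direction you pick a relative-interior point $v_0$ of $F$, take its slope decomposition, and show that the normalized last summand is a unitangent vector in $F$ perpendicular to $F$; the paper does precisely this (using Lemma~\ref{lem:faceproperty} where you iterate Lemma~\ref{lem:facialXcone}, with the same effect). For the converse you induct on the dimension of the smallest face $F$ containing $c/\|c\|_X$, peel off a maximal positive multiple of the guaranteed unitangent vector $w\perp F$, verify $\langle a,b\rangle=\|a\|_X\|b\|_Y$, and close the loop with Proposition~\ref{prop:cistight}; the paper's converse is the same peeling argument, parametrized as $(1+t)c-tb$ with $t$ maximal keeping this in $F$ rather than your $\phi(t)=\|c-tw\|_X+t$.

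One step needs repair. You assert $t^{*}<\|c\|_X$ ``because $\gsparse_X(c)\geq 2$,'' but this does not follow: the hypothesis only guarantees \emph{some} unitangent $w\in F$ perpendicular to $F$, and that vector can be $c/\|c\|_X$ itself even when $\dim F\geq 1$. For example, in $\R^2$ with $\|\cdot\|_X=\|\cdot\|_\infty$, the midpoint $(1,0)$ of the edge $\{(1,y):-1\leq y\leq 1\}$ of $B_X$ is unitangent, perpendicular to that edge, and lies in its relative interior. If the chosen $w$ equals $c/\|c\|_X$ then $t^{*}=\|c\|_X$, $a=0$, and the subsequent appeals to $a/\|a\|_X\in F$ and to Lemma~\ref{lem:smallestXcone} applied to $a$ are vacuous, so the induction stalls at exactly the point you flagged as the technical heart. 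The repair is immediate: if $t^{*}=\|c\|_X$ (equivalently $c/\|c\|_X=w$), then $c$ is a positive multiple of the unitangent $w$, hence unitangent and tight, and you stop; otherwise $0<t^{*}<\|c\|_X$ and your argument runs as written. The paper's phrasing (``choose $t$ maximal such that $(1+t)c-tb\in F$'') carries the same implicit assumption $b\neq c$, so this is a shared subtlety rather than a sign that your approach is off track, but it should be stated.
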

\begin{proof}
Suppose that $\|\cdot\|_X$ is tight and that $F$ is a face of $B_X$ (other than $B_X$ itself). Choose $c\in F$ in the relative interior.
Then we have a slope decomposition
$$
c=c_1+c_2+\cdots+c_r.
$$
If $t_i=\|c_i\|_X/\|c\|_X$ then we have $t_1+t_2+\cdots+t_r=1$ and
$$
\frac{c}{\|c\|_X}=t_1\frac{c_1}{\|c_1\|_X}+t_2\frac{c_2}{\|c_2\|_X}+\cdots+\cdots+t_r\frac{c_r}{\|c_r\|_X}.
$$
Now $c_1,\dots,c_r\in F$ by Lemma~\ref{lem:faceproperty}. We have $\langle c_r,c\rangle=\|c\|_X\|c_r\|_Y=\|c_r\|_Y$. For any other vector $b\in B_X$ we have $\langle c_r,b\rangle\leq\|b\|_X\|c_r\|_Y
\leq \|c_r\|_Y$. So  the functional $ \langle c_r,\cdot\rangle$ on F is maximal at $c$, and therefore maximal  and constant on the face $F$.
It follows that $c_r$ is perpendicular to $F$.

Now we show the converse. Suppose that every face $F$ of the unit ball $B_X$ contains a unitangent vector that is perpendicular to $F$. Suppose that $c\in V$ is  a vector with $\|c\|_X=1$. Let $F$ be the smallest face of $B_X$ that contains $c/\|c\|_X$. By induction on $\dim F$ we show that $c$ is tight.
The case $\dim F=0$ is clear. Suppose that $\dim F>0$. There exists a vector $b\in F$ that is unitangent and orthogonal to $c$.  Choose $t$ maximal such that $v=c+t(c-b)=(1+t)c-tb\in F$. Let $a'=\frac{1}{1+t}v$ and $b'=\frac{t}{t+1}b$.
Then we have $c=a'+b'$. Since $v$ lies in a face of smaller dimension, we know by induction that $v$ and $a'$ are tight.
Because $b'$ is unitangent, it is also tight. We have
$$
\|a'\|_X=\frac{\|v\|_X}{t+1}=\frac{1}{t+1}=\frac{\|b\|_X}{t+1}.
$$
$$
\|b'\|_Y=\frac{t\|b\|_Y}{t+1}.
$$
$$
\langle a',b'\rangle=\frac{t\langle v,b\rangle}{(t+1)^2}=\frac{t\langle b,b\rangle}{(t+1)^2}.
$$
So $\langle a',b'\rangle=\|a'\|_X\|b'\|_Y$.
It follows that $c=a'+b'$ is an $X2$-decomposition. By Lemma~\ref{lem:abctight}, $c$ is tight.
\end{proof}
\begin{example}
Consider again Examples~\ref{ex:1.10} and~\ref{ex:l1linf}. Suppose that $c=(c_1,\dots,c_n)^t\in \R^n$ and define $\lambda_1>\lambda_2>\cdots>\lambda_r>0$ by
$$
\{|c_1|,|c_2|,\dots,|c_n|\}\setminus \{0\}=\{\lambda_1,\dots,\lambda_r\}.
$$
Define $m_i$ be the multiplicity of $\lambda_i$, i.e., $m_i$ is the number of values of $j$ for which $|c_j|=\lambda_i$.
We define vectors $c^{(1)},\dots,c^{(r)}\in \R^n$ as follows:
$$
c^{(i)}=(c^{(i)}_1,\dots,c^{(i)}_n)
$$
and
$$
c^{(i)}_j=\begin{cases}
\sgn(c_j)(\lambda_i-\lambda_{i+1}) & \mbox{if $|c_j|\geq \lambda_i$ and}\\
0 & \mbox{if $|c_j|<\lambda_i$.}
\end{cases}
$$
We use the convention $\lambda_{r+1}=0$. We have
\begin{equation}\label{eq:cslope}
c=c^{(1)}+\cdots+c^{(r)}
\end{equation}
We have
$
\|c^{(i)}\|_\infty=\lambda_i-\lambda_{i+1}
$. 
and
$$
\|c^{(i)}\|_{1}=(m_1+m_{2}+\cdots+m_i)(\lambda_i-\lambda_{i+1}).
$$
If $i<j$ then we have
$$
\langle c^{(i)}, c^{(j)}\rangle =(m_1+m_{2}+\cdots+m_j)(\lambda_i-\lambda_{i+1})(\lambda_j-\lambda_{j+1})=\|c^{(i)}\|_\infty \|c^{(i)}\|_{1}.
$$
We have
$$
\mu_{1\infty}(c^{(i)})=\frac{\|c^{(i)}\|_\infty}{\|c^{(i)}\|_1}=\frac{1}{m_1+m_2+\cdots+m_i}.
$$
This shows that (\ref{eq:cslope}) is a slope decomposition. So the norms $\|\cdot\|_\infty$ and $\|\cdot\|_1$ are tight. 
\end{example}

\section{The Singular Value Decomposition of a matrix}\label{sec:matrixSVD}
\subsection{Matrix norms}
In this section, we will study the singular value decomposition of a matrix using our terminology and the results we have obtained.  
Let $V=\C^{m\times n}$ be the space of $m\times n$-matrices. We have a bilinear form $\langle \cdot,\cdot\rangle$ on $V$ defined
by
$$
\langle A,B \rangle =\Re(\trace(AB^\star))=\Re(\trace(B^\star A)),
$$
where $B^\star$ denotes the conjugate transpose of $B$ and $\Re(\cdot)$ denotes the real part. We will study  3 norms on the vector space $V$ namely the {\em euclidean norm}, the {\em nuclear norm}
and the {\em spectral norm}, and express each of these in terms of the singular values of a matrix.

The matrix $A^\star A$ is a nonnegative definite Hermitian matrix and its
eigenvalues are nonnegative and real.
The euclidean $\ell_2$-norm of a matrix is given by
$$
\|A\|_2=\sqrt{\langle A,A\rangle}=\sqrt{\Re(\trace(A^\star A))}=\sqrt{\trace(A^\star A)}.
$$
Since $A^\star A$ is positive semi-definite Hermitian, we can choose a unitary matrix $U$ such that
$U^\star A^\star A U$ is a diagonal matrix with diagonal entries $\lambda_1^2,\lambda_2^2,\dots,\lambda_n^2$ where $\lambda_1\geq \lambda_2\geq \cdots\geq \lambda_n\geq 0$ are the {\em singular values of $A$}.  We have
$$
\|A\|_2=\sqrt{\trace(A^\star A)}=\sqrt{\trace(U^\star A^\star A U)}=\sqrt{\lambda_1^2+\cdots+\lambda_n^2}.
$$

Let $D$ be the diagonal matrix
$$
\begin{pmatrix} \lambda_1 & & &\\
& \lambda_2 & &\\
&&\ddots &\\
&&& \lambda_n\end{pmatrix}
$$
Then $UDU^\star$ is the unique positive semi-definite Hermitian matrix whose square is $A^\star A$, and we will denote this matrix by $\sqrt{A^\star A}$.

 We define the {\em spectral norm} or {\em operator norm} $\|A\|_\sigma$ of $A$  by
$$
\|A\|_Y=\|A\|_\sigma=\max\{\|Av\|_2\mid v\in \C^n\mbox{ and } \|v\|_2=1\}.
$$
We have
$$
\|A\|_\sigma=\left\|\begin{pmatrix}
\lambda_1 &&\\
& \ddots &\\
&&\lambda_n\end{pmatrix}\right\|_\sigma=\lambda_1.
$$

The {\em nuclear norm} $\|\cdot\|_\star$ is defined by
$$
\|A\|_X=\|A\|_\star=\trace(\sqrt{A^\star A})=\lambda_1+\lambda_2+\cdots+\lambda_n.
$$
The proof of the following well-known result will be useful for the discussion that follows.
\begin{lemma}\label{lem:sigmadual}
The norms $\|\cdot\|_\sigma$ and $\|\cdot\|_*$ are dual to each other.
\end{lemma}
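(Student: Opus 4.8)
The plan is to prove directly that the dual norm of $\|\cdot\|_\sigma$ is $\|\cdot\|_\star$; since taking the dual norm twice returns the original norm, this simultaneously shows that $\|\cdot\|_\sigma$ is the dual norm of $\|\cdot\|_\star$, so the two norms are dual to each other. Concretely, for each $A\in\C^{m\times n}$ I want to establish
$$
\|A\|_\star=\max\{\,\langle A,B\rangle \mid B\in\C^{m\times n},\ \|B\|_\sigma=1\,\}.
$$
I would fix a reduced singular value decomposition $A=UDV^\star$, where $r=\rank(A)$, the matrices $U\in\C^{m\times r}$ and $V\in\C^{n\times r}$ have orthonormal columns (so $U^\star U=V^\star V=I_r$), and $D\in\R^{r\times r}$ is diagonal with entries $\lambda_1\ge\cdots\ge\lambda_r>0$, the nonzero singular values of $A$. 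If $A=0$ both sides vanish, so I may assume $A\neq 0$, i.e.\ $r\ge 1$.

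For the upper bound, let $B\in\C^{m\times n}$ with $\|B\|_\sigma\le 1$. By cyclic invariance of the trace,
$$
\langle A,B\rangle=\Re\trace(UDV^\star B^\star)=\Re\trace(DW),\qquad W:=V^\star B^\star U\in\C^{r\times r}.
$$
Since $U$ and $V$ have orthonormal columns, multiplying a matrix on the left or right by $U$, $U^\star$, $V$ or $V^\star$ does not increase its spectral norm, and $\|B^\star\|_\sigma=\|B\|_\sigma$; hence $\|W\|_\sigma\le 1$. As $D$ is diagonal, $\trace(DW)=\sum_{i=1}^r\lambda_i W_{ii}$, and every diagonal entry obeys $|W_{ii}|=|e_i^\star W e_i|\le\|We_i\|_2\le\|W\|_\sigma\le 1$. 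Therefore
$$
\langle A,B\rangle\le\sum_{i=1}^r\lambda_i|W_{ii}|\le\sum_{i=1}^r\lambda_i=\|A\|_\star.
$$

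For the matching lower bound I would take $B_0:=UV^\star\in\C^{m\times n}$. Then $B_0^\star B_0=VU^\star UV^\star=VV^\star$ is an orthogonal projection of rank $r\ge 1$, so the singular values of $B_0$ are all $0$ or $1$ and $\|B_0\|_\sigma=1$. Moreover, using $V^\star V=I_r$ and $U^\star U=I_r$,
$$
\langle A,B_0\rangle=\Re\trace\big(UDV^\star(UV^\star)^\star\big)=\Re\trace(UDV^\star VU^\star)=\Re\trace(UDU^\star)=\trace(D)=\sum_{i=1}^r\lambda_i=\|A\|_\star.
$$
Combining the two bounds yields the displayed identity, and hence the lemma.

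Routine calculations aside, the only point that needs care is the reduction of the pairing $\langle A,B\rangle$ to the diagonal matrix $D$ while controlling $\|W\|_\sigma$ and keeping track of the real part throughout; the elementary estimate $|W_{ii}|\le\|W\|_\sigma$ is what lets us avoid invoking von Neumann's trace inequality, which would otherwise deliver the upper bound in one line. One should also verify explicitly that $B_0=UV^\star$ has spectral norm exactly $1$ rather than merely $\le 1$, which is the reason the computation $B_0^\star B_0=VV^\star$ is recorded.
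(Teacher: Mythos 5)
Your proof is correct and takes essentially the same SVD-based approach as the paper: both establish the upper bound $\langle A,B\rangle\le\|A\|_\star\|B\|_\sigma$ by reducing the pairing to a trace against the diagonal singular-value matrix and bounding each diagonal term, and both achieve equality by pairing $A$ with the ``unitary part'' of its SVD. A small cosmetic difference: the paper writes $\sqrt{A^\star A}=UDU^\star$ and forms $W=AUD^{-1}$, which tacitly assumes $D$ is invertible (i.e.\ $A$ has full column rank), whereas your use of the reduced SVD $A=UDV^\star$ with $D$ containing only the strictly positive singular values sidesteps this and handles the rank-deficient and rectangular cases cleanly; likewise the paper's upper bound invokes Cauchy--Schwarz on each $\langle Au_i,Bu_i\rangle$ while yours uses the equivalent elementary bound $|W_{ii}|\le\|W\|_\sigma$ for $W=V^\star B^\star U$.
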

\begin{proof}
Let $A\in V$. We use the notation as before, where $\sqrt{A^\star A}=UDU^\star$, and $D$ is the diagonal matrix whose diagonal entries are the singular values $\lambda_1,\lambda_2,\dots,\lambda_n$. Let $u_1,u_2,\dots,u_n$ be the columns of $U$. These vectors form an orthonormal basis and for any $B\in V$ we have
\begin{multline}\label{eq:dual1}
\langle A,B\rangle=\Re( \trace(B^\star A))=\Re (\trace(U^\star B^\star A U))=\Re\Big(\sum_{i=1}^n \langle Au_i,Bu_i\rangle\Big)\leq \\ \leq\sum_{i=1}^n \|Au_i\|_2\|Bu_i\|_2=\sum_{i=1}^n \lambda_i\|B\|_\sigma=\|A\|_\star \|B\|_\sigma.
\end{multline}
Because $(AU)^\star(AU)=D^2$, the columns of $AU$ are orthogonal. The matrices $W=AUD^{-1}$ is unitary.
So $A=WDU^\star$. If $w_1,\dots,w_n$ are the columns of $W$, then the singular value decomposition of $A$ is
$$
\sum_{i=1}^n \lambda_iw_iu_i^\star.
$$
Let $r$ be maximal such that $\lambda_r\neq 0$ and define the block matrix
$$
E=\begin{pmatrix} I_r & 0\\
0 & 0\end{pmatrix}
$$
For  $B=WEU^\star$ we have 
\begin{multline}\label{eq:dual2}
\langle A,B\rangle=\Re(\trace(B^\star A))=\Re(\trace(UEW^\star A))=\Re(\trace(EW^\star A U))=\\=
\Re(\trace(ED))
=\Re(\trace(D))=\lambda_1+\cdots+\lambda_r=\|A\|_\star.
\end{multline}
From (\ref{eq:dual1}) and (\ref{eq:dual2}) follows  that $\|\cdot\|_\star$ is the dual norm of $\|\cdot\|_\sigma$.
\end{proof}
\subsection{Slope decomposition for matrices}
Suppose that $C$ is a complex $m\times n$ matrix and let $\lambda_1>\lambda_2>\dots>\lambda_r>0$ be the nonsingular values of $C$ with multiplicities $m_1,m_2,\dots,m_r$ respectively.
We can write $C=WDU^\star$ where $U,W$ are unitary and $D$ is of the form
$$
\left(\begin{array}{cccc|c}
\lambda_1 I_{m_1} & & & &0 \\
 & \lambda_2I_{m_2} & & & 0 \\
 & & \ddots &&  \vdots \\
 &&& \lambda_r I_{m_r} & 0 \\ \hline
0 &0 &\cdots &0 & 0
 \end{array}\right)
 $$
 where $I_d$ is the $d\times d$ identity matrix and the zeros represent possible empty zero blocks.  The norms can be expressed as follows:
\begin{eqnarray*}
\|C\|_\star& =&\textstyle \sum_{i=1}^r m_i\lambda_i\\
\|C\|_\sigma & = & \lambda_1\\
\|C\|_2 & = &\textstyle \sqrt{\sum_{i=1}^r m_i\lambda_i^2}
\end{eqnarray*}
 Define
$$
C^{(j)}=(\lambda_j-\lambda_{j+1})W\begin{pmatrix} I_{k_j} & 0\\
0 & 0\end{pmatrix}U^\star
$$
for $j=1,2,\dots,r$ where $k_j=m_1+m_2+\cdots+m_j$ and  $\lambda_{r+1}=0$.
We have
\begin{equation}\label{eq:Aslope}
C=C^{(1)}+\cdots+C^{(r)}.
\end{equation}
\begin{proposition}
The expression (\ref{eq:Aslope}) is a slope decomposition. 
In particular, the spectral and the nuclear norms are tight.
\end{proposition}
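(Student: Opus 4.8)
The plan is to check directly that $(\ref{eq:Aslope})$ satisfies the three conditions in the definition of a $\star\sigma$-slope decomposition (nuclear norm for $X$, spectral norm for $Y$) and then to read off tightness from Theorem~\ref{theo:tightslope}. Throughout, write $E_j=\begin{pmatrix} I_{k_j} & 0\\ 0 & 0\end{pmatrix}$ for the diagonal $0/1$ projection of rank $k_j$, so that $C^{(j)}=(\lambda_j-\lambda_{j+1})WE_jU^\star$. Since $W$ and $U$ are unitary and $\lambda_j>\lambda_{j+1}\geq 0$, the matrix $C^{(j)}$ has the same singular values as $(\lambda_j-\lambda_{j+1})E_j$, namely $\lambda_j-\lambda_{j+1}$ with multiplicity $k_j\geq 1$. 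Hence $C^{(j)}\neq 0$, and the norm formulas give $\|C^{(j)}\|_\sigma=\lambda_j-\lambda_{j+1}$ and $\|C^{(j)}\|_\star=k_j(\lambda_j-\lambda_{j+1})$, so $\mu_{\star\sigma}(C^{(j)})=1/k_j$. As $k_1<k_2<\cdots<k_r$, the slopes satisfy $\mu_{\star\sigma}(C^{(1)})>\cdots>\mu_{\star\sigma}(C^{(r)})$.

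Next I would confirm $(\ref{eq:Aslope})$ itself, which reduces to the identity $\sum_{j=1}^r(\lambda_j-\lambda_{j+1})E_j=D$. This is a telescoping computation on the diagonal: for $k_{i-1}<\ell\leq k_i$ (with $k_0=0$), the $\ell$-th diagonal entry of the left-hand side is $\sum_{j\geq i}(\lambda_j-\lambda_{j+1})=\lambda_i-\lambda_{r+1}=\lambda_i$, and for $\ell>k_r$ it is $0$; both match $D$.

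The substantive step is the identity $\langle C^{(i)},C^{(j)}\rangle=\|C^{(i)}\|_\star\,\|C^{(j)}\|_\sigma$ for $i\leq j$. The key algebraic facts are $E_iE_j=E_{\min(i,j)}$ (a product of nested coordinate projections), $U^\star U=W^\star W=I$, and cyclicity of the trace. With these,
$$
\langle C^{(i)},C^{(j)}\rangle=\Re\trace\!\big(C^{(i)}\,(C^{(j)})^\star\big)=(\lambda_i-\lambda_{i+1})(\lambda_j-\lambda_{j+1})\,\trace(E_i)=k_i(\lambda_i-\lambda_{i+1})(\lambda_j-\lambda_{j+1}),
$$
which is precisely $\|C^{(i)}\|_\star\,\|C^{(j)}\|_\sigma$. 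Combined with the first two paragraphs, this shows $(\ref{eq:Aslope})$ is a $\star\sigma$-slope decomposition. Since $C$ was an arbitrary complex $m\times n$ matrix, every matrix has a slope decomposition, so Theorem~\ref{theo:tightslope}(1) gives that every matrix is tight; equivalently, the nuclear and spectral norms are tight.

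The one place to be careful is the asymmetry of the slope condition: the first factor enters through its nuclear norm and the second through its spectral norm, so one must verify that the projection surviving in $E_iE_j$ (for $i\leq j$) is $E_{\min(i,j)}=E_i$, giving trace $k_i$ and not $k_j$; this is exactly what makes $\langle C^{(i)},C^{(j)}\rangle$ equal $\|C^{(i)}\|_\star\|C^{(j)}\|_\sigma$. Everything else is unitary invariance of the singular values together with the telescoping identity for the diagonal.
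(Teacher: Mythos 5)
Your proof is correct and takes essentially the same route as the paper: compute $\|C^{(j)}\|_\sigma$, $\|C^{(j)}\|_\star$, and hence $\mu_{\star\sigma}(C^{(j)})=1/k_j$, then verify $\langle C^{(i)},C^{(j)}\rangle=\|C^{(i)}\|_\star\|C^{(j)}\|_\sigma$ for $i\leq j$ by a trace computation using unitarity and the nesting of the coordinate projections, and finish with Theorem~\ref{theo:tightslope}(1). The only cosmetic point is that when $m\neq n$ the blocks $E_j$ appearing in $C^{(j)}=(\lambda_j-\lambda_{j+1})WE_jU^\star$ are $m\times n$, so strictly speaking $E_iE_j$ does not typecheck; what actually collapses in the trace (as the paper writes out) is $E_i^\star W^\star W E_j=E_i^\star E_j$, an $n\times n$ projection of rank $k_{\min(i,j)}$ --- but this is exactly your $E_{\min(i,j)}$ up to bookkeeping, and the computation and conclusion are unaffected.
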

\begin{proof}
We have
\begin{eqnarray*}
\|C^{(j)}\|_Y=\|A^{(j)}\|_\sigma &=& \lambda_j-\lambda_{j+1}\\
\|C^{(j)}\|_X=\|C^{(j)}\|_\star&=& k_j(\lambda_j-\lambda_{j+1})\\
\mu_{XY}(C^{(j)})&=&\frac{1}{k_j}.
\end{eqnarray*}
In particular, $\mu_{XY}(C^{(j)})$ is strictly decreasing as $j$ increases.

If $i\leq j$ then we have
\begin{multline*}
\langle C^{(i)},C^{(j)}\rangle=\Re\Big(\trace\big((C^{(i)})^\star C^{(j)}\big)\Big)=\\
=(\lambda_i-\lambda_{i+1})(\lambda_j-\lambda_{j+1})\Re\Big(\trace\Big(U\begin{pmatrix}I_{k_i} & 0\\0 & 0\end{pmatrix}
W^\star W\begin{pmatrix} I_{k_j} & 0\\0 & 0\end{pmatrix}U^\star\Big)\Big)=\\
=(\lambda_i-\lambda_{i+1})(\lambda_j-\lambda_{j+1})\Re\Big(\trace\Big(U\begin{pmatrix}I_{k_i}& 0\\0 & 0\end{pmatrix}
\begin{pmatrix} I_{k_j}& 0\\0 & 0\end{pmatrix}U^\star\Big)\Big)=\\
=(\lambda_i-\lambda_{i+1})(\lambda_j-\lambda_{j+1})\Re\Big(\trace\Big(U\begin{pmatrix}I_{k_i} & 0\\0 & 0\end{pmatrix}U^\star\Big)\Big)=\\
=k_i(\lambda_i-\lambda_{i+1})(\lambda_j-\lambda_{j+1})=\|C^{(i)}\|_{\star}\|C^{(j)}\|_\sigma.
\end{multline*}
This proves that  (\ref{eq:Aslope})  is the slope decomposition. 
\end{proof}
\subsection{Principal Component Analysis}
In Principal Component Analysis (PCA), one finds a low rank matrix that approximates the matrix $A$ by truncating the singular value decomposition.  For a given threshold $y$, let $s$ be maximal such that $\lambda_s>y$. 
Then
$$
C'=W\left(\begin{array}{cccc|c}
\lambda_1 I_{m_1} & & & &0 \\
 & \lambda_2I_{m_2} & & & 0 \\
 & & \ddots &&  \vdots \\
 &&& \lambda_r I_{m_r} & 0 \\ \hline
0 &0 &\cdots &0 & 0
 \end{array}\right)U^\star
 $$
is a low rank approximation of $C$.
 This method is called {\em hard threshholding}. Replacing $C$ by the approximation $C'$ is an effective way to reduce the dimension of a large scale problem. Let us compare this to the $XY$-decomposition (or equivalently $X2$- or $2Y$-decomposition) of $C$. Let us define 
$$
A=W\left(\begin{array}{cccc|c}
(\lambda_1-y) I_{m_1} & & & & 0 \\
 &( \lambda_2-y) I_{m_2} & & & 0 \\
 & & \ddots &&  \vdots \\
 &&& (\lambda_s-y) I_{m_s} & 0 \\ \hline
0 &0 &\cdots &0 & 0
 \end{array}\right)U^\star
 $$
 and 
 $$
 B=C-A=
 W\left(\begin{array}{cccccc|c}
yI_{m_1} & & & & & &  0 \\
 &\ddots & & & & & \vdots \\
 & & yI_{m_s} && &  & 0 \\
 &&& \lambda_{s+1} I_{m_{s+1}} &  && 0 \\
 &&&&\ddots & & \vdots \\
 &&&& & \lambda_r I_{m_r} & 0\\ \hline
0 & \cdots & 0 &0 & \cdots & 0 & 0
 \end{array}\right)U^\star
 $$

\begin{lemma}
The expression $C=A+B$ is an $XY$-decomposition.
\end{lemma}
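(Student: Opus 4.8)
The plan is to reduce the claim to a single scalar identity via the duality characterization of Proposition~\ref{prop:XYchar}, and then invoke tightness. Recall that in this section $\|\cdot\|_X=\|\cdot\|_\star$ and $\|\cdot\|_Y=\|\cdot\|_\sigma$. By Proposition~\ref{prop:XYchar}, the expression $C=A+B$ is an $X2$-decomposition if and only if $\langle A,B\rangle=\|A\|_\star\|B\|_\sigma$; and since the preceding proposition shows that the nuclear and spectral norms are tight, every $X2$-decomposition is automatically an $XY$-decomposition. So it suffices to verify that equality.

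First I would read off the two norms from the explicit block forms. Writing $A=WD_AU^\star$ and $B=WD_BU^\star$ with $D_A,D_B$ real diagonal, one has $\|A\|_\star=\sum_{i=1}^s m_i(\lambda_i-y)$. Because $s$ is chosen maximal with $\lambda_s>y$, we have $\lambda_{s+1}\le y$, so the largest diagonal entry of $D_B$ is $y$ and $\|B\|_\sigma=y$. (The degenerate situations $s=r$, or $\lambda_i=y$ for some $i$, cause no difficulty.)

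Next I would compute the inner product. Since $W$ and $U$ are unitary, $A^\star B=UD_AW^\star WD_BU^\star=U(D_AD_B)U^\star$, so $\langle A,B\rangle=\Re\trace(A^\star B)=\trace(D_AD_B)$ is the sum over the diagonal of the products of corresponding entries. Only the first $s$ blocks of $A$ are nonzero, and there the entry $\lambda_i-y$ of $D_A$ is paired with the entry $y$ of $D_B$, with multiplicity $m_i$; hence $\langle A,B\rangle=y\sum_{i=1}^s m_i(\lambda_i-y)=\|B\|_\sigma\|A\|_\star$. By Proposition~\ref{prop:XYchar} this exhibits $C=A+B$ as an $X2$-decomposition, and tightness of the two norms promotes it to an $XY$-decomposition.

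The only step requiring genuine care is the block bookkeeping: one must check that the diagonal of $D_A$ lines up with that of $D_B$ so that the $\lambda_i-y$ entries of $A$ sit exactly where $B$ has its $y$ entries, and one must confirm that $\|B\|_\sigma$ equals $y$ rather than one of the untouched singular values $\lambda_{s+1},\dots,\lambda_r$ — which is precisely where the maximality of $s$ enters. Everything else is a direct trace computation.
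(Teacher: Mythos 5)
Your proof takes essentially the same route as the paper: compute $\|A\|_\star=\sum_{i=1}^s m_i(\lambda_i-y)$, $\|B\|_\sigma=y$, and $\langle A,B\rangle=y\sum_{i=1}^s m_i(\lambda_i-y)$, verify the equality $\langle A,B\rangle=\|A\|_\star\|B\|_\sigma$, and conclude via Proposition~\ref{prop:XYchar} together with the tightness of the spectral and nuclear norms. You simply spell out the block bookkeeping and the role of maximality of $s$ more explicitly than the paper does, which is a reasonable amount of added care rather than a different argument.
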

\begin{proof}
We have
 $\|A\|_*=\|A\|_X=\sum_{i=1}^s m_i(\lambda_i-y)$ and $\|B\|_\sigma=\|B\|_Y=y$.
Now the lemma follows from
$$
\langle A,B\rangle=\sum_{i=1}^sm_i(\lambda_i-y)y=\|A\|_\star\|B\|_\sigma=\|A\|_X\|B\|_Y.
$$
\end{proof}
In particular, $A=\shrink_Y(C,y)$. The operator $\shrink_Y(\cdot,y)$ is soft-threshholding with threshold level $y$ (see~\cite{Donoho}).
Unlike hard thresholding, soft thresholding is continuous.
The Pareto frontier  (which is also the  sub-frontier) is given by
$$
f_{XY}^C(y)=h_{XY}^C(y)=m_i\sum_{i=1}^rm_i\max\{\lambda_i-y,0\}.
$$

\subsection{The singular value region for matrices}
The Pareto frontier of $C$ encodes the singular values. We will describe in detail how to obtain the singular values from the Pareto frontier.
Note that we consider $x=h_{XY}(y)$ as a function of $y$, rather then considering its inverse function $y=h_{YX}(x)$.
If we differentiate $h_{XY}^C(y)$ with respect to $y$ we get
$$
(h_{XY}^C(y))' = -\sum_{i=1}^s m_i
$$
if $\lambda_s>y>\lambda_{s+1}$  and $1\leq s\leq r$ (with the convention that $\lambda_{r+1}=0$).

If we plot $y$ against $x=-(h_{XY}^C(y))'$ then we get the singular value region. From the descriptions above, it is now clear
that this region can be described as an $m_1\times \lambda_1$ bar, followed by an $m_2\times \lambda_2$ bar etc. So the singular value region
from Definition~\ref{def:SVregion} is the same as the singular value region for matrices as described in the introduction.

\subsection{Rank minimization and low rank matrix completion}
Let $V=\C^{m\times n}$ be the set of $m\times n$ matrices. We will study the low rank matrix completion from the viewpoint of competing dual norms.
\begin{problem}[Low Rank Matrix Completion (LRMC)]
Given a $m\times n$ matrix $C$ where the entries $(i_1,j_1),\dots,(i_s,j_s)$ are missing. Fill in the missing entries such that the resulting matrix 
has minimal rank.
\end{problem}
The low rank matrix completion problem has applications in collaborative filtering and recommender systems such as the Netflix problem.
The low rank matrix completion problem is a special case of the rank minimization problem.
\begin{problem}[Rank Minimization (RM)]
Suppose that $W$ is a subspace of $V$, and let $A\in V$. 
Find a matrix $B\in A+W$ of minimal rank. 
\end{problem}
The Low Rank Matrix Completion problem can be formulated as a rank minimization problem as follows. Complete $C$ to a matrix $A$ in some way (for example, set all the missing entries equal to $0$). Then, Let $W\subseteq V$ be the subspace spanned by all matrices $e_{i_k,j_k}$, $k=1,2,\dots,s$.
Here $e_{p,q}$ is the matrix with all $0$'s except for a 1 in position $(p,q)$. Find $B\in A+W$ with minimal rank using RM. Then $B$ is also   the solution to the LRMC problem.

Let us consider the Rank Minimization problem. Using the philosophy of convex relaxation, we consider the following problem instead (see~\cite{CR,CT2,RFP}):
\begin{problem}\label{problem:minnuclear}
Find a matrix $D\in C+W$ with $\|D\|_\star$ minimal.
\end{problem}
Let $Z$ be the orthogonal complement of $W$ and let $\pi_{Z}$ be the orthogonal projection onto $Z$. 
The problem does not change when we replace $C$ by $\pi_Z(C)$, so  we may assume that $C\in Z$ without loss of generality.
We define a  norm $\|\cdot\|_{X}$ on $Z$ by 
$$
\|C\|_{X}=\min\{\|D\|_\star \mid D\in C+W\}.
$$
So Problem~\ref{problem:minnuclear} is essentially the problem of determining the value of $\|C\|_X$. 
In the presence of noise, we would like to find a matrix $A$ such that  $\|A\|_X$ and $\|C-A\|_2$ are small. 
This leads to the following optimization problem.
\begin{problem}\label{problem:matrixcomp}
For a fixed parameter $\lambda$, minimize
$$
{\textstyle \frac{1}{2}}\|C-A\|_2^2+\lambda\|A\|_X
$$
\end{problem}
We can write $A=\pi_Z(D)$ such that $\|A\|_X=\|D\|_\star$. We can reformulate the problem as:
\begin{problem}\label{problem:matrixcomp2}
For a fixed parameter $\lambda$, minimize
$$
{\textstyle \frac{1}{2}}\|C-\pi_Z(D)\|_2^2+\lambda\|D\|_\star
$$
\end{problem}

 The dual norm to $\|\cdot\|_X$ is defined by
$$
\|B\|_Y=\|\pi_Z(B)\|_\sigma.
$$
Problem~\ref{problem:matrixcomp} is equivalent to
\begin{problem}
Minimize $\|A\|_2$ under the constraint $\|C-\pi_Z(A)\|_\sigma\leq \lambda$.
\end{problem}

\section{Restricting Norms}
Suppose that $V$ is a finite dimensional $\R$-vector space with a positive definite bilinear form $\langle\cdot,\cdot\rangle$, and $\|\cdot\|_X$ is a norm on $V$. For a subspace $W$ of $V$, it is natural to ask whether the $X2$-decompositions of vectors in $W$ are always within the space $W$.
In this section we will give a sufficient criterion for $W$ to have this property.
\begin{definition}
A subspace $W\subseteq V$ is called a {\em nice slice} if we have $\|\pi_W(c)\|_X\leq \|c\|_X$ for all $c\in V$, where $\pi_W:V\to W$ is the orthogonal projection.
\end{definition}
\begin{lemma}
If $W$ is a nice slice, then we also have $\|\pi_W(c)\|_Y\leq \|c\|_Y$ for all $c\in V$, where $\|\cdot\|_Y$ is the norm dual to $\|\cdot\|_X$.
\end{lemma}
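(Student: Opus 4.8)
The plan is to use the variational characterization of the dual norm, namely $\|v\|_Y=\max\{\langle v,w\rangle\mid w\in V,\ \|w\|_X=1\}$, together with the fact that an orthogonal projection is self-adjoint. First I would fix $c\in V$ and, by compactness of the $X$-unit sphere, choose a vector $v\in V$ with $\|v\|_X=1$ attaining the maximum, so that $\|\pi_W(c)\|_Y=\langle \pi_W(c),v\rangle$.

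The key observation is that since $\pi_W$ is the \emph{orthogonal} projection onto $W$, it is self-adjoint with respect to $\langle\cdot,\cdot\rangle$: for all $x,y\in V$ one has $\langle \pi_W(x),y\rangle=\langle \pi_W(x),\pi_W(y)\rangle=\langle x,\pi_W(y)\rangle$, because $\pi_W(x)\in W$ and $y-\pi_W(y)\perp W$. Applying this identity with $x=c$ and $y=v$ gives $\|\pi_W(c)\|_Y=\langle c,\pi_W(v)\rangle$.

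Finally I would invoke the duality inequality $\langle a,b\rangle\le\|a\|_Y\|b\|_X$ (valid because $\|\cdot\|_X$ and $\|\cdot\|_Y$ are dual) with $a=c$ and $b=\pi_W(v)$, together with the nice-slice hypothesis $\|\pi_W(v)\|_X\le\|v\|_X=1$, to conclude
$$
\|\pi_W(c)\|_Y=\langle c,\pi_W(v)\rangle\le \|c\|_Y\,\|\pi_W(v)\|_X\le \|c\|_Y.
$$
Since $c$ was arbitrary, this proves the lemma.

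There is no serious obstacle here; the only point requiring care is the self-adjointness identity, which is precisely where the word ``orthogonal'' in ``orthogonal projection'' is used — for a non-orthogonal projection the argument would break down. Everything else is a direct application of the dual-norm inequality and the definition of a nice slice.
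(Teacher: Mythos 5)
Your proof is correct and is essentially identical to the paper's: both pick a maximizing vector of $X$-norm one, move the projection across the inner product by self-adjointness, and then apply the duality inequality together with the nice-slice hypothesis. The only difference is notational (you call the maximizer $v$, the paper calls it $d$) and that you make the self-adjointness step more explicit.
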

\begin{proof}
Choose a vector $d$ with $\|d\|_X=1$ and
$\langle d,\pi_W(c)\rangle=\|\pi_W(c)\|_Y$.
We have
$$
\|\pi_W(c)\|_Y=\langle d,\pi_W(c)\rangle=\langle \pi_W(d),c\rangle\leq \|\pi_W(d)\|_X\|c\|_Y\leq \|d\|_X\|c\|_Y=\|c\|_Y.
$$
\end{proof}
Let $\OO(V)$ be the orthogonal group consisting of all $g\in \GL(V)$ with the property 
$$\langle g\cdot v,g\cdot w\rangle=\langle v,w\rangle$$
for all $v,w\in V$.
\begin{lemma}
Suppose that $G\subseteq \OO(V)$ is a subgroup with the properties
$\|g\cdot v\|_X=\| v\|_X$
for all $v,w\in V$ and all $g\in G$. Then the space $V^G=\{v\in V\mid \forall g\in G\ g\cdot v=v\}$ of $G$-invariant vectors is a nice slice.
\end{lemma}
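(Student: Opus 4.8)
The plan is to identify the orthogonal projection $\pi_W$ onto $W=V^G$ with a group–averaging (Reynolds) operator, and then read off the desired inequality from the convexity of $\|\cdot\|_X$. First I would reduce to the case that $G$ is compact. The closure $\overline{G}$ of $G$ inside $\OO(V)$ is a compact subgroup of $\OO(V)$; it still satisfies $\|g\cdot v\|_X=\|v\|_X$ for all $g\in\overline{G}$ by continuity of $\|\cdot\|_X$ and of the linear action; and $V^{\overline{G}}=V^G$, since any $v$ fixed by all of $G$ is fixed by the closure. So we may assume $G$ is compact, and we equip it with its normalized Haar measure $\mu$ (total mass $1$).

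Next I would introduce the averaging operator $R\colon V\to V$, $R(c)=\int_G g\cdot c\,d\mu(g)$, and claim that $R=\pi_W$. It is linear; for $h\in G$ we have $h\cdot R(c)=\int_G hg\cdot c\,d\mu(g)=R(c)$ by left–invariance of $\mu$, so $R(c)\in V^G=W$; and $R(v)=v$ for $v\in W$, so $R$ is idempotent with image exactly $W$. Finally $R$ is self-adjoint for $\langle\cdot,\cdot\rangle$: since each $g\in\OO(V)$ satisfies $\langle g\cdot c,d\rangle=\langle c,g^{-1}\cdot d\rangle$ and $\mu$ is invariant under $g\mapsto g^{-1}$, one gets $\langle R(c),d\rangle=\int_G\langle c,g^{-1}\cdot d\rangle\,d\mu(g)=\langle c,R(d)\rangle$. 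A self-adjoint idempotent with image $W$ is precisely the orthogonal projection $\pi_W$.

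With this identification the inequality follows from the integral form of the triangle inequality (Jensen's inequality for the convex function $\|\cdot\|_X$): for any $c\in V$,
\[
\|\pi_W(c)\|_X=\Bigl\|\int_G g\cdot c\,d\mu(g)\Bigr\|_X\le\int_G\|g\cdot c\|_X\,d\mu(g)=\int_G\|c\|_X\,d\mu(g)=\|c\|_X,
\]
using $\|g\cdot c\|_X=\|c\|_X$ and $\mu(G)=1$. Hence $W=V^G$ is a nice slice.

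The main obstacle is not a single hard estimate but setting up the right tool cleanly: passing to the closure to obtain a compact group, verifying that the Haar average is the orthogonal projection (idempotence, image, self-adjointness), and invoking the vector-valued Jensen inequality for the convex function $\|\cdot\|_X$. If one prefers to avoid measure theory, an equivalent route is to observe that $\pi_W(c)=R(c)$ is a barycenter of a probability measure supported on the compact orbit $\overline{G}\cdot c$, hence lies in the closed convex hull $K$ of that orbit; by Milman's theorem the extreme points of $K$ are among the $g\cdot c$, each of $X$-norm $\|c\|_X$, and a convex function on a compact convex set attains its maximum at an extreme point, so $\|\pi_W(c)\|_X\le\max_{x\in K}\|x\|_X\le\|c\|_X$.
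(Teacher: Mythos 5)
Your proposal is correct and follows essentially the same approach as the paper: pass to the closure to get a compact group, identify the orthogonal projection onto $V^G$ with the Haar average $\int_G g\cdot c\,d\mu$, and apply the integral triangle inequality (the paper phrases this as the averaged point lying in the convex hull of the orbit). The main difference is that you verify in detail that the Reynolds operator is self-adjoint, idempotent, with image $W$, and hence is the orthogonal projection, whereas the paper states this identification without proof; your write-up is a more complete version of the same argument.
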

\begin{proof}
We can replace $G$ with its closure, so without loss of generality we may assume that $G$ is a compact Lie group.
Let $\pi_W$ be the projection onto $W:=V^G$. We have
$$
\pi_W(v)=\int_{g\in G} g\cdot v\,d\mu
$$
where $d\mu$ is the normalized Haar measure. This shows that $\pi_W(v)$ lies in the convex hull of all $g\cdot v$, $g\in G$. Since $\|g\cdot v\|_X\leq \|v\|_X$ for all $v\in V$, we also have $\|\pi_W(v)\|_X\leq \|v\|_X$ for all $g\in G$.
\end{proof}
Suppose that $W\subseteq V$ is a nice slice. Let $\|\cdot\|_{X'}$ and $\|\cdot\|_{Y'}$ be the restrictions of the norms $\|\cdot\|_X$ and $\|\cdot\|_Y$ to $W$.
\begin{lemma}
The norms $\|\cdot\|_{X'}$ and $\|\cdot\|_{Y'}$ are also  dual to each other.
\end{lemma}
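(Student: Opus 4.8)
The plan is to unwind the definition of the dual norm directly on $W$ and match it against $\|\cdot\|_{Y'}$. Equip $W$ with the inner product restricted from $V$; then the norm dual to $\|\cdot\|_{X'}$ sends $w\in W$ to $\max\{\langle w,u\rangle\mid u\in W,\ \|u\|_{X'}=1\}$, a maximum that is attained because $\{u\in W\mid\|u\|_X=1\}$ is compact (assuming $W\neq\{0\}$, the remaining case being vacuous). I want to show this maximum equals $\|w\|_{Y'}=\|w\|_Y=\max\{\langle w,v\rangle\mid v\in V,\ \|v\|_X=1\}$ for every $w\in W$.

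One inequality is immediate: the maximum defining the dual of $\|\cdot\|_{X'}$ ranges over the subset $\{u\in W\mid\|u\|_X=1\}$ of $\{v\in V\mid\|v\|_X=1\}$, hence is at most $\|w\|_Y$. For the reverse inequality, fix $w\in W$; if $w=0$ both sides vanish, so assume $w\neq 0$ and choose $v\in V$ with $\|v\|_X=1$ and $\langle w,v\rangle=\|w\|_Y>0$. Since $w\in W$ and $v-\pi_W(v)\in W^\perp$, we have $\langle w,\pi_W(v)\rangle=\langle w,v\rangle=\|w\|_Y>0$, so $\pi_W(v)\neq 0$. The nice slice hypothesis gives $\|\pi_W(v)\|_X\leq\|v\|_X=1$, so $u:=\pi_W(v)/\|\pi_W(v)\|_X\in W$ satisfies $\|u\|_{X'}=1$ and $\langle w,u\rangle=\langle w,v\rangle/\|\pi_W(v)\|_X\geq\langle w,v\rangle=\|w\|_Y$. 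Thus the dual of $\|\cdot\|_{X'}$ evaluated at $w$ is at least $\|w\|_Y=\|w\|_{Y'}$, and combined with the first inequality we obtain equality.

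This establishes that $\|\cdot\|_{Y'}$ is the norm dual to $\|\cdot\|_{X'}$ on $W$; since on a finite dimensional inner product space the dual of the dual norm is the original norm, it follows that $\|\cdot\|_{X'}$ is the norm dual to $\|\cdot\|_{Y'}$ as well, so the two restricted norms are dual to each other. (One could equally run the symmetric argument using the companion fact $\|\pi_W(c)\|_Y\leq\|c\|_Y$ from the preceding lemma.) I do not expect a real obstacle here: the only points needing care are that the inequality $\langle w,u\rangle\geq\langle w,v\rangle$ uses both $\|\pi_W(v)\|_X\leq 1$ and the positivity $\langle w,v\rangle=\|w\|_Y\geq 0$, and that the degenerate case $\pi_W(v)=0$ has to be ruled out, which happens precisely because it can occur only when $w=0$. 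Everything else is a direct consequence of the definition of the dual norm, the orthogonality of $\pi_W$, and the nice-slice condition.
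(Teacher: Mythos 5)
Your proof is correct and essentially mirrors the paper's argument, except that you prove $\|\cdot\|_{Y'}$ is the dual of $\|\cdot\|_{X'}$ using the nice-slice inequality $\|\pi_W(v)\|_X\leq\|v\|_X$ directly, whereas the paper proves the symmetric statement that $\|\cdot\|_{X'}$ is the dual of $\|\cdot\|_{Y'}$ by invoking the companion inequality $\|\pi_W(w)\|_Y\leq\|w\|_Y$ from the preceding lemma. The two routes are related by biduality, which you correctly invoke, so they are essentially the same proof with the roles of $X$ and $Y$ exchanged.
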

\begin{proof}
If $v,u\in  W$ and $\|u\|_Y=1$ then we have
$$\langle v,u\rangle\leq \|v\|_X\|u\|_Y=\|v\|_{X'}.
$$
For a given $v\in W$, there exists $w\in V$ with $\|w\|_Y=1$ and 
$$
\langle v,w\rangle=\|v\|_X=\|v\|_{X'}.
$$
We get
$$
\langle v,\pi_W(w)\rangle=\langle \pi_W(v),w\rangle=\langle v,w\rangle=\|v\|_{X'}
$$
Define $u=\pi_W(w)/\|\pi_W(w)\|_{Y'}$. We have $\|u\|_{Y'}=1$.
 It follows that
$$
\|v\|_{X'}\geq \langle v,u\rangle=\frac{\langle v,\pi_W(w)\rangle}{\|\pi_W(w)\|_{Y'}}= \frac{\|v\|_{X'}}{\|\pi_W(w)\|_{Y'}}\geq\|v\|_{X'},
$$
because $\|\pi_W(w)\|_{Y'}=\|\pi_W(w)\|_Y\leq \|w\|_Y=1$.
So $\langle v,u\rangle=\|v\|_{X'}$. 
\end{proof}

\begin{lemma}
Suppose that $c\in W$.
\begin{enumerate}
\item 
If $c=a+b$ is an $X2$-decomposition, then $a,b\in W$ and $c=a+b$ is an $X'2$ decomposition.
\item If $c=a+b$ is an $XY$ decomposition then $c=\pi_W(a)+\pi_W(b)$ is an $X'Y'$ decomposition,
$\|\pi_W(a)\|_{X'}=\|a\|_X$ and $\|\pi_W(b)\|_{Y'}=\|b\|_Y$.
\end{enumerate}
\end{lemma}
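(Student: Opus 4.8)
The plan is to prove (1) by a short inner-product computation that reduces to Proposition~\ref{prop:XYchar}, and to prove (2) by comparing the Pareto frontier of $c$ for the ambient dual pair $(\|\cdot\|_X,\|\cdot\|_Y)$ on $V$ with the Pareto frontier of $c$, regarded as a vector of $W$, for the restricted dual pair $(\|\cdot\|_{X'},\|\cdot\|_{Y'})$. For (1), I would write $a=\pi_W(a)+a^\perp$ and $b=\pi_W(b)+b^\perp$ with $a^\perp,b^\perp\in W^\perp$; applying the orthogonal projection onto $W^\perp$ to $c=a+b$ and using $c\in W$ forces $b^\perp=-a^\perp$, and expanding $\langle a,b\rangle$ while using orthogonality of $W$ and $W^\perp$ gives $\langle\pi_W(a),\pi_W(b)\rangle=\langle a,b\rangle+\|a^\perp\|_2^2$. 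Since $\langle a,b\rangle=\|a\|_X\|b\|_Y$ by Proposition~\ref{prop:XYchar}, while the norm--dual-norm inequality together with the nice-slice bound (for $\|\cdot\|_X$ directly, and for $\|\cdot\|_Y$ by the lemma proved just above) gives $\langle\pi_W(a),\pi_W(b)\rangle\le\|\pi_W(a)\|_X\|\pi_W(b)\|_Y\le\|a\|_X\|b\|_Y$, I would conclude $\|a^\perp\|_2=0$, so $a=\pi_W(a)\in W$ and $b=c-a\in W$. Then $\|a\|_{X'}=\|a\|_X$ and $\|b\|_{Y'}=\|b\|_Y$, and since $\|\cdot\|_{X'}$ and $\|\cdot\|_{Y'}$ are dual on $W$, Proposition~\ref{prop:XYchar} applied inside $W$ shows $c=a+b$ is an $X'2$-decomposition.

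For (2), I would first record that $\|c\|_{X'}=\|c\|_X$ and that any decomposition of $c$ lying in $W$ is in particular a decomposition in $V$, so minimizing $\|c-a'\|_Y$ over the smaller feasible set $\{a'\in W:\|a'\|_X\le x\}$ only increases the optimum; that is, $f^c_{Y'X'}(x)\ge f^c_{YX}(x)$ for $x\in[0,\|c\|_X]$, so the restricted frontier lies weakly above the ambient one. I would then set $a''=\pi_W(a)$ and $b''=\pi_W(b)$, noting $a''+b''=\pi_W(c)=c$ and, by the nice-slice property, $\|a''\|_{X'}\le\|a\|_X$ and $\|b''\|_{Y'}\le\|b\|_Y$. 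Because $c=a+b$ is an $XY$-decomposition, Lemma~\ref{lem:ParetoEfficient} gives $f^c_{YX}(\|a\|_X)=\|b\|_Y$, while the decomposition $c=a''+b''$ places the point $(\|a''\|_{X'},\|b''\|_{Y'})$ on or above $f^c_{Y'X'}$, hence on or above $f^c_{YX}$. Strict monotonicity of $f^c_{YX}$ (Lemma~\ref{lem:convexdecreasing}) then forces $\|a''\|_{X'}=\|a\|_X$, for if $\|a''\|_{X'}<\|a\|_X$ we would get $f^c_{YX}(\|a''\|_{X'})>\|b\|_Y\ge\|b''\|_{Y'}$, contradicting that the point lies on or above $f^c_{YX}$. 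Combining $\|b''\|_{Y'}\le\|b\|_Y$ with $\|b''\|_{Y'}\ge f^c_{Y'X'}(\|a\|_X)\ge f^c_{YX}(\|a\|_X)=\|b\|_Y$ yields $\|b''\|_{Y'}=\|b\|_Y$, and in particular $f^c_{Y'X'}(\|a\|_X)=\|b\|_Y$, so by Lemma~\ref{lem:ParetoEfficient} applied in $W$ the decomposition $c=\pi_W(a)+\pi_W(b)$ is an $X'Y'$-decomposition.

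I expect part (2) to be the main obstacle, since the clean inner-product criterion of Proposition~\ref{prop:XYchar} detects only $X2$-decompositions and not general $XY$-decompositions; the argument therefore has to go through the monotone comparison of the two Pareto frontiers together with the strict monotonicity of $f^c_{YX}$ rather than through a single identity. A secondary point to watch is that the nice-slice hypothesis is needed twice, once for $\|\cdot\|_X$ and once for $\|\cdot\|_Y$ via the lemma established earlier in this section.
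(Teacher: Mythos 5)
Your proof is correct, and for part (1) it takes a genuinely different route than the paper. The paper's argument for (1) first observes that $\|\pi_W(a)\|_X\le\|a\|_X$ and $\|\pi_W(b)\|_2\le\|b\|_2$, concludes that $c=\pi_W(a)+\pi_W(b)$ is again an $X2$-decomposition, and then invokes uniqueness of the solution to ${\bf M}^c_{2X}$ (the strict-convexity/rigidity of $\|\cdot\|_2$) to get $\pi_W(a)=a$ and $\pi_W(b)=b$; by contrast, you split $a,b$ into their $W$ and $W^\perp$ components, compute $\langle\pi_W(a),\pi_W(b)\rangle=\langle a,b\rangle+\|a^\perp\|_2^2$, and then squeeze against the combination of Proposition~\ref{prop:XYchar} with the nice-slice inequalities to force $a^\perp=0$. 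Your argument is more elementary in that it bypasses rigidity entirely and reduces directly to the inner-product criterion; the paper's is shorter because it leans on machinery already in place. For part (2) the two arguments are essentially the same in content --- project by $\pi_W$, use the nice-slice inequalities, and let Pareto-optimality of $c=a+b$ in $V$ force both inequalities to be equalities --- but you route the conclusion through the auxiliary comparison $f^c_{Y'X'}\ge f^c_{YX}$ and strict monotonicity of $f^c_{YX}$, whereas the paper argues directly from the definition of Pareto efficiency in $V$ (a strictly smaller pair $(\|\pi_W(a)\|_X,\|\pi_W(b)\|_Y)\le(\|a\|_X,\|b\|_Y)$ must be equal), and then observes that a Pareto-efficient point in $V$ realized by a decomposition inside $W$ is automatically Pareto efficient in $W$. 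The detour via $f^c_{Y'X'}$ is harmless but not needed; the one thing both approaches must be careful about --- and which you do flag --- is that the nice-slice bound is used twice, once for $\|\cdot\|_X$ and once for $\|\cdot\|_Y$ via the lemma immediately preceding this one.
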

\begin{proof}
(1) If $c=a+b$ is an $X2$ demposition, then we have 
$$c=\pi_W(c)=\pi_W(a)+\pi_W(b),$$ 
$\|\pi_W(a)\|_X\leq \|a\|_X$ and $\|\pi_W(b)\|_2\leq \|b\|_2$. It follows that $c=\pi_W(a)+\pi_W(b)$
is also an $X2$-decomposition and by uniqueness we have $\pi_W(a)=a$ and $\pi_W(b)=b$.
Suppose that $c=a'+b'$ with $a',b'\in W$ and $\|b'\|_2=\|b\|_2$.
We get 
$$
\|a'\|_{X'}=\|a'\|_X\geq \|a\|_X=\|a\|_{X'}
$$
because $c=a+b$ is an $X2$-decomposition. This shows that $c=a+b$ is an $X'2$-decomposition.

(2) Suppose that $c=a+b$ is an $XY$-decomposition. We get
$$
c=\pi_W(c)=\pi_W(a)+\pi_W(b),
$$
$\|\pi_W(a)\|_X\leq \|a\|_X$ and $\|\pi_W(b)\|_Y\leq \|b\|_Y$. It follows that 
$c=\pi_W(a)+\pi_W(b)$ is also an $XY$ decomposition.  A similar argument as in (1) shows that this is also an $X'Y'$-decomposition. 
Since $\|\pi_W(a)\|_X\leq \|a\|_X$, we must have $\|b\|_Y\geq\|\pi_W(b)\|_Y\geq \|b\|_Y$, because $c=a+b$
is an $XY$-decomposition. It follows that 
$$\|\pi_W(b)\|_{Y'}=\|\pi_W(b)\|_{Y}=\|b\|_Y.$$
Similarly, we get $\|\pi_W(a)\|_{X'}=\|a\|_X$.
\end{proof}
In particular, we have $f^c_{YX}=f^c_{Y'X'}$ and $h^c_{YX}=h^c_{Y'X'}$.
\begin{example}
Suppose that $V=\C^{m\times n}$ and $W=\R^{m\times n}\subseteq V$. The orthogonal projection $\pi_W:V\to W$ is
given by $\pi_W(A)=\Re(A)$. 
Suppose that $A=A_1+A_2 i\in \C^{m\times n}$ where $A_1,A_2\in \R^{m\times n}$.
Choose a unit vector $v\in \R^n$ such that $\|\pi_W(A)v\|_2=\|\pi_W(A)\|_\sigma$.
Then we have
$$
\|\pi_W(A)\|_\sigma=\|\pi_W(A)v\|_2=\|A_1v\|_2\leq \sqrt{\|A_1v\|_2^2+\|A_2v\|_2^2}=\|Av\|_2\leq \|A\|_\sigma \|v\|_2=\|A\|_\sigma.
$$
This  shows that $W$ is a nice slice.
\end{example}
\section{1D total variation denoising}
\subsection{The total variation norm and its dual}
In this section we discuss the application to 1-dimensional total variation denoising. This example is particularly interesting because the corresponding norms are tight.

Define the difference map $D:\R^{n+1}\to \R^{n}$ by 
$$
D(b)=
\begin{pmatrix}
-1 & 1 &  & &\\
& -1 & 1 & &\\
& & \ddots & \ddots & \\
& & & -1 & 1
\end{pmatrix}
\begin{pmatrix}
b_1\\
b_2\\
\vdots\\
b_{n+1}\end{pmatrix}
=\begin{pmatrix}
b_2-b_1\\
b_3-b_2\\
\vdots\\
b_{n+1}-b_n\end{pmatrix}.
$$
The map $D$ is surjective, and the kernel is spanned by the vector ${\bf 1}=(1\ 1\cdots 1)^t$.
The dual map $D^\star:\R^{n}\to \R^{n+1}$ is given by 
$$
D^\star\begin{pmatrix}
a_1\\a_2\\\vdots \\a_n\end{pmatrix}
=\begin{pmatrix}
-a_1\\a_1-a_2\\ \vdots\\ a_{n-1}-a_{n}\\a_{n}\end{pmatrix}.
$$
If we compose the two maps we get
$$
DD^\star (a)=\begin{pmatrix}
2a_1-a_2\\
2a_2-a_1-a_3\\
2a_3-a_2-a_4\\
\vdots\\
2a_{n-1}-a_{n-2}-a_{n}\\
2a_{n}-a_{n-1}
\end{pmatrix}.
$$
The linear map $DD^\star$ is invertible. 
Let $V\subset \R^{n+1}$ be the subspace defined by
$$
V=\{a\in \R^{n+1}\mid a_1+\cdots+a_{n+1}=0\}.
$$
The image of $D^\star$ is  $V$
If $b=D^\star(DD^\star)^{-1}a$, then $b$ is the vector of minimal length with the property $Db=a$.

We define a norm $\|\cdot\|_X$ on $\R^n$ by
$$
\|a\|_X=\|D^\star a\|_1.
$$
Another norm $\|\cdot\|_Y$ on $\R^n$ is given by
$$
\|a\|_Y=\min \{ \|b\|_\infty\mid b\in \R^{n+1}\mbox{ and }Db=a\}.
$$
\begin{lemma}
Suppose that $b$ is a vector with $Db=a$. Let $m_+(b)=\max\{b_1,\dots,b_{n+1}\}$ and $m_-(b)=\min\{b_1,\dots,b_{n+1}\}$.
Then we have $\|a\|_Y=\frac{1}{2}(m_+(b)-m_-(b))$.
\end{lemma}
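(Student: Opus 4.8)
The plan is to exploit the description of $D$ given just above the lemma: $D$ is surjective with $\ker D = \R\mathbf 1$. Hence the affine set $\{b'\in\R^{n+1}\mid Db'=a\}$ is precisely the coset $b+\R\mathbf 1$, so every competitor in the definition of $\|a\|_Y$ has the form $b+t\mathbf 1$ for a unique $t\in\R$, and
$$
\|a\|_Y=\min_{t\in\R}\|b+t\mathbf 1\|_\infty=\min_{t\in\R}\ \max_{1\le i\le n+1}|b_i+t|.
$$
This reduces the problem to a one-parameter minimization.

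Next I would observe that for fixed $t$ the numbers $b_i+t$ all lie in the interval $[m_-(b)+t,\ m_+(b)+t]$, and both endpoints are actually attained among the $b_i+t$; therefore
$$
\max_{1\le i\le n+1}|b_i+t|=\max\{\,|m_+(b)+t|,\ |m_-(b)+t|\,\}.
$$
From the triangle inequality,
$$
\max\{\,|m_+(b)+t|,\ |m_-(b)+t|\,\}\ \ge\ \tfrac12\big(|m_+(b)+t|+|m_-(b)+t|\big)\ \ge\ \tfrac12\big|(m_+(b)+t)-(m_-(b)+t)\big|=\tfrac12\big(m_+(b)-m_-(b)\big),
$$
which gives the lower bound $\|a\|_Y\ge\tfrac12(m_+(b)-m_-(b))$.

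Finally I would exhibit the optimal shift $t_0=-\tfrac12(m_+(b)+m_-(b))$: then $m_+(b)+t_0=-(m_-(b)+t_0)=\tfrac12(m_+(b)-m_-(b))\ge0$, so $\max\{|m_+(b)+t_0|,|m_-(b)+t_0|\}=\tfrac12(m_+(b)-m_-(b))$, showing the lower bound is attained and hence $\|a\|_Y=\tfrac12(m_+(b)-m_-(b))$. There is no real obstacle here; the only point requiring a moment's care is the reduction of $\max_i|b_i+t|$ to the two extreme entries $m_\pm(b)$, and the genuine content is simply the identification of the solution set of $Db'=a$ as a line in the direction $\mathbf 1$.
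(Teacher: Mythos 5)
Your proof is correct and follows essentially the same route as the paper's: parametrize the fiber of $D$ over $a$ by the line $b+\R\mathbf 1$, reduce $\max_i|b_i+t|$ to the two extreme entries $m_\pm(b)$, and observe that the optimal shift centers the range at $0$. You spell out the lower bound via the triangle inequality, which the paper leaves implicit, but the argument is the same.
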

\begin{proof}
The vectors that map to $a$ under $D$ are of the
form $b-\lambda {\bf 1}$. 
We have $\|b-\lambda {\bf 1}\|_\infty=\max\{m_+-\lambda,m_--\lambda\}$.
This quantity is minimal if $\lambda=\frac{1}{2}(m_+(b) +m_-(b))$. In that case we have $\|b-\lambda {\bf 1}\|_\infty=\frac{1}{2}(m_+(b)-m_-(b))$.
\end{proof}

\begin{lemma}
The norms $\|\cdot\|_X$ and $\|\cdot\|_Y$ are dual to each other.
\end{lemma}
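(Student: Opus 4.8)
The plan is to show that $\|\cdot\|_Y$ is precisely the norm dual to $\|\cdot\|_X$ with respect to the standard inner product $\langle\cdot,\cdot\rangle$ on $\R^n$; since taking dual norms is an involution in finite dimensions, this implies that $\|\cdot\|_X$ and $\|\cdot\|_Y$ are dual to each other. Denote temporarily by $\|u\|_{X^{\ast}}=\max\{\langle u,a\rangle\mid a\in\R^n,\ \|a\|_X\le 1\}$ the dual norm of $\|\cdot\|_X$. The task then reduces to proving $\|u\|_{X^{\ast}}\le\|u\|_Y$ and $\|u\|_{X^{\ast}}\ge\|u\|_Y$ for every $u\in\R^n$. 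The ingredients I will use are the adjointness identity $\langle Db,a\rangle=\langle b,D^\star a\rangle$ (for $b\in\R^{n+1}$, $a\in\R^n$), the duality of $\|\cdot\|_1$ and $\|\cdot\|_\infty$ on $\R^{n+1}$, and the preceding lemma, which gives $\|u\|_Y=\tfrac{1}{2}\bigl(m_+(b)-m_-(b)\bigr)$ for \emph{every} $b$ with $Db=u$.

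For the inequality $\|u\|_{X^{\ast}}\le\|u\|_Y$, fix $a$ with $\|a\|_X=\|D^\star a\|_1\le 1$ and fix any $b\in\R^{n+1}$ with $Db=u$. By adjointness and H\"older,
\begin{equation*}
\langle u,a\rangle=\langle Db,a\rangle=\langle b,D^\star a\rangle\le\|b\|_\infty\,\|D^\star a\|_1\le\|b\|_\infty .
\end{equation*}
Taking the infimum over all such $b$ gives $\langle u,a\rangle\le\|u\|_Y$, and then the supremum over the unit $X$-ball gives $\|u\|_{X^{\ast}}\le\|u\|_Y$.

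For the reverse inequality I would exhibit an explicit maximizer. Fix any $b\in\R^{n+1}$ with $Db=u$, choose $p$ with $b_p=m_+(b)$ and $q\neq p$ with $b_q=m_-(b)$, and set $w=e_p-e_q\in\R^{n+1}$. The key point is that $w$ has coordinate sum $0$, so it lies in the image of $D^\star$ (which is exactly $V$); hence $w=D^\star a$ for a unique $a\in\R^n$. Then $\|a\|_X=\|D^\star a\|_1=\|w\|_1=2$, while
\begin{equation*}
\langle u,a\rangle=\langle Db,a\rangle=\langle b,D^\star a\rangle=\langle b,w\rangle=b_p-b_q=m_+(b)-m_-(b)=2\|u\|_Y ,
\end{equation*}
the last equality being the preceding lemma. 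Thus $a/2$ lies in the unit $X$-ball and $\langle u,a/2\rangle=\|u\|_Y$, so $\|u\|_{X^{\ast}}\ge\|u\|_Y$. Combined with the previous paragraph this yields $\|u\|_{X^{\ast}}=\|u\|_Y$, which is the claim.

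The only part that takes an idea rather than bookkeeping is the choice of $w$ in the last paragraph: the naive functional certifying $\|b\|_\infty$ is $\pm e_i$ at a coordinate of maximal modulus, but this need not belong to $\operatorname{im}(D^\star)=V$ and hence need not be of the form $D^\star a$. Replacing it by the difference $e_p-e_q$ of the functionals at a largest and a smallest coordinate repairs this — that difference automatically has coordinate sum $0$, hence lies in $V$ — and the preceding lemma shows $\langle b,e_p-e_q\rangle=m_+(b)-m_-(b)$ already equals $\|w\|_1\cdot\|u\|_Y$. Conceptually this is the concrete instance of the general fact that for an injective linear map $T\colon\R^n\to\R^m$ the norm $a\mapsto\|Ta\|_1$ has dual norm $u\mapsto\min\{\|v\|_\infty\mid T^\star v=u\}$, applied with $T=D^\star$ so that $T^\star=D$.
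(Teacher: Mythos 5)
Your proof is correct, and it takes a route dual to the paper's. The paper fixes a nonzero $a\in\R^n$ and exhibits a $b$ with $\|b\|_Y=1$ saturating $\langle a,b\rangle\le\|a\|_X\|b\|_Y$: it sets $e=\operatorname{sgn}(D^\star a)\in\{-1,0,1\}^{n+1}$ and $b=De$, using $\sum_i(D^\star a)_i=0$ to guarantee that $e$ attains both values $\pm 1$ (so $\|b\|_Y=1$ by the preceding lemma) and that $\langle a,b\rangle=\langle D^\star a,e\rangle=\|D^\star a\|_1=\|a\|_X$. You instead fix $u\in\R^n$ and exhibit an $a$ with $\|a\|_X=2$ for which $\langle u,a\rangle=2\|u\|_Y$, by solving $D^\star a=e_p-e_q$ where $p,q$ are coordinates realizing $m_+(b)$ and $m_-(b)$ for any lift $b$ of $u$; the crucial observation that $e_p-e_q$ has zero coordinate sum, hence lies in $\operatorname{im}(D^\star)=V$, plays the same structural role as the paper's observation that $D^\star a$ has zero coordinate sum. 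Both proofs lean on the preceding lemma, yours explicitly (to get $m_+(b)-m_-(b)=2\|u\|_Y$ for an \emph{arbitrary} lift $b$), the paper's to identify $\|De\|_Y=1$. The paper's extremizer is perhaps the more canonical one (a subgradient of $\|D^\star(\cdot)\|_1$), while yours has the pleasant feature of not requiring you to compute an $\ell_\infty$-minimal lift of $u$. Your framing via the general fact that $(\,a\mapsto\|Ta\|_1\,)^{\ast}(u)=\min\{\|v\|_\infty:T^\star v=u\}$ for injective $T$ is a clean way to package the computation, and one could replace the whole argument by citing it.

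Two tiny points worth a sentence if you were to polish this: for the reverse inequality you should dispose of the case $u=0$ (where $m_+(b)=m_-(b)$ and the conclusion is trivial) before selecting $p\neq q$; and when you write ``$w=D^\star a$ for a unique $a$'' it is worth saying explicitly that $D^\star$ is injective because $D$ is surjective.
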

\begin{proof}
Suppose that $a,b\in \R^n$. Choose $e\in \R^{n+1}$ such that $De=b$ and $\|e\|_\infty=\|b\|_Y$.
Then we have
$$
\langle a,b\rangle=\langle a,De\rangle=\langle D^\star a,e\rangle\leq \|D^\star a\|_1\|e\|_\infty=\|a\|_X\|b\|_Y.
$$
Suppose that $a$ is nonzero and let $c=D^\star a\neq 0$. Define
$$
e=\begin{pmatrix}
\sgn(c_1)\\
\sgn(c_2)\\
\vdots\\
\sgn(c_{n+1})
\end{pmatrix}.
$$
Because $c_1+\cdots+c_{n+1}=0$, the set $\{c_1,\dots,c_{n+1}\}$ has positive and negative elements.
This implies that $m_+(e)=1$ and $m_-(e)=-1$.
If $b=De$ then we have $\|b\|_Y=\frac{1}{2}(m_+(e)-m_-(e))=1$.
$$
\langle a,b\rangle=\langle a,De\rangle=\langle D^\star a,e\rangle=\langle c,e\rangle=\|c\|_1=\|a\|_X.
$$
This shows that the norms $\|\cdot\|_X$ and $\|\cdot\|_Y$ are dual.
\end{proof}
For a vector $a\in \R^n$, $\|D^\star a\|_1$ is its total variation. Given a signal $c$ and an $\varepsilon>0$, 
a solution $a$ to the problem ${\bf M}_{X2}(\varepsilon)$ minimizes the total variation $\|D^\star a\|_1$
under the constraint $\|c-a\|_2\leq \varepsilon$. The function $a$ is typical a piecewise constant function.
Below is an example, where the blue function is $c$ and the red function is $a$.

\centerline{\includegraphics[width=8in]{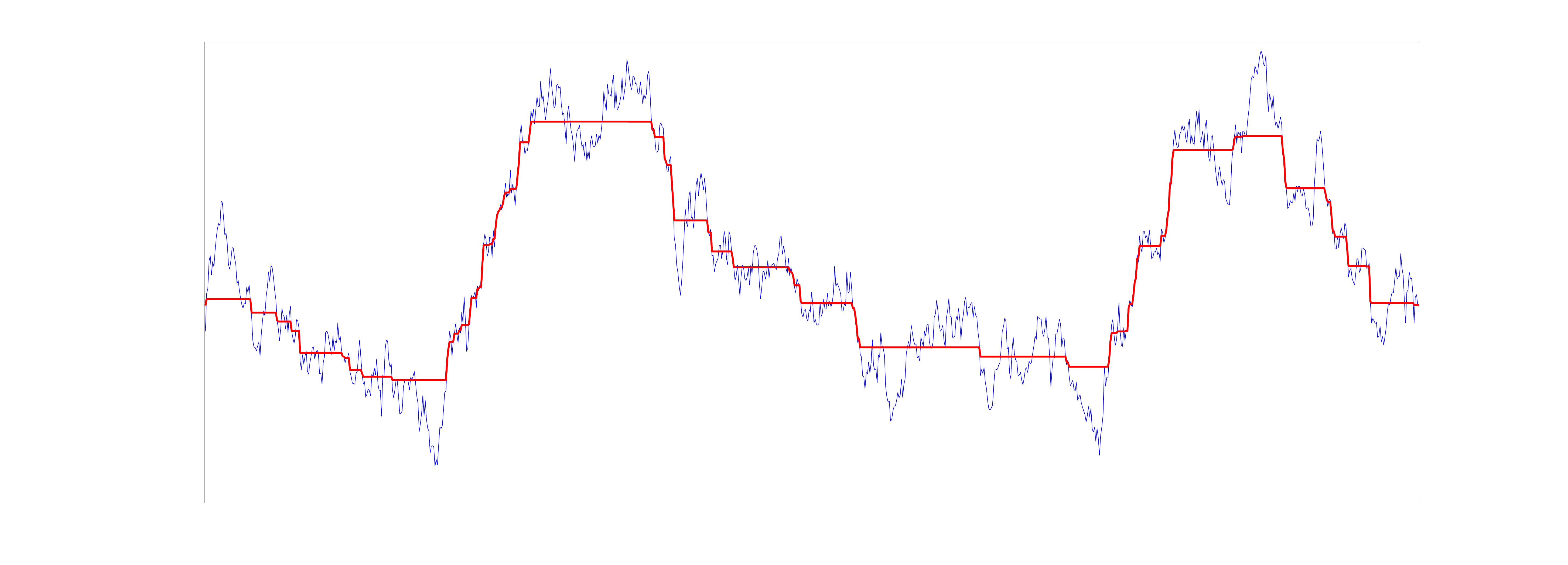}}

As we increase the value of $\varepsilon$, the sparsity decreases. Below we draw the signal $c$ in blue, and (a vertical translation of) the denoised signal $a=\shrink_Y(\varepsilon)$
for various values of $\varepsilon$ in red.

\centerline{\includegraphics[width=8in]{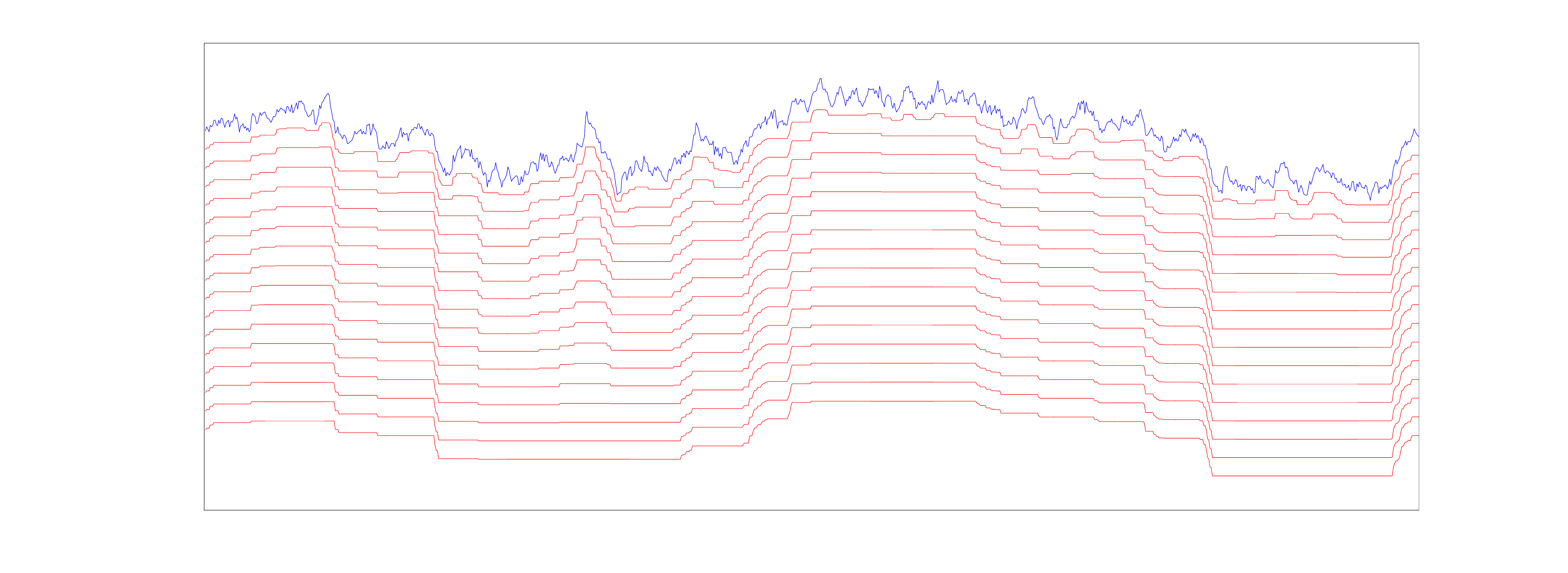}}

\subsection{Description of the unit ball}
We now will describe the unit balls $B_X$ and $B_Y$. 
\begin{definition}
A vector $(s_1\ s_2\ \cdots\ s_{n+1})^t$ is called a {\it signature sequence} if $s_1,s_2,\dots,s_{n+1}\in \{-1,0,1\}$
and $\{1,-1\}\subseteq \{s_1,\dots,s_{n+1}\}$.
For a signature sequence $s$, we define $\widetilde{F}_s$ as the set of all vectors $x\in \R^{n+1}$
such that $x_i=s_i$ when $s_i=\pm 1$, and $|x_i|\leq 1$ if $s_i=0$. We define $F_s=D(\widetilde{F}_s)$.
\end{definition}
\begin{lemma}
The set $F_s$ is a face of the unit ball $B_Y$.
\end{lemma}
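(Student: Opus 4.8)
The plan is to verify the three conditions in the paper's definition of a face of a closed convex set: that $F_s$ is closed, that it is convex and contained in $B_Y$, and that it has the face property. The first two are quick. The set $\widetilde{F}_s$ is the intersection of the cube $[-1,1]^{n+1}$ with the affine subspace cut out by the equations $x_i=s_i$ for those $i$ with $s_i=\pm 1$, hence a compact convex polytope; since $D$ is linear and continuous, $F_s=D(\widetilde{F}_s)$ is compact and convex. To see $F_s\subseteq B_Y$, observe that every $x\in\widetilde{F}_s$ has $m_+(x)=1$ and $m_-(x)=-1$, because the signature-sequence condition forces some coordinate to equal $1$ and another to equal $-1$ while all coordinates lie in $[-1,1]$; the lemma expressing $\|\cdot\|_Y$ through $m_\pm$ then gives $\|D(x)\|_Y=\tfrac12(m_+(x)-m_-(x))=1$.

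For the face property, suppose $a,b\in B_Y$, $0<t<1$, and $c:=ta+(1-t)b\in F_s$; write $c=Dx$ with $x\in\widetilde{F}_s$. The key step is to lift this convex combination to $\R^{n+1}$: choose $y,z\in\R^{n+1}$ with $Dy=a$, $Dz=b$ and $\|y\|_\infty=\|a\|_Y\le 1$, $\|z\|_\infty=\|b\|_Y\le 1$. Then $w:=ty+(1-t)z$ has $Dw=Dx$, so $w=x+\kappa\mathbf{1}$ for some scalar $\kappa$, since $\ker D=\R\mathbf{1}$. On one hand $\|w\|_\infty\le t\|a\|_Y+(1-t)\|b\|_Y\le 1$; on the other hand $m_+(w)-m_-(w)=m_+(x)-m_-(x)=2$, which forces $\|w\|_\infty\ge 1$, hence $\|w\|_\infty=1$, and then $m_+(w)=1$, $m_-(w)=-1$, so $\kappa=0$ and $w=x$ coordinatewise. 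Once $ty+(1-t)z=x$ holds coordinatewise, a one-line convexity argument finishes: at an index with $s_i=1$ we have $1=x_i=ty_i+(1-t)z_i$ with $y_i,z_i\le 1$ and $t,1-t>0$, forcing $y_i=z_i=1$, and symmetrically $y_i=z_i=-1$ when $s_i=-1$; combined with $\|y\|_\infty,\|z\|_\infty\le 1$ this says precisely $y,z\in\widetilde{F}_s$, so $a=Dy\in F_s$ and $b=Dz\in F_s$.

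The main obstacle is precisely the lifting step: because $D$ has a one-dimensional kernel, the chosen preimages $y,z$ a priori only combine to $x$ up to a multiple of $\mathbf{1}$, and pinning that multiple down to $0$ is what uses both the $\ell_\infty$-minimality of $y$ and $z$ and the fact that $x$ already has ``spread'' exactly $2$. I would also record the short case check needed there: if $m_+(w)\le 0$ or $m_-(w)\ge 0$, then the spread-$2$ condition alone gives $\|w\|_\infty\ge 2$, contradicting $\|w\|_\infty\le 1$, so $m_+(w)>0>m_-(w)$ and the deduction $m_+(w)=1$, $m_-(w)=-1$ is justified. Everything after the lifting step is routine coordinatewise bookkeeping.
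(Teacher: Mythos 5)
Your proof is correct and follows essentially the same route as the paper: lift the convex combination to $\R^{n+1}$ via minimal-$\ell_\infty$ preimages, use that the two lifts differ by a multiple of $\mathbf{1}$ together with the fact that the preimage of the face element already has spread $2$ to pin the scalar to zero, and then pass back. The only cosmetic difference is that at the last step you argue coordinatewise at the indices with $s_i=\pm 1$, whereas the paper invokes that $\widetilde{F}_s$ is a face of the cube $[-1,1]^{n+1}$ to conclude $\widetilde{b}_1,\widetilde{b}_2\in\widetilde{F}_s$; these are interchangeable.
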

\begin{proof}
The set $\widetilde{F}_s$ is a face of the unit ball $B_1\subset \R^{n+1}$ for the $\ell_1$ norm. 
 Suppose that $b=tb_1+(1-t)b_2\in F_s$ 
with $b_1,b_2\in B_Y$ and $0<t<1$. Then we can write $b_i=D(\widetilde{b}_i)$ with $\|\widetilde{b}_i\|_\infty\leq 1$ for $i=1,2$.
We have 
$$D(t\widetilde{b}_1+(1-t)\widetilde{b}_2)=tb_1+(1-t)b_2=b=D\widetilde{b}$$ for some $\widetilde{b}\in \widetilde{F}_s$.
We get that $\widetilde{b}$ and $t\widetilde{b}_1+(1-t)\widetilde{b}_2$ differ by a multiple of   ${\bf 1}$. 
Since the maximum and minimum entry of $\widetilde{b}$ are $1$ and $-1$ respectively, and $\|t\widetilde{b}_1+(1-t)\widetilde{b}_2\|_\infty\leq 1$ 
we deduce that $\widetilde{b}=t\widetilde{b}_1+(1-t)\widetilde{b}_2$. Now $\widetilde{b}_1,\widetilde{b}_2$ lie in the unit ball $B_1$ and $\widetilde{b}$ lies in the face $\widetilde{F}_s$. 
It follows that $\widetilde{b}_1,\widetilde{b}_2\in \widetilde{F}_s$ and $b_1=D(\widetilde{b}_1),b_2=D(\widetilde{b}_2)\in F_s$.
\end{proof}

The dimension of $F_s$ is equal to the number of $0$'s in the signature sequence $s$. The restriction of $D$ to $\widetilde{F}_s$ is injective, so $F_s$ and $\widetilde{F}_s$ have the same dimension.
\begin{proposition}
The faces of $B_Y$ of dimension $<n$ are exactly all $F_s$ where $s$ is a signature sequence. 
\end{proposition}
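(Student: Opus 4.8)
The plan is to realize $B_Y$ as a linear image of a cube and then match \emph{exposed} faces on the two sides. First I would record that $B_Y=D(C)$, where $C=[-1,1]^{n+1}$ is the unit ball of $\|\cdot\|_\infty$: by definition $\|a\|_Y\le 1$ holds exactly when $a=Db$ for some $b$ with $\|b\|_\infty\le 1$. Hence $B_Y$ is a polytope, and it is full-dimensional in $\R^n$ because $\|\cdot\|_Y$ is an honest norm (if $\|a\|_Y=0$ then $a=Db$ with $b=0$, so $a=0$). Consequently a face of $B_Y$ has dimension $<n$ precisely when it is a proper face. One inclusion of the proposition is then already in hand: if $s$ is a signature sequence then $F_s$ is a face of $B_Y$ by the previous lemma, and $\dim F_s=\dim\widetilde F_s=\#\{i\mid s_i=0\}$; since a signature sequence has at least one coordinate $+1$ and at least one coordinate $-1$, this number is $\le n-1$, so $F_s$ is a proper face.

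For the reverse inclusion I would take an arbitrary nonempty proper face $G$ of $B_Y$ and manufacture the signature sequence. Since $B_Y$ is a polytope, $G$ is an exposed face, so there is a nonzero $c\in\R^n$ with $G=\{y\in B_Y\mid \langle c,y\rangle=\alpha\}$ and $\alpha=\max_{y\in B_Y}\langle c,y\rangle$. Set $v:=D^\star c$. Because $DD^\star$ is invertible, $D^\star$ is injective, so $v\neq 0$; and $\operatorname{im}(D^\star)=V$, so $v_1+\cdots+v_{n+1}=0$ and therefore $v$ has coordinates of both signs. Then $s:=(\sgn v_1,\dots,\sgn v_{n+1})^t$ is a signature sequence, and $\widetilde F_s$ is exactly the face of $C$ on which the functional $b\mapsto\langle v,b\rangle$ attains its maximum $\|v\|_1$.

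The final step, checking $G=F_s$, is a short computation using surjectivity of $D$. For $b\in C$ one has $\langle c,Db\rangle=\langle v,b\rangle\le\|v\|_1$, with equality iff $b\in\widetilde F_s$; since $D$ is onto, $\alpha=\|v\|_1$. Now $y\in G$ means $y=Db$ with $b\in C$ and $\langle v,b\rangle=\alpha=\|v\|_1$, i.e. $b\in\widetilde F_s$, so $y\in D(\widetilde F_s)=F_s$; conversely any $y=Db$ with $b\in\widetilde F_s\subseteq C$ lies in $B_Y$ and satisfies $\langle c,y\rangle=\langle v,b\rangle=\|v\|_1=\alpha$, so $y\in G$. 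Hence $G=F_s$.

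The one point that is not bookkeeping is the claim that every proper face of the polytope $B_Y$ is exposed; this is where polytopality (inherited from the cube) is genuinely used, as the statement fails for general compact convex bodies. I would either cite it (e.g. \cite{Barvinok}) or insert the standard short argument: every proper face is an intersection of facets, each facet is exposed since $B_Y$ lies on one side of its affine hull, and an intersection of exposed faces is exposed by adding the exposing functionals. Apart from the harmless convention that the empty face is excluded, this completes the proof, and it simultaneously matches dimensions, since a signature sequence with $k$ zeros yields a face of dimension $k\in\{0,1,\dots,n-1\}$.
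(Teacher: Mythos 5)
Your proof is correct and follows essentially the same route as the paper's: expose the face by a functional $c$, pass to $v=D^\star c$, read off the signature $s=\sgn(v)$, and match the exposed faces of the cube under $D$. You add two pieces of bookkeeping the paper leaves implicit — that $B_Y$ is a full-dimensional polytope (so faces of dimension $<n$ are the proper faces) and that every proper face of a polytope is exposed — which is a genuine improvement in rigor, not a different argument.
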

\begin{proof}
Suppose that $F$ is a proper face of the polytope $B_Y$. Then there exists a vector
 $a\in \R^n$ with $\|a\|_X=1$  and $F=\{b\in B_Y\mid \langle a,b\rangle=1\}$.
Let $\widetilde{a}=D^\star a$ and  $s_i=\sgn(\widetilde{a}_i)$ for $i=1,2,\dots,n+1$.
Since $\widetilde{a}\neq 0$, and $\widetilde{a}_1+\cdots+\widetilde{a}_{n+1}=0$, the vector $\widetilde{a}$ must have positive and negative coordinates.
In the sequence $s_1,\dots,s_{n+1}$ the elements $1$ and $-1$ both must appear.
For a vector $b\in \R^n$ with $\|b\|_Y=1$, let $\widetilde{b}\in \R^{n+1}$ be the unique vector with $D\widetilde{b}=b$
and $\|\widetilde{b}\|_\infty=1$. Then we have
$$
\langle a,b\rangle=\langle a ,D\widetilde{b}\rangle=\langle D^\star a,\widetilde{b}\rangle=\langle \widetilde{a},\widetilde{b}\rangle
$$
with $\|\widetilde{a}\|_1=\|\widetilde{b}\|_\infty=1$. Now $\langle \widetilde{a},\widetilde{b}\rangle=\sum_{i=1}^{n+1}\widetilde{a}_i\widetilde{b}_i=1$ if and only if for every $i$, $\widetilde{a}_i=s_i=0$ or  $\widetilde{b}_i=s_i$.
In other words, $\langle \widetilde{a},\widetilde{b}\rangle=1$ if and only if $\widetilde{b}\in \widetilde{F}_s$.
So $\langle a,b\rangle=\langle \widetilde{a},\widetilde{b}\rangle=1$ if and only if $b=D\widetilde{b}\in F_s$.
\end{proof}
\begin{theorem}
The norms $\|\cdot\|_X$ and $\|\cdot\|_Y$ are tight.
\end{theorem}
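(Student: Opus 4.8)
The plan is to verify the criterion of Proposition~\ref{prop:normtight}. Tightness of $\|\cdot\|_X$ and of its dual $\|\cdot\|_Y$ are equivalent (the conditions defining a tight vector, as well as the notion of a unitangent vector, are symmetric under interchanging the two norms), so it suffices to show that every face of the unit ball $B_Y$ contains a unitangent vector that is perpendicular to it. Now $B_Y\subseteq\R^n$ is a full-dimensional polytope; the face $B_Y$ itself contains $0$, which is unitangent and perpendicular to all of $V$, and every proper face equals $F_s=D(\widetilde F_s)$ for some signature sequence $s$, by the proposition preceding the theorem. So I only need to handle a fixed $F_s$.

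For the construction, set $Z=\{i\mid s_i=0\}$ and let $\widetilde b\in\R^{n+1}$ be the unique vector with $\widetilde b_i=s_i$ for $i\notin Z$ and $(D^\star D\widetilde b)_i=0$ for $i\in Z$; concretely $\widetilde b$ is the piecewise-linear interpolant of the pinned values $s_i\in\{-1,1\}$, extended as a constant past the first and last pinned index. Since linear interpolation between values in $\{-1,1\}$ stays in $[-1,1]$, we get $\|\widetilde b\|_\infty\le1$, so $\widetilde b\in\widetilde F_s$; and as $s$ contains both a $1$ and a $-1$, the values $1$ and $-1$ both occur among the entries of $\widetilde b$. Put $b=D\widetilde b\in F_s$ and $c=D^\star b=D^\star D\widetilde b$.

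I would then check the two required properties. Perpendicularity: $c_i=0$ for $i\in Z$ by construction, so for every $\widetilde b'\in\widetilde F_s$,
\[
\langle b,D\widetilde b'\rangle=\langle D^\star b,\widetilde b'\rangle=\langle c,\widetilde b'\rangle=\sum_{i\notin Z}c_i s_i ,
\]
which does not depend on $\widetilde b'$; hence $\langle b,\cdot\rangle$ is constant on $F_s=D(\widetilde F_s)$. Unitangency: for $i\notin Z$ the entry $\widetilde b_i=s_i=\pm1$ is an extreme value of $\widetilde b$ (all entries lie in $[-1,1]$), so the sign of the discrete negative-Laplacian entry $c_i$ — equal to $2\widetilde b_i-\widetilde b_{i-1}-\widetilde b_{i+1}$ in the interior and to $\widetilde b_1-\widetilde b_2$, resp.\ $\widetilde b_{n+1}-\widetilde b_n$, at the two endpoints — agrees with $s_i$, i.e.\ $c_i s_i=|c_i|$. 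Combined with $c_i=0$ for $i\in Z$, this yields
\[
\|b\|_2^2=\langle D^\star D\widetilde b,\widetilde b\rangle=\langle c,\widetilde b\rangle=\sum_{i\notin Z}c_i s_i=\sum_{i=1}^{n+1}|c_i|=\|c\|_1=\|b\|_X .
\]
On the other hand, by the lemma computing $\|b\|_Y=\tfrac12\bigl(m_+(\widetilde b)-m_-(\widetilde b)\bigr)$ for the preimage $\widetilde b$, we get $\|b\|_Y=\tfrac12(1-(-1))=1$, so $\|b\|_2^2=\|b\|_X=\|b\|_X\|b\|_Y$ and $b$ is unitangent. Applying Proposition~\ref{prop:normtight} (with $X$ and $Y$ interchanged) then shows $\|\cdot\|_Y$, and hence $\|\cdot\|_X$, is tight.

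The genuine content is the single idea of taking $\widetilde b$ to be the harmonic (piecewise-linear) extension of the signature values: the vanishing of $c=D^\star D\widetilde b$ on $Z$ gives perpendicularity to $F_s$ for free, while the facts that $\|\widetilde b\|_\infty=1$ and that $\widetilde b$ peaks with the sign of $s$ at each pinned index give unitangency. The part I expect to require the most care is the behaviour at the two endpoints of the path: checking that the flat continuation past the extreme pinned indices is consistent with the endpoint forms $\widetilde b_1-\widetilde b_2$ and $\widetilde b_{n+1}-\widetilde b_n$ of $D^\star D$, and that $\widetilde b$ is indeed well-defined and stays within $[-1,1]$ in all cases.
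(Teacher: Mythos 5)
Your proof is essentially the same as the paper's: both verify the criterion of Proposition~\ref{prop:normtight} by taking, for each signature sequence $s$, the piecewise-linear interpolant $\widetilde b$ of the pinned values $s_i\in\{-1,1\}$ (constant past the outer pinned indices), setting $b=D\widetilde b\in F_s$, and using the sign property $(D^\star D\widetilde b)_i s_i=|(D^\star D\widetilde b)_i|$ to compute $\|b\|_2^2=\|b\|_X\|b\|_Y$. You additionally spell out the perpendicularity of $b$ to $F_s$ via $\langle b,D\widetilde b'\rangle=\sum_{i\notin Z}c_i s_i$ being independent of $\widetilde b'$, which the paper leaves implicit (it follows from the same sign property, since $s_i=0$ forces $(D^\star D\widetilde b)_i=0$); this is a welcome clarification rather than a different route.
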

\begin{proof}
Suppose that $s$ is a signature vector. It suffices to construct a unitangent vector $u\in F_s$ by Proposition~\ref{prop:normtight}.
We define a vector $v\in \R^{n+1}$ as follows. If $s_i=\pm 1$ then $v_i=s_i$.
The other coordinates of $v$ are obtained by linear interpolation. If $i<j$,
 $s_i, s_{j}\neq 0$ and $s_{i+1}=\cdots=s_{j-1}=0$ then we define 
 $$v_k=\left(\frac{k-i}{j-i}\right)s_i+\left(\frac{j-k}{j-i}\right)s_j
 $$
 whenever $i<k<j$.
 If $s_1=s_2=\cdots=s_{i-1}=0$ and $s_i\neq 0$ then we define $v_k=s_i$ for $k<i$.
 If $s_{j+1}=s_{j+2}=\cdots=s_{n+1}=0$ and $s_j\neq 0$ then we define $v_k=s_j$ for $k>j$.
 For example, if $s=(0,0,1,0,0,0,-1,0,-1,0,0,1)$ then $v=(1,1,1,\frac{1}{2},0,-\frac{1}{2},-1,-1,-1,-\frac{1}{3},\frac{1}{3},1)$.

 From the construction follows that for every $i$ we have $(D^\star D v)_is_i=|(D^\star D v)_i|$.
 We define $u=Dv$. We have $\|u\|_Y=\|v\|_\infty=1$. From the construction it is clear that $v\in \widetilde{F}_s$ and $u\in F_s$.
 We have
 $$
 \langle u,u\rangle=\langle Dv,Dv\rangle=\langle D^\star Dv,v\rangle=\sum_{i=1}^{n+1} (D^\star D v)_i v_i=
 \sum_{i=1}^{n+1} |(D^\star u)_i|=\|u\|_X=\|u\|_X\|u\|_Y,
 $$
 so $u$ is unitangent.
 \end{proof}
We briefly discuss the combinatorics of the polytope $B_Y$.
Let $h_i$ be the number of faces of $B_Y$ of dimension $i$. For $i<n$, $h_i$ is the number of signature sequences with $i$ zeroes. We also have $h_n=1$.
The generating function for $h_0,h_1,\dots,h_n$
is $H(t)=h_0+h_1t+\cdots+h_nt^n$. The generating function for the set  $\{1,0,-1\}^{n+1}$ is $(2+t)^{n+1}$.
The generating function for $\{1,0\}^{n+1}$ and for $\{-1,0\}^{n+1}$ is $(1+t)^{n+1}$. The generating function for $\{(0,0,\dots,0)\}$ is $t^{n+1}$. So the generating function for the set of signature sequences, using inclusion-exclusion,
is $(2+t)^{n+1}-2(1+t)^{n+1}+t^{n+1}$. There is one face of dimension $n$ that does not correspond to a signature sequences, so we have 
$$H(t)=(2+t)^{n+1}-2(1+t)^{n+1}+t^{n+1}+t^n. 
$$
In particular, $h_0=2^{n+1}-2$ is the number of vertices of the ball $B_Y$, and $h_{n-1}=4{n+1\choose 2}-2{n+1\choose 2}=
n^2+n$ is the number of facets of $B_Y$, which is the number of vertices of $B_X$.
The total number of faces of $B_Y$ (and $B_X$)  is $H(1)=3^{n+1}-2^{n+2}+2$.

\begin{example}
Let $n=3$. We have the following signature sequences and corresponding unitangent vectors (written as row vectors):
$$
\begin{array}{c|c}
(1,1,1,-1),(1,0,1,-1),(0,1,1,-1),(0,0,1,-1)  &  (0,0,-2)^*\\
(1,1,0,-1),(0,1,0,-1) & (0,-1,-1) \\
(1,1,-1,1),(0,1,-1,1) & (0,-2,2)^*\\
(1,1,-1,-1),(1,1,-1,0),(0,1,-1,-1),(0,1,-1,0) & (0,-2,0)^*\\
(1,0,0,-1) &
(-\frac{2}{3},-\frac{2}{3},-\frac{2}{3}) \\
(1,0,-1,1) & (-1,-1,2)\\
(1,0,-1,-1),(1,0,-1,0) & (-1,-1,0)\\
(1,-1,1,1),(1,-1,1,0)  & (-2,2,0)^*\\
(1,-1,1,-1) & (-2,2,-2)^*\\
(1,-1,0,1) & (-1,-1,1)\\
(1,-1,-1,1) & (-2,0,2)^*\\
(1,-1,-1,-1),(1,-1,-1,0),(1,-1,0,-1),(1,-1,0,0) & (-2,0,0)^*\\
(-1,1,1,1),(-1,1,0),(-1,1,0,1),(-1,1,0,0) & (2,0,0)^*\\
(-1,1,1,-1) & (2,0,-2)^*\\
(-1,1,0,-1) & (1,1,-1)\\
(-1,1,-1,1) & (2,-2,2)^*\\
(-1,1,-1,-1),(-1,1,-1,0) & (2,-2,0)^*\\
(-1,0,1,1),(-1,0,1,0) & (1,1,0)\\
(-1,0,1,-1) & (1,1,-2)\\
(-1,0,0,1) & 
(\frac{2}{3},\frac{2}{3},\frac{2}{3}) \\
(-1,-1,1,1),(-1,-1,1,0),(0,-1,1,1),(0,-1,1,0) & (0,2,0)^*\\
(-1,-1,1,-1),(0,-1,1,-1) & (0,2,-2)^*\\
(-1,-1,0,1),(0,-1,0,1) & (0,1,1)\\
(-1,-1,-1,1)(-1,0,-1,1),(0,-1,-1,1),(0,0,-1,1) & (0,0,2)^*
\end{array}
$$
Every vertex of the unit ball $B_Y$ is unitangent. These vectors are marked with $\star$ and they correspond
to signature sequences that have no zeroes. Below is a 2-dimensional projection of the unit ball $B_Y$.

\centerline{\includegraphics[width=4in]{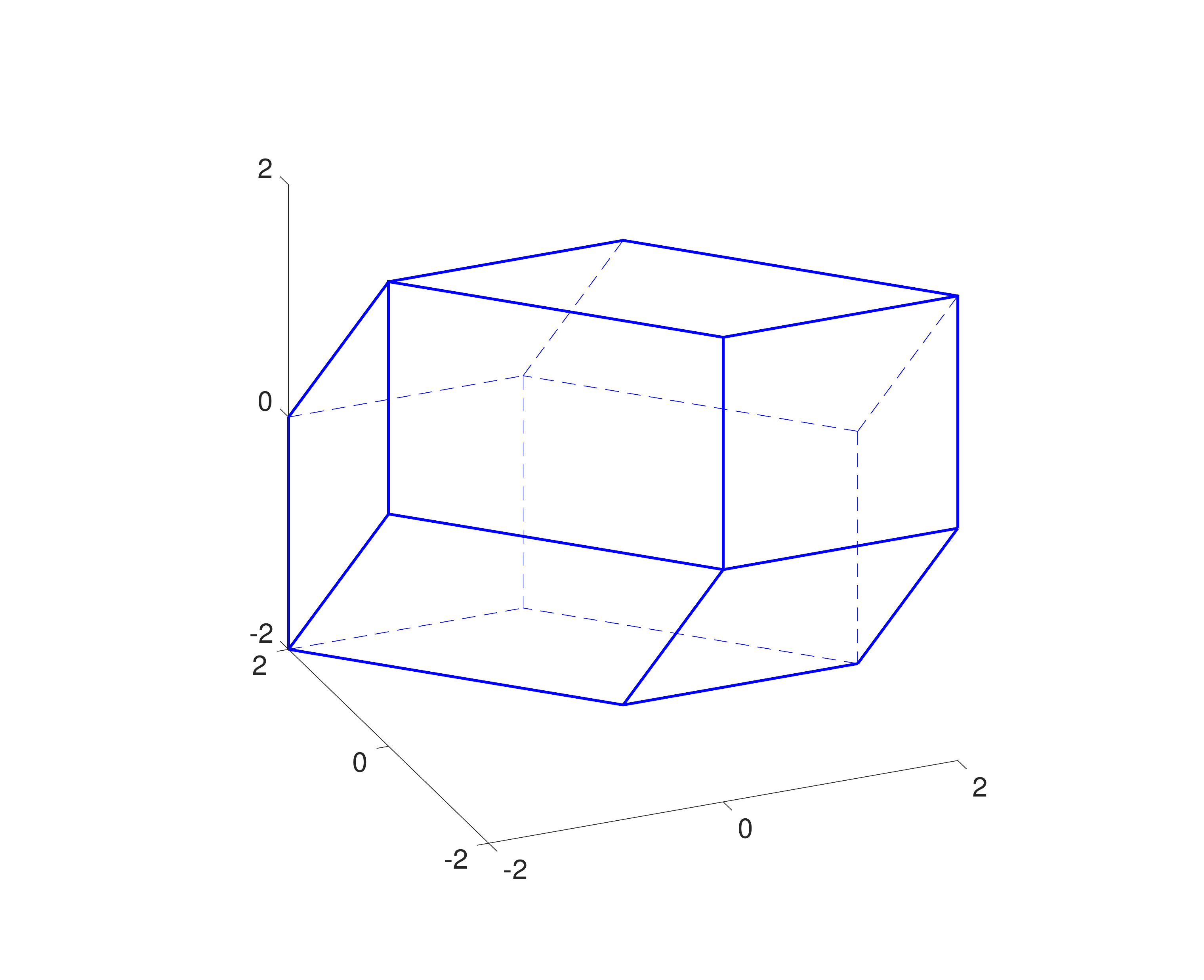}}

The unit ball $B_X$ is dual to the polytope $B_Y$ and is shown below:

\centerline{\includegraphics[width=4in]{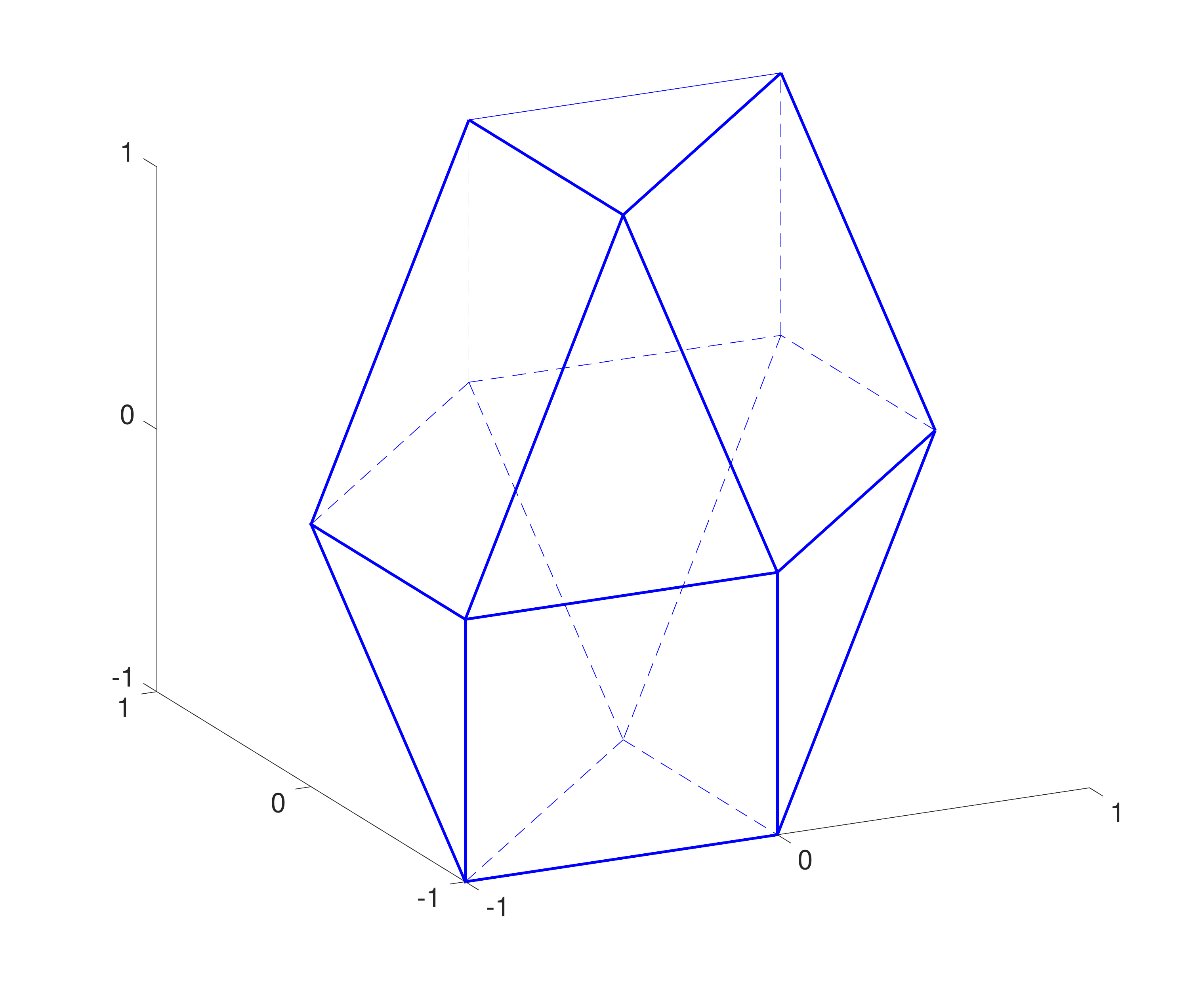}}

An example of an $XY$-slope decomposition is:
$$
\begin{pmatrix}
5\\-2\\3\end{pmatrix}=
\frac{3}{4}\begin{pmatrix}\frac{2}{3} \\[3pt] \frac{2}{3} \\[3pt]  \frac{2}{3}\end{pmatrix}+
\begin{pmatrix} 2\\0\\0\end{pmatrix}+
\frac{5}{4}\begin{pmatrix}2\\ -2 \\ 2\end{pmatrix}
$$

\end{example}
If $a\in \R^n$, then we have 
$$
\gsparse_Y(a)=|\{i\mid 1\leq i\leq n-1,\ a_i\neq a_{i+1}\}|+1.
$$

\subsection{The taut string method}
The restriction of $D:\R^{n+1}\to \R^n$ to $V$ gives an isomorphism between $V$ and $\R^{n}$.
Now we can view the bilinear form $\langle\cdot,\cdot\rangle$ as a bilinear form on $V$ and for $a,b\in V$ we have
$$
\langle a,b\rangle=\langle Da,Db\rangle=\sum_{i=1}^n (a_{i+1}-a_i)(b_{i+1}-b_i).
$$
In particular, we have
$$
\|a\|_2=\sqrt{\langle a,a\rangle}=\sqrt{\sum_{i=1}^n (a_{i+1}-a_i)^2}.
$$
We can also view $\|\cdot\|_X$ and $\|\cdot\|_Y$ as norms on $V$ and for $c=(c_1,\dots,c_{n+1})\in V$ we have
$$
\|c\|_X=\|D^\star Dc\|_1
$$
and
$$
\|c\|_Y=\textstyle \frac{1}{2}(\max_i\{c_i\}-\min_i\{c_i\}).
$$
For $c\in \R^{n+1}$, we consider the following  optimization problem:\\

\noindent ${\bf TS}^c(\varepsilon)$: Find a vector $a\in \R^{n+1}$   with  $\|c-a\|_\infty\leq \varepsilon$
such that $\|D^\star Da\|_1$ is minimal.\\

We call this the Taut String problem. This problem, and some generalizations to higher order, were studied in \cite{MvdG}.
If the vectors $a,b,c$ are discretized functions, Then the graph $a$ lies between $c-\varepsilon$ and $c+\varepsilon$. Now $D^\star Da$ is a discrete version of the second derivative. 
The value $|D^\star Da)_i|$ is a measure of how much the graph bends at vertex $i$. So $\|D^\star Da\|_1$ is the total amount of bending and we try to minimize this. Visually we can see $a$ as a string between $c-\varepsilon$ and $c+\varepsilon$ and we pull  $a$ on both ends so that  the string is taut.
 The {\em Taut String Algorithm} described in \cite{DK} computes $a$ in time $O(n)$ (see also~\cite{Condat}). 
The function $a$ is piecewise linear and $D(a)$ is piecewise constant. Total Variation Denoising of time signals has applications in statistics to estimate a density function from a collection of measurements (see~\cite{BBBB}). It was also used in \cite{BAS,NBWD} for analysing heart rate variabilty signals to predict hemodynamic decompensation.
In the graph below, we have drawn $c-\varepsilon$ and $c+\varepsilon$ for a function $c$, as well as the solution $a$ to the Taut String problem ${\bf TS}^c(\varepsilon)$. It appears as a tight string that is in between the graphs of $c-\varepsilon$ and $c+\varepsilon$ and is a piecewise linear approximation of $c$.

\centerline{\includegraphics[width=8in]{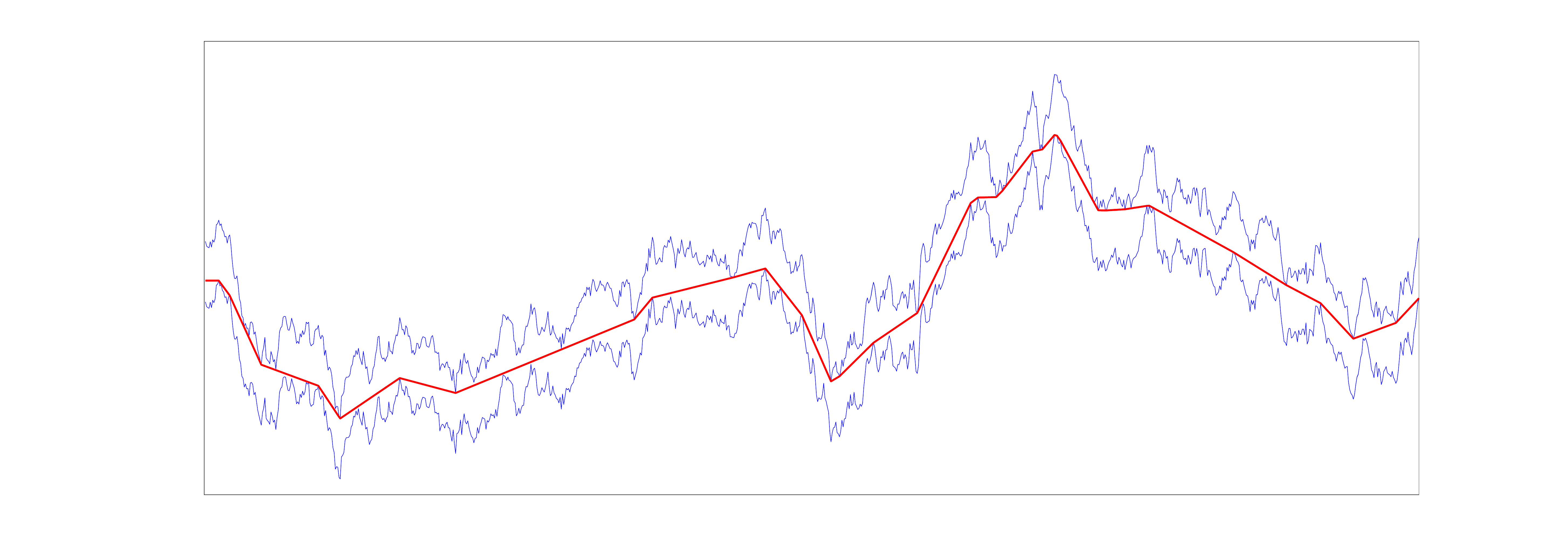}}

As the value of $\varepsilon$ increases, the sparsity decreases:

\centerline{\includegraphics[width=8in]{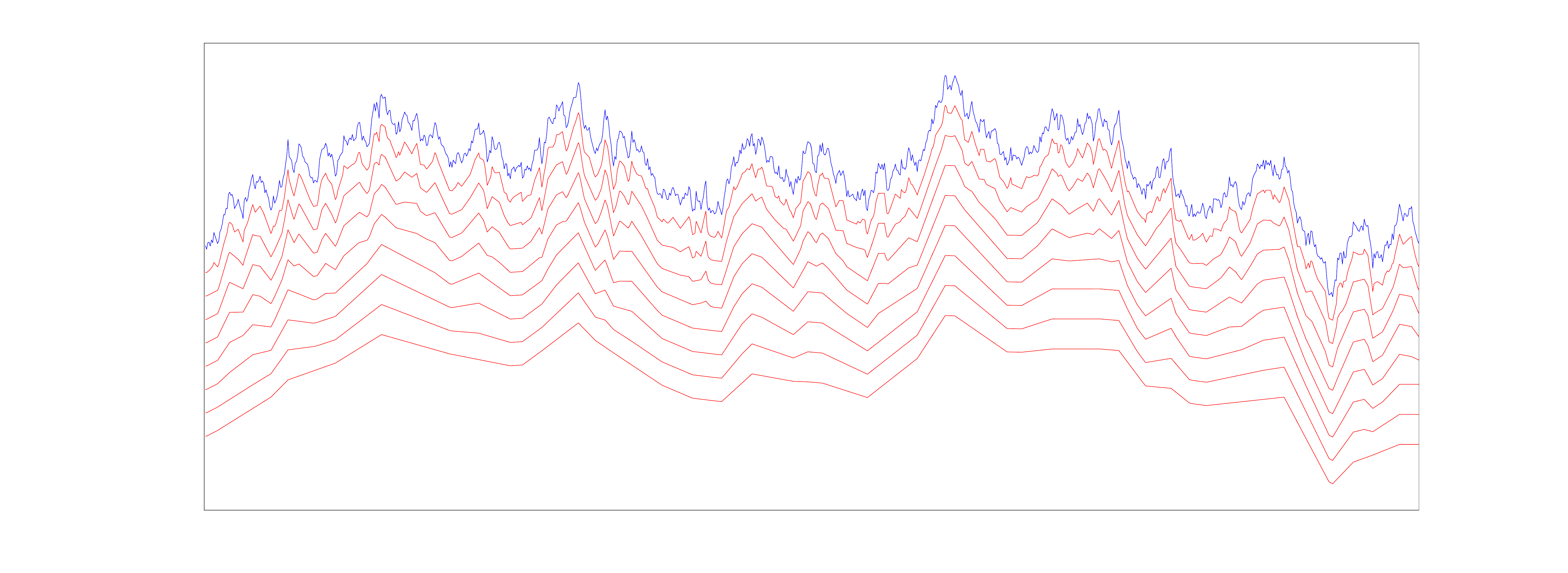}}

Let $\pi:\R^{n+1}\to V$ be the projection onto $V$ defined by 
$$\pi(a_1,\dots,a_{n+1})=(a_1,\dots,a_{n+1})-\frac{\sum_{i=1}^{n+1} a_i}{n+1}(1,1,\dots,1).
$$

\begin{lemma}
Suppose that $c\in V$ and $a\in \R^{n+1}$  minimizes  $\|D^\star Da\|_1$  under the constraint $\|c-a\|_\infty\leq \varepsilon$.
Then $\pi(a)$ is a solution to ${\bf M}_{XY}^{c}(\varepsilon)$ and $c=\pi(a)+\pi(b)$ is an $XY$-decomposition,
where $b=c-a$.
\end{lemma}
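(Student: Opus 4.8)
The plan is to identify the taut string problem ${\bf TS}^c(\varepsilon)$, which lives on $\R^{n+1}$, with the intrinsic problem ${\bf M}_{XY}^c(\varepsilon)$ on $V$, and then to read off the $XY$-decomposition from Lemma~\ref{lem:ParetoEfficient}. First the bookkeeping: $\pi$ is the orthogonal projection of $\R^{n+1}$ onto $V$ along the line $\R\mathbf 1=\ker D$, so $Da=D\pi(a)$ and hence $D^\star Da=D^\star D\pi(a)$ for every $a$; in particular $\|\pi(a)\|_X=\|D^\star D\pi(a)\|_1=\|D^\star Da\|_1$, and adding a multiple of $\mathbf 1$ to a vector changes its maximum and minimum entries by the same amount, so $\|\pi(v)\|_Y=\tfrac{1}{2}(\max_i v_i-\min_i v_i)$ for any $v\in\R^{n+1}$. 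Since $c\in V$ we have $\pi(c)=c$, whence $\pi(a)+\pi(b)=\pi(a+b)=\pi(c)=c$, which is the asserted decomposition.

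The main step is to show that $\pi(a)$ solves ${\bf M}_{XY}^c(\varepsilon)$, i.e. that $\pi(a)$ minimizes $\|a'\|_X$ among those $a'\in V$ with $\|c-a'\|_Y\le\varepsilon$. Feasibility of $\pi(a)$ is immediate: $\|c-\pi(a)\|_Y=\|\pi(c-a)\|_Y=\tfrac{1}{2}(\max_i(c-a)_i-\min_i(c-a)_i)\le\|c-a\|_\infty\le\varepsilon$. For optimality, take an arbitrary feasible $a'\in V$, set $b'=c-a'\in V$, and use $\|b'\|_Y\le\varepsilon$ to subtract the mid-range multiple $\lambda\mathbf 1$ of $\mathbf 1$, so that $\|b'-\lambda\mathbf 1\|_\infty=\|b'\|_Y\le\varepsilon$. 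Then $\widehat a:=a'+\lambda\mathbf 1=c-(b'-\lambda\mathbf 1)$ satisfies $\|c-\widehat a\|_\infty\le\varepsilon$, so $\widehat a$ is admissible for ${\bf TS}^c(\varepsilon)$ and $\|D^\star D\widehat a\|_1\ge\|D^\star Da\|_1$ by the optimality of $a$. Since $D^\star D\widehat a=D^\star Da'$ and $\|a'\|_X=\|D^\star Da'\|_1$ (because $a'\in V$), this says exactly $\|a'\|_X\ge\|D^\star Da\|_1=\|\pi(a)\|_X$. I expect this matching of the extrinsic $\ell_\infty$-tube constraint on $\R^{n+1}$ with the intrinsic $Y$-norm constraint on $V$ to be the one genuinely load-bearing step; it goes through because $\ker(D^\star D)=\R\mathbf 1$ and the $Y$-norm on $V$ is precisely the $\ell_\infty$-width modulo $\mathbf 1$, so feasible points of the two problems correspond under adding a suitable multiple of $\mathbf 1$.

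It remains to upgrade this to an $XY$-decomposition. Put $\varepsilon_0:=\|c-\pi(a)\|_Y\le\varepsilon$; since the feasible set only shrinks when the bound shrinks, $\pi(a)$ still minimizes $\|\cdot\|_X$ over $\{a'\in V:\|c-a'\|_Y\le\varepsilon_0\}$, and now the constraint is active at $\pi(a)$. A short scaling argument ($a'\mapsto ta'$ with $t\uparrow 1$) then shows that no $a'$ with $\|a'\|_X\le\|\pi(a)\|_X$ can satisfy $\|c-a'\|_Y<\varepsilon_0$; that is, $\pi(a)$ is a solution of ${\bf M}_{YX}^c(\|\pi(a)\|_X)$, and Lemma~\ref{lem:ParetoEfficient}(1) then yields that $c=\pi(a)+\pi(b)$ is an $XY$-decomposition. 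This also covers the degenerate range $\varepsilon\ge\|c\|_Y$, where the optimal taut string is a constant vector, $\pi(a)=0$, and $c=0+c$. As an alternative to the last paragraph, once taut-string optimality is recorded as the complementary-slackness statement that $(D^\star Da)_i\neq 0$ forces $(c-a)_i$ to equal $\max_j(c-a)_j$ or $\min_j(c-a)_j$ according to the sign of $(D^\star Da)_i$, one obtains $\langle\pi(a),\pi(b)\rangle=\|\pi(a)\|_X\|\pi(b)\|_Y$ by direct computation, and Proposition~\ref{prop:XYchar} together with tightness of $\|\cdot\|_X$ concludes; the first route is cleaner and I would take it.
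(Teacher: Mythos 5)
Your proof is correct and takes essentially the same approach as the paper's: you lift a competing decomposition $c=a'+b'$ in $V$ to an admissible taut-string competitor in $\R^{n+1}$ by adding the mid-range multiple of $\mathbf 1$ to $a'$, exactly as in the paper. The final paragraph, where you shrink $\varepsilon$ to the active value $\varepsilon_0$ and run the scaling argument to invoke Lemma~\ref{lem:ParetoEfficient}(1), spells out the $XY$-decomposition claim that the paper's proof leaves implicit (the paper stops at "$\pi(a)$ is a solution to ${\bf M}_{XY}^c(\varepsilon)$"), and is a welcome addition rather than a deviation.
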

\begin{proof}
We have $\|\pi(b)\|_Y\leq \|b\|_\infty\leq \varepsilon$.
Suppose that  $c=a'+b'$ with $a',b'\in V$ and $\|b'\|_Y\leq \varepsilon$. Then there is a vector $\overline{b}\in \R^{n+1}$
with $\pi(\overline{b})=b$ and $\|\overline{b}\|_\infty=\|b'\|_X\leq \varepsilon$. If we define $\overline{a}=c-\overline{b}$, then 
$\pi(\overline{a})=a'$ and we have
$$
\|a'\|_X=\|D^\star Da'\|_1=\|D^\star D\overline{a}\|_1\geq \|D^\star Da\|_1=\|D^\star D\pi(a)\|_1=\|\pi(a)\|_X$$
This shows that $\pi(a)$ is a solution to ${\bf M}_{XY}^{c}(\varepsilon)$.
\end{proof}

\subsection{An ECG example}
For a  $500$ Hz  noisy electrocardiogram (ECG) signal $c\in \R^{2000}$ of 4 seconds long,
we graph the Pareto frontier (and sub-frontier)
of $c$:

\centerline{\includegraphics[width=3in]{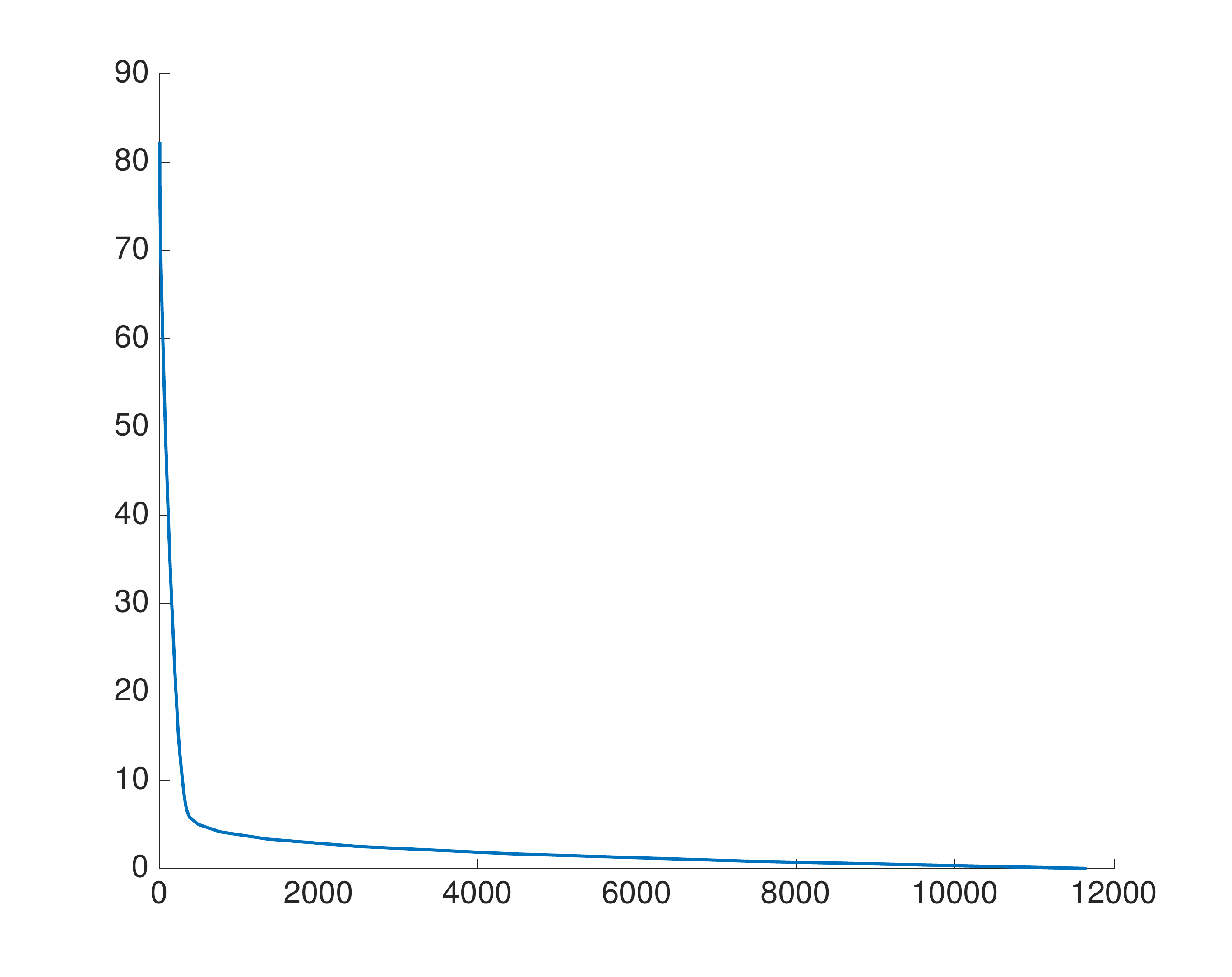}}

(If $c\not\in V$, then we can remove the baseline by replacing $c$ with $\pi(c)\in V$.)
The graph is $L$-shaped, where the vertical leg corresponds to the sparse signal, and the horizontal leg corresponds to noise.
The vertical leg starts near the point $(300,8)$. This means that there exists a decomposition $c=a+b$
with $\|b\|_\infty=8$ and $\|D^\star Da\|_1\approx 300$. The signal $a$ is the denoised signal. 
The singular value region for $c$ is shown below:

\centerline{\includegraphics[width=3in]{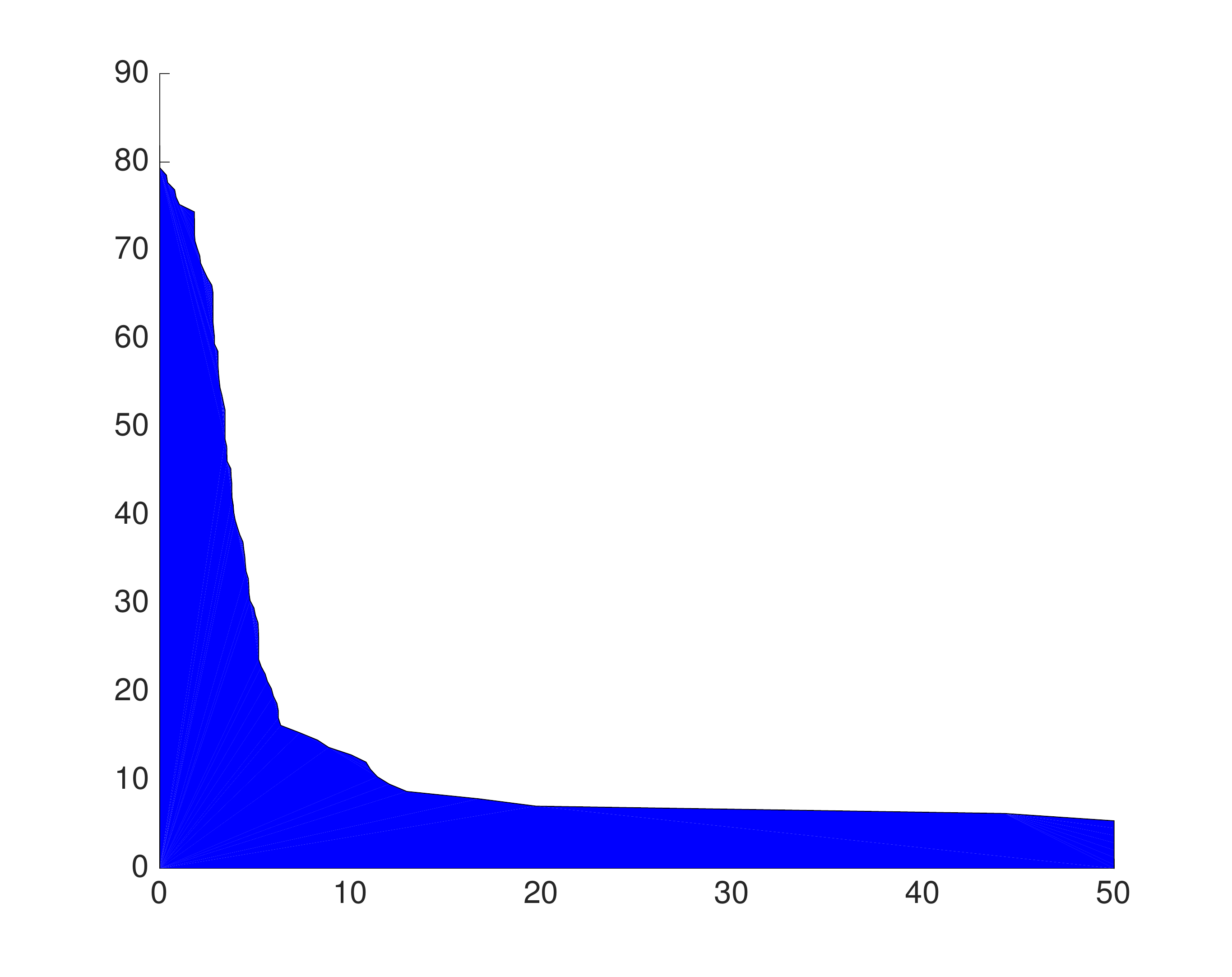}}

The $x$-axis is cut of here in order to better visualize the graph. The graph approaches the $x$-axes slowly and meets the $x$-axis near $x=1000$.
The horizontal leg corresponds to noise. To estimate the noise level,  we look at where the horizontal leg starts. To find a cutoff for the singular values
one proceeds as in principal component analysis.
A reasonable cutoff is again $y=8$. The value 8 is an estimation of the maximum amplitude of noise, which is more than the standard deviation (which is closer to $4$ in this case). Below we graph the noisy signal, and the denoised signals for $y=2,4,6,8,10,12$. The denoised signal for $y=8$ is colored green.

\centerline{\includegraphics[width=8in]{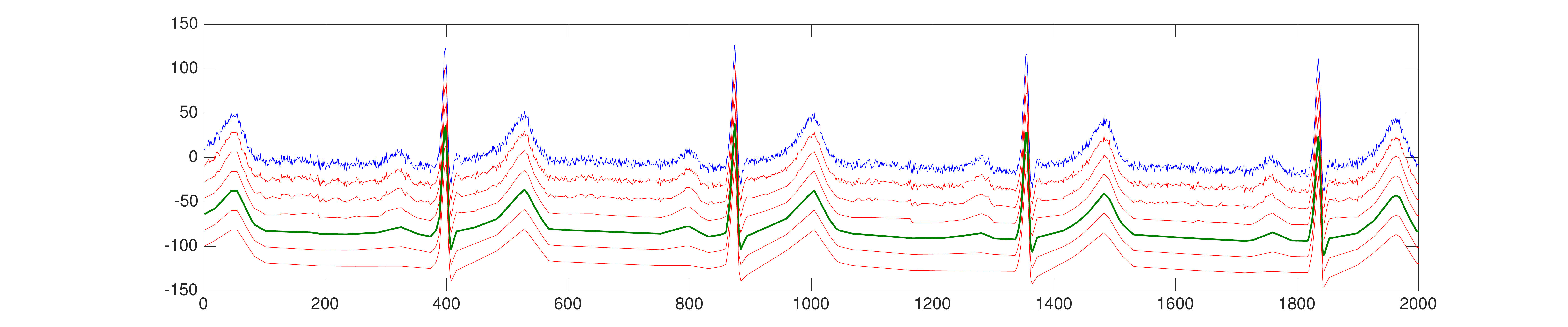}}

\subsection{higher order total variation denoising}
Suppose that $c:[0,1]\times [0,1]\to \R$ is a function. One can also denoise by using a higher derivative to regularize the $\ell_2$ norm by minimizing

$$
{\textstyle \frac{1}{2}}\|c-a\|_2^2+\lambda\|a^{(k)}\|_1 =\frac{1}{2}\int_0^\infty (c(x)-a(x))^2\,dx+\lambda \int_0^1 |a^{(k)}(x)|\,dx
$$
is small. For $k=1$ we just get  1D Total Variation Denoising. We consider a discrete version.

Define $D^{(k)}:\R^{n+k}\to \R^{n}$ as the composition $\underbrace{D\circ D\circ \cdots\circ D}_k$. We can view $D^{(k)}$ as the $k$-th discrete derivative.
For example, for $k=2$ we have
$$
D^{(2)}(v_1,v_2,\dots,v_{n+2})=(v_1-2v_2+v_3,v_2-2v_3+v_4,\dots,v_n-2v_{n+1}+v_{n+2}).
$$
Define a semi-norm on $\R^{n+k}$ by $\|v\|_X=\|D^{(k)}v\|_1$. Restricting the norm to
$$
V=\{(v_1,\dots,v_{n+k})\in \R^{n+k}\mid \textstyle \sum_{i=1}^{n+k} v_ii^d=0\mbox{ for $d=0,1,\dots,k-1$}\}
$$
gives a norm, and let $\|\cdot\|_Y$ be the dual to this norm.

\begin{problem}
Given $c\in \R^{n+k}$, minimize
$\frac{1}{2}\|c-a\|_2^2+\lambda \|a\|_X$.
\end{problem}
For $k=2$, this problem is called $\ell_1$-trend filtering. An overview of $\ell_1$-trend filtering is given in \cite{KKBG}. Some applications are in  financial time series (\cite{YY}), 
macroeconomics (\cite{YJ}) automatic control (\cite{OGLB}), oceanography (\cite{Wunsch}) and geophysics ( \cite{Klinger}). In $\ell_1$-trend filtering, 
the function $a$ is piecewise linear. More generally, for $k>2$, the $(k-1)$-th derivative of $a$ will be piecewise constant. This case has been studied in \cite{MvdG}.
For $k\geq 2$, the norms $\|\cdot\|_X$ and $\|\cdot\|_Y$ are not tight.

\section{The ISTA algorithm for $XY$-decompositions}
A general formulation of the Iterative Shrinkage-Tresholding Algorithm (ISTA) was given in~\cite{DDD}. 
A map $f:\R^n\to \R^n$ is called {\it non-expansive} if it is Lipschitz with Lipschitz constant 1, i.e., $\|f(v)-f(w)\|_2\leq \|v-w\|_2$ for all $v,w\in \R^n$. 
\begin{lemma}
If $\|\cdot\|_X$ is a norm on  $\R^n$, then the functions $\proj_X(\cdot,x)$ and $\shrink_X(\cdot,x)$ are non-expansive.
\end{lemma}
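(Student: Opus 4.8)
The plan is to exhibit $\proj_X(\cdot,x)$ as the metric (Euclidean) projection onto the closed convex set $B_X(x)$ and to use the standard variational inequality characterizing such a projection; the non-expansiveness of both $\proj_X(\cdot,x)$ and $\shrink_X(\cdot,x)$ then falls out of a single application of the Cauchy--Schwarz inequality.

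First I would note that $\proj_X(c,x)$ is well defined: $B_X(x)$ is nonempty, closed and convex, and $a\mapsto\|c-a\|_2^2$ is strictly convex and coercive, so there is a unique minimizer. Then I would prove the obtuse-angle characterization: for $a\in B_X(x)$ one has $a=\proj_X(c,x)$ if and only if $\langle c-a,\,a''-a\rangle\le 0$ for all $a''\in B_X(x)$. For the \emph{if} direction expand
$$\|c-a''\|_2^2=\|c-a\|_2^2-2\langle c-a,\,a''-a\rangle+\|a''-a\|_2^2\ge\|c-a\|_2^2 .$$
For the \emph{only if} direction, if some $a''\in B_X(x)$ had $\langle c-a,\,a''-a\rangle>0$, then by convexity $a+t(a''-a)\in B_X(x)$ for $t\in[0,1]$, and $\|c-(a+t(a''-a))\|_2^2=\|c-a\|_2^2-2t\langle c-a,\,a''-a\rangle+t^2\|a''-a\|_2^2<\|c-a\|_2^2$ for small $t>0$, contradicting minimality.

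Next, given $c,c'\in\R^n$, put $a=\proj_X(c,x)$ and $a'=\proj_X(c',x)$. Applying the characterization to $c$ with $a''=a'$ and to $c'$ with $a''=a$, and adding the two resulting inequalities, one gets after rearranging
$$\|a-a'\|_2^2\le\langle c-c',\,a-a'\rangle\le\|c-c'\|_2\,\|a-a'\|_2 ,$$
hence $\|a-a'\|_2\le\|c-c'\|_2$; this is the non-expansiveness of $\proj_X(\cdot,x)$. For the shrinkage operator, write $b=\shrink_X(c,x)=c-a$ and $b'=\shrink_X(c',x)=c'-a'$, and expand
$$\|b-b'\|_2^2=\|c-c'\|_2^2-2\langle c-c',\,a-a'\rangle+\|a-a'\|_2^2\le\|c-c'\|_2^2-\|a-a'\|_2^2\le\|c-c'\|_2^2 ,$$
using the middle inequality $\langle c-c',\,a-a'\rangle\ge\|a-a'\|_2^2$ from the previous display. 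Thus $\shrink_X(\cdot,x)$ is non-expansive as well.

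The argument is entirely routine; the only point requiring a little care is the variational characterization of the projection, together with the implicit fact (presupposed by the statement) that $\proj_X(c,x)$ is single-valued, both of which follow from strict convexity of the Euclidean norm restricted to the closed convex ball $B_X(x)$.
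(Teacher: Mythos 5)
Your proof is correct and follows essentially the same approach as the paper: you derive the variational (obtuse-angle) inequality for the metric projection onto the convex ball $B_X(x)$, add the two instances of it, and read off both conclusions from the resulting inequality $\langle c-c',\,a-a'\rangle\ge\|a-a'\|_2^2$. The paper obtains the same inequality inline by a $t\to 0$ limit rather than stating the full characterization, and deduces both non-expansiveness statements from the single identity $\|v-w\|_2^2-\|v'-w'\|_2^2-\|(v-w)-(v'-w')\|_2^2\ge 0$, but this is only a cosmetic difference.
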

\begin{proof}
Suppose that $v,w\in \R^n$, and let $v'=\proj_X(v,x)$ and $w'=\proj_X(w,x)$.
For $t\in [0,1]$ we have $(1-t)v'+tw'\in B_X(x)$. By definition of $v'=\proj_X(v,x)$ we have
$$
\|(v-v')+t(v'-w')\|_2=\|v-\Big((1-t)v'+tw'\Big)\|_2\geq \|v-v'\|_2.
$$
Squaring both sides yields
$$
2t\langle v-v',v'-w'\rangle+t^2\|w'-v'\|_2^2\geq 0.
$$
If we take the limit $t\to 0$, we get
$$
\langle v-v',v'-w'\rangle\geq 0.
$$
By symmetry, we also get
$$
\langle w'-w,v'-w'\rangle=\langle w-w',w'-v'\rangle \geq 0
$$
Adding both equations, yields
$$
\|v-w\|_2^2-\|v'-w'\|_2^2-\|(v-w)-(v'-w')\|_2^2=2\langle (v-w)-(v'-w'),v'-w'\rangle \geq 0.
$$
It follows that
$$\|v'-w'\|_2\leq \|v-w\|_2\mbox{ and } \|(v-w)-(v'-w')\|_2\leq \|v-w\|_2.
$$
This shows that $\proj_X(\cdot,x)$ and $\shrink_X(\cdot,x)$ are nonexpansive.

\end{proof}

Suppose that $V=\R^n$. Let $D:\R^m\to \R^n$ be a surjective linear map and suppose that $\|\cdot\|_{\overline{X}}$ is a norm on $\R^m$.
We define a norm $\|\cdot\|_X$ on $\R^n$ by
$$
\|c\|_X=\min\{\|\overline{c}\|_{\overline{X}}\mid \overline{c}\in \R^m,\ D\overline{c}=c\}.
$$
The dual norm $\|\cdot\|_Y$ of $\|\cdot\|_X$ is defined by
$$
\|c\|_Y:=\|D^\star c\|_{\overline{Y}}
$$
where $\|\cdot\|_{\overline{Y}}$ is the dual norm to $\|\cdot\|_{\overline{X}}$. Assume that we can easily compute the norms  $\|\cdot\|_{\overline{X}}$, $\|\cdot\|_{\overline{Y}}$ and the projection function $\proj_{\overline{X}}$. We will also assume that the singular values of $D$ all lie in $[0,1]$.
 We have the following algorithm for computing $\proj_{X}(c,x)$ for $c\in V$ and $x\geq 0$. There is one more parameter, $\delta$, which specifies the accuracy of the output.
 We assume $0<\delta<1$ and the closer $\delta$ is to $1$, the  more accurate the output will be. 
\begin{algorithmic}[1]
\Function{${\rm proj}_{X}$}{$c,x,\delta$}
\State $e \leftarrow 0$
\While{ $\langle D e, c-De\rangle\leq \delta\cdot  \|e\|_{\overline{X}} \cdot \|D^\star(c-De)\|_{\overline{Y}}$ }
\State $e\leftarrow \proj_{\overline{X}}(2 D^\star (c-De))+e,x)$
\EndWhile
\State \Return $De$
\EndFunction
\end{algorithmic}
\section{Basis Pursuit Denoising, LASSO and the Dantzig selector}
In Basis Pursuit (BP) one tries to solve the equation $Av=c$ where $A$ is a given $n\times m$ matrix and $v$ is a sparse vector (few nonzero entries). We will assume that $A$ has rank $n$ and that the system is underdetermined, ($m>n$). It was shown in \cite{CT1} that $v$ often can be found by minimizing $\|v\|_1$ under the constraint $Av=c$. This can be done efficiently using linear programming.   We define a norm $\|\cdot\|_X$ on $\R^n$ by 
$$
\|c\|_X=\min\{\|v\|_1\mid Av=c\}.
$$
Note that evaluating the norm $\|c\|_X$ of some vector $c$ is a Basis Pursuit problem. We also have
$$
\sparse_X(c)=\min\{\|v\|_0\mid Av=c\}.
$$

 In the presence of noise, one minimizes $\|v\|_1$ under the constraint $\|Av-c\|_2\leq y$.  This is called Basis Pursuit Denoising (BPDN), see \cite{CDS,CRT}.
If we set $a=Av$, then we minimize $\|a\|_X$ under the  the constraint $\|c-a\|_2\leq y$. If we set $b=c-a$, then we minimize $\|c-b\|_X$ under the constraint $\|b\|_2\leq y$.
This is the  optimization problem ${\bf M}_{X2}^c(y)$. 

Sometimes BPDN is formulated as the problem of minimizing $\ell_1$-regularized function
$$
{\textstyle \frac{1}{2}}\|Av-c\|_2^2+\lambda\|v\|_1.
$$
This is the same problem as minimizing
$$
{\textstyle \frac{1}{2}}\|c-a\|_2^2+\lambda\|a\|_X.
$$
The equivalence between the two formulations of BPDN is well-known (see also Proposition~\ref{prop:BPDNformulations}). 

The LASSO problem asks to minimize $\|c-Av\|_2$ under the constraint $\|v\|_1\leq x$. This is equivalent to minimizing $\|c-a\|_2$ under the constraint $\|a\|_X\leq x$.
This is the optimization problem ${\bf M}_{2X}^c(x)$.

The dual norm of $\|\cdot\|_X$ is defined by
$$
\|c\|_Y:=\|A^\star c\|_\infty.
$$
Since the $X2$-decompositions and the $2Y$ decompositions are the same, we have two more approaches for finding the $X2$-decompositions
(dual LASSO/BPDN):
\begin{enumerate}
\item ${\bf M}^c_{2Y}(y)$: Under the constraint $\|A^\star (c-a)\|_\infty\leq y$  we minimize $\|a\|_2$ (\cite{OPT,vdBF}).
\item ${\bf M}^c_{Y2}(x)$: Under the constraint $\|a\|_2\leq x$, we minimize $\|A^\star  (c-a)\|_\infty$.
\end{enumerate}

 To find an $XY$-decomposition we  can minimize $\|a\|_X$ under the constraint 
$\|c-a\|_Y\leq y$ (${\bf M}^c_{XY}(y)$).  If we set $a=Av$ then this is equivalent to minimizing $\|v\|_1$ under the constraint $\|A^\star (Av-c)\|_\infty\leq y$.
This optimization problem is called the {\em Dantzig selector} (\cite{CT3}). 

The Dantzig selector does not always have the same solution as LASSO (or the other equivalent problems). Some conditions were given in \cite{AR} when the Dantzig selector and LASSO have the same solutions. If $a_1,a_2,\dots,a_m$ are the columns of $A$, then the unit ball $B_X$ is the convex hull of $a_1,\dots,a_m,-a_1,\dots,-a_m$.  Theorem~\ref{prop:normtight} gives a necessary and sufficient condition for this polytope so that  Dantig selector and LASSO have the same solutions for every $c\in V$.

\section{Total Variation Denoising in Imaging}
\subsection{The 2D total variation norm}
We can view a grayscale image as a function $c:[0,1]\times [0,1]\to \R$. In the anisotropic Rudin-Osher-Fatemi (\cite{ROF}) total variation model,  we seek a decomposition $c=a+b$
such that
$$
\frac{1}{2}\int_0^1\int_0^1 (c(x,y)-a(x,y))^2\,dx\,dy+\lambda\int_{0}^1\int_0^1 \|\nabla a(x,y)\|_1\,dx\,dy
$$
is small. (In the isotropic model we replace $\|\nabla a(x,y)\|_1$ by $\|\nabla a(x,y)\|_2$.)

A discrete formulation of the model is as follows.
We can view a grayscale image of $m\times n$ pixels  as a matrix $c\in \R^{m\times n}$. 
We define a map $F:\R^{m\times n}\to \R^{m\times (n-1)}\times \R^{(m-1)\times n}$ by
$$
F(c)=(d,e)
$$
where $d_{i,j}=c_{i,j}-c_{i,j+1}$ and $d_{i,j}=e_{i,j}-e_{i,j+1}$ for all $i,j$. We define a total variation semi-norm by
$$
\|c\|_X=\|F c\|_1=\sum_{i=1}^{m}\sum_{j=1}^{n-1}|c_{i,j}-c_{i,j+1}|+\sum_{i=1}^{m-1}\sum_{j=1}^n|c_{i,j}-c_{i+1,j}|.
$$
The restriction of $\|\cdot \|_X$ to the set
$$V=\{c\in \R^{m\times n}\mid \textstyle \sum_{i,j} c_{i,j}=0\}$$
is a norm. We can always normalize an image by subtracting the average value to obtain an element of $V$.
Let $F^\star:\R^{m\times (n-1)}\times \R^{(m-1)\times n}\to \R^{m\times n}$ be the dual of $F$.
We have
$$
F^\star(d,e)_{i,j}=d_{i,j}-d_{i,j-1}+e_{i,j}-e_{i-1,j},\quad 1\leq i\leq m,\ 1\leq j\leq n.
$$
with the conventions that $d_{i,0}=d_{i,n}=e_{0,j}=e_{m,j}=0$ for all $i, j$.

The dual norm of $\|\cdot\|_X$ is 
$$
\|c\|_Y=\min\{\|(d,e)\|_\infty\mid F^\star(d,e)=c\}.
$$
Total Variation Denoising is usually formulated as follows
\begin{problem}  Minimize 
$$
{\textstyle \frac{1}{2}}\|c-a\|_2^2+\lambda \|Fa\|_1
$$
is minimal. 
\end{problem}
Since the paper of Rudin, Osher and Fatemi (\cite{ROF}), several other algorithms have been proposed, for example Chambolle's algorithm (\cite{Chambolle}), 
the split Bregman method (\cite{GO}) and the efficient  primal-dual hybrid gradient algorithm (\cite{ZC}).

A region in $R\subseteq\{1,2,\dots,m\}\times \{1,2,\dots,n\}$

\subsection{Spareseness and total variation}
We will study the notion of $X$-sparsity in this context. Suppose that $F\in V$ is an image. An $F$-region is a maximal connected subset of $\{1,2,\dots,m\}\times \{1,2,\dots,n\}$ on which $F$ is constant.
\begin{proposition}
For an image $F$ we have $\gsparse(F)=d-1$ where $d$ is the number of connected regions on which $F$ is constant.
\end{proposition}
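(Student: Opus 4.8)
\medskip

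\noindent\textbf{Proof proposal.} My plan is first to translate the total variation into a sum over grid edges, then to compute the smallest facial $X$-cone of $F$ using Lemma~\ref{lem:smallestXcone} and read off its dimension. Index the pixels by the vertex set $\mathcal V=\{1,\dots,m\}\times\{1,\dots,n\}$ of the grid graph and let $\mathcal E$ be its edge set (the horizontal pairs $\{(i,j),(i,j+1)\}$ and the vertical pairs $\{(i,j),(i+1,j)\}$). For a grayscale image $b\in\R^{m\times n}$, writing $b_u$ for the value at a pixel $u$, we have $\|b\|_X=\sum_{\{u,v\}\in\mathcal E}|b_u-b_v|$. Let $R_1,\dots,R_d$ be the $F$-regions and let $W\subseteq\R^{m\times n}$ be the subspace of images that are constant on each $R_k$, so that $\dim W=d$ and $F\in W$. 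Since the all-ones image $\mathbf 1$ lies in $W$ but not in $V$, we get $W=(W\cap V)\oplus\R\mathbf 1$ and hence $\dim(W\cap V)=d-1$. So it suffices to show that the smallest facial $X$-cone $C$ containing $F$ has linear span $W\cap V$; then $\gsparse_X(F)=\dim C=d-1$. (If $F=0$ then $d=1$ and $\gsparse_X(0)=0$ by convention, so assume $F\neq0$.)

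The key computation is the following. By Lemma~\ref{lem:smallestXcone}, $C$ is the set of $b\in V$ with $\|F-\varepsilon b\|_X+\|\varepsilon b\|_X=\|F\|_X$ for some $\varepsilon>0$. Call an edge $\{u,v\}\in\mathcal E$ \emph{interior} if $F_u=F_v$ (equivalently, $u$ and $v$ lie in a common $F$-region) and \emph{boundary} otherwise. Fix $b\in V$; for $\varepsilon>0$ small enough that $\varepsilon|b_u-b_v|<|F_u-F_v|$ on every boundary edge we have $|F_u-F_v-\varepsilon(b_u-b_v)|=|F_u-F_v|-\varepsilon\,\sgn(F_u-F_v)(b_u-b_v)$ on boundary edges and $|{-\varepsilon(b_u-b_v)}|=\varepsilon|b_u-b_v|$ on interior edges. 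Summing over $\mathcal E$,
\begin{multline*}
\|F-\varepsilon b\|_X+\|\varepsilon b\|_X-\|F\|_X=\\
\varepsilon\Big(\sum_{\text{boundary }e}\big(|b_u-b_v|-\sgn(F_u-F_v)(b_u-b_v)\big)+2\sum_{\text{interior }e}|b_u-b_v|\Big),
\end{multline*}
a sum of nonnegative terms, which therefore vanishes exactly when $b_u=b_v$ on every interior edge and $\sgn(b_u-b_v)\in\{0,\sgn(F_u-F_v)\}$ on every boundary edge. Since each $F$-region is connected through interior edges, the first family of conditions says precisely that $b\in W$. Hence
\[
C=\bigl\{\,b\in W\cap V\ :\ \sgn(b_u-b_v)\in\{0,\sgn(F_u-F_v)\}\ \text{for every boundary edge }\{u,v\}\,\bigr\}.
\]

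To finish, note that $F$ itself lies in $W\cap V$ and satisfies all the boundary sign conditions \emph{strictly}, since $F_u-F_v\neq0$ on a boundary edge. Hence there is $\delta>0$ with the property that every $b\in W\cap V$ with $\|b-F\|_2<\delta$ still satisfies them; that is, $C$ contains a relatively open neighborhood of $F$ inside $W\cap V$. Consequently the linear span of $C$ is $W\cap V$, so $\dim C=\dim(W\cap V)=d-1=\gsparse_X(F)$, as claimed. I expect the only real obstacle to be the edge-by-edge first-order expansion above and the bookkeeping of exactly which summands can vanish; the rest is routine. As an alternative to Lemma~\ref{lem:smallestXcone}, one could verify directly via the norm-sum criterion of Lemma~\ref{lem:facialXcone} that the set displayed for $C$ is a facial $X$-cone contained in every facial $X$-cone through $F$.
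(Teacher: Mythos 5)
Your proof is correct and follows essentially the same route as the paper's: both use Lemma~\ref{lem:smallestXcone} to characterize the smallest facial $X$-cone $C$ containing $F$ by an edge-by-edge analysis of the norm-sum equality, both arrive at the same description of $C$ (constant on each $F$-region, with compatible sign conditions across region boundaries), and both conclude $\dim C = d-1$. Your final step — observing that $F$ satisfies the boundary sign conditions strictly so that $C$ contains a relatively open neighborhood of $F$ in $W\cap V$, hence spans it — is a slightly more explicit justification of the dimension count than the paper's "the function $G$ can have arbitrary values on these $d$ regions subject to the constraints," but it is the same idea.
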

\begin{proof}
Let $C$ be the smallest  facial $X$-cone containing $F$. 
By Lemma~\ref{lem:smallestXcone}, $F$ lies in $C$ if and only if $\|F-\varepsilon G\|_X+\|\varepsilon G\|_X=\|F\|_X$ for some $\varepsilon>0$.
So
$G$ lies in $C$ if and only if the following properties are satisfied for all $i,j$:
\begin{enumerate}
\item  $G(i,j)>G(i,j+1)$ implies $F(i,j)>F(i,j+1)$;
 \item $G(i,j)<G(i,j+1)$ implies $F(i,j)<F(i,j+1)$;
 \item $G(i,j)>G(i+1,j)$ implies $F(i,j)>F(i+1,j)$;
 \item $G(i,j)<G(i+1,j)$ implies $F(i,j)<F(i+1,j)$.
 \end{enumerate}
 Taking the contrapositive in each statement (and changing the indexing), we see that for all $i,j$ we have:
 \begin{enumerate}
\item  $F(i,j)\geq F(i,j+1)$ implies $G(i,j)\geq G(i,j+1)$;
 \item $F(i,j)\leq F(i,j+1)$ implies $G(i,j)\leq G(i,j+1)$;
 \item $F(i,j)\geq F(i+1,j)$ implies $G(i,j)\geq G(i+1,j)$;
 \item $F(i,j)\leq F(i+1,j)$ implies $G(i,j)\leq G(i+1,j)$.
 \end{enumerate}
 It is clear that for all $G\in C$, we have that $G$ is constant on the connected regions on which $F$ is constant. The function $G$
can have arbitrary values on these  $d$ connected regions as long as the 4 inequalities above and the linear constraint $\sum_{i,j}G(i,j)=0$
are satisfied. It follows that $\gsparse_X(F)=\dim C=d-1$.
\end{proof}

\subsection{The total variation norm is not tight}

Below we denoised the image of the letter C using ROF total variation denoising. The original image has only 2 colors, black and white, and has  geometric $X$-sparsity 1.
In the denoised images, there are various shades of grey, and the geometric $X$-sparsity is more than 1. So ROF denoising may increase the geometric sparsity, so the norms $\|\cdot\|_X$ and $\|\cdot\|_Y$ are not tight.

\centerline{\includegraphics[width=3in]{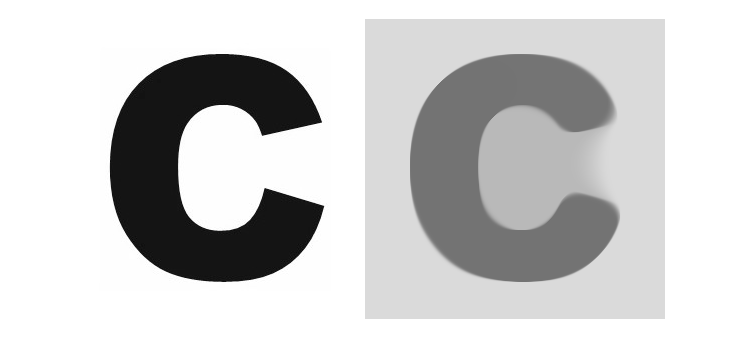}}



\section{Tensor Decompositions}\label{sec:tensor}
\subsection{CP decompositions}
One of the motivations for writing this paper is the study of tensor decompositions. Suppose that $\F$ is the field $\C$ or $\R$, and that
$V^{(1)},\dots,V^{(d)}$ are finite dimensional $\F$-vector spaces. 
Define 
$$V=V^{(1)}\otimes_{\F} V^{(2)}\otimes_{\F}\cdots \otimes_{\F} V^{(d)}.
$$
With a tensor we will mean an element of $V$. 
Elements of $V$ can be thought of as multi-way arrays of size  $n_1\times n_2\times \cdots \times n_d$ where $n_i=\dim V^{(i)}$.
A {\em simple tensor} (also called {\em rank one tensor} or {\em simple tensor}) is a tensor of the form
$$
v^{(1)}\otimes v^{(2)}\otimes \cdots \otimes v^{(d)}
$$
where $v^{(i)}\in V^{(i)}$ for all $i$. Not every tensor is simple, but every tensor can be written as a sum of simple tensors. 
\begin{problem}[Tensor Decomposition]\label{prob:tensor}
Given a tensor $T$, find a decomposition $T=v_1+v_2+\dots+v_r$ where $v_1,\dots,v_r$ are simple tensors and $r$ is minimal.
\end{problem}
Hitchcock defined in \cite{Hitchcock}
 the rank of the tensor $T$ as the smallest $r$ for which such a decomposition exists and this minimal rank decomposition is called  the {\em canonical polyadic decomposition}. Problem~\ref{prob:tensor}
 is  also known as the PARAFAC (\cite{Harshman})  or CANDECOMP (\cite{CC}) model. 
 Finding the rank of a tensor is an NP-hard problem.
 Over $\Q$ this was shown in \cite{Hastad} and in our case, $\F=\R$ or $\F=\C$,  this was proved in~\cite{HL}.
 \subsection{The CoDe model and the nuclear norm}
Even in relatively small dimensions, there are examples of tensors for which the rank is unknown. Using the heuristic of convex relaxation, we consider the following problem:
\begin{problem}[CoDe model]
Given a tensor $T$, find a decomposition $T=v_1+v_2+\cdots+v_r$ where $v_1,\dots,v_r$ are simple tensors and
$\sum_{i=1}^r \|v_i\|_2$ is minimal.
\end{problem}
The nuclear norm for tensors was explicitly given \cite{LC1,LC2}, but the ideas go back to \cite{Grothendieck} and  \cite{Schatten} :
\begin{definition}
The nuclear norm $\|T\|_\star$ of the tensor as the smallest possible value of $\sum_{i=1}^r \|v_i\|_2$ such that $v_1,\dots,v_r$ are simple tensors and $T=\sum_{i=1}^r v_i$.
\end{definition}
A matrix can be viewed as a 2-way tensor and in this case the nuclear norm for tensors coincides with the nuclear norm of the matrix, which is defined as 
$$\|A\|_\star=\operatorname{trace}(\sqrt{A^\star A})=\lambda_1+\lambda_2+\cdots+\lambda_r
$$
where $A^\star$ is the complex conjugate transpose of $A$, $\sqrt{A^\star A}$ is the unique nonnegative definite Hermitian matrix whose square is $A^\star A$ and $\lambda_1,\lambda_2,\dots,\lambda_r$ are the singular values of $A$.
Although finding the nuclear norm of a higher order tensors is also NP-complete (see~\cite{FL}),  it is often easier than determining its rank. In \cite{Derksen} some examples of tensors are given for which the nuclear norm and the optimal decomposition can be computed, but where the rank of the tensors are unknown.

Let $\|\cdot\|_X=\|\cdot\|_\star$ be the nuclear norm. The dual norm, $\|\cdot\|_Y$, is equal to the spectral norm:
\begin{definition}
The spectral norm of a tensor $T$ is defined by
$$
\|T\|_\sigma=\max\{|\langle T,v\rangle|\mid \mbox{$v$ is a simple tensor with $\|v\|_2=1$}\}.
$$
\end{definition}
Finding the spectral norm of a higher-order tensor is also an NP-complete problem (see~\cite{HL}). 

From now on we consider the case $\F=\C$, where $V$ is the tensor product (over $\C$) of several finite dimensional  Hilbert spaces. We have a positive definite Hermitian inner product $\langle\cdot,\cdot\rangle$ on $V$. A real inner product is given by $\langle \cdot,\cdot\rangle=\Re \langle\cdot,\cdot\rangle_\C$.
\subsection{Examples of unitangent tensors}
The following examples come from~\cite{Derksen}:
\begin{example}
The space $\C^{p\times q}$ of complex $p\times q$ matrices has the usual basis $e_{i,j}$ where $1\leq i\leq p$ and $1\leq j\leq q$. Define $V=\C^{p\times q}\otimes \C^{q\times r}\otimes \C^{r\times p}$ and define the tensor
\begin{equation}\label{eq:Tpqr}
T_{p,q,r}=\sum_{i=1}^p\sum_{j=1}^q\sum_{k=1}^r e_{i,j}\otimes e_{j,k}\otimes e_{k,i}.
\end{equation}
This tensor is related to matrix multiplication. It is known that if $\rank(T_{p,q,r})\leq d$, then two $n\times n$ matrices can be multiplied using $O(n^{3\log(d)/\log(pqr)})$ arithmetic operations in $\C$. For most $p,q,r$, the rank of $T_{p,q,r}$ is unknown. For example, the best known lower bound for $\rank(T_{3,3,3})$ is 19, and follows from \cite{Blaser}.
The best known upper bound is 23 and comes from \cite{Laderman}.
It was shown in \cite{Derksen} that $\|T_{p,q,r}\|_\sigma=1$ and  $\|T_{p,q,r}\|_\star=pqr$. 
Now (\ref{eq:Tpqr}) is a convex decomposition, because
$$
pqr=\|T_{p,q,r}\|_{\star}=\sum_{i=1}^p\sum_{i=1}^q\sum_{i=1}^r\|e_{i,j}\otimes e_{j,k}\otimes e_{k,i}\|_2.
$$
Also, we have
$$
\|T_{p,q,r}\|_2^2=pqr=\|T_{p,q,r}\|_\star\|T_{p,q,r}\|_\sigma,
$$
so $T_{p,q,r}$ is unitangent. This means that 
$$T_{p,q,r}=\Big(\lambda T_{p,q,r}\Big)+\Big((1-\lambda)T_{p,q,r}\Big)$$
is an $X2$-decomposition, a $2Y$-decomposition and an $XY$-decomposition 
(as well as $2X$, $Y2$ and $YX$) if $0\leq \lambda\leq 1$.
If we take $\lambda=\|S\|_\star/\|T_{p,q,r}\|_\star=\|S\|_\star/(pqr)$, then 
we have $\|S\|_\star\leq \|\lambda T_{p,q,r}\|_\star$. It follows that
we get
$$
\|T_{p,q,r}-S\|_2\geq \|(1-\lambda)T_{p,q,r}\|_2=\left(1-\frac{\|S\|_\star}{pqr}\right)\sqrt{pqr}=\sqrt{pqr}-\frac{\|S\|_\star}{\sqrt{pqr}}.
$$
Similarly, we get inequalities
$$
\|T_{p,q,r}-S\|_2\geq \sqrt{pqr}-\sqrt{pqr}\|S\|_\sigma
$$
and
$$
\|T_{p,q,r}-S\|_\sigma \geq 1-\frac{\|S\|_\star}{pqr}.
$$
\end{example}

\begin{example}
Let $\Sigma_n$ be the set of permutation of $\{1,2,\dots,n\}$, and for a permutation $\tau$ denote its sign by $\sgn(\tau)$. The determinant tensor is defined by
$$
{\textstyle \det_n}=\sum_{\tau\in \Sigma_n}\sgn(\tau) e_{\tau(1)}\otimes e_{\tau(2)}\otimes \cdots\otimes e_{\tau(n)}\in \C^n\otimes \C^n\otimes \cdots\otimes \C^n.
$$
It was shown in \cite{Derksen} that $\|\det_n\|_\sigma=1$, $\|\det_n\|_\star=n!$ and $\|\det_n\|_2=\sqrt{n!}$.
In particular, $\det_n$ is unitangent. 
Let
$$
\perm_n=\sum_{\tau\in \Sigma_n}e_{\tau(1)}\otimes e_{\tau(2)}\otimes \cdots\otimes e_{\tau(n)}\in \C^n\otimes \C^n\otimes \cdots\otimes \C^n.
$$
be the permanent tensor. In \cite{Derksen} it was calculated that $\|\perm_n\|_\sigma=n!/n^{n/2}$, $\|\perm_n\|_\star=n^{n/2}$
and $\|\perm_n\|_2=\sqrt{n!}$.  The tensor $\perm_n$ is also unitangent.
\end{example}

\subsection{The diagonal SVD and the slope decomposition}
Following \cite{Derksen}, we make the following definitions.
\begin{definition}
Suppose that $v_1,v_2,\dots,v_r$ are simple tensors with $\|v_i\|_2=1$ for all $i$. For a real number $t\geq 1$ we say that $v_1,\dots,v_r$ are $t$-orthogonal if
$$
\sum_{j=1}^r |\langle v_i,w\rangle_\C|^{2/t}\leq 1
$$
for every simple tensors $w$ with $\|w\|_2=1$. 
\end{definition}
Note that $t$-orthogonality implies orthogonality because we can take $w=v_j$ so that
$$
\sum_{j=1}^r|\langle v_i, w\rangle_\C|^{2/t}=1+\sum_{j\neq i} |\langle v_i,v_j\rangle_\C|^{2/t}\leq 1
$$
implies that $v_j$ is orthogonal to all $v_i$ with $i\neq j$. By Pythagoras' theorem, orthogonality is equivalent to $1$-orthogonality.
\begin{definition}
The expression
\begin{equation}\label{eq:TDSVD}
T=\lambda_1v_1+\cdots+\lambda_r v_r
\end{equation}
is called a Diagonal Singular Value Decomposition (DSVD) if 
$v_1,\dots,v_r$ are $2$-orthogonal  simple tensors of length $1$ and $\lambda_1\geq \cdots\geq \lambda_r>0$.
\end{definition}
If (\ref{eq:TDSVD}) is a DSVD, then we have
\begin{eqnarray*}
\|T\|_\star &=& \lambda_1+\lambda_2+\cdots+\lambda_r\\
\|T\|_\sigma &=&\max\{\lambda_1,\lambda_2,\dots,\lambda_r\}\\
\|T\|_2 &=& \sqrt{\lambda_1^2+\lambda_2^2+\cdots+\lambda_r^2}.
\end{eqnarray*}

\begin{theorem}
Suppose that a tensor $T$ has a DSVD with singular values $\lambda_1>\lambda_2>\dots>\lambda_r>0$ and multiplicities $m_1,m_2,\dots,m_r$ respectively.
Then we can write
$$
T=\lambda_1 w_1+\lambda_2w_2+\cdots+\lambda_rw_r
$$
such that
$$
w_i=v_{i,1}+v_{i,2}+\cdots+v_{i,m_i}
$$
for all $i$ and
$$
v_{1,1},\dots,v_{1,m_1},v_{2,1},\dots,v_{2,m_2},\dots,v_{r,1},\dots,v_{r,m_r}
$$
is a sequence of $2$-orthogonal simple unit tensors.  Then the slope decomposition of $T$ is given by
$$
T=u_1+u_2+\cdots+u_r
$$
where 
$$u_i=(\lambda_i-\lambda_{i+1})(w_1+w_2+\cdots+w_i).
$$
\end{theorem}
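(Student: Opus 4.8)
The plan is to treat the two assertions separately. The first is essentially bookkeeping: a DSVD of $T$ expresses $T$ as a sum $\sum_a \mu_a v_a$ in which $v_1,v_2,\dots$ are $2$-orthogonal simple unit tensors and $\mu_1\ge\mu_2\ge\cdots>0$; grouping the indices by the value of the coefficient and relabeling the block on which $\mu_a=\lambda_i$ as $v_{i,1},\dots,v_{i,m_i}$, and setting $w_i=v_{i,1}+\cdots+v_{i,m_i}$, one gets $T=\sum_{i=1}^r\lambda_i w_i$ with the concatenation $v_{1,1},\dots,v_{r,m_r}$ being exactly the original $2$-orthogonal list. So the substance is the second assertion. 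Since the slope decomposition is unique, it suffices to verify that $T=u_1+\cdots+u_r$, with $u_i=(\lambda_i-\lambda_{i+1})(w_1+\cdots+w_i)$ and $\lambda_{r+1}=0$, satisfies the three defining conditions of a $\star\sigma$-slope decomposition: the $u_i$ are nonzero, $\langle u_i,u_j\rangle=\|u_i\|_\star\|u_j\|_\sigma$ for $i\le j$, and $\mu_{\star\sigma}(u_1)>\cdots>\mu_{\star\sigma}(u_r)$.

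First I would check that these $u_i$ sum to $T$ by Abel summation: the coefficient of $w_j$ in $\sum_{i=1}^r(\lambda_i-\lambda_{i+1})(w_1+\cdots+w_i)$ is $\sum_{i\ge j}(\lambda_i-\lambda_{i+1})=\lambda_j$. Write $W_i=w_1+\cdots+w_i$ and $k_i=m_1+\cdots+m_i$, so $u_i=(\lambda_i-\lambda_{i+1})W_i$ with $\lambda_i-\lambda_{i+1}>0$ (for $i<r$ because the $\lambda$'s strictly decrease, for $i=r$ because $\lambda_r>0$). Each $W_i$ is a sum of the $k_i$ simple unit tensors $v_{1,1},\dots$, and these are pairwise orthogonal: taking $w$ to be one of the $v$'s in the $2$-orthogonality inequality $\sum_a|\langle v_a,w\rangle_\C|\le 1$ forces every off-diagonal Hermitian pairing to vanish. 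Hence $\langle W_i,W_j\rangle=k_{\min(i,j)}$; in particular $\langle W_i,W_i\rangle=k_i>0$, so $W_i\ne 0$ and $u_i\ne 0$.

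The crux is the pair of norm identities $\|W_i\|_\sigma=1$ and $\|W_i\|_\star=k_i$. For the spectral norm, for any simple unit tensor $w$ one has $|\langle W_i,w\rangle|\le\sum_a|\langle v_a,w\rangle_\C|\le 1$ by $2$-orthogonality (using $|\Re z|\le|z|$ and the fact that a simple unit tensor stays simple and unit after multiplication by a unit scalar), so $\|W_i\|_\sigma\le 1$; on the other hand $\|W_i\|_\star\le k_i$ is immediate from the definition of the nuclear norm, since $W_i$ is a sum of $k_i$ simple tensors of $\ell_2$-norm $1$, and $\|W_i\|_\sigma\|W_i\|_\star\ge\langle W_i,W_i\rangle=k_i$, whence $\|W_i\|_\sigma\ge k_i/\|W_i\|_\star\ge 1$ and $\|W_i\|_\star\ge k_i/\|W_i\|_\sigma\ge k_i$; so both are equalities. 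Consequently $\|u_i\|_\sigma=\lambda_i-\lambda_{i+1}$ and $\|u_i\|_\star=k_i(\lambda_i-\lambda_{i+1})$, giving $\mu_{\star\sigma}(u_i)=1/k_i$, which is strictly decreasing in $i$ because the multiplicities $m_j$ are positive; and for $i\le j$, $\langle u_i,u_j\rangle=(\lambda_i-\lambda_{i+1})(\lambda_j-\lambda_{j+1})k_i=\|u_i\|_\star\|u_j\|_\sigma$. All three conditions hold, so $T=u_1+\cdots+u_r$ is the slope decomposition.

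The only delicate step is the inequality $\|W_i\|_\sigma\le 1$: this is the one place where $2$-orthogonality is used essentially (plain orthogonality would not suffice), and it is where one must pass carefully between the Hermitian pairing $\langle\cdot,\cdot\rangle_\C$ in the definition of $t$-orthogonality and the real inner product $\langle\cdot,\cdot\rangle=\Re\langle\cdot,\cdot\rangle_\C$ underlying the norms. Everything else — the Abel summation, the orthonormality bookkeeping, and the triangle/Cauchy--Schwarz estimates — is routine.
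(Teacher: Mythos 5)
Your proposal is correct and follows the same route as the paper's proof: verify that $T=u_1+\cdots+u_r$ via Abel summation, then check the three defining conditions of the slope decomposition using the norm formulas $\|w_1+\cdots+w_i\|_\sigma=1$ and $\|w_1+\cdots+w_i\|_\star=m_1+\cdots+m_i$ together with the orthogonality $\langle w_1+\cdots+w_i, w_1+\cdots+w_j\rangle_\C = m_1+\cdots+m_i$ for $i\le j$. The one difference is that you actually prove the two norm identities (via $\|W_i\|_\sigma\le 1$ from $2$-orthogonality, $\|W_i\|_\star\le k_i$ from the triangle inequality, and then sandwiching with $\|W_i\|_\sigma\|W_i\|_\star\ge\langle W_i,W_i\rangle=k_i$), whereas the paper simply invokes the formulas stated just before the theorem for the $\star$, $\sigma$, and $\ell_2$ norms of any tensor with a DSVD — applied to the partial sum $w_1+\cdots+w_i$, which is itself a DSVD with all singular values equal to $1$. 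Both are fine; your version is more self-contained.
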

\begin{proof}
We have
\begin{eqnarray*}
\|u_i\|_\sigma &=& (\lambda_i-\lambda_{i+1})\|w_1+\cdots+w_i\|_\sigma=(\lambda_i-\lambda_{i+1})\\
\|u_i\|_\star & = & (\lambda_i-\lambda_{i+1})\|w_1+\cdots+w_i\|_\star=(\lambda_i-\lambda_{i+1})(m_1+m_2+\cdots+m_i)\\
\mu{\star\sigma}(u_i) & = & \frac{\|u_i\|_\sigma}{\|u_i\|_\star}=(m_1+m_2+\cdots+m_i)^{-1}
\end{eqnarray*}
so we have
$$
\mu_{\star\sigma}(u_1)>\mu_{\star\sigma}(u_2)>\cdots>\mu_{\star\sigma}(u_s)>0.
$$
For $i<j$ we have
\begin{multline*}
\langle u_i,u_j\rangle_\C=(\lambda_i-\lambda_{i+1})(\lambda_j-\lambda_{j+1})\langle w_1+\cdots+w_i,w_1+\cdots+w_j\rangle_\C=\\
(\lambda_i-\lambda_{i+1})(\lambda_j-\lambda_{j+1})(m_1+m_2+\cdots+m_i)=\|u_i\|_\star\|u_j\|_\sigma.
\end{multline*}
So we also have
$$
\langle u_i,u_j\rangle=\Re\langle u_i,u_j\rangle_\C=\|u_i\|_\star\|u_j\|_\sigma.
$$
This proves that $T=u_1+u_2+\cdots+u_s$ is the slope decomposition.

\end{proof}
It was shown in \cite{Derksen} that the tensor $T_{p,q,r}$ has a diagonal singular value decomposition, but $\perm_n$ and $\det_n$
do not for $n\geq 3$.
\subsection{Group algebra tensors}
Suppose that $G$ is a finite group of order $n$. Suppose that there are $m_d$ irreducible representations of dimension $d$.
Then we have $\sum_d m_dd^2=n$. Let $e_g$, $g\in G$ be an orthonormal basis of $\C^n$ and consider the tensor
$$
T_G=\sum_{g\in G}\sum_{h\in G} e_g\otimes e_h\otimes e_{h^{-1}g^{-1}}.
$$
which is related to the multiplication in the group algebra. Then $T_G$ has singular value $\sqrt{n/d}$ with multiplicity $m_dd^3$ for all $d$.
We have
$$
f_{*\sigma}(\lambda)=h_{*\sigma}(\lambda)=\sum_{d}m_dd^3\max\left\{\lambda-\sqrt{\frac{n}{d}},0\right\}.
$$
\centerline{\includegraphics[width=4in]{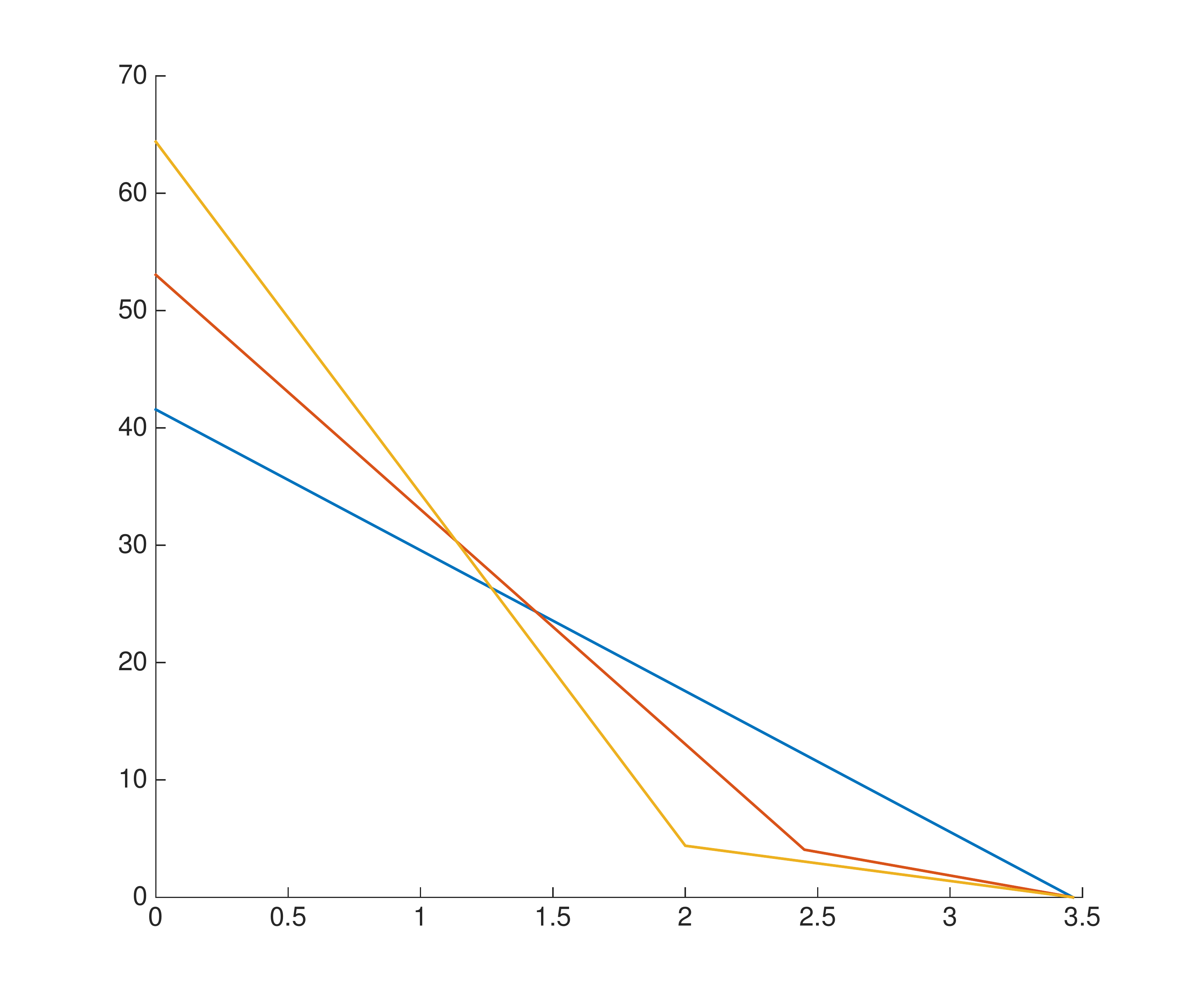}}
In the figure we drew the Pareto sub-frontier of $T_G$ for all groups $G$ of order $G$. The blue graph represents the abelian groups
$\Z/12$ and $\Z/6\times \Z/2$ with only $1$-dimensional representations, the red graph represents the dihedral group $D_{6}$ and
and the semi-direct product $\Z/4\ltimes \Z/3$ with representations of dimension $1,1,1,1,2,2$, and the yellow graph represents
the alternating group ${\bf A}_4$ with representations of dimension $1,1,2,3,3$. 

\subsection{symmetric tensors in $\R^{2\times 2\times 2}$}
For the remainder of the section, let us consider the tensor product space $\R^2\otimes \R^2\otimes \R^2$.
In particular, for $t\in \R$, we will study the symmetric tensor
$$
f_t=e_2\otimes e_1\otimes e_1+e_1\otimes e_2\otimes e_1+e_1\otimes e_1\otimes e_2+t e_2\otimes e_2\otimes e_2.
$$
\begin{proposition}\
\begin{enumerate}
\item We have
$$
\|f_t\|_\sigma=\begin{cases}
|t| & \mbox{if $t\geq 2$ or $t\leq -1$;}\\
\frac{2}{\sqrt{3-t}} & \mbox{if $-1\leq t\leq 2$.} \end{cases}
$$
\item We have
$$
\|f_t\|_\star=\begin{cases}
3-t & \mbox{if $t\leq \frac{1}{3}$};\\
\frac{(1+t)^{3/2}}{\sqrt{t}} & \mbox{if $t\geq \frac{1}{3}$}.\end{cases}
$$
\end{enumerate}
\end{proposition}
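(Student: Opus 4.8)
The plan is to prove the two formulas by combining explicit rank-one decompositions (for upper bounds) with dual certificates (for lower bounds), exploiting the elementary structure of $2\times 2\times 2$ tensors throughout. Write $g_0=e_2\otimes e_1\otimes e_1+e_1\otimes e_2\otimes e_1+e_1\otimes e_1\otimes e_2$, so that $f_t=g_0+t\,e_2^{\otimes 3}$, and record $\langle g_0,g_0\rangle=3$, $\langle e_2^{\otimes 3},e_2^{\otimes 3}\rangle=1$, $\langle g_0,e_2^{\otimes 3}\rangle=0$.

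\textbf{Part (1): the spectral norm.} First I reduce to a $2\times 2$ matrix problem: contracting the third factor against a unit vector $w=(w_1,w_2)^t$ turns $\langle f_t,a\otimes b\otimes w\rangle$ into $a^t M_t(w)\,b$ with $M_t(w)=\left(\begin{smallmatrix} w_2 & w_1\\ w_1 & tw_2\end{smallmatrix}\right)$, so $\|f_t\|_\sigma=\max_{\|w\|_2=1}\|M_t(w)\|_\sigma$. Since $M_t(w)$ is symmetric with trace $(1+t)w_2$ and $T^2-4\det=(1-t)^2w_2^2+4w_1^2\geq 0$, one gets $\|M_t(w)\|_\sigma=\tfrac12\big(|1+t|\,|w_2|+\sqrt{(1-t)^2w_2^2+4w_1^2}\big)$. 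Substituting $w_1^2=1-w_2^2$ and $s=|w_2|\in[0,1]$ leaves the task of maximizing $\Psi(s)=|1+t|\,s+\sqrt{4+((1-t)^2-4)s^2}$ over $[0,1]$. A routine computation shows: if $(1-t)^2-4\geq 0$ (i.e.\ $t\geq 3$ or $t\leq -1$) then $\Psi$ is monotone; otherwise the unique interior critical point is $s_\star=1/\sqrt{3-t}$, which lies in $[0,1]$ exactly when $t\leq 2$, with $\Psi(s_\star)=|1+t|/\sqrt{3-t}+\sqrt{3-t}=4/\sqrt{3-t}$ when $-1\leq t\leq 2$, while $\Psi(1)=|1+t|+|1-t|=2\max\{|t|,1\}$. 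Comparing these two candidate maxima yields $\|f_t\|_\sigma=2/\sqrt{3-t}$ for $-1\leq t\leq 2$ and $\|f_t\|_\sigma=|t|$ otherwise (the two expressions agree at $t=-1$ and $t=2$). (One could instead invoke that the spectral norm of a symmetric tensor is attained at a symmetric rank-one tensor and maximize the binary cubic $|3s+(t-3)s^3|$ on $[-1,1]$, obtaining the same endpoints.)

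\textbf{Part (2): upper bounds for the nuclear norm.} For every $\beta>0$ there is the identity
\[
f_t=\tfrac1{2\beta}(e_1+\beta e_2)^{\otimes 3}-\tfrac1{2\beta}(e_1-\beta e_2)^{\otimes 3}+(t-\beta^2)\,e_2^{\otimes 3},
\]
of decomposition length $\beta^{-1}(1+\beta^2)^{3/2}+|t-\beta^2|$. Taking $\beta=1/\sqrt3$ (which minimizes $\beta^{-1}(1+\beta^2)^{3/2}$, giving the value $3$) yields $\|f_t\|_\star\leq \tfrac83+|t-\tfrac13|=3-t$ whenever $t\leq\tfrac13$; taking $\beta=\sqrt t$ for $t>0$ kills the last term and yields $\|f_t\|_\star\leq (1+t)^{3/2}/\sqrt t$.

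\textbf{Part (2): lower bounds, and the main obstacle.} I use $\|f_t\|_\star\geq \langle f_t,Z\rangle/\|Z\|_\sigma$ for a well-chosen symmetric $Z=\tfrac b3 g_0+d\,e_2^{\otimes 3}=\tfrac b3 f_{3d/b}$, so that $\|Z\|_\sigma$ is itself evaluated by the already-proven Part (1) and $\langle f_t,Z\rangle=b+td$. For $t\leq\tfrac13$, take $Z=f_{-1}=g_0-e_2^{\otimes 3}$: then $\|f_{-1}\|_\sigma=1$ and $\langle f_t,f_{-1}\rangle=3-t$, so $\|f_t\|_\star\geq 3-t$. For $t\geq\tfrac13$, take $b=\tfrac32\sqrt{(1+t)/t}$ and $d=\tfrac{(2t-1)\sqrt{1+t}}{2t^{3/2}}$, so that $3d/b=2-1/t\in[-1,2)$; Part (1) then gives $\|Z\|_\sigma=\tfrac b3\cdot 2/\sqrt{3-(2-1/t)}=1$, while $\langle f_t,Z\rangle=b+td=(1+t)^{3/2}/\sqrt t$, so $\|f_t\|_\star\geq (1+t)^{3/2}/\sqrt t$. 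Matching these with the upper bounds finishes both cases, and the two formulas agree at $t=\tfrac13$. The genuinely creative step is discovering the certificate $Z$ for $t\geq\tfrac13$ (equivalently, the extremal binary cubic $b\,x_1^2x_2+d\,x_2^3$ touching $\pm1$ precisely along the directions of the optimal $2$-term decomposition); once the ansatz $Z=\tfrac b3 f_{3d/b}$ is in hand, everything collapses to Part (1). The only other mild nuisance is the endpoint/critical-point case analysis in Part (1), which is standard single-variable calculus.
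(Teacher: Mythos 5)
Your argument is correct and Part (2) is, up to cosmetics, the paper's argument: the upper bound at $t\ge\tfrac13$ via $f_t=\tfrac1{2\sqrt t}\big((e_1+\sqrt t e_2)^{\otimes 3}-(e_1-\sqrt t e_2)^{\otimes 3}\big)$, the triangle-inequality bound $\|f_t\|_\star\le\|f_{1/3}\|_\star+|t-\tfrac13|$ at $t\le\tfrac13$, and the dual certificates $f_{-1}$ and $f_{2-t^{-1}}$ (your $Z$ for $t\ge\tfrac13$ is exactly $\tfrac b3 f_{2-1/t}$, a scalar multiple of the paper's certificate, and the estimate $\|f_t\|_\star\ge\langle f_t,Z\rangle/\|Z\|_\sigma$ is scale-invariant). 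Your Part (1), however, is a genuinely different and more elementary route: the paper invokes Banach's theorem to reduce the maximization over all rank-one tensors to symmetric ones and then optimizes a one-variable cubic, while you contract one slot to obtain the family $M_t(w)$ of $2\times 2$ symmetric matrices and maximize $\|M_t(w)\|_\sigma$ directly over $w$ on the circle. This avoids the appeal to Banach's theorem at the modest cost of handling a slightly messier one-variable function $\Psi(s)$; both are legitimate, and you correctly note the two-case endpoint/critical-point analysis and the agreement at $t=-1,2$.

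One small slip, which does not affect the proof: $\beta=1/\sqrt 3$ does \emph{not} minimize $\beta^{-1}(1+\beta^2)^{3/2}$ — that quantity is minimized at $\beta=1/\sqrt 2$ with value $\tfrac{3\sqrt 3}{2}$, and at $\beta=1/\sqrt 3$ it equals $\tfrac83$, not $3$. What is minimized at $\beta=1/\sqrt 3$ is the quantity you actually need for $t\le\tfrac13$, namely $\beta^{-1}(1+\beta^2)^{3/2}+\beta^2$, whose minimum value is $3$; the inequality $\|f_t\|_\star\le\tfrac83+|t-\tfrac13|=3-t$ that you write down is correct, so only the parenthetical justification needs fixing.
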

\begin{proof}\

(1) By the definition of the spectral norm, we have
$$
\|f_t\|_\sigma=\max\{ \langle f_t,v_1\otimes v_2\otimes v_3\rangle\mid \|v_1\|=\|v_2\|=\|v_3\|=1\}.
$$
By Banach's Theorem (see~\cite{Banach,Friedland}), we may take $v_1=v_2=v_3=v$. If we write $v=xe_1+ye_2$ with $x^2+y^2=1$,
then we have
\begin{multline*}
\|f_t\|_\sigma=\max_{x^2+y^2=1} \langle f_t,(xe_1+ye_2)\otimes (xe_1+ye_2)\otimes (xe_1+ye_2)\rangle\}=\\=
\max_{x^2+y^2=1} 3x^2y+ty^3=\max_{|y|\leq 1} 3y+(t-3)y^3.
\end{multline*}
Let $g_t(y)=3y+(t-3)y^3$ for $y\in [-1,1]$. We get $g_t'(y)=3+3(t-3)y^2$. If $t>2$ then $g_t'(y)$ has no roots in $[-1,1]$
and
$$
\|f_t\|_\sigma=\max\{g(1),g(-1)\}=t.
$$
If $t<2$, then the roots of $g_t'(y)$ are 
$$y=\pm \frac{1}{\sqrt{3-t}}.
$$
We have
\begin{multline*}
\|f_t\|_\sigma=\max\Big\{g(1),g(-1),g\Big(\frac{1}{\sqrt{3-t}}\Big),g\Big(-\frac{1}{\sqrt{3-t}}\Big)\Big\}=\\=\max\Big\{|t|,\frac{2}{\sqrt{3-t}}\Big\}=\begin{cases}
|t| & \mbox{if $t\leq -1$;}\\
\frac{2}{\sqrt{3-t}} & \mbox{if $-1\leq t\leq 2$.} \end{cases}
\end{multline*}

(2) Note that
$$
f_t=\frac{1}{2\sqrt{t}}\Big(e_1+\sqrt{t}e_2)\otimes (e_1+\sqrt{t}e_2)\otimes (e_1+\sqrt{t}e_2)-(e_1-\sqrt{t}e_2)\otimes (e_1-\sqrt{t}e_2)\otimes (e_1-\sqrt{t}e_2)\Big).
$$
This implies that
$$
\|f_t\|_\star\leq \frac{(1+t)^{3/2}}{\sqrt{t}}.
$$
If $t\geq \frac{1}{3}$,  then let $s=2-t^{-1}\in [-1,2]$. 
We get
$$
2+2t=3+st=\langle f_t,f_s\rangle\leq \|f_t\|_\star\|f_s\|_\sigma=\|f_t\|_\star\frac{2}{\sqrt{3-s}}=\|f_t\|_\star \frac{2\sqrt{t}}{\sqrt{1+t}}.
$$
So it follows that
$$
\|f_t\|_\star\geq \frac{(1+t)^{3/2}}{t}
$$
so we must have equality.

For $t=\frac{1}{3}$ we get $\|f_{1/3}\|_\star= \frac{8}{3}$. For $t<\frac{1}{3}$ we get
$$
\textstyle\|f_t\|_\star\leq \|f_{1/3}\|_\star+\|f_t-f_{1/3}\|_\star=\frac{8}{3}+\|(t-\frac{1}{3})e_2\otimes e_2\otimes e_2\|_\star=\frac{8}{3}+\frac{1}{3}-t=3-t.
$$
We have
$$
3-t=\langle f_t,f_{-1}\rangle\leq \|f_t\|_\star\|f_{-1}\|_\sigma=\|f_t\|_\star,
$$
so this shows that $\|f_t\|_\star=3-t$.

\end{proof}

\begin{corollary}
For $0\leq t\leq \frac{1}{3}$
$$
f_0=\Big(\frac{1}{t+1}\Big) f_t+\Big(\frac{t}{1+t}\Big)f_{-1}
$$
is a $\star \sigma$-decomposition, and for $\frac{1}{3}\leq t\leq \frac{1}{2}$
$$
f_0=\left(\frac{1-2t}{(1-t)^2}\right)f_t+\Big(\frac{t^2}{(1-t)^2}\Big)f_{2-t^{-1}}
$$
is a $\star\sigma$-decomposition. A parametrization of the subpareto curve is given by
$$
\left(\frac{3-t}{t+1},\frac{t}{1+t}\right)
$$
if $0\leq t\leq \frac{1}{3}$ and
$$
\left(\frac{(1-2t)(1+t)^{3/2}}{(1-t)^2\sqrt{t}},\frac{2t^{5/2}}{(1-t)^2\sqrt{1+t}}\right)
$$
if $\frac{1}{3}\leq t\leq \frac{1}{2}$.

\end{corollary}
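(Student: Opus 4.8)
The plan is to verify that each displayed equation really is a decomposition $f_0=a+b$ and that it satisfies $\langle a,b\rangle=\|a\|_\star\|b\|_\sigma$; by Proposition~\ref{prop:XYchar}, applied to the dual pair of tensor norms $\|\cdot\|_\star$ and $\|\cdot\|_\sigma$, this last condition is exactly the statement that $f_0=a+b$ is a $\star\sigma$-decomposition (equivalently a $\star 2$- or $2\sigma$-decomposition), and the pair $(\|a\|_\star,\|b\|_\sigma)$ is then by definition a point of the Pareto sub-frontier of $f_0$. The remaining assertion, that the two families parametrize the \emph{entire} sub-frontier, will follow from the structural fact that the sub-frontier is the graph of a strictly decreasing continuous function on $[0,\|f_0\|_\star]$.

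First I would check the two algebraic identities. Write $f_t=g+t\,e_2\otimes e_2\otimes e_2$, where $g=e_2\otimes e_1\otimes e_1+e_1\otimes e_2\otimes e_1+e_1\otimes e_1\otimes e_2$ is the ``cyclic part'' and does not depend on $t$. Each identity is linear in $g$ and in $e_2\otimes e_2\otimes e_2$ separately, so it reduces to two scalar checks. In the first case $\frac{1}{t+1}+\frac{t}{1+t}=1$ (matching the coefficient $1$ of $g$ in $f_0$) and $\frac{1}{t+1}\cdot t+\frac{t}{1+t}\cdot(-1)=0$ (matching the vanishing coefficient of $e_2\otimes e_2\otimes e_2$ in $f_0$). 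In the second case, with $s:=2-t^{-1}$, one has $\frac{1-2t}{(1-t)^2}+\frac{t^2}{(1-t)^2}=\frac{(1-t)^2}{(1-t)^2}=1$ and $\frac{1-2t}{(1-t)^2}\cdot t+\frac{t^2}{(1-t)^2}\cdot s=0$, since $t^2 s=t(2t-1)=-t(1-2t)$.

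Next I would compute $\|a\|_\star$, $\|b\|_\sigma$, and $\langle a,b\rangle$ using the preceding Proposition together with the observation that the three simple tensors making up $g$ and the tensor $e_2\otimes e_2\otimes e_2$ are pairwise orthogonal unit vectors, so that $\langle f_t,f_s\rangle=3+st$. For $0\le t\le\frac13$ take $a=\frac{1}{t+1}f_t$ and $b=\frac{t}{1+t}f_{-1}$; then $\|a\|_\star=\frac{3-t}{t+1}$ (using $\|f_t\|_\star=3-t$ for $t\le\frac13$) and $\|b\|_\sigma=\frac{t}{1+t}$ (using $\|f_{-1}\|_\sigma=1$), while $\langle a,b\rangle=\frac{t}{(1+t)^2}\langle f_t,f_{-1}\rangle=\frac{t(3-t)}{(1+t)^2}=\|a\|_\star\|b\|_\sigma$. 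For $\frac13\le t\le\frac12$, so $s=2-t^{-1}\in[-1,0]$, take $a=\frac{1-2t}{(1-t)^2}f_t$ and $b=\frac{t^2}{(1-t)^2}f_s$; using $\|f_t\|_\star=\frac{(1+t)^{3/2}}{\sqrt{t}}$ (valid for $t\ge\frac13$) and $\|f_s\|_\sigma=\frac{2}{\sqrt{3-s}}=\frac{2\sqrt{t}}{\sqrt{1+t}}$ (valid for $-1\le s\le 2$), one gets $\|a\|_\star=\frac{(1-2t)(1+t)^{3/2}}{(1-t)^2\sqrt{t}}$, $\|b\|_\sigma=\frac{2t^{5/2}}{(1-t)^2\sqrt{1+t}}$, and $\langle a,b\rangle=\frac{(1-2t)t^2}{(1-t)^4}\langle f_t,f_s\rangle=\frac{2(1-2t)t^2(1+t)}{(1-t)^4}$, which again equals $\|a\|_\star\|b\|_\sigma$. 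In both ranges Proposition~\ref{prop:XYchar} now yields that $f_0=a+b$ is a $\star\sigma$-decomposition, and that $(\|a\|_\star,\|b\|_\sigma)$ is the claimed point of the sub-frontier.

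Finally, to see that these curves give the whole sub-frontier, note that $t\mapsto\|a\|_\star$ is continuous on $[0,\frac12]$ (both formulas give the value $2$ at $t=\frac13$), equals $\|f_0\|_\star=3$ at $t=0$, and equals $0$ at $t=\frac12$. Since the Pareto sub-frontier of $f_0$ is the graph of a strictly decreasing continuous function $h^{f_0}_{\sigma\star}$ on $[0,\|f_0\|_\star]$, and each point $(\|a\|_\star,\|b\|_\sigma)$ produced above lies on that graph, the intermediate value theorem shows that the produced points exhaust it. I do not expect a genuine obstacle here: the argument is a finite computation, and the only place requiring care is the bookkeeping of the piecewise formulas for $\|f_t\|_\star$ and $\|f_t\|_\sigma$ — one must make sure that the arguments $t\in[0,\frac13]$, the value $-1$, and $s=2-t^{-1}\in[-1,0]$ are each substituted into the correct branch of those formulas.
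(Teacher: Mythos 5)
Your proposal is correct and follows the argument the paper leaves implicit: the corollary is stated with no proof, but it manifestly rests on the preceding proposition for $\|f_t\|_\star$ and $\|f_t\|_\sigma$ together with Proposition~\ref{prop:XYchar}'s characterization $\langle a,b\rangle=\|a\|_X\|b\|_Y$, which is exactly what you verify after checking $\langle f_t,f_s\rangle=3+ts$ and the linear identities. Your closing observation that $t\mapsto\|a\|_\star$ runs continuously from $\|f_0\|_\star=3$ at $t=0$ down to $0$ at $t=\tfrac12$, so that the constructed points exhaust the graph of the strictly decreasing sub-frontier function, is the right way to justify that the two pieces parametrize the \emph{entire} sub-Pareto curve. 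One small remark on reading: the paper's phrase ``$\star\sigma$-decomposition'' here can only mean a $\star 2$-decomposition (equivalently $2\sigma$-decomposition), not a Pareto-efficient $XY$-decomposition in the sense of the frontier, since the text immediately afterward observes that $f_0$ is not tight; you read it this way correctly.
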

Let us plot the Pareto sub-frontier:

\centerline{\includegraphics[width=3in]{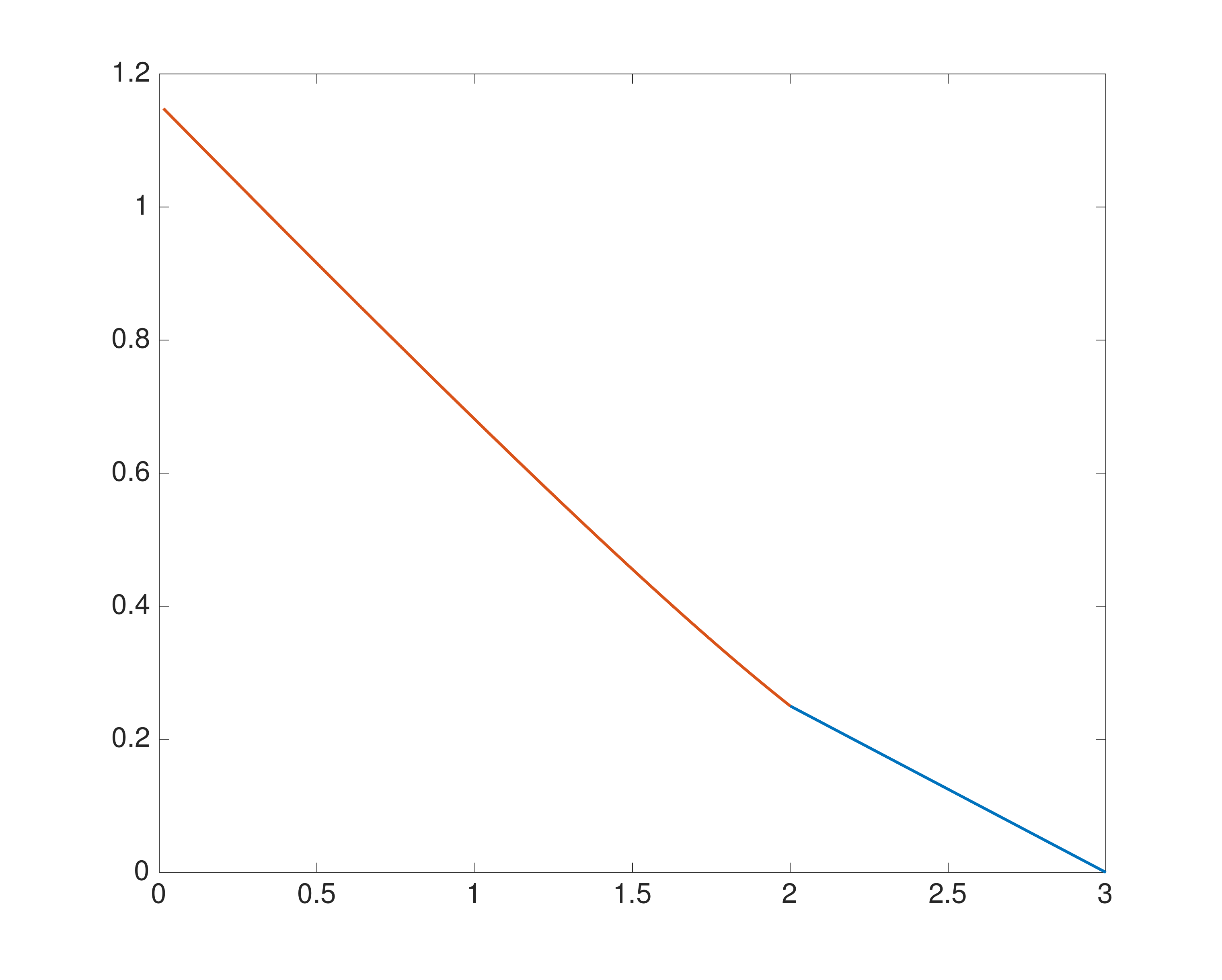}}

The blue part of the graph is linear, but the red part is non-linear. The graph is not piecewise linear, so $f_0$ does not have a slope decomposition. This shows that the nuclear norm and the spectral norm on $\R^2\otimes \R^2\otimes \R^2$ are not tight. 

Below we have plotted the singular value region. The singular value $\frac{2}{\sqrt{3}}$ appears with multiplicity $\frac{27}{13}$, the singular value $\frac{1}{4}$ appears with multiplicity $\frac{4}{3}$. The singular values between $\frac{1}{4}$ and $\frac{2}{\sqrt{3}}$ appear with infinitesemal multiplicities.
The height of the region is the spectral norm $\|f_0\|_\sigma=\frac{2}{\sqrt{3}}$, the area of the region is the nuclear norm $\|f_0\|_\star=3$, and if we integrate $2y^2$ over the region we get  the square of the euclidean norm which is $\|f_0\|_2^2=3$.

\centerline{\includegraphics[width=4in]{SVregion2.eps}}

   \end{document}